\numberwithin{equation}{section}
\theoremstyle{plain}
\definecolor{orange}{rgb}{1.0,0.3,0}
\newtheorem{theorem}[equation]{Theorem}
\newtheorem{thm}[equation]{Theorem}
\newtheorem{lemma}[equation]{Lemma}
\newtheorem{corollary}[equation]{Corollary}
\theoremstyle{remark}
\newtheorem{remark}[equation]{Remark}
\theoremstyle{definition}
\newtheorem{definition}[equation]{Definition}
\newtheorem{defn}[equation]{Definition}
\newtheorem{question}[equation]{Question}
\newtheorem*{question*}{Question}
\newcommand{\A}{{\mathcal A}}
\newcommand{\C}{{\mathcal C}}
\newcommand{\F}{{\mathcal F}}
\renewcommand{\L}{{\mathcal L}}
\newcommand{\N}{\mathbb N}
\newcommand{\R}{\mathbb R}
\renewcommand{\S}{{\mathcal S}}
\newcommand{\U}{{\mathcal U}}
\newcommand{\V}{{\mathcal V}}
\newcommand{\W}{{\mathcal W}}
\newcommand{\cone}{\operatorname{Cone}}
\newcommand{\dist}{\operatorname{dist}}
\newcommand{\dom}{\operatorname{Dom}}
\newcommand{\frag}{\operatorname{Frag}}
\newcommand{\id}{\operatorname{id}}
\newcommand{\im}{\operatorname{Im}}
\newcommand{\length}{\operatorname{length}}
\newcommand{\lip}{\operatorname{lip}}
\newcommand{\Lip}{\operatorname{Lip}}
\newcommand{\md}{\operatorname{md}}
\renewcommand{\span}{\operatorname{span}}
\newcommand{\var}{\operatorname{var}}
\newcommand{\al}{\alpha}
\newcommand{\D}{\partial}
\newcommand{\ga}{\gamma}
\newcommand{\Ga}{\Gamma}
\newcommand{\la}{\lambda}
\newcommand{\lra}{\longrightarrow}
\newcommand{\ol}{\overline}
\newcommand{\on}{\:\mbox{\rule{0.1ex}{1.2ex}\rule{1.1ex}{0.1ex}}\:}
\newcommand{\ra}{\rightarrow}
\newcommand{\restr}{\mbox{\Large \(|\)\normalsize}}
\newcommand{\si}{\sigma}
\newcommand{\tang}{\operatorname{Bw-up}}
\def\Xint#1{\mathchoice
   {\XXint\displaystyle\textstyle{#1}}%
   {\XXint\textstyle\scriptstyle{#1}}%
   {\XXint\scriptstyle\scriptscriptstyle{#1}}%
   {\XXint\scriptscriptstyle\scriptscriptstyle{#1}}%
   \!\int}
\def\XXint#1#2#3{{\setbox0=\hbox{$#1{#2#3}{\int}$}
     \vcenter{\hbox{$#2#3$}}\kern-.5\wd0}}
\def\av{\Xint-}
\def\ccgroup{{\mathbb G}}
\def\horalg{{\mathfrak{h}}}
\DeclareMathOperator\sgn{sgn}
\DeclareMathOperator\adoubling{As}
\DeclareMathOperator\spt{spt} 
\def\pcreatedst#1#2{\expandafter\def\csname #1dst\endcsname##1,##2.{#2(##1,##2)}
\expandafter\def\csname #1dstp\endcsname##1.{{#2}_{##1}}
\expandafter\def\csname #1dname\endcsname{#2}} 
\def\pcreatenrm#1#2{\expandafter\def\csname
  #1nrm\endcsname##1.{\left\|##1\right\|_{#2}} \expandafter\def\csname
  #1nrmname\endcsname{\left\|\,\cdot\,\right\|_{#2}}} 
\DeclareMathOperator\rprm{Rep}
\DeclareMathOperator\curves{Curves}
\DeclareMathOperator\fillfrag{Fill}
\def\albrep#1.{{\mathcal A}_{#1}} 
\DeclareMathOperator\frags{Frag} 
\def\mpush#1.{{#1}_{\sharp}} 
\newcommand{\lines}{\operatorname{Lines}}
\def\epsi{\varepsilon}
\def\tnorm#1.{{{\left\|#1\right\|_{\Lip^*}}}}
\def\ptnorm#1,#2.{{{\left\|#1\right\|_{#2,\Lip^*}}}}
\def\cotnorm#1.{{{\left\|#1\right\|_{\Lip}}}}
\def\lebmeas{{\mathcal L}^1} 
\def\glip#1.{{\bf L}(#1)} 
\let\biglip=\Lip 
\let\smllip=\lip 
\let\metdiff=\md
\let\mrest=\on
\def\fellc{F_c}
\def\biglipr{\text{\normalfont $\rdname$-Lip}}
\def\rmd{\text{\normalfont $\rdname$-md}}
\def\lipalg#1.{{\rm Lip}_{\text{\normalfont b}}(#1)} 
\def\lipfun#1.{{\rm Lip}(#1)} 
\def\lipalgb#1,#2.{{\rm Lip}_{\text{\normalfont b},#2}(#1)}
\def\ball#1,#2.{B(#1,#2)} 
\def\clball#1,#2.{\bar B(#1,#2)} 
\def\dst#1,#2.{d(#1,#2)} 
\def\dstp#1.{d_{#1}} 
\def\hmeas#1.{\mathscr{H}^{\setbox0=\hbox{$#1\unskip$}\ifdim\wd0=0pt 1
    \else #1\fi}} 
\def\rhmeas#1.{\mathscr{H}^{\setbox0=\hbox{$#1\unskip$}\ifdim\wd0=0pt 1
    \else #1\fi}_{\rdname}} 
\def\natural{{\mathbb N}} 
\let\zahlen=\integers
\def\rational{{\mathbb Q}} 
\def\real{{\mathbb R}} 
\DeclareMathOperator\reg{Reg} 
\def\radon{\text{\normalfont Rad}}
\DeclareMathOperator\graphstar{St}
\def\cbb#1.{\text{\normalfont CBB}({\setbox0=\hbox{$#1\unskip$}\ifdim\wd0=0pt 0
    \else #1\fi})}              
\def\cba#1.{\text{\normalfont CBA}({\setbox0=\hbox{$#1\unskip$}\ifdim\wd0=0pt 0
    \else #1\fi})}              
\def\cat#1.{\text{\normalfont CAT}({\setbox0=\hbox{$#1\unskip$}\ifdim\wd0=0pt 0
    \else #1\fi})}              
\def\dset{D_X}
\begin{document}

\begin{abstract}
We prove metric differentiation  for differentiability
spaces in the sense of Cheeger \cite{cheeger,keith,bate-diff}.  
As corollaries we give
a new proof of one of the main results of \cite{cheeger},  
a proof that the Lip-lip constant of any 
Lip-lip space in the sense of Keith \cite{keith} is equal to $1$, and
new nonembeddability results.
\end{abstract}

\title[Metric Differentiation]{Infinitesimal structure of differentiability spaces, and metric differentiation}
\author{Jeff Cheeger}
\address[Jeff Cheeger]{Courant Institute of Mathematical Sciences}
\email{cheeger@cims.nyu.edu}
\author{Bruce Kleiner}
\address[Bruce Kleiner]{Courant Institute of Mathematical Sciences}
\email{bkleiner@cims.nyu.edu}
\author{Andrea Schioppa}
\address[Andrea Schioppa]{ETH Zurich}
\email{andrea.schioppa@math.ethz.ch}
\thanks{J.C. was supported by a collaboration grant from the Simons 
foundation, and NSF grant DMS-1406407.
B.K. was supported by a Simons Fellowship, a Simons collaboration
grant,  and NSF grants
DMS-1105656, and DMS-1405899.  A.S. was supported by NSF DMS-1105656 and 
European Research Council grant n.291497.}
\date{\today}
\maketitle

\tableofcontents

\section{Introduction}

In this paper we study the metric geometry of differentiability spaces
in the sense of Cheeger \cite{cheeger,keith,bate-diff}.  We 
develop the  infinitesimal geometry of Lipschitz curves and Lipschitz 
functions, generalizing and refining earlier work on 
spaces satisfying Poincar\'e inequalities and differentiability 
spaces; using this we formulate
and establish 
metric differentiation for differentiability spaces.
We then give several applications of these results.  They include
a new proof that the minimal generalized upper gradient of a Lipschitz function is
its pointwise upper Lipschitz constant,  which is one
of the main results of \cite{cheeger}, 
an alternate proof that the Lip-lip constant of any
differentiability space is equal to $1$
\cite{deralb}, and
new nonembeddability results.

In order to motivate the
theory and place it in context, we begin with some background.   We will make some 
additional historical comments at the conclusion of the introduction, after stating 
our results.

\subsection*{Metric differentiation for $\R^n$}
The first instance of metric differentiation 
 was for Lipschitz maps 
$F:\R^n\ra Z$,
where $Z$ is an arbitrary metric space; this is due to Ambrosio 
in the  $n=1$ case and Kirchheim 
 for general $n$ \cite{ambrosio_metric_bv,kirchheim_metric_diff}.
 Although
Rademacher's differentiability theorem for Lipschitz maps
$\R^n\ra\R^m$
does not apply in this situation,
and in fact the usual notion of differentiability does not
even make sense since $Z$ has no linear structure, Ambrosio
and Kirchheim introduced 
a new kind of differentiation --- metric differentiation ---  and
proved that it always holds.  Metric differentiation associates 
to the map $F$ a measurable
Finsler metric, i.e.   a measurable
assignment $x_0\mapsto \|\cdot\|_F(x_0)$ of a seminorm 
(here we identify the tangent space $T_{x_0}\R^n$ with $\R^n$ itself), 
which captures the geometry of the pullback distance
function
\begin{equation}
  \label{eq:Fpseudo}
  \Fdst x_1,x_2.=\zdst F(x_1),F(x_2).
\end{equation}
in the sense that for almost
every $x_0\in \R^n$, the pseudodistance $\varrho_F$ satisfies
\begin{equation}
  \label{eq:kirch_seminorm}
  \Fdst x,x_0.=\fnrm x-x_0.(x_0)+o\left(\|x-x_0\|_{\real^N}\right).
\end{equation}
A slightly different (and stronger) way to express metric differentiation is in terms of 
the family of pseudodistances 
$\{\varrho_F^\la(x_0):\R^n\times\R^n\ra [0,\infty)\}_{\la\in (0,\infty)}$
obtained by  
rescaling
$\varrho_F$ centered at $x_0$:
\begin{equation}
\label{eqn_blow_up_distance}
\varrho_F^\la(x_0)(x_1,x_2)=\la\cdot\varrho_F(x_0+\la^{-1}x_1,x_0+\la^{-1}(x_2))\,.
\end{equation}
For almost every $x_0$, as $\la\ra\infty$ the pseudodistance $\varrho_F^\la(x_0)$
converges uniformly on compact subsets of $\R^n\times\R^n$
 to the pseudodistance associated with the seminorm
$\|\cdot\|_F(x_0)$.  An additional aspect of
metric differentiation
is that for a  Lipschitz curve $\ga:I\ra \R^n$, the
length of the path $F\circ\ga:I\ra Z$ is given by integrating the speed of $\ga$
with respect to the Finsler metric $\|\cdot\|_F$, 
\begin{equation}
\label{eqn_finsler_length}
\length(F\circ\ga)=\int_I \|\ga'(t)\|_F(\ga(t))\,dt\,,
\end{equation}
provided that for a.e. $t\in I$, 
the norm $\|\cdot\|_F$ is defined at $\ga(t)$, and (\ref{eq:kirch_seminorm}) holds
with $x_0=\ga(t)$.  Such curves $\ga$ exist in abundance by Fubini's theorem. 

Like Rademacher's theorem for Lipschitz maps
$\R^n\ra \R^m$, metric differentiation for maps  $\R^n\ra Z$ as above
 can be proved by reducing to the $n=1$ case.   Likewise,
one ingredient in our approach to
metric differentiation for differentiability spaces is a specific form of the 
$1$-dimensional case of metric differentiation due to Ambrosio-Kirchheim,
\cite{ambrosio_kirchheim}.

The $\R^n$ version of metric differentiation
has been applied to the theory of
rectifiable sets and currents in metric spaces
\cite{kirchheim_metric_diff,ambrosio_kirchheim,ambrosio_kirchheim_currents},
to the theory of Sobolev spaces with  metric space targets
\cite{korevaar_schoen_metric_diff}, and in geometric
group theory 
\cite{kleiner_metric_diff,wenger_sharp_isoperimetric_constant,wenger_filling_invariants}. 
As an historical note, 
we mention that metric differentiation 
was discovered 
independently in conversations between Korevaar-Schoen and the second author in 92-93, who 
were 
unaware of Kirchheim's work at the time \cite{korevaar_schoen_metric_diff}.

\subsection*{Metric differentiation for Carnot groups}
A generalization of metric differentiation to Carnot groups was established
by Pauls \cite{pauls_metric_diff}.    If $F:\ccgroup\ra Z$
is a Lipschitz map from a Carnot group $\ccgroup$ equipped with a Carnot-Caratheodory metric
to a metric space $Z$, then for any $x_0\in \ccgroup$ one can apply the canonical rescaling
of $\ccgroup$ to the pseudodistance $\varrho_F$ to 
produce a family of rescaled pseudodistances 
$$
\{\varrho_F^\la(x_0):\ccgroup\times\ccgroup\ra [0,\infty)\}_{\la\in(0,\infty)}
$$
analogous to (\ref{eqn_blow_up_distance}).
Pauls showed that there is a measurable assignment
$x_0\mapsto \|\cdot\|(x_0)$ of seminorms to the horizontal subbundle of $\ccgroup$, such
that for almost every $x_0\in \ccgroup$ with respect to Haar measure, as $\la\ra\infty$,
the rescalings
$\varrho_F^\la(x_0)$ converge on compact subsets of $\ccgroup\times\ccgroup$ to the 
Carnot-Caratheodory pseudodistance associated
with $\|\cdot\|_F(x_0)$; however,   this convergence is only asserted to hold
on the subset of pairs $(x_1,x_2)\in \ccgroup\times \ccgroup$ lying on horizontal geodesics.
This restriction to  special pairs is necessary even in the case of the Heisenberg group, 
as  was shown in \cite{magnani_kirchheim_counterexa}.
Pauls used his metric differentiation theorem to prove that nonabelian Carnot groups
cannot be bilipschitz embedded in Alexandrov spaces, generalizing
an earlier result of Semmes   \cite{david_semmes_book} (which was
based on Pansu's version of 
Rademacher's theorem for mappings between Carnot groups).  Another application was a 
second proof
\cite{cheeger_kleiner_metricdiff_monotone} of the fact that the Heisenberg group cannot be
biLipschitz embedded in $L_1$ (originally proved in
\cite{cheeger_kleiner_L1_BV}).

\subsection*{Differentiability spaces}
The main goal in this paper is to generalize metric differentiation to a large class
of metric measure spaces, namely    differentiability spaces.  
These were first
introduced and studied in \cite{cheeger} without being given a name;
see in particular, Theorem 4.38, Definition 4.42 and the surrounding
discussion.
There  it was shown that 
 PI spaces  --- 
 metric measure spaces that
are doubling and satisfy 
a Poincar\'e inequality in the sense of Heinonen-Koskela \cite{heinonen98}
--- 
are differentiability
spaces.  Differentiability
spaces were further studied in \cite{keith,bate-diff} (under slightly
different hypotheses), where
they were called spaces with a strong measurable differentiable structure, and Lipschitz
differentiability spaces, respectively. 
Examples of differentiability spaces include PI spaces such as Carnot groups
with Carnot-Caratheodory metrics, and more generally
 Borel subsets of  PI spaces, with the restricted measures. 
We recall  (see Section \ref{sec_preliminaries})
that a differentiability space
$(X,\mu)$  has a countable collection $\{(U_i,\phi_i)\}$
of charts, where $\cup_i\, U_i$ has full measure in $X$.
Also, there are canonically defined measurable tangent and cotangent bundles 
$TX$, $T^*X$, and  for any Lipschitz function $u:X\ra\R$, there is a 
well-defined differential $du$, which is   a 
measurable section  of $T^*X$.  

\begin{remark}
We emphasize that the cotangent  and tangent bundles are not on the same footing:
the existence of the cotangent bundle follows quite directly from definition of differentiability
space, whereas the tangent bundle is defined as 
the dual  of the cotangent bundle i.e. 
$TX=(TX^*)^*$. It was observed in 
\cite{cheeger_kleiner_radon} that for PI spaces, given a Lipschitz curve $\ga$, 
for certain parameter values, one can define a velocity vector $\ga'(t)\in T_{\ga(t)}X$
 and that such velocity vectors span the tangent space almost everywhere; in 
 \cite{cheeger_kleiner_metric_diff_unpublished} ``span'' was upgraded to 
 ``are dense''.  As will be seen below, this new geometric characterization
of tangent vectors
was crucial to subsequent developments including 
 the papers 
 \cite{cheeger_kleiner_metric_diff_unpublished}, \cite{bate_jams} and the main results of the present paper, a first example being 
 Theorem \ref{thm_intro_norms_agree}.
\end{remark}

For a Carnot group $\ccgroup$ with a Carnot-Caratheodory metric, the horizontal bundle can be canonically identified with the tangent bundle $T\ccgroup$ of $\ccgroup$ 
viewed as a PI space. This example indicates that in order to formulate a version of metric differentiation for a differentiability space $(X,\mu)$, 
one needs to identify a measurable seminorm  on the tangent bundle
$TX$ and a family of geodesics that will play the role of the family of horizontal geodesics.
We first discuss these in the case of the identity map $X\ra X$, initially
focussing on the measurable seminorm on $TX$; the treatment
in this special case may be viewed as part of the intrinsic 
structure theory of $X$ itself. 

For the remainder of the introduction $(X,\mu)$ will denote a differentiability
space. 

\subsection*{The canonical norm on $TX$}
We now consider several ways of  defining a seminorm on the tangent bundle
$TX$;
as indicated above, these will be used in the formulation of metric
differentiation in the special case of the identity map $X\ra X$.
In the first, we choose a countable dense set $\{x_i\}\subset X$,  and let 
$u_i:X\ra \R$ be  distance function $u_i(x)=d(x,x_i)$.  For every $i$, since the 
differential $du_i$ is a measurable section of the cotangent bundle, by 
duality it defines a measurable family of linear functions on the 
tangent spaces, and therefore $|du_i(\cdot)|$ defines a measurable family
of seminorms on $TX$; taking supremum  we may define
$$
\|v\|_1=\sup_i|du_i(v)|\,.
$$
 As a variations on this, we may define
$\|\cdot\|_2$ and $\|\cdot\|_3$ by replacing the
collection of distance functions $\{u_i\}$ with the collections of 
all distance functions and all 
$1$-Lipschitz functions, respectively; note that this requires a little care since
these collections are uncountable, see Lemma \ref{lem:sup_of_seminorms}.  Finally, it was observed in \cite{cheeger}
that the pointwise upper Lipschitz constant induces a canonical measurable norm
on the cotangent bundle $T^*X$, and by duality this yields
a norm $\|\cdot\|_4$ on $TX$.

\begin{theorem}[See Section \ref{sec:conse}]
\label{thm_intro_norms_agree} 
The seminorms described above agree almost everywhere.  In particular, they are all
norms, and 
$\|\cdot\|_1$ is independent of the choice of the countable dense subset.
\end{theorem}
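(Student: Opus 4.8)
The plan is to establish the chain $\|v\|_1\le\|v\|_2\le\|v\|_3\le\|v\|_4$ pointwise on each tangent fibre, and then to close the loop by proving $\|v\|_4\le\|v\|_1$ for $\mu$-a.e.\ $x$ and all $v\in T_xX$; since $\|\cdot\|_4$ is a norm, this squeezes all four seminorms together and shows they are norms, while the independence of $\|\cdot\|_1$ from the choice of dense set follows for free because $\|\cdot\|_2,\|\cdot\|_3,\|\cdot\|_4$ make no reference to it. The first chain is routine: the $u_i$ form a subfamily of all distance functions, which form a subfamily of the $1$-Lipschitz functions, so the suprema increase --- Lemma \ref{lem:sup_of_seminorms} supplies measurability and reduces the uncountable suprema defining $\|\cdot\|_2,\|\cdot\|_3$ to countable ones --- and if $u$ is $1$-Lipschitz then $\Lip u\le 1$ everywhere, so the canonical cotangent norm of $du(x)$, which by construction is at most $\Lip u(x)$, is $\le 1$ for a.e.\ $x$; by the duality $TX=(T^*X)^*$ this gives $|du(x)(v)|\le\|v\|_4$, and taking the supremum over $u$ yields $\|v\|_3\le\|v\|_4$. (A McShane/sup-convolution argument in fact realises every cotangent vector of norm $<1$, up to arbitrarily small error, as the differential at $x$ of a globally $1$-Lipschitz function, giving $\|\cdot\|_3=\|\cdot\|_4$ a.e.; but I will not rely on this.)

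To get $\|v\|_4\le\|v\|_1$ I would dualise fibrewise. Fix a chart $(U,\phi)$, over which $TX$ and $T^*X$ are trivial with the standard pairing on $\R^{N}$. Then $\|\cdot\|_4\le\|\cdot\|_1$ on $T_xX$ is equivalent, by duality in the finite-dimensional fibre, to the inclusion of the unit ball of the canonical cotangent norm in the closed symmetric convex hull of $\{du_i(x)\}_i$; i.e.\ to: for a.e.\ $x$, every cotangent vector of norm $\le 1$ is a limit of absolutely convex combinations $\sum_i t_i\,du_i(x)$ with $\sum_i|t_i|\le 1$. Equivalently: for a.e.\ $x$ and every Lipschitz $f$, $|df(x)(v)|\le\Lip f(x)\,\|v\|_1$ for all $v$.

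The tool for this is the metric differentiation theorem of the present paper applied to the identity map $X\to X$, together with the density (for a.e.\ $x$) in $T_xX$ of the velocities $\ga'(t)$ of metrically differentiable Lipschitz curves $\ga$ through $x$ (\cite{cheeger_kleiner_radon,cheeger_kleiner_metric_diff_unpublished}, as recalled in the Remark above). For such a curve, $|u_i(\ga(t'))-u_i(\ga(t))|\le\dist(\ga(t'),\ga(t))$ and the chain rule $(u_i\circ\ga)'(t)=du_i(\ga'(t))$ (valid for all $i$ at a.e.\ $t$) give at once $|du_i(\ga'(t))|\le\md\ga(t)$, hence $\|\ga'(t)\|_1\le\md\ga(t)$. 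The real content is the reverse bound $\md\ga(t)\le\|\ga'(t)\|_1$ for abundantly many curves and a.e.\ $t$: since $\{x_i\}$ is dense one has $\dist(\ga(t'),\ga(t))=\sup_i|u_i(\ga(t'))-u_i(\ga(t))|$, and the desired bound amounts to interchanging this supremum over basepoints with the blow-up limit defining $\md\ga(t)$ --- which fails for a generic curve, and this is the main obstacle. It should be overcome using the one-dimensional Ambrosio--Kirchheim metric differentiation \cite{ambrosio_kirchheim} along the curves of an Alberti representation: after blowing up at $\ga(t)$, the distance functions to basepoints approaching the curve realise, in the limit, exactly the metric-speed direction, so $\md\ga(t)$ is attained to first order by a single $u_i$. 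Once $\md\ga(t)=\|\ga'(t)\|_1$ for abundant curves and a.e.\ $t$, metric differentiation identifies $\md\ga(t)$ with $\|\ga'(t)\|_4$; thus $\|\cdot\|_1$ and $\|\cdot\|_4$ agree on a dense subset of each tangent fibre, hence, being continuous seminorms, agree a.e., which completes the proof.
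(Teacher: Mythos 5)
Your overall architecture is the same as the paper's: the easy chain $\|\cdot\|_1\le\|\cdot\|_2\le\|\cdot\|_3\le\|\cdot\|_4$ via $\|du\|_{\Lip}=\Lip u\le 1$ for $1$-Lipschitz $u$, and then closing the loop on generic velocity vectors using one-dimensional Ambrosio--Kirchheim metric differentiation along fragments of Alberti representations together with the density of generic directions (Theorems \ref{thm:met_diff_norm} and \ref{thm:Sus_dir_dens}; note that in a general differentiability space this density is not covered by the PI-space references you cite --- it rests on Bate's Alberti representations in arbitrary cone directions, Theorem \ref{thm:bate_alb_arb_dir}). However, there is one concrete unjustified step, and it sits exactly where the real work is: you write that ``metric differentiation identifies $\md\ga(t)$ with $\|\ga'(t)\|_4$''. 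It does not. What Ambrosio--Kirchheim plus the chain rule give (Theorem \ref{thm:met_diff_norm}) is $\md\ga(t)=\|\ga'(t)\|_1$, the supremum of derivatives of the chosen distance functions; the identification of $\md\ga(t)$ with the dual-Lip norm $\|\ga'(t)\|_4$ on generic velocities is precisely the content of the theorem being proved (it is Theorem \ref{thm:metdiff=TX} in the paper), so as written this step is circular.

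The gap is small and is filled by exactly the estimate your own dual reformulation calls for: for a generic pair $(\ga,t)$ with $x=\ga(t)$ and any Lipschitz $f$, the definition of the pointwise upper Lipschitz constant gives $|f(\ga(t+s))-f(\ga(t))|\le(\Lip f(x)+o(1))\,d(\ga(t+s),\ga(t))$ as $s\to 0$, hence $|df(\ga'(t))|=|(f\circ\ga)'(t)|\le\Lip f(x)\,\md\ga(t)$; taking the supremum over cotangent vectors of $\Lip$-norm at most $1$ yields $\|\ga'(t)\|_4\le\md\ga(t)$ (this is the computation (\ref{eq:metdiff=TX_p2}) in the paper). Combined with $\md\ga(t)=\|\ga'(t)\|_1\le\|\ga'(t)\|_4$, this squeezes everything together on generic velocities, and your final step (positive homogeneity and continuity of seminorms applied to a set of velocities with dense directions) then gives $\|\cdot\|_1=\|\cdot\|_4$ $\mu$-a.e. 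With that estimate inserted your argument becomes a slightly leaner variant of the paper's proof: the paper instead constructs, for each generic velocity, a supporting functional obtained as a limit of differentials of distance functions and invokes Lemma \ref{lem:finitedim_norms}, which also yields the density of normalized generic velocities in the unit sphere; your squeeze bypasses that lemma but does not produce those supporting functionals. Finally, your worry that interchanging the supremum over basepoints with the difference quotient ``fails for a generic curve'' is overstated: the conditions in (MD2) hold at almost every $t$ along every fragment, which is exactly how genericity is arranged in Section \ref{sec:gen_points_gen_veloc}.
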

We will henceforth use $\|\cdot\|$ denote the norms $\|\cdot\|_i$, $1\leq i\leq 4$ on the 
full measure set where they are well-defined and agree.

\subsection*{Generic curves and pairs}
We now discuss the role of curves  in differentiability spaces.  For this
we fix a particular choice of charts $\{(U_i,\phi_i)\}$ as above.  
If $\ga:I\ra X$ is a Lipschitz curve, then one would like to make sense, for 
almost every $t\in I$,  of the 
velocity $\ga'(t)$ and its norm $\|\ga'(t)\|$, where $\|\cdot\|$ is the norm from 
Theorem \ref{thm_intro_norms_agree}  (compare (\ref{eqn_finsler_length})).  
Clearly this is impossible for an arbitrary curve $\ga$, since
it could lie entirely in the complement of the set where the tangent bundle
$TX$ and the norm are well-defined.   To address this, we work with generic curves,
and generic pairs.   Roughly speaking
(see Section \ref{sec:gen_points_gen_veloc}
for the precise definition) if $\ga:I\ra X$ is Lipschitz curve and $t\in I$,
then the pair $(\ga,t)$ is {\em generic} if for some chart $(U_i,\phi_i)$
of the differentiable structure, the time $t$ is:
\begin{itemize}
\item A Lebesgue density
point of the inverse image $\ga^{-1}(U_i)$.
\item An approximate continuity point
of the measurable function $(\phi_i\circ\ga)':I\ra \R^{n_i}$.
\item A density point of $\ga^{-1}(Y)$, where $Y\subset X$ is a full measure 
subset of $\cup_i U_i$ where the norm $\|\cdot\|$ is well-defined.  
\end{itemize}
The curve $\ga$ is {\em generic} if the pair $(\ga,t)$ is generic for almost every
$t\in I$.  It follows readily from the definitions that 
for any generic pair $(\ga,t)$, both the velocity vector $\ga'(t)\in TX$ and 
its norm $\|\ga'(t)\|$ are well-defined.  More generally, we may use essentially the
same notions when $\ga$ is  a {\em curve fragment} rather than a curve, i.e.  a Lipschitz map
$\ga:C\ra X$, where $C\subset \R$ is closed subset; this additional generality is 
essential because a differentiability space might have no nonconstant Lipschitz
curves.    Also,  if $\F$ 
and $\C$ are countable collections of 
Lipschitz functions  and bounded Borel functions respectively,
we may impose
 the additional  requirement  that $t$ is an approximate continuity point
of $(f\circ\ga)'$ and $u\circ\ga$ for all $f\in \F$, $u\in\C$.

\subsection*{Metric differentiation along curves}
Using the notions of genericity above, we can formulate one aspect of
metric differentiation, which is a  statement about  curve fragments.
This uses the  concept of the length of a curve fragment, which is 
straightforward extension of the length of a curve.

\begin{theorem}
\label{thm_curve_metric_diff}
Suppose $\ga:C\ra X$ is a   curve fragment.
\begin{enumerate}
\setlength{\itemsep}{1ex}
\item 
If $(\ga,t)$ is  a generic pair, then $t$ is a point of 
metric differentiability of $\ga$ in the sense that 
(\ref{eqn_blow_up_distance}) holds with $F=\ga$, $x_0=t$,
and for pairs of points $x_1$, $x_2$ where
the right-hand side is defined, and moreover $\|\ga'(t)\|=\|\frac{\D}{\D t}\|_{\ga}(t)$. 

\item  If $\ga$ is generic, then the length of $\ga$ is given by
$$
\length(\ga)=\int_C\|\ga'(t)\|d\L\,,
$$
where $\|\cdot\|$ is the norm of Theorem \ref{thm_intro_norms_agree}.
\end{enumerate}
\end{theorem}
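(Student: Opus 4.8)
The plan is to reduce the statement, via composition with the chart coordinates $\phi_i$, to the one-dimensional Ambrosio--Kirchheim metric differentiation theorem for the pullback distance along a Lipschitz curve fragment in $\R$, and then identify the resulting metric derivative with the norm $\|\ga'(t)\|$ using the characterization of $\|\cdot\|$ from Theorem \ref{thm_intro_norms_agree}. For part (1), fix a generic pair $(\ga,t)$ and a chart $(U_i,\phi_i)$ realizing genericity; write $\si=\phi_i\circ\ga$, which is a Lipschitz map from (a neighborhood in) $C$ into $\R^{n_i}$, differentiable at $t$ since $t$ is an approximate continuity point of $\si'$. The pullback pseudodistance $\varrho_\ga(s_1,s_2)=d_X(\ga(s_1),\ga(s_2))$ is $1$-dimensional, so by \cite{ambrosio_kirchheim} it is metrically differentiable at a.e. parameter; the point is to see that \emph{every} generic $t$ is such a point and to compute the metric derivative. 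I would do this by sandwiching $\varrho_\ga$ between two controlled quantities: on the one hand, applying $du_j$ (distance functions to a countable dense set) gives, for each $j$, that $|u_j(\ga(s))-u_j(\ga(t))| = |du_j(\ga'(t))|\,|s-t| + o(|s-t|)$ by the chain rule in the chart together with approximate continuity of $(u_j\circ\ga)'$ (this is exactly the extra genericity clause one may impose, with $\F\supset\{u_j\}$), so $\varrho_\ga(s,t)\ge \sup_j|du_j(\ga'(t))|\,|s-t|+o(|s-t|) = \|\ga'(t)\|_1\,|s-t|+o(|s-t|)$; on the other hand, an upper bound $\varrho_\ga(s,t)\le \|\ga'(t)\|\,|s-t|+o(|s-t|)$ follows from the $\|\cdot\|_4$ description of the norm via the pointwise upper Lipschitz constant $\operatorname{lip}$, applied infinitesimally along $\ga$ at a density point of $\ga^{-1}(Y)$. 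Since $\|\cdot\|_1=\|\cdot\|_4=\|\cdot\|$ a.e. by Theorem \ref{thm_intro_norms_agree}, the two bounds match and give \eqref{eqn_blow_up_distance} with $F=\ga$, $x_0=t$, with the blow-up pseudodistance equal to $(x_1,x_2)\mapsto \|\ga'(t)\|\,|x_1-x_2|$ on the relevant set of pairs; reading off the metric derivative $\|\tfrac{\D}{\D t}\|_\ga(t)$ from this blow-up gives the final identity $\|\ga'(t)\|=\|\tfrac{\D}{\D t}\|_\ga(t)$.

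For part (2), assume $\ga$ is generic, so by part (1) the metric derivative $\|\tfrac{\D}{\D t}\|_\ga(t)$ exists and equals $\|\ga'(t)\|$ for a.e.\ $t\in C$. The length of a curve fragment $\ga:C\to X$ can be taken to be $\operatorname{length}(\ga)=\sup\sum_k d_X(\ga(t_k),\ga(t_{k+1}))$ over increasing finite subsets $t_0<\dots<t_m$ of $C$ — equivalently the total variation of the pseudodistance $\varrho_\ga$ — and the classical $1$-dimensional fact (again available from the Ambrosio--Kirchheim circle of ideas, or a direct covering argument) is that for a Lipschitz pseudodistance on a closed subset of $\R$ that is metrically differentiable a.e., the length equals $\int_C \|\tfrac{\D}{\D t}\|_\ga\,d\L$. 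Substituting the identity from part (1) yields $\operatorname{length}(\ga)=\int_C\|\ga'(t)\|\,d\L$. The one subtlety here is handling the gaps of $C$: the length of a curve fragment should be defined so that it does not charge the complementary intervals (one sums $d_X$ only over consecutive points of $C$, not an interpolation), which is precisely the statement that length equals total variation of $\varrho_\ga$ restricted to $C$, and this is consistent with the a.e.\ differentiation statement since $\L(\R\setminus C)$ contributes nothing to the integral over $C$.

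The main obstacle I expect is the upper bound $\varrho_\ga(s,t)\le \|\ga'(t)\|\,|s-t|+o(|s-t|)$ in part (1). The lower bound is essentially soft — it just needs the chain rule in a chart plus approximate continuity of finitely/countably many derivatives $(u_j\circ\ga)'$ — but the upper bound genuinely requires controlling $d_X(\ga(s),\ga(t))$ for \emph{nearby} times $s$, not just infinitesimally, and the naive bound via a Lipschitz constant of $\ga$ is too crude (it would give $\operatorname{Lip}(\ga)$, not $\|\ga'(t)\|$). This is where one must exploit that the norm $\|\cdot\|_4$ comes from the pointwise upper Lipschitz constant on $T^*X$ together with a genericity/density hypothesis strong enough to ensure that, along $\ga$, the infinitesimal stretching of \emph{all} Lipschitz (or distance) functions is simultaneously controlled; making this uniform over the relevant function classes — and doing it with curve fragments, where $s$ ranges only over $C$ — is the delicate point, and is presumably where the hypotheses on generic pairs (the approximate continuity clauses for the families $\F$, $\C$, and membership in the full-measure set $Y$) are used in an essential way.
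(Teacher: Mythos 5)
Your overall plan---reduce to the one-dimensional Ambrosio--Kirchheim statement, identify the resulting metric derivative via the chain rule in a chart, and then invoke Theorem \ref{thm_intro_norms_agree}---is indeed the paper's route, and your lower bound and your treatment of part (2) are fine modulo part (1). But there is a genuine gap exactly where you flag ``the main obstacle'': the upper bound $d_X(\ga(s),\ga(t))\le (\|\ga'(t)\|+o(1))|s-t|$ is never proved, and the mechanism you propose for it cannot work. Knowing that $\ga(t)$ lies in the good set $Y$ and knowing the value of $\|\ga'(t)\|_4$ (the dual-$\Lip$ norm of the velocity vector) gives by itself no control on $d_X(\ga(s),\ga(t))$ for nearby times $s$: that norm is defined fibrewise on $T_{\ga(t)}X$ by duality and says nothing about the behaviour of the distance along the particular fragment $\ga$. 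Indeed, the remark following Theorem \ref{thm_curve_metric_diff} stresses that it is precisely the surprising content of the theorem that the tangent vector alone controls the speed of the curve, so the upper bound cannot be read off ``infinitesimally'' from the definition of $\|\cdot\|_4$.

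What closes the gap is condition \textbf{(Gen5)} of Definition \ref{def_generic_pair}, which your argument never uses. The pointwise Ambrosio--Kirchheim argument (see \textbf{(MD2)} and the remark after Definition \ref{def_generic_pair}) runs as follows: by density of $D_X$ one has the exact identity $d_X(y,z)=\sup_{x\in D_X}|d_X(y,x)-d_X(z,x)|$; each term is bounded, uniformly in $x$, by $\int_{[t,s]\cap C}\sup_{x'\in D_X}|(d_{x'}\circ\ga)'|\,d\L+\Lip(\ga)\,\L([t,s]\setminus C)$; condition \textbf{(Gen1)} makes the second summand $o(|s-t|)$, and \textbf{(Gen5)} (approximate continuity of the supremum, combined with the uniform bound by $\Lip(\ga)$) makes the first $(\sup_{x}|(d_x\circ\ga)'(t)|+o(1))|s-t|$. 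This yields both metric differentiability at $t$ and the formula $\md\ga(t)=\sup_{x\in D_X}|(d_x\circ\ga)'(t)|$, i.e.\ (\ref{eq:sem_metric_diff}); then \textbf{(Gen2)}, \textbf{(Gen4)} and the requirement that $\ga(t)$ lie in a differentiability set for the distance functions give, via the chain rule, $\sup_x|(d_x\circ\ga)'(t)|=\|\ga'(t)\|_1$ (this is Theorem \ref{thm:met_diff_norm}), and Theorem \ref{thm_intro_norms_agree} upgrades $\|\cdot\|_1$ to $\|\cdot\|$. No sandwich between $\|\cdot\|_1$ and $\|\cdot\|_4$ is needed: the Ambrosio--Kirchheim identity gives the metric derivative exactly, and part (2) then follows from the area formula (\ref{eq:sem_metric_area}) essentially as you indicate.
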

Theorem \ref{thm_curve_metric_diff}
is essentially just an application of 
Theorem \ref{thm_intro_norms_agree}, and the method of proof of the $1$-dimensional version of 
metric differentiation given in \cite{ambrosio_kirchheim}, which exploits a countable
collection of distance functions as in the definition of $\|\cdot\|_1$;
See (\ref{eq:sem_metric_diff}) and Theorem \ref{thm:met_diff_norm}.

\begin{remark}
 We point out that unlike in the Carnot group case (and in particular $\R^n$),
in a differentiability space (for instance the Laakso spaces \cite{laakso}) 
one can have, for a full measure set of points $x\in X$,
two generic pairs $(\ga_1,t_1)$, $(\ga_2,t_2)$ such that $\ga_i(t_i)=x$,
the velocity vectors $\ga_1'(t_1)$, $\ga_2'(t_2)$ coincide, but the curves
are not tangent to first order in the sense
that
 $\limsup_{s\ra 0}\frac{d(\ga_1(t_1+s),\ga_2(t_2+s))}{s}>0$.
Thus it somewhat surprising that the tangent vector alone controls the speed of 
the curve.
\end{remark}

\mbox{}

\subsection*{The density of
generic velocities in $TX$, and consequences}
While the definition of genericity is convenient for stating results about 
individual curve fragments, in order to use it
in   statements about $(X,\mu)$ that
hold at almost every point,  such as Theorem \ref{thm_curve_metric_diff},
 it is crucial to know that generic curve fragments exist
in abundance.  This not at all obvious 
because the definition of a differentiability space is based on the
behavior of Lipschitz functions and does not involve
curves explicitly; in particular  it is not even
clear why $X$  should contain any curve fragments
with positive length.
To deduce the needed abundance,
we invoke Bate's fundamental work on Alberti representations
and differentiability spaces.  
Bate's work shows that one can characterize differentiability
spaces by means of differentiability of Lipschitz functions
along curve fragments.  The main consequence that we will use here is that
for $\mu$-a.e. $p\in X$, the set of generic velocity vectors is dense in $T_pX$
 (see Theorem \ref{thm:Sus_dir_dens}).  Here a {\em generic 
velocity vector} is the velocity vector
$\ga'(t)\in T_{\ga(t)}X$ of a generic pair
$(\ga,t)$.

Theorem \ref{thm_intro_norms_agree} and the density of velocity vectors 
leads directly to the following:

\begin{corollary}
\label{cor_directional_derivative_Lip}
If $u:X\ra \R$ is a Lipschitz function, then for $\mu$-a.e. $p\in X$, the pointwise
upper Lipschitz constant $\Lip u(p)$ is the supremal normalized
directional derivative of $u$ over
generic pairs $(\ga,t)$ with $\ga(t)=p$:
$$
\Lip u(p)=\sup\left\{ \frac{(u\circ\ga)'(t)}{\|\ga'(t)\|}
=\frac{(u\circ\ga)'(t)}{\|\frac{\D}{\D t}\|_{\ga}(t)}
\;\mid \;(\ga,t)\;\text{generic},\;\ga(t)=p,\;\ga'(t)\neq 0\right\}\,.
$$
\end{corollary}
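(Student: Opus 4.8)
The plan is to treat the statement as a short deduction from Theorem \ref{thm_intro_norms_agree}, the density of generic velocities (Theorem \ref{thm:Sus_dir_dens}), and the chain rule for Lipschitz functions along curve fragments. Fix the Lipschitz function $u$. I would first restrict attention to the full-measure set of $p\in X$ at which \emph{all} of the following hold: the seminorms $\|\cdot\|_1,\dots,\|\cdot\|_4$ of Theorem \ref{thm_intro_norms_agree} are defined and agree, with common value denoted $\|\cdot\|$; the differential $du(p)\in T_p^*X$ is defined and, for every generic pair $(\ga,t)$ with $\ga(t)=p$, the chain rule $(u\circ\ga)'(t)=du(p)(\ga'(t))$ holds; the set $D_p\subset T_pX$ of generic velocity vectors at $p$ is dense in $T_pX$; and the identity $\|\ga'(t)\|=\|\frac{\D}{\D t}\|_{\ga}(t)$ of Theorem \ref{thm_curve_metric_diff}(1) holds for every generic pair through $p$. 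Each item defines a set of full measure.

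Having fixed such a $p$, I would rewrite both sides of the asserted equality. On the one hand, by construction $\|\cdot\|_4$ is the norm on $T_pX$ dual to the canonical norm on $T_p^*X$, which by \cite{cheeger} assigns to $du(p)$ the value $\Lip u(p)$; since $\|\cdot\|_4=\|\cdot\|$ this gives
\begin{equation*}
\Lip u(p)=\sup\bigl\{\,du(p)(v)\;:\;v\in T_pX,\ \|v\|\le 1\,\bigr\}=\sup_{v\in T_pX\setminus\{0\}}\frac{du(p)(v)}{\|v\|}\,.
\end{equation*}
On the other hand, applying the chain rule and Theorem \ref{thm_curve_metric_diff}(1) inside the supremum of the corollary converts it into $\sup\{\,du(p)(v)/\|v\|\;:\;v\in D_p\setminus\{0\}\,\}$. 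Thus the corollary reduces to the claim that the dense subset $D_p$ already computes the supremum over all of $T_pX\setminus\{0\}$.

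This last step needs only a routine continuity remark: since $v\mapsto du(p)(v)$ is linear and $\|\cdot\|$ is a norm, the ratio $v\mapsto du(p)(v)/\|v\|$ is continuous on $T_pX\setminus\{0\}$, so for any nonzero $v_0$ and any $\eps>0$ the density of $D_p$ produces $w\in D_p\setminus\{0\}$ with $du(p)(w)/\|w\|>du(p)(v_0)/\|v_0\|-\eps$; hence the supremum over $D_p\setminus\{0\}$ dominates the one over all nonzero $v$, and the opposite inequality is trivial. Combining this with the two displayed expressions finishes the proof.

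I do not anticipate a genuine obstacle — the result is, as indicated in the text, a direct corollary. The only points needing care both draw on material already developed rather than on a new argument: (i) the identification of the cotangent norm of $du(p)$ with $\Lip u(p)$, i.e.\ that $\|\cdot\|_4$ is the relevant dual norm (Theorem \ref{thm_intro_norms_agree} together with \cite{cheeger}); and (ii) the chain rule $(u\circ\ga)'(t)=du(p)(\ga'(t))$ at a generic pair, where genericity — Lebesgue density of $\ga^{-1}(U_i)$ and approximate continuity of $(\phi_i\circ\ga)'$ — is exactly what converts first-order differentiability of $u$ in a chart into differentiability along $\ga$.
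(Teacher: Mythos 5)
Your proposal is correct and follows essentially the route the paper intends: the paper gives no separate proof of this corollary, asserting that it follows directly from Theorem \ref{thm_intro_norms_agree} and the density of generic velocities, and your argument (the dual-norm identification $\Lip u(p)=\|du(p)\|_{\Lip}$ from the standing setup, the chain rule at generic pairs exactly as in Theorem \ref{thm:met_diff_norm}, the identity $\md\ga(t)=\|\ga'(t)\|$, and then density) is precisely that deduction. One small precision: Theorem \ref{thm:Sus_dir_dens} gives that the generic velocities at $p$ contain a \emph{dense set of directions} in $T_pX$ rather than being dense as a set, but since the ratio $v\mapsto du(p)(v)/\|v\|$ is invariant under positive scaling, this weaker statement is all your final approximation step actually uses.
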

Corollary \ref{cor_directional_derivative_Lip}
has two further consequences.  The first is a new proof of
the characterization  
of the minimal generalized upper gradient in PI spaces as the pointwise
Lipschitz constant 
(see Section \ref{subsec:gf_new}); this was one of the main results in 
\cite{cheeger}.  The second is  a new proof of the following recent result
of the third author
(see   Section 
\ref{subsec:Liplip}).

\begin{theorem}[\cite{deralb}]
\label{thm_Lip_equals_lip}
If $(X,\mu)$ is a differentiability space, and $u:X\ra\R$ is a Lipschitz
function, then for $\mu$-a.e. $p\in X$ we have $\Lip u(p)=\lip u(p)$.
Here $\lip u(p)$ is the pointwise lower Lipschitz constant
(Definition  \ref{defn:Liplip}). 
\end{theorem}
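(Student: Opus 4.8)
\emph{Proof proposal.} The plan is to derive the nontrivial inequality $\lip u(p)\ge\Lip u(p)$ almost everywhere by following, at a typical point $p$, a generic curve fragment through $p$ along which $u$ grows at rate arbitrarily close to $\Lip u(p)$ (produced by Corollary \ref{cor_directional_derivative_Lip}), and transferring this growth into a lower bound for $\lip u(p)$ via metric differentiability along the fragment (Theorem \ref{thm_curve_metric_diff}) together with the density-$1$ property of the fragment's time domain. Since $\lip u(p)\le\Lip u(p)$ is immediate from the definitions (a $\liminf$ versus a $\limsup$), this suffices.

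First I would discard the $\mu$-null set where Corollary \ref{cor_directional_derivative_Lip} fails and observe that at points with $\Lip u(p)=0$ we trivially have $0\le\lip u(p)\le\Lip u(p)=0$. So fix $p$ with $\Lip u(p)>0$, and fix $\epsilon\in(0,\Lip u(p))$ and $\eta\in(0,1)$. By Corollary \ref{cor_directional_derivative_Lip} there is a generic pair $(\gamma,t)$ with $\gamma\colon C\to X$ a curve fragment, $\gamma(t)=p$, $\gamma'(t)\ne0$, and
\[
\frac{(u\circ\gamma)'(t)}{\|\gamma'(t)\|}\ \ge\ \Lip u(p)-\epsilon\ >\ 0;
\]
write $c=\|\gamma'(t)\|>0$ and $a=(u\circ\gamma)'(t)$, so $a\ge(\Lip u(p)-\epsilon)\,c>0$. (Here we take the countable collections $\F,\C$ entering the definition of genericity to include $u$, so that $t$ is in particular a point of differentiability of $u\circ\gamma$ and $(u\circ\gamma)'$ is approximately continuous at $t$; in particular $t$ is also a Lebesgue density point of $C$.)

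Next I would record two infinitesimal expansions valid as $h\to0$ with $t+h\in C$. From part (1) of Theorem \ref{thm_curve_metric_diff} (metric differentiability of $\gamma$ at the generic time $t$, with $\|\tfrac{\D}{\D t}\|_\gamma(t)=c$), specializing the blow-up pseudodistance on $\R\times\R$ to the pair $(h,0)$, one gets $d(\gamma(t+h),p)=c\,|h|\,(1+o(1))$; and differentiability of $u\circ\gamma$ at $t$ gives $u(\gamma(t+h))-u(p)=a\,h+o(|h|)$. Since $t$ has density $1$ in $C$, it also has one-sided density $1$ from the right, so for every sufficiently small $r>0$ the interval $\bigl(t+(1-\eta)\tfrac{r}{c},\,t+(1-\tfrac{\eta}{2})\tfrac{r}{c}\bigr)$, whose length $\tfrac{\eta}{2}\cdot\tfrac{r}{c}$ exceeds $|C^c\cap(t,t+\tfrac{r}{c})|=o(\tfrac{r}{c})$, must meet $C$; choose $h_r$ in it with $t+h_r\in C$. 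For $r$ small the two expansions then yield $d(\gamma(t+h_r),p)\le r$ and $|u(\gamma(t+h_r))-u(p)|\ge a(1-\eta)\tfrac{r}{c}(1+o(1))\ge(\Lip u(p)-\epsilon)(1-\eta)\,r\,(1+o(1))$. Hence $\tfrac1r\sup_{d(x,p)\le r}|u(x)-u(p)|\ge(\Lip u(p)-\epsilon)(1-\eta)(1+o(1))$ as $r\to0$, so $\lip u(p)\ge(\Lip u(p)-\epsilon)(1-\eta)$; letting $\epsilon,\eta\to0$ gives $\lip u(p)\ge\Lip u(p)$ at $p$, hence $\mu$-a.e.

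The main (indeed essentially the only) obstacle is the curve-fragment bookkeeping in the last step: because $\gamma$ is defined on a closed set rather than an interval, one cannot follow $\gamma$ out to radius exactly $r$, and must instead use the density-$1$ property of $C$ at $t$ to produce parameters $t+h_r\in C$ realizing radii within a factor $1-\eta$ of every small $r$, while keeping the $o(1)$ errors from the metric and scalar expansions uniform over the chosen window. Once that is done honestly, the rest is a direct appeal to Corollary \ref{cor_directional_derivative_Lip} and Theorem \ref{thm_curve_metric_diff}, so the argument is short.
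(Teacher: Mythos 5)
Your argument is correct, and there is no circularity in it: Corollary \ref{cor_directional_derivative_Lip} is derived (as the introduction indicates) from Theorem \ref{thm_intro_norms_agree}, the density of generic directions (Theorem \ref{thm:Sus_dir_dens}) and Theorem \ref{thm:met_diff_norm}, none of which use $\Lip=\lip$, so you may use it as input here; and your density-point bookkeeping producing parameters $t+h_r\in C$ with $h_r\in\bigl[(1-\eta)\tfrac{r}{c},(1-\tfrac{\eta}{2})\tfrac{r}{c}\bigr]$ is a correct and complete proof of the inequality $|(u\circ\gamma)'(t)|\le\lip u(\gamma(t))\,\md\gamma(t)$ at generic pairs. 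The route differs from the paper's formal proof, though. In the body, Theorem \ref{thm_Lip_equals_lip} is proved as Theorem \ref{thm:alb_speed_one}: the bulk of that proof constructs, for every $(\epsi,\sigma)$, a $(1,1+\epsi)$-biLipschitz Alberti representation of $\mu$ with $f$-speed $\ge\sigma\Lip f$ (via Egorov/Lusin renorming, the cone-diameter Lemma \ref{cone_diameter}, and the gluing principle), and only then deduces $\lip f\ge\sigma\Lip f$ a.e.\ from the existence of generic velocity vectors with $(f\circ\gamma)'(t)\ge\sigma\Lip f\,\md\gamma(t)$, together with exactly the "observation" $|(f\circ\gamma)'(t)|\le\lip f\,\md\gamma(t)$ that you prove in detail. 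You instead obtain the near-maximal generic velocity directly from the dual-norm characterization of $\Lip u(p)$ plus density of generic directions (i.e.\ Corollary \ref{cor_directional_derivative_Lip}), bypassing the Alberti-representation construction entirely; this gives a shorter proof of the equality alone, and is in fact the derivation the introduction advertises. What the paper's longer route buys is the stronger quantitative statement — Alberti representations with speed $\ge\sigma\Lip f$ and biLipschitz constants close to $1$ — which is not a consequence of the equality and is needed later (e.g.\ in Lemma \ref{lem:local_alberti_blow_up} and the blow-up results of Section \ref{sec:geom_blowups}). The only caveat is that Corollary \ref{cor_directional_derivative_Lip} is stated in the introduction without a written proof, so a self-contained version of your argument should spell out its two-line derivation ($\Lip u(p)=\|du\|_{\Lip}(p)$, duality, symmetry and density of the normalized generic velocities in the unit sphere of $(T_pX,\|\cdot\|)$, and $du(\gamma'(t))=(u\circ\gamma)'(t)$), all of which is available from the results you cite.
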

We recall that \cite{keith} introduced the $\Lip$-$\lip$-condition for a metric
measure space, which says that for some $C\in \R$, and every Lipschitz function
$u:X\ra \R$, the upper and lower pointwise Lipschitz constants satisfy
$\Lip u\leq C\lip u$ almost everywhere.  Keith showed that under mild assumptions on 
the measure, a metric measure space satisfying a $\Lip$-$\lip$-condition is a
differentiability space.  Combining this
with Theorem  \ref{thm_Lip_equals_lip}, it follows that one may always take $C=1$.
We note that when $(X,\mu)$ is PI space, or more generally a Borel subset of a
PI space with the restricted measure, it followed from the earlier work 
\cite{cheeger} that $\Lip u=\lip u$ almost everywhere.  These results 
indicate a strong similarity between PI
spaces and differentiability spaces.  

For more discussion of these results we refer the reader to the corresponding Sections.

\subsection*{The structure of blow-ups}
\label{subsec:struc_blow}

For a general differentiability space, there is no natural rescaling as in the 
Carnot group case, so  to formulate an analog of the convergence of the
rescaled pseudodistances
(\ref{eqn_blow_up_distance}), we consider sequences of
rescalings of $X$ with the measure $\mu$  suitably renormalized,
and take pointed  Gromov-Hausdorff limits of the metric measure spaces,
as well as the chart functions and Alberti representations.  We give a
brief and informal account of this here, and refer the reader to Section \ref{sec:geom_blowups}
for more discussion.
For simplicity, in the following statement we assume in addition 
that $X$ is a doubling metric space.

\begin{theorem}
\label{tnm_intro_blow_up}
For $\mu$-a.e. $x\in X$, if $\{\la_j\}$ is any sequence of scale factors
with $\la_j\ra\infty$, and $x\in U_i$, then there is a sequence
$\{\la_j'\}$ such that the sequence 
$$
\{(\la_jX,\la_j'\mu,x)\stackrel{\phi_i}{\lra} (\la_j\R^{n_i},\phi_i(x))\}
$$
of pointed rescalings of the
chart $\phi_i:X\ra \R^{n_i}$ subconverges in the pointed measured Gromov-Hausdorff
sense to a pointed blow-up map
$$
\hat \phi_i:(\hat X,\hat \mu,\star)\ra (T_xX,0)\,,
$$
where $(\hat X,\hat \mu)$ is a doubling metric measure space.
Moreover:
\begin{enumerate}
\setlength{\itemsep}{1ex}
\item  When $T_xX$ is equipped with the norm $\|\cdot\|$ of Theorem 
\ref{thm_intro_norms_agree}, then the map $\hat \phi_i:\hat X\ra T_xX$ becomes
 a metric submersion (see Definition \ref{def_metric_submersion} below).
\item For every unit vector $v$ in the normed space $(T_xX,\|\cdot\|)$, 
there is an Alberti representation of $\hat \mu$ whose support is contained in the 
collection of unit
speed geodesics $\ga:\R\ra \hat X$ with the property that $\hat \phi_i\circ\ga:\R\ra T_xX$
has constant velocity $v$; furthermore, the measure associated to each such 
$\ga$ is just arclength.  This Alberti representation is obtained by 
blowing-up suitable Alberti representations in $X$.  
\end{enumerate}  
\end{theorem}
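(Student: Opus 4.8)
The plan is to run a Gromov--Hausdorff compactness argument for the rescaled charts in parallel with a blow-up analysis of Alberti representations, and then to read off the metric submersion property from $1$-Lipschitzness together with the geodesics produced by the blown-up representations. Fix $x$ in the full-measure set where the norm $\|\cdot\|$ of Theorem~\ref{thm_intro_norms_agree} is defined, where $\mu$ is pointwise doubling (which holds $\mu$-a.e. in any differentiability space), and where the conclusions of Theorem~\ref{thm:Sus_dir_dens} and of metric differentiation hold infinitesimally. Taking $\la_j'=\mu(\ball x,\la_j^{-1}.)^{-1}$ and using pointwise doubling, the spaces $(\la_jX,\la_j'\mu,x)$ are precompact in the pointed measured Gromov--Hausdorff topology, so along a subsequence they converge to a doubling space $(\hat X,\hat\mu,\star)$; since each $\phi_i$ is globally Lipschitz, the rescaled charts $\phi_i\colon\la_jX\to\la_j\R^{n_i}$ are uniformly Lipschitz, and a further diagonal subsequence makes them converge to a Lipschitz map $\hat\phi_i\colon\hat X\to\R^{n_i}$, after recentering $\R^{n_i}$ at $\phi_i(x)$ (so $\hat\phi_i(\star)=0$). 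I identify $\R^{n_i}$ with $T_xX$ by transporting $\|\cdot\|$ through $d\phi_i$, i.e. by equipping $\R^{n_i}$ with the unique norm making $d\phi_i\colon(T_xX,\|\cdot\|)\to\R^{n_i}$ an isometry; the next paragraph shows this is the (only) normalization for which $\hat\phi_i$ is $1$-Lipschitz.

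To see that $\hat\phi_i$ is $1$-Lipschitz, I use Theorem~\ref{thm_intro_norms_agree} in the form $\|\cdot\|=\|\cdot\|_3$: the unit ball of the dual norm (transported to $(\R^{n_i})^*$) is the closed convex hull of the covectors $d_xu$ with $u\colon X\to\R$ globally $1$-Lipschitz, and since convex combinations of $1$-Lipschitz functions are again $1$-Lipschitz, by finite-dimensionality one may, for each $n$, fix finitely many globally $1$-Lipschitz functions $u_1^{(n)},\dots,u_{m_n}^{(n)}$ whose differentials at $x$ form a $\tfrac1n$-net of that ball. Choosing $x$ also in the (full-measure) set where all of these countably many functions are differentiable with respect to the chart, one has $u_p^{(n)}(y)=u_p^{(n)}(x)+\langle d_xu_p^{(n)},\phi_i(y)-\phi_i(x)\rangle+o(d(x,y))$, with the error uniform over the finitely many functions entering at each scale. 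For $a_j,b_j\to x$ realizing $d_{\hat X}(\hat a,\hat b)$ in the blow-up, subtracting these expansions and using $|u_p^{(n)}(a_j)-u_p^{(n)}(b_j)|\le d(a_j,b_j)$ gives $\la_j|\langle d_xu_p^{(n)},\phi_i(a_j)-\phi_i(b_j)\rangle|\le\la_jd(a_j,b_j)+o(1)$; taking the maximum over the net and then $n\to\infty$ yields $\|\hat\phi_i(\hat a)-\hat\phi_i(\hat b)\|\le d_{\hat X}(\hat a,\hat b)$, hence $\hat\phi_i(\bar B(\hat a,r))\subseteq\bar B(\hat\phi_i(\hat a),r)$ for all $\hat a$, $r$.

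The heart of the proof is part (2): blowing up Alberti representations. Fix a unit vector $v\in(T_xX,\|\cdot\|)$ and $\epsilon>0$. By Theorem~\ref{thm:Sus_dir_dens} together with Bate's results realizing achievable direction fields by Alberti representations, there is a Borel set $E$ with $\mu(E\cap\ball x,r.)/\mu(\ball x,r.)\to1$ as $r\to0$, carrying an Alberti representation of $\mu|_E$ supported on generic curve fragments $\ga$ for which, at generic times, $\|\ga'(t)\|\in(1-\epsilon,1+\epsilon)$ and $d\phi_i(\ga'(t))$ lies within $\epsilon$ of $v$. By the metric differentiability of generic pairs (Theorem~\ref{thm_curve_metric_diff}) and the fact that generic times are Lebesgue density points of the fragment's domain, the rescalings at $x$ of these fragments subconverge to bi-infinite curves $\hat\ga\colon\R\to\hat X$ that are geodesics of speed in $[1-\epsilon,1+\epsilon]$ along which $\hat\phi_i$ has velocity within $\epsilon$ of $v$, while the rescaled measures of the representation converge weakly to an Alberti representation of $\hat\mu$ supported on this family --- here one uses compactness of Alberti representations as measures on a space of curves, and the fact that $E$ has density $1$ at $x$, so $\mu|_E$ blows up to all of $\hat\mu$. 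Letting $\epsilon\downarrow0$ and diagonalizing produces, for the given $v$, an Alberti representation of $\hat\mu$ supported on unit-speed geodesic lines $\hat\ga\colon\R\to\hat X$ with $\hat\phi_i\circ\hat\ga$ of constant velocity exactly $v$; as $\hat\phi_i$ is $1$-Lipschitz and $\|v\|=1$, each such $\hat\ga$ maps isometrically onto its image, so its fibre measure must be arclength.

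For the metric submersion property it remains to prove $\bar B(\hat\phi_i(\hat a),r)\subseteq\hat\phi_i(\bar B(\hat a,r))$. Fix a countable dense set of unit vectors $\{v_n\}$ in $(T_xX,\|\cdot\|)$ and let $\hat a$ lie in the $\hat\mu$-full-measure set through which every one of the corresponding representations of part (2) passes. Given $b'\in\bar B(\hat\phi_i(\hat a),r)$, put $\rho=\|b'-\hat\phi_i(\hat a)\|\le r$, pick $v_n\to(b'-\hat\phi_i(\hat a))/\rho$, and take the support geodesic $\hat\ga_n$ through $\hat a$ parametrized so that $\hat\phi_i(\hat\ga_n(s))=\hat\phi_i(\hat a)+sv_n$; then $\hat\ga_n(\rho)\in\bar B(\hat a,\rho)$ and $\hat\phi_i(\hat\ga_n(\rho))\to b'$, and properness of $\hat X$ (doubling and complete) gives a subsequential limit $c\in\bar B(\hat a,r)$ with $\hat\phi_i(c)=b'$. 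Thus $\hat\phi_i$ is a metric submersion on a dense set of basepoints, and since the defining relations are closed under the relevant limits it is a metric submersion at every point, proving (1); finally $(\hat X,\hat\mu)$ is doubling as a pointed measured Gromov--Hausdorff limit of uniformly doubling spaces. The main obstacle is the third paragraph: coordinating the three coupled limits --- of the rescaled spaces, of the rescaled charts, and of the rescaled Alberti representations viewed as measures on a curve space --- and verifying that the limiting data are \emph{exactly} as asserted (bi-infinite geodesics, projected velocity exactly $v$, arclength fibre measures, and a limiting representation accounting for all of $\hat\mu$). This is where Bate's structure theory and Ambrosio--Kirchheim-style metric differentiation are used in an essential, quantitative way; everything else is compactness and bookkeeping.
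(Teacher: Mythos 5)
Your outline follows the same strategy as the paper: rescale the space, chart and measure simultaneously; get $1$-Lipschitzness of $\hat\phi_i$ from the realization of $\|\cdot\|$ as a supremum of differentials of $1$-Lipschitz (distance) functions plus first-order Taylor expansion; produce the distinguished Alberti representations by blowing up Bate-type representations that are $(1,1+\epsilon)$-biLipschitz, lie in the $\phi_i$-direction of small cones around $v$ and have speed close to $1$ (this is where Theorem \ref{thm:alb_speed_one} and Theorem \ref{thm:Sus_dir_dens} enter), then diagonalize over $\epsilon$; and finally deduce the metric submersion property from $1$-Lipschitzness plus geodesic lines through a dense set of points and properness. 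All of that matches Section \ref{sec:geom_blowups}. However, your third paragraph — which you yourself flag as ``the main obstacle'' — is not a proof but a restatement of what has to be proved, and it is exactly where the paper spends its effort (Theorem \ref{thm:alberti_blow_up}, Lemma \ref{lem:local_alberti_blow_up}, Theorem \ref{thm:measured_blow_up}). There is no off-the-shelf ``compactness of Alberti representations as measures on a space of curves'': to extract a weak* limit one needs uniform mass bounds on the rescaled transverse measures, and these come from a quantitative argument using the asymptotic doubling constant (the bound of the form $\tfrac{D_0}{2R_0}\adoubling(\mu,p)^{\log_2R_0+1}$), which in turn requires first decomposing the representation into a ``regular'' part (the sets $\reg(\pars)$, with measurability checked via Lusin--Novikov-type selection) and a ``bad'' part killed at generic points by measure differentiation (Lemma \ref{lem:meas_diff}); one also has to re-parametrize fragments (affine recentering, then measurable translations) and ``fill in'' their gaps by embedding everything in a convex subset of a Banach space so that limit curves make sense. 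Moreover, a single blown-up representation only gives geodesic \emph{segments} crossing a fixed ball $B(q,R_0)$ with endpoints pushed outside $B(q,\tfrac32R_0)$; obtaining a representation concentrated on bi-infinite lines is not a formal consequence of rescaling individual fragments, but requires running the construction for radii $R_n\to\infty$, translating domains measurably, proving uniform mass bounds on the sets $K(i)$, and patching — the content of Theorem \ref{thm:measured_blow_up}. Your claim that the rescaled fragments ``subconverge to bi-infinite curves'' skips precisely this multi-scale step.

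Two further inferences in your sketch do not stand as written. First, ``as $\hat\phi_i$ is $1$-Lipschitz and $\|v\|=1$, each such $\hat\gamma$ maps isometrically onto its image, so its fibre measure must be arclength'' is a non sequitur: an Alberti representation only requires the fibre measures to be absolutely continuous with respect to $1$-dimensional Hausdorff measure on the image, so unit speed alone does not force arclength. In the paper the arclength conclusion is built into the construction: one starts from \emph{simplified} representations whose fibre measures are push-forwards of Lebesgue measure, tracks them through the blow-up to get $\tfrac{1}{\theta_\gamma}$ times Hausdorff measure on each limit segment, and then uses $\theta_\gamma\to1$. Second, the exactness claims (projected velocity exactly $v$, speed exactly $1$) require the speed lower bound from Theorem \ref{thm:alb_speed_one} to be propagated quantitatively through conditions like \textbf{(Reg3)}--\textbf{(Reg4)} and the estimates \eqref{eq:alberti_blow_up_s1}--\eqref{eq:alberti_blow_up_s2}, not just a qualitative ``within $\epsilon$'' statement followed by diagonalization; the diagonalization itself needs the limit representations for different $\epsilon$ to live on a common compact family of curves with uniformly bounded transverse mass, which again is exactly the quantitative control you have not supplied. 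So the architecture is right, but the central analytic step of the theorem is missing.
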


\begin{definition}
\label{def_metric_submersion}
A map $f:Y\ra Z$ between metric spaces is a metric submersion if it is a $1$-Lipschitz
surjection, and for every $y_1\in Y$, $z_2\in Z$, there is a $y_2\in f^{-1}(z_2)$
such that $d(y_1,y_2)=d(f(y_1),z_2)$.  Equivalently, given any two fibers
$f^{-1}(z_1),\,f^{-1}(z_2)\subset Y$, the distance function from the fiber
$f^{-1}(z_1)$ is  constant and equal to  $d(z_1,z_2)$ on the other fiber $f^{-1}(z_2)$.  
\end{definition}

 To aid the reader's intuition, it might be helpful to look at
the example $(\real^2,{\mathcal L}^2)$, where on $\real^2$ we consider
the $l^1$-norm; as this norm is not strictly convex, one can obtain an
Alberti representation of ${\mathcal L}^2$ by using unit-speed
geodesics in ${\mathcal L}^2$ with corners, i.e.~geodesics which do
not lie in straight lines. Blowing-up such representations at a
generic point, one obtains an Alberti representation of ${\mathcal
  L}^2$ whose transverse measure is concentrated on the set of
straight lines in $\real^2$. 

There are  precursors to 
Theorem \ref{tnm_intro_blow_up} in \cite{cheeger} in the case
of PI spaces.  In that case the blow-ups (tangent cones) are also PI spaces,  the
coordinate functions blow-up to generalized linear functions, and \cite{cheeger}
proved the surjectivity of the 
canonical map $Y\ra T_xX$.  Distinguished geodesics 
of a different sort were discussed
in \cite{cheeger}, namely the gradient lines of generalized linear functions; however,
unlike the curves in
the support of the Alberti representations of Theorem \ref{tnm_intro_blow_up}\,(2),
these need not be affine with respect to the blow-up chart $\hat \phi_i$.

\begin{remark}
The third author \cite{deralb} and David
\cite{david_difftang_ahlfors} also have  results related
to Theorem \ref{tnm_intro_blow_up}\,(1).  
They show that certain blow-up
maps are Lipschitz quotient maps, which is  a weaker version of the 
metric submersion property.
The paper \cite{deralb} is concerned with the relationship between Weaver derivations
\cite{weaver}
and Alberti representations without the assumption that one has a differentiability space,
so the setup there is much more general 
than the one considered here.
We point out that
our results in Section \ref{sec:geom_blowups} have natural counterparts in that
general context,
under the assumption that $\mu$ is asymptotically doubling.
We note that one of the main ingredients in Theorem \ref{tnm_intro_blow_up}
is a procedure for blowing-up Alberti representations, which has other applications.
In particular, it allows one to blow-up Weaver derivations under the
assumption that the background measure is asymptotically doubling. We
point out that, as the metric measure space $X$ does not need to
possess a group of dilations, it is not trivial to find a correct way
to rescale derivations and pass to a limit; however, by taking advantage
of the representation of Weaver derivations in terms of Alberti
representations proven in \cite{deralb}, one can use Theorem
\ref{thm:measured_blow_up} to blow-up a derivation at a generic point.
Moreover, as the blown-up Alberti representation is concentrated on
the set of geodesic lines, the blown-up derivation corresponds to a
$1$-normal current (in the sense of Lang) without boundary.
We refer the reader to Section \ref{sec:geom_blowups} 
(in particular Theorem \ref{thm:alberti_blow_up} and Remark \ref{rem:space_blow_up})
for more details.
\end{remark}

Theorem \ref{tnm_intro_blow_up}
 implies that the blow-up of any Lipschitz function at a generic point
is harmonic, in the following sense.

\begin{definition}
\label{def_p_lip_harmonic}
Suppose $(W,\zeta)$ is proper metric measure space, where $\zeta$ is a locally
finite Borel measure.
Then a Lipschitz function $u:W\ra\R$ is {\em $p$-$\Lip$-harmonic} if for every
ball $B(x,r)\subset W$, and every
Lipschitz function $v:W\ra\R$ that agrees with $u$ outside $B(x,r)$, we have
$$
\int_{B(x,r)}
(\Lip v)^p\,d\hat \mu\geq \int_{B(x,r)}(\Lip u)^p\,d\hat \mu\,.
$$
\end{definition}

Theorem \ref{tnm_intro_blow_up} yields:
\begin{corollary}
\label{cor_blow_ups_harmonic}
Suppose $u:X\ra \R$ is a Lipschitz function.  Then for $\mu$ a.e. $x\in X$, for
any blow-up sequence as in Theorem \ref{tnm_intro_blow_up} there is a blow-up limit
$\hat u:Y\ra \R$ such that:
\begin{enumerate}
\item $\hat u$ is {\em $p$-$\Lip$-harmonic} for all $p\geq 1$.
\item For any $y\in Y$, $r\in [0,\infty)$ 
we have $\var(u,y,r) =r\cdot \Lip(u)(x)$,
where $\var(u,y,r)$ is the variation of $\hat u$ over $B(y,r)$:
$$
\var(u,y,r)=\sup\{|u(z)-u(y)|\mid z\in B(y,r)\}\,.
$$  
In particular, $\Lip(\hat u)(y)=\lip(\hat u)(y)=\Lip(u)(x)$
for all $y\in Y$, and  $\Lip(u)(x)$ is also  the global Lipschitz constant of
$\hat u$.  
\end{enumerate}
\end{corollary}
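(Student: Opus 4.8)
The plan is to deduce both assertions directly from the structural description of the blow-up in Theorem \ref{tnm_intro_blow_up}, applied at a point $x$ that is generic in the sense that, simultaneously: Theorem \ref{tnm_intro_blow_up} holds at $x$ with some chart $(U_i,\phi_i)$; the function $u$ is differentiable at $x$ with respect to $\phi_i$; and $\Lip u(x)=\|du(x)\|_{T_x^*X}$ (the last holding a.e. by \cite{cheeger}, cf.\ Theorem \ref{thm_intro_norms_agree} and Corollary \ref{cor_directional_derivative_Lip}). Given such an $x$ and a blow-up sequence $\{\la_j\}$, $\{\la_j'\}$ as in Theorem \ref{tnm_intro_blow_up}, the rescaled functions $y\mapsto \la_j\bigl(u(y)-u(x)\bigr)$ are uniformly Lipschitz and vanish at the basepoint, so after passing to a further subsequence (a diagonal/Arzel\`a--Ascoli argument compatible with the convergence of the charts and the Alberti representations) they converge, uniformly on bounded sets, to a Lipschitz function $\hat u:(\hat X,\star)\to(\R,0)$, which is the asserted blow-up limit (with $Y=\hat X$). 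The key initial observation is that $\hat u$ is \emph{affine over the blow-up chart}: since $u(y)-u(x)-du(x)\bigl(\phi_i(y)-\phi_i(x)\bigr)=o(d(y,x))$ and the $j$-th rescaling only involves points within an $O(\la_j^{-1})$-neighbourhood of $x$, passing to the limit forces $\hat u=L\circ\hat\phi_i$, where $L:=du(x)\in T_x^*X$. Because $\hat\phi_i$ is a metric submersion (hence $1$-Lipschitz onto $T_xX$) and $\|L\|_{T_x^*X}=\Lip u(x)$, duality gives that $L$ has operator norm $\Lip u(x)$ with respect to $\|\cdot\|$ on $T_xX$; in particular $\hat u$ is globally $\Lip u(x)$-Lipschitz, and (finite dimensionality) there is a unit vector $v_0\in(T_xX,\|\cdot\|)$ with $L(v_0)=\Lip u(x)$.

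For assertion (2), the upper bound $\var(\hat u,y,r)\le r\,\Lip u(x)$ is immediate from the previous paragraph. For the matching lower bound, apply Theorem \ref{tnm_intro_blow_up}(2) to the unit vector $v_0$: there is an Alberti representation of $\hat\mu$ concentrated on unit-speed geodesics $\ga:\R\to\hat X$ with $\hat\phi_i\circ\ga$ of constant velocity $v_0$, and along any such $\ga$ one computes $\hat u(\ga(t))-\hat u(\ga(s))=L\bigl(\hat\phi_i(\ga(t))-\hat\phi_i(\ga(s))\bigr)=(t-s)\,L(v_0)=(t-s)\,\Lip u(x)$, so $\hat u$ increases along $\ga$ at the maximal rate $\Lip u(x)$. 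Since $\hat\mu$ is doubling it has full support, so these geodesics pass within any prescribed distance of any given $y\in Y$; travelling along one of them starting from a point near $y$ produces points in $B(y,r)$ on which $\hat u$ exceeds $\hat u(y)$ by almost $r\,\Lip u(x)$, hence $\var(\hat u,y,r)\ge r\,\Lip u(x)$. Combining the two bounds gives $\var(\hat u,y,r)=r\,\Lip u(x)$ for all $y$ and $r$; letting $r\to0$ (and using properness of $\hat X$ so the variation is attained, together with the geodesic segments which realize the constant) yields $\Lip\hat u\equiv\lip\hat u\equiv\Lip u(x)$, and also identifies $\Lip u(x)$ as the global Lipschitz constant of $\hat u$.

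For assertion (1), fix a ball $B(x_0,\rho)\subset Y$ and a Lipschitz competitor $v$ agreeing with $\hat u$ on $Y\setminus B(x_0,\rho)$; by (2) the goal is $\int_{B(x_0,\rho)}(\Lip v)^p\,d\hat\mu\ge (\Lip u(x))^p\,\hat\mu(B(x_0,\rho))$. Disintegrate $\hat\mu$ through the Alberti representation associated to $v_0$, writing $\hat\mu=\int \ga_\sharp\L\,dP(\ga)$ with $\L$ arclength on each geodesic line $\ga$. On a fixed such $\ga$, the set $\ga^{-1}(B(x_0,\rho))$ is a bounded open subset of $\R$ (bounded since $d(\ga(t),x_0)\to\infty$ as $|t|\to\infty$), hence a countable union of intervals $(a,b)$ whose endpoints satisfy $d(\ga(a),x_0)=d(\ga(b),x_0)=\rho$, so $v=\hat u$ there; therefore $v(\ga(b))-v(\ga(a))=\hat u(\ga(b))-\hat u(\ga(a))=(b-a)\,\Lip u(x)$, and, $\ga$ being unit-speed, $\int_a^b(\Lip v\circ\ga)\,d\L\ge\int_a^b|(v\circ\ga)'|\,d\L\ge (b-a)\,\Lip u(x)$. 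Jensen's inequality (valid since $p\ge1$) upgrades this to $\int_a^b(\Lip v\circ\ga)^p\,d\L\ge (b-a)\,(\Lip u(x))^p$; summing over the components of $\ga^{-1}(B(x_0,\rho))$ and integrating over the transverse measure $P$ gives $\int_{B(x_0,\rho)}(\Lip v)^p\,d\hat\mu\ge(\Lip u(x))^p\,\hat\mu(B(x_0,\rho))=\int_{B(x_0,\rho)}(\Lip\hat u)^p\,d\hat\mu$, as required. (When $\Lip u(x)=0$ the function $\hat u$ is constant and everything above is trivial.)

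The main obstacle is not in assertion (1) — granted Theorem \ref{tnm_intro_blow_up}, that is a fairly mechanical ``fill-in along the foliating geodesics'' estimate — but in the bookkeeping of the first paragraph: one must make sure that the blow-up $\hat u$ of $u$ genuinely exists as a limit taken jointly with the chosen blow-ups of the charts and Alberti representations, and that the differentiability estimate $u-u(x)-du(x)\bigl(\phi_i(\cdot)-\phi_i(x)\bigr)=o(d(\cdot,x))$ survives the rescaling so as to force $\hat u=L\circ\hat\phi_i$. A secondary technical point is justifying the disintegration step in assertion (1): one needs $\Lip v$ Borel measurable (automatic for Lipschitz $v$) and the Alberti representation of $\hat\mu$ to be compatible with Fubini-type integration of $(\Lip v)^p$ against arclength along the geodesics, which is precisely the content of a (locally finite) Alberti representation.
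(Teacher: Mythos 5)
Your proposal is correct and follows essentially the same route as the paper's proof: identify the blow-up as $\hat u=du(x)\circ\hat\phi_i$ at a differentiability point, pick a norm-attaining unit vector, and use the blown-up Alberti representation along that direction together with the fundamental theorem of calculus and Jensen's inequality to compare any competitor with $\hat u$ on the foliating geodesics. The only difference is cosmetic: you spell out part (2) (via the metric submersion bound and density of the geodesics in the full-support doubling measure) and decompose the preimage of the ball into components, whereas the paper works with a single interval containing $\ga^{-1}(K)$ and leaves part (2) implicit.
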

We remark that 
in the terminology of \cite[Sec. 6]{keith},
part (2) of the corollary says that blow-ups are $1$-quasilinear; this
refines \cite[Sec. 6]{keith}, where it was shown  that blow-ups are $K$-quasilinear
for some $K$.

It is an open question whether a blow-up  of a differentiability
space must be
a PI space, or even a differentiability space.
Corollary \ref{cor_blow_ups_harmonic} may be compared with the result from
\cite{cheeger}, which asserts that blow-ups of Lipschitz functions are
generalized linear functions --- $p$-harmonic functions with constant norm
gradient.  The proof in \cite{cheeger} is quite different however --- it is based on 
asymptotic harmonicity and 
breaks down in differentiability spaces.

The results above all speak to the broader topic of the infinitesimal structure
of differentiability spaces.  There are a number of open questions here.
The present state of knowledge makes it difficult to formulate compelling conjectures
or questions
in a precise form, but one may ask the following:

\begin{question}
If $(X,\mu)$ is a differentiability space, is there a countable collection 
$\{U_i\}$ of Borel subsets of $X$, such that $\mu(X\setminus \cup_i\,U_i)=0$ and
every $U_i$ admits a measure-preserving isometric embedding in a PI space?
\end{question}
\noindent
If the answer is yes, then blow-ups of differentiability spaces at generic points
will also be PI spaces, so one may approach this question by trying to verify that
blow-ups have various properties of PI spaces, such as quasiconvexity,  a
differentiable structure, etc.  It is of independent interest to gain a better 
understanding of the structure of blow-ups in the PI space case.  Known examples
suggest that the blown-up Alberti representations may have accessibility properties
similar to the accessibility one has in Carnot groups.

\begin{remark}
Since the preprint version of this paper was posted, there has been significant 
progress  on these issues. It was
shown in \cite{schioppa_lip_lip_stable} that at almost every point, any blow-up 
of a differentiability space is again a differentiability space.
It was shown  in 
\cite{bate_li_rnp}
that at almost every point, every  blow-up of an RNP differentiability space
satisfies a 
non-homogeneous Poincare inequality, and consequently is a quasiconvex
RNP differentiability space; it was also shown
\cite{bate_li_rnp} that RNP differentiability spaces
may  be characterized by quantitative
connectedness properties of universal Alberti representations,
and by  asymptotic non-homogeneous  Poincar\'e
inqualities.
\end{remark}

\subsection*{The infinitesimal geometry of Lipschitz maps}
\label{subsec:inf_geom_lipmaps}

We now return to the general case of metric differentiation.
Consider a Lipschitz map $F:X\to Z$, where $Z$ is any
metric space, and let $\varrho=\varrho_F$ be the pullback distance function
(\ref{eq:Fpseudo}).   Our results in this
case parallel what has been discussed above for the special
case of the identity map $\id_X:X\ra X$, so we will be brief
and focus on  the novel features; see Section \ref{sec:lipmaps_metricdiff}
for the details.

The map $F$ gives rise to a distinguished subset of the Lipschitz functions
on $X$,  namely the set of pullbacks $u\circ F$, where $u:Z\ra\R$ is Lipschitz,
or equivalently, the set of functions $v:X\ra\R$ 
that are  Lipschitz with respect to the pseudodistance $\varrho$.  

\begin{theorem}[Theorem \ref{thm:distance_functions_span}]
There is a canonical subbundle $\W_\varrho\subset T^*X$ such that the differential
of any $\varrho$-Lipschitz function $v:X\ra \R$ belongs to $\W_\varrho$ $\mu$-almost
everywhere.  Moreover, for any countable dense subset $D_X\subset X$, the 
set of differentials of the corresponding
$\varrho$-distance functions span $\W_\varrho$.
\end{theorem}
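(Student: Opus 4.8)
The plan is to define $\W_\varrho$ fiberwise as the measurable span, over the full-measure chart domains, of the differentials of $\varrho$-Lipschitz functions, and then to show that countably many $\varrho$-distance functions already generate the whole bundle. The first step is to set up the right measurability framework: since the collection of $\varrho$-Lipschitz functions is uncountable, I would invoke Lemma \ref{lem:sup_of_seminorms} (the same device used for $\|\cdot\|_2,\|\cdot\|_3$) to produce a \emph{measurable} assignment $x\mapsto \W_\varrho(x)\subseteq T^*_xX$ defined as the smallest measurable subbundle containing $dv(x)$ for a.e.\ $x$, for every $\varrho$-Lipschitz $v$. Concretely, working chart by chart in a chart $(U_i,\phi_i)$ where $T^*X$ is trivialized over $\R^{n_i}$, each $dv$ becomes a bounded measurable $\R^{n_i}$-valued function, and one takes the measurable linear hull of a countable dense (in the sup-metric-differentiable sense) family of such functions; a Lindel\"of/separability argument shows this hull is independent of the chosen countable family and absorbs $dw$ for every $\varrho$-Lipschitz $w$. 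This gives the existence of $\W_\varrho$ and the statement that $dv\in\W_\varrho$ $\mu$-a.e.\ for all $\varrho$-Lipschitz $v$.

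The second, substantive step is the spanning statement. Fix a countable dense set $D_X=\{x_j\}\subset X$ and let $\varrho_j(x)=\varrho(x,x_j)=\dst[Z] F(x),F(x_j).$; these are $\varrho$-Lipschitz, hence $d\varrho_j(x)\in\W_\varrho(x)$ a.e. Let $\W_\varrho'(x)\subseteq\W_\varrho(x)$ denote the measurable span of $\{d\varrho_j(x)\}_j$; the goal is $\W_\varrho'=\W_\varrho$ a.e. Suppose not: then on a positive-measure set $A$ there is a measurable unit vector field $x\mapsto v(x)\in T_xX$ with $d\varrho_j(x)(v(x))=0$ for all $j$, yet $dw(x)(v(x))\neq 0$ for some $\varrho$-Lipschitz $w$. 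I would derive a contradiction using the density of generic velocities (Theorem \ref{thm:Sus_dir_dens}) together with metric differentiation along curves: at a generic point $p\in A$ realized by a generic pair $(\ga,t)$ with $\ga(t)=p$ and $\ga'(t)$ close to $v(p)$, the one-dimensional metric differentiation of Ambrosio--Kirchheim applied to $F\circ\ga$ gives a seminorm $\bigl\|\tfrac{\D}{\D t}\bigr\|_{F\circ\ga}(t)$ whose value is controlled by the distance functions $\varrho_j$: indeed, by density of $\{x_j\}$ one has, for a.e.\ such $t$,
\begin{equation*}
\Bigl\|\tfrac{\D}{\D t}\Bigr\|_{F\circ\ga}(t)=\sup_j \bigl|(\varrho_j\circ\ga)'(t)\bigr|=\sup_j\bigl|d\varrho_j(\ga'(t))\bigr|,
\end{equation*}
which is the $\varrho$-analogue of the identity $\|\cdot\|_1=\|\cdot\|_2$ from Theorem \ref{thm_intro_norms_agree}. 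If all $d\varrho_j$ annihilate $v(p)=\ga'(t)$, this forces the pullback seminorm of $F$ to vanish in the direction $\ga'(t)$, whence $(w\circ F\circ\ga)'(t)=0$ for \emph{every} $\varrho$-Lipschitz $w$ — contradicting $dw(p)(v(p))\neq0$ once $\ga'(t)$ is taken sufficiently close to $v(p)$ and one uses approximate continuity of $dw\circ\ga$ along the generic curve.

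I expect the main obstacle to be the interchange of quantifiers in the previous paragraph: one has, for each fixed $\varrho$-Lipschitz $w$, a full-measure set of good points, but the spanning statement needs a single full-measure set on which $\W_\varrho'=\W_\varrho$, and $\W_\varrho$ itself is only defined through a countable exhausting family. The clean way around this is to \emph{define} $\W_\varrho$ from the outset as the measurable span of a countable family that includes $\{d\varrho_j\}$ together with differentials of a countable metric-differentiably-dense family of $\varrho$-Lipschitz functions, prove the displayed identity for that countable family (which is a direct application of Theorem \ref{thm:met_diff_norm} / equation (\ref{eq:sem_metric_diff}) to $F\circ\ga$), and then observe that the displayed identity forces every $d\varrho_j$ to already span the same subbundle as the larger family. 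The remaining routine points — that $\W_\varrho$ so defined does not depend on the auxiliary countable family, and that it contains $dv$ for \emph{every} $\varrho$-Lipschitz $v$ — follow by the standard approximation/separability argument, exactly as in the proof that $\|\cdot\|_1$ is independent of the chosen dense set in Theorem \ref{thm_intro_norms_agree}.
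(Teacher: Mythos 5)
Your plan is essentially the paper's own (first) proof: the canonical subbundle is produced by an essential-span/maximality argument (Lemma \ref{lem:can_sub_sem}, modeled on Lemma \ref{lem:sup_of_seminorms} — it is maximality of the expected fibre dimension, not a separability/density argument, that makes $\W_\varrho$ absorb $dv$ for every $\varrho$-Lipschitz $v$), and the spanning statement is proved by exactly your contradiction scheme: a measurable Hahn--Banach/Lusin--Novikov selection of a direction annihilating $\W_{D_X,\varrho}$ while pairing nontrivially with $\W_\varrho$, then density of generic velocity directions (Theorem \ref{thm:Sus_dir_dens}) together with fragment-wise metric differentiation (Theorem \ref{thm:met_diff_norm}) forces the $\varrho$-metric differential, and hence $(w\circ\gamma)'(t)$ for every $\varrho$-Lipschitz $w$, to be small, a contradiction. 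The paper additionally records a second, genuinely different proof via weak*-continuity of the differential and Mazur's lemma, which you did not pursue.
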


One may  construct several seminorms on $TX$  analogous to the seminorms $\|\cdot\|_j$,
$1\leq j\leq 4$, of Theorem \ref{thm_intro_norms_agree}.  For instance, given a 
countable dense subset $D_X\subset X$, we may define a seminorm by 
$$
\|\cdot\|_{1,\varrho}=\sup\{|d\rho_x|\;|\; x\in D_X\}\,,
$$
where $\varrho_x$ is the $\varrho$-distance from $x$; analogs of the other three
seminorms
are defined similarly, using the pseudodistance $\varrho$ instead of the 
distance function $d_X$.  

\begin{theorem}[Theorem \ref{thm:can_seminorm}]
\label{thm_intro_pullback_norms_agree}
The seminorms agree almost everywhere, giving rise to
a canonical seminorm $\|\cdot\|_\varrho$
on $TX$.
\end{theorem}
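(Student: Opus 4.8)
\textbf{Proof proposal for Theorem \ref{thm_intro_pullback_norms_agree}.}

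The plan is to follow the template of the proof of Theorem \ref{thm_intro_norms_agree}, adapting each comparison to the pseudodistance $\varrho$ in place of the metric $d_X$. Write $\|\cdot\|_{1,\varrho}$, $\|\cdot\|_{2,\varrho}$, $\|\cdot\|_{3,\varrho}$, $\|\cdot\|_{4,\varrho}$ for, respectively, the seminorm built from $\varrho$-distance functions to a countable dense set $D_X$, from all $\varrho$-distance functions, from all $\varrho$-Lipschitz functions, and from the canonical norm on the subbundle $\W_\varrho$ (via the pointwise upper Lipschitz constant with respect to $\varrho$). The trivial inequalities are $\|\cdot\|_{1,\varrho}\le\|\cdot\|_{2,\varrho}\le\|\cdot\|_{3,\varrho}$ almost everywhere, since each successive family of test functions contains the previous one; the $\varrho$-distance functions $\varrho_x$ are themselves $1$-$\varrho$-Lipschitz, and by Theorem \ref{thm:distance_functions_span} their differentials already span $\W_\varrho$, so comparing $\|\cdot\|_{3,\varrho}$ with $\|\cdot\|_{4,\varrho}$ reduces to comparing the supremum of $|dv|$ over $\varrho$-Lipschitz $v$ with the dual norm on $\W_\varrho$. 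The real content is therefore the reverse chain of inequalities, closing the loop back to $\|\cdot\|_{1,\varrho}$.

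First I would handle $\|\cdot\|_{2,\varrho}\le\|\cdot\|_{1,\varrho}$. Fix a generic point $p$ and a generic velocity $v\in T_pX$ realized by a generic pair $(\ga,t)$ with $\ga(t)=p$, $\ga'(t)=v$; by Theorem \ref{thm:Sus_dir_dens} such $v$ are dense in $T_pX$, so it suffices to control $|d\varrho_x(v)|$ uniformly over all $x\in X$ in terms of the supremum over $x\in D_X$. Given arbitrary $x\in X$ and $\de>0$, pick $x'\in D_X$ with $d_X(x,x')<\de$; then $|\varrho_x(y)-\varrho_{x'}(y)|\le\varrho(x,x')\le\Lip(F)\,\de$ is a bound on the sup-norm of the difference $\varrho_x-\varrho_{x'}$, but this does \emph{not} directly bound $|d(\varrho_x-\varrho_{x'})|$. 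The correct mechanism is the metric-differentiation-along-curves statement, Theorem \ref{thm_curve_metric_diff}(1): along the generic pair $(\ga,t)$ the blow-up of $\varrho$ is the seminorm $\|\tfrac{\D}{\D t}\|_\ga(t)$, and $(\varrho_x\circ\ga)'(t)$ is computed from the one-dimensional metric differential of $t\mapsto\varrho(\ga(t),x)$; the Ambrosio--Kirchheim one-dimensional theory (as in \cite{ambrosio_kirchheim}, invoked exactly as in the proof of Theorem \ref{thm:met_diff_norm}) gives $|(\varrho_x\circ\ga)'(t)|\le\|\tfrac{\D}{\D t}\|_\ga(t)$ with \emph{equality for $\varrho$-a.e.\ relevant $x$ along $\ga$}, and in particular the sup over a countable dense $D_X$ of such directional derivatives recovers $\|\tfrac{\D}{\D t}\|_\ga(t)=\|v\|_{\text{blow-up}}$. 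This is the step where I expect the main obstacle: one must verify that a \emph{fixed} countable dense $D_X$ suffices simultaneously for $\mu$-a.e.\ $p$ and all generic velocities at $p$ — i.e.\ that the null sets depending on $x$ can be controlled — which is precisely the point addressed for the metric case by Lemma \ref{lem:sup_of_seminorms} together with a Fubini/approximate-continuity argument, and the same lemma should apply here since $\varrho$-distance functions form a separable family under the relevant topology.

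Next, for $\|\cdot\|_{3,\varrho}\le\|\cdot\|_{2,\varrho}$ I would argue that any $\varrho$-Lipschitz function $v$ can, at a generic point, have its differential approximated in the dual-to-$TX$ sense by differentials of $\varrho$-distance functions: this follows from Theorem \ref{thm:distance_functions_span} (the $\varrho$-distance differentials span $\W_\varrho$, and $dv\in\W_\varrho$ a.e.) combined with the observation that $v$ is $1$-$\varrho$-Lipschitz implies $|dv(w)|\le\|w\|_{2,\varrho}$ pointwise on generic velocities $w$ — again via Theorem \ref{thm_curve_metric_diff}(1) applied to $v\circ\ga$. Finally, $\|\cdot\|_{4,\varrho}\le\|\cdot\|_{3,\varrho}$ is essentially the definition of the canonical norm on $\W_\varrho$ as a pointwise upper Lipschitz constant: the dual norm of $w\in T_pX$ against $\W_\varrho$ is the sup of $|dv(w)|$ over $v$ with $\Lip_\varrho v(p)\le1$, and such $v$ are (after normalization) among the $\varrho$-Lipschitz test functions. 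Assembling the four inequalities into a cycle $\|\cdot\|_{1,\varrho}\le\|\cdot\|_{2,\varrho}\le\|\cdot\|_{3,\varrho}\le\|\cdot\|_{4,\varrho}\le\|\cdot\|_{1,\varrho}$ holding $\mu$-a.e.\ forces equality a.e., and independence of the choice of $D_X$ follows because two countable dense sets can be merged into a third, forcing both associated seminorms to equal the one from the union. I would also remark that, exactly as in the metric case, the resulting $\|\cdot\|_\varrho$ is only a \emph{seminorm} — its kernel at $p$ records the velocities along which $F$ is infinitesimally constant — and that no strict-convexity or non-degeneracy is claimed, which is the one genuine difference from Theorem \ref{thm_intro_norms_agree}.
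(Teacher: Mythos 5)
Your overall strategy is the one the paper uses for Theorem \ref{thm:can_seminorm}: easy inequalities by inclusion of test families, and in the hard direction a bound on directional derivatives along $\gquad$-generic pairs, identified with the seminorm $\dxnrmname$ through the $\varrho$-metric differential (Theorem \ref{thm:met_diff_norm} gives $\rmd\gamma(t)=\left\|\gamma'(t)\right\|_{D_X,\varrho}$), then extended from generic velocities to all of $T_pX$ by density of directions (Theorem \ref{thm:Sus_dir_dens}), with Lemma \ref{lem:sup_of_seminorms} handling the uncountable families. Your steps $\|\cdot\|_{2,\varrho}\le\|\cdot\|_{1,\varrho}$ and $\|\cdot\|_{3,\varrho}\le\|\cdot\|_{2,\varrho}$ are correct in substance, and you rightly flag that the sup-norm approximation of $\varrho_x$ by $\varrho_{x'}$ controls nothing at the level of differentials. (One misstatement: Ambrosio--Kirchheim gives that the supremum over the fixed countable dense family of directional derivatives equals $\rmd\gamma(t)$; there is no ``equality for $\varrho$-a.e.\ $x$,'' but this does not affect your argument.)

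The genuine gap is the link you dismiss as ``essentially the definition.'' The fourth seminorm --- the paper's $\rLipnrmname$, defined in \eqref{eq:seminorm_*_sec} --- normalizes $\left|\langle df(x),v\rangle\right|$ by the \emph{pointwise} constant $\biglipr f(x)$ of \eqref{eq:sem_biglip}, not by the global $\varrho$-Lipschitz constant. A function with $\biglipr f(x)\le 1$ at the point in question can have arbitrarily large global constant, so ``after normalization'' it is \emph{not} among your $1$-$\varrho$-Lipschitz test functions: rescaling by the global constant shrinks $|df(x)(v)|$ by more than it shrinks the normalizer, and the inequality you need goes the wrong way. Consequently $\|\cdot\|_{4,\varrho}\le\|\cdot\|_{3,\varrho}$ is not definitional, and your cycle does not close for the pointwise-normalized seminorm. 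This is exactly why the paper proves the single hard inequality $\rLipnrmname\le\dxnrmname$ directly, via the refined estimate $\left|(f_\beta\circ\gamma)'(t)\right|\le\biglipr f_\beta(\gamma(t))\,\rmd\gamma(t)$ at generic pairs (see \eqref{eq:can_seminorm_p2}--\eqref{eq:can_seminorm_p3}): one expands the difference quotient at the scale $r_n=\rdst\gamma(t+s_n),\gamma(t).$, uses the defining $\limsup$ of $\biglipr f_\beta$ at $\gamma(t)$, bounds $\rdst\gamma(t+s_n),\gamma(t).$ by $\int_{[t,t+s_n]\cap\dom\gamma}\rmd\gamma(\tau)\,d\tau+o(s_n)$, and invokes the approximate continuity of the $\varrho$-metric derivative at $t$, which is built into genericity through \textbf{(Gen4)}--\textbf{(Gen5)}. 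Your machinery contains all the ingredients for this estimate, but as written the proposal omits it, and it is the one piece of real content beyond the globally-$1$-Lipschitz case.
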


Unlike in the case of the identity map, when $\varrho=d_X$, the canonical
seminorm need not 
be a norm.  Instead it induces a norm on the quotient bundle
$TX/\W_\varrho^\perp$
and a dual norm
$\|\cdot\|_\varrho^*$ on the canonical subbundle $\W_\varrho\subset T^*X$;
here $\W_\varrho^\perp\subset TX$ is the annihilator of the 
$\W_\varrho\subset T^*X$, .

There are two different ways to formulate metric differentiation in terms of
blow-ups.  In the first, we refine Theorem \ref{tnm_intro_blow_up} by bringing
in the sequence of rescaled pseudodistances  $\{\la_j\varrho\}$ as well.  After
passing to a subsequence, these will Gromov-Hausdorff converge (in a natural sense)
to a limiting pseudodistance $\hat\varrho$ on $Y$.  Then in addition to conclusions
(1) and (2) of Theorem \ref{tnm_intro_blow_up}, we have:

\begin{enumerate}
\setcounter{enumi}{2}
\setlength{\itemsep}{1ex}
\item 
When $Y$ and $T_pX$ are equipped with the pseudodistance 
$\hat\varrho$ and the seminorm $\|\cdot\|_\varrho$ of Theorem 
\ref{thm_intro_pullback_norms_agree} respectively,  the map $\hat \phi_i:Y\ra T_pX$ is
 a metric submersion.
\item For every unit vector $v$ in the normed space $(T_pX,\|\cdot\|)$, 
there is an Alberti representation of $\hat \mu$ whose support is contained in the 
collection of curves $\ga:\R\ra Y$ with the property that $\hat \phi_i\circ\ga:\R\ra T_pX$
has constant velocity $v$,  $\ga$ is a unit speed $d_Y$-geodesic, and
a constant $\|v\|_\varrho$-speed $\hat\varrho$-geodesic. 
\end{enumerate}

A second way to formulate the blow-up assertion is to take an ultralimit of the
map $F:X\ra Z$.  We refer the reader to Section \ref{subsec:metdiff_blowups} 
for the statements.

One  consequence of (4)  is that the blown-up Alberti representations appearing in 
Theorem \ref{tnm_intro_blow_up}\,(2) may be viewed as invariants of the  differentiability
space structure, in the following way.  The definitions readily imply that if
$(X,\mu)$ is a differentiability space, $(Z,\nu)$ is a metric measure space,
and $F:(X,\mu)\ra (Z,\nu)$ is a
bilipschitz homeomorphism that is also measure class preserving in the sense
that  pushforward measure $F_*\mu$ and $\nu$ are mutually absolutely continuous, then 
$(Z,\nu)$ is also a differentiability space.  When $X$ is doubling, for almost
every $p\in X$, we can then take a Gromov-Hausdorff limit of the the sequence of
rescalings of $F$ as in Theorem \ref{tnm_intro_blow_up}, 
to obtain a bilipschitz homeomorphism 
$$
\hat F:(\hat X,\hat p)\lra (\hat Z,\hat F(\hat p))\,.
$$
This blow-up map $\hat F$ will preserve the blow-up measures up to scale, and will
preserve the blow-up Alberti representations from Theorem \ref{tnm_intro_blow_up}(2)
up to a change of speed that depends only on the choice of tangent vector $v$.

\subsection*{Applications to embedding}
In Section \ref{sec:examples} we apply metric differentiation to Lipschitz maps
between Carnot groups, Alexandrov spaces with curvature bounded above or below,
and the inverse limit spaces in \cite{cheeger_inverse_poinc}, showing that such maps are 
strongly constrained on an infinitesimal level.  

\subsection*{Further discussion}
We now make some remarks about the evolution of some of the main ideas in this paper ---
generic velocities, the proof of abundance, the structure of blow-ups, and their
distinguished geodesics.

While \cite{cheeger} clarified many points at the foundation of PI spaces, the role
of curves remained somewhat mysterious, and in particular velocity vectors to curves
were not considered there.
In fact, although Lipschitz curves were used in the
original definition of a PI space by Heinonen-Koskela (which is based on upper gradients) 
there is an equivalent definition  in which curves do not appear at all \cite{keith_lip_pi}.

The first appearance of tangent vectors to curves in the context of PI spaces was 
in   \cite{cheeger_kleiner_radon}.   There a notion similar to generic velocity 
vectors was introduced, and it was shown that they span the tangent space at a typical
point; in addition, there was a new characterization of the minimal 
generalized upper
gradient, which may be viewed as a precursor to Corollary \ref{cor_directional_derivative_Lip}.
Metric differentiation for PI spaces was announced in 
\cite[p.1020]{cheeger_kleiner_radon}.   This was work of the first two authors, which led to an 
unpublished account of metric differentiation 
\cite{cheeger_kleiner_metric_diff_unpublished}
that was similar in several respects to the
present paper.  For instance,   it used a notion of generic velocity vectors, 
and contained a
blow-up statement like 
Theorem \ref{tnm_intro_blow_up} involving a distinguished family of geodesics with
constant velocity in the blow-up chart; however, it did not use Alberti representations.
We mention that
 is  easy to see that  the collection of nongeneric Lipschitz curves 
$\ga:I\ra X$ has zero $p$-modulus, for every $p$.
This yields a weak form of abundance of generic curves in the PI space case.  
A key ingredient in 
\cite{cheeger_kleiner_metric_diff_unpublished} was a proof of the density of 
the directions of generic velocity vectors based on a much deeper
argument that borrowed ideas --- a renorming argument and the equality
$\Lip u$ and the minimal  generalized upper gradient --- from \cite{cheeger}.

Bate's beautiful work on Alberti representations \cite{bate-diff,bate_jams}
greatly strengthened the connection between curves and differentiability, 
providing several different alternate characterizations of differentiability 
spaces in terms of Alberti representations.  His approach was partly motivated by
the work of Alberti-Csornyei-Preiss on differentiability for subsets of $\R^n$, 
and  an observation of Preiss that the characterization of the minimal
generalized upper gradient in \cite{cheeger_kleiner_radon} implied the existence of
Alberti representations for PI spaces \cite[Sec. 10]{bate_jams}.  

When \cite{bate-diff} appeared, the third author used it to give a proof of
$\Lip=\lip$ based on a renorming construction, without being aware of the contents
of \cite{cheeger_kleiner_metric_diff_unpublished}.   Independently,
the first two authors  recognized that \cite{bate-diff} could
be used to give a stronger and more general treatment of metric differentiation,
and proposed writing the present paper.

\subsection*{Acknowledgements}
We would like to thank David Bate, Guy David,  and Sean Li for drawing our attention to
errors in an earlier version of this paper.

\section{Preliminaries}
\label{sec_preliminaries}
\newcount\maskprel
\maskprel=0
\ifnum\maskprel>0{
\begin{enumerate}
\item Standing assumptions about differentiability spaces, and properties.
\item Metric derivative for curve fragments (rectifiable sets).  Ambrosio-Kirchheim.
\item Alberti representations.
\item Bate's theorem.
\item Gromov-Hausdorff limits of spaces, and spaces with baggage (measures,
collections of functions, or paths).
\end{enumerate}
}\fi
 \subsection{Standing assumptions and review of differentiability
   spaces}
 \label{subsec:stand_ass}
Throughout this paper, the pair $(X,\mu)$ will denote
a {\bf differentiability space}; this means that
$(X,\xdname)$  is a complete, separable metric space, $\mu$ is
a  Radon measure, and the pair
$(X,\mu)$  admits a measurable differentiable structure 
as recalled
below, cf. \cite{cheeger,keith}.

 We briefly highlight the main features of a differentiability
space, see below for
more discussion:
\begin{enumerate}
\item There is a countable collection of charts
  $\{(U_\alpha,\phi_\alpha)\}_\alpha$, 
  where $U_\al\subset X$ is measurable and $\phi_\al$ is Lipschitz,
  such that $X\setminus
  (\cup_\alpha U_\alpha)$ is $\mu$-null, and each real-valued
  Lipschitz function $f$ admits a first order Taylor expansion with
  respect to the components of $\phi_\alpha:X\to\real^{N_\alpha}$
  at
  generic points of $U_\alpha$, i.e.
  there exist a.e. unique measurable functions $\frac{\partial
      f}{\partial\phi_\alpha^i}$ on $U_\al$ such that:
  \begin{multline}
    \label{eq:taylor_exp}
    f(x)=f(x_0)+\sum_{i=1}^{N_\alpha}\frac{\partial
      f}{\partial\phi_\alpha^i}(x_0)\left(\phi_\alpha^i(x)-\phi_\alpha^i(x_0)\right)\\
    +o\left(\xdst x,x_0.\right)\quad(\text{for $\mu$-a.e.~$x_0\in U_\alpha$}).
  \end{multline}
  \vskip2mm
\item There are  measurable cotangent and tangent bundles
  $T^*X$ and $TX$ (see also subsection
  \ref{subsec:meas_vec_bund}). The fibres of $T^*X$ are generated by
  the differentials of Lipschitz functions, and the tangent bundle of
  $TX$ is defined \emph{formally} by duality: part of the motivation
  of the present work is to give a concrete description of $TX$ by
  using velocity vectors of Lipschitz curves.
  \vskip2mm
\item 
Natural dual norms $\|\cdot\|_{\Lip}$ and $\|\cdot\|_{\Lip}^*$ on $T^*X$ and
  $TX$ respectively. The norm $\|\cdot\|_{\Lip}$ is induced by the pointwise upper
  Lipschitz constant, i.e.~for any real-valued Lipschitz functions
  $f$ we have $\|df\|_{\Lip}=\biglip f(x)$ for $\mu$-a.e.~$x\in X$.
\end{enumerate}
We recall that $\biglip f(x)$ denotes the \textbf{(upper) pointwise
  Lipschitz constant of $f$ at $x$}, that is:
     \begin{equation}
       \label{eq:Liplip2}
       \biglip f(x)=\limsup_{r\searrow0}\sup\left\{\frac{\left|f(y)-f(x)\right|}{r}:\xdst
         x,y.\le r\right\}.
     \end{equation}
\par We now give a brief review of some definitions from
\cite{cheeger,keith}; an exposition can be found in \cite{kleiner_mackay}.
Let $(Z,\nu)$ be a metric measure space.
Let $\U$ be a (countable) collection
of Lipschitz functions on $Z$.
  Then $\U$ is {\bf dependent at $x\in Z$} if
some finite nontrivial linear combination $v$ of elements of $\U$
is constant to first order at $x$, i.e.
$|v(y)-v(x)|=o(\xdst x,y.))$. Alternatively, one can say that the pointwise upper
  Lipschitz constant $\biglip v$ of $v$ vanishes at $x$.  The {\bf dimension of $\U$ at $x$} is the
supremal cardinality of a subset that is linearly independent at $x$;  the 
dimension function 
$\dim_{\U}:Z\ra \natural\cup\{\infty\}$ is Borel whenever $\U$
is a countable collection. Suppose that $U\subset Z$ is a Borel set
with positive $\nu$-measure, and that
$\phi:U\ra \R^n$ is Lipschitz. The pair $(U,\phi)$
is a {\bf chart} if the component functions
$\phi_1,\ldots,\phi_n$ of $\phi$
are independent
at $\nu$-a.e.~$x\in U$, and if for each real-valued Lipschitz function
$f$, the $(n+1)$-tuple
$\left(\phi_1,\ldots,\phi_n,f\right)$ is dependent at $\nu$-a.e.~$x\in
U$. In particular, there are, unique up to $\nu$-null sets, Borel
functions $\frac{\partial f}{\partial \phi_i}:U\to\real$ such that the
Taylor expansion (\ref{eq:taylor_exp}) holds for $\nu$-a.e.~$x_0\in
U$; in this case we also say that $f$ is \textbf{differentiable at
  $x_0$ with respect to the
  $\{\phi_i\}_{i=1}^n$}. 

A metric measure space $(Z,\nu)$
 \textbf{admits a measurable differentiable
structure} if there exists an countable collection of charts
$\left\{(U_\alpha,\phi_\alpha)\right\}_\alpha$ such that
$Z\setminus (\cup_\alpha U_\alpha)$ is $\nu$-null. 
Without loss of
generality, we will always assume that
for each pair $(\alpha,\beta)$, at each point of $U_\alpha\cap
U_\beta$ the functions $\phi_\alpha$ are differentiable with respect
to the functions $\phi_\beta$.

One says that a metric measure space $(Z,\nu)$ is {\bf (almost everywhere) finite dimensional} if for any countable
collection $\U$ of Lipschitz functions, the dimension $\dim_{\U}$ is finite 
almost everywhere. It follows from a selection argument 
\cite{cheeger,keith} that when $\nu$ is $\si$-finite, then $(Z,\nu)$
admits a measurable differentiable structure if and only if it is finite dimensional. 
Thus, apart from being a standard condition on a measure,
$\si$-finiteness is a natural assumption in the present topic.
As the measure $\nu$ only enters through its sets of measure zero, one really only
cares about the \emph{measure class} of $\nu$;  hence if $\nu$ is $\si$-finite, 
then without loss
of generality one may take $\nu$ to  be finite.
\par 
We finally give a brief justification 
of why we assume $X$ to be
complete in the definition of a differentiability space, 
which was also a working assumption in
\cite{bate_speight,bate-diff}. Suppose 
$(Z,\nu)$ is a metric measure space, where $Z$ is not necessarily 
complete.  Denote by $\bar Z$ its completion, and let $\bar\nu$ be the pushforward of $\nu$
under the inclusion $Z\ra \bar Z$.  Then any Lipschitz function $u\in \Lip(Z)$
extends uniquely to $\bar Z$, and since $Z$ is dense in $\bar Z$, the notions of
dependence and dimension for a collection $\U\subset\Lip(Z)$ at any $x\in Z$
agree with the notions for the corresponding collection $\bar\U\subset\Lip(\bar Z)$. 
Hence $(Z,\nu)$ has a differentiable structure if and only if $(\bar Z,\bar\nu)$ has a 
measurable differentiable structure.
\subsection{The metric derivative for $1$-rectifiable sets}
\label{subsec:metric_deriv}
Let $Z$ be a separable metric space and denote by $\zdname$ the metric
on $Z$. We say that a pseudometric $\rdname$ on $Z$ is \textbf{Lipschitz
  compatible} if there is a nonnegative constant $C$ such that:
\begin{equation}
  \label{eq:comp_sem}
  \rdname\le C\zdname;
\end{equation}
we say that a function $f:Z\to W$ is $\rdname$-Lipschitz if there is
a nonnegative $C$ such that:
\begin{equation}
  \label{eq:lip_sem}
  \wdst f(z_1),f(z_2).\le C\rdst z_1,z_2.\quad(\forall z_1,z_2\in Z).
\end{equation}
Note that $\rdname$-Lipschitz functions are necessarily
$\zdname$-Lipschitz; when referring to the background metric $\zdname$ we will
simply use the term Lipschitz. We denote by $\hmeas .$ the
$1$-dimensional Hausdorff measure on $Z$ and by $\rhmeas .$ the
$1$-dimensional Hausdorff measure associated to the pseudometric $\rdname$.
\par We now recall metric differentiation results of  \cite{kirchheim_metric_diff,ambrosio_kirchheim,ambrosio_tilli} 
in the case of $1$-rectifiable sets.
\par Let $Y$ be a Lebesgue measurable subset of $\real$ and let $\ga:Y\ra Z$ be a Lipschitz map.
We fix a countable dense subset $\{z_i\}$ of $Z$, and 
let $u_i$ be the 
pullback of the pseudodistance function $\rdstp
z_i.(\cdot)=\rdst\cdot,z_i.$ by the map $\gamma$.
Then $\gamma$ has a \textbf{$\rdname$-metric differential}
$\rmd\ga:Y\to[0,\infty)$, which is uniquely determined for $\lebmeas$
a.e. $t\in Y$, and which has the following properties:
\begin{description}
\item[(MD1)] Rescalings of the pullback pseudometric $\ga^*\rdname$ at $t$ converge uniformly on 
compact sets to $\rmd\ga(t)\,d_{\R}$, that is, the Euclidean distance
scaled by the factor $\rmd\ga(t)$.
\vskip2mm
\item[(MD2)] Consider a point $t\in Y$ such that:
\vskip2mm
  \begin{enumerate}
  \item   The point $t$ is a Lebesgue density point of $Y$;
    \vskip2mm
  \item  The derivatives
    of the functions $\{u_i\}_i$ exist at $t$;\vskip2mm
  \item The derivatives $\{u'_i\}_i$ are approximately continuous at $t$;\vskip2mm
  \item The function $\sup_i|u_i'|$ is approximately continuous at $t$.\vskip2mm
  \end{enumerate}
 Then the $\rdname$-metric differential exists at $t$ and is given by:
  \begin{equation}
    \label{eq:sem_metric_diff}
    \rmd\gamma(t)=\sup_i\left|u'_i(t)\right|.
  \end{equation}
\item[(MD3)] One has an area formula \cite[Thm.~7]{kirchheim_metric_diff}:
  \begin{equation}
    \label{eq:sem_metric_area}
    \int_Z\#\left\{t\in Y: \gamma(t)=z\right\}\,d\rhmeas .(z)=\int_Y \rmd\gamma(t)\,d\lebmeas(t).
  \end{equation}
\end{description}
In the case in which the metric differential refers to the metric
$\zdname$ we will use the symbol $\md\gamma$ instead of $\text{$\zdname$-md}\gamma$.
\subsection{Alberti representations}
\label{subsec:alb_rep}
\par Alberti representations were introduced in
\cite{alberti_rank_one} to prove the so-called rank-one property for BV
functions; they were later applied to study the differentiability
properties of Lipschitz functions $f:\real^N\to\real$
\cite{acp_plane,acp_proceedings} and have recently been used to obtain a
description of measures in differentiability spaces
\cite{bate-diff}. We first give an informal definition.
\par An \textbf{Alberti representation} of a Radon measure $\mu$ is a
generalized Lebesgue decomposition of $\mu$ in terms of
$1$-rectifiable measures: i.e.~one writes $\mu$ as an integral:
\begin{equation}
  \label{eq:albrep_informal}
  \mu=\int \nu_\gamma\,dP(\gamma),
\end{equation}
where $\{\nu_\gamma\}$ is a family of $1$-rectifiable
measures. The standard example is offered by Fubini's Theorem; given
$x\in\real^{N-1}$, denote by $\gamma(x)$ the parametrized line in
$\real^{N}$ given by $\gamma(x)(t)=x+te_{n+1}$; then an Alberti
representation of the Lebesgue measure ${\mathcal L}^{N}$ is given by:
\begin{equation}
  \label{eq:fubini}
  {\mathcal L}^{N}=\int_{\real^{N-1}}\hmeas._{\gamma(x)}\,d{\mathcal L}^{N-1}(x).
\end{equation}
\par To make the previous account more precise we introduce more
terminology. For more details we refer the reader to \cite{bate-diff}
and \cite[Sec.~2.1]{deralb}; note however, that we slightly diverge
from the treatments in \cite{bate-diff,deralb} because we discuss also
unbounded $1$-rectifiable sets: the need to do so becomes apparent in
Section \ref{sec:geom_blowups}.
\begin{definition}\label{defn:frags}
A {\bf fragment in $X$}
is a Lipschitz map $\ga:C\ra X$, where $C\subset\real$ is closed. The
set of fragments in $X$ will be denoted by $\frag(X)$.
\end{definition}
We need to topologize $\frags(X)$; let $F(\real\times X)$ denote the set of
closed subsets of $\real\times X$ with the Fell topology
\cite[(12.7)]{kechris_desc}; we recall that a basis of the Fell
topology consists those sets of the form:
\begin{equation}
  \label{eq:felltop}
  \left\{F\in F(\real\times X): F\cap K=\emptyset, F\cap
    U_i\ne\emptyset\;\text{for $i=1,\ldots,n$}\right\},
\end{equation}
where $K$ is a compact subset of $\real\times X$, and 
$\{U_i\}_{i=1}^n$ is a finite collection of open subsets of $\real\times X$.
Note that
the empty set $\emptyset$ is included in $F(\real\times X)$ and that,
if $X$ is locally compact, the topological space $F(\real\times X)$ is
compact. We now consider the set $\fellc(\real\times X)=F(\real\times
X)\setminus\{\emptyset\}$ which is, if $X$ is locally compact, a
$K_\sigma$, i.e.~a countable union of compact sets. Each fragment $\gamma$ can be identified with an element
of $\fellc(\real\times X)$ and thus $\frag(X)$ will be topologized as
a subset of $\fellc(\real\times X)$. We will use fragments to
parametrize $1$-rectifiable subsets of $X$.
\par We now briefly discuss the
topology on Radon measures that allows to make sense of an integral
like (\ref{eq:albrep_informal}). Let $C_c(X)$ denote the set of continuous function defined on $X$
with compact support; recall that the set $C_c(X)$ is a Fr\'echet space.
We denote by $\radon(X)$ the set of (nonnegative) Radon measures on $X$; as $\radon(X)$
can be identified with a subset of the dual of $C_c(X)$, we will
topologize it with the restriction of the weak* topology. In
particular, when we assert that a map $\psi:Z\to\radon(X)$ is Borel,
we mean that for each $g\in C_c(X)$, the map:
\begin{equation}
  \label{eq:radon_borel}
  z\mapsto\int_X g(x)\,d\left(\psi(z)\right)(x)
\end{equation}
is Borel.
\begin{defn}
  \label{defn:alberti_reps}
  An Alberti representation of the measure $\mu$ is a pair $(P,\nu)$
  such that:
  \begin{description}
  \item[(Alb1)] $P$ is a Radon measure on $\frags(X)$;
    \vskip2mm
  \item[(Alb2)] The map $\nu:\frags\to\radon(X)$ is Borel and, for each
    $\gamma\in\frags(X)$, we have $\nu_\gamma\ll\hmeas ._\gamma$,
    where $\hmeas._\gamma$ denotes the
    $1$-dimensional Hausdorff measure on the image of $\gamma$;\vskip2mm
  \item[(Alb3)] The measure $\mu$ can be represented as $\mu=\int_{\frags(X)}\nu_\gamma\,dP(\gamma)$;\vskip2mm
  \item[(Alb4)] For each Borel set $A\subset X$ and all real numbers
    $b\geq a$, the map
    $\gamma\mapsto\nu_\gamma\left(A\cap\gamma(\dom\gamma\cap[a,b])\right)$
    is Borel.
  \end{description}
\end{defn}
\par We now recall some definitions regarding additional properties of
Alberti representations.
\begin{defn}
   \label{defn:alberti_constants}
   An Alberti representation $\albrep.=(P,\nu)$ is said to be
   \textbf{$C$-Lipschitz (resp.~$(C,D)$-biLipschitz)} if
   $P$-a.e.~$\gamma$ is $C$-Lipschitz (resp.~$(C,D)$-biLipschitz).
 \end{defn}
\begin{defn}
   \label{defn:alberti_speed}
   Let $\sigma:X\to[0,\infty)$ be Borel and $f:X\to\real$ be
   Lipschitz. An Alberti representation $\albrep.=(P,\nu)$  is said to
   be \textbf{have $f$-speed $\ge\sigma$ (resp.~$>\sigma$)} if for $P$-a.e.~$\gamma\in\frags(X)$ and
   $\lebmeas\mrest\dom\gamma$-a.e.~$t$ one has
   $(f\circ\gamma)'(t)\ge\sigma(\gamma(t))\metdiff\gamma(t)$ (resp.~$(f\circ\gamma)'(t)>\sigma(\gamma(t))\metdiff\gamma(t)$).
 \end{defn}
Another property regards the \emph{direction}, with respect to a
finite tuple of Lipschitz functions, of the fragments used in an
Alberti representation. To measure the direction one can use the
notion of Euclidean cone:
\begin{defn}\label{defn:cone}
  Let $\theta\in(0,\pi/2)$, $v\in{\mathbb S}^{n-1}$; the \textbf{open
    cone} $\cone(v,\theta)\subset\real^n$ with axis $v$ and opening
  angle $\theta$ is:
\begin{equation}
    \cone(v,\theta)=\{u\in\real^q: \tan\theta\langle
    v,u\rangle>\|\pi_v^\perp u\|_2\},
  \end{equation} where $\pi_v^\perp$ denotes the orthogonal projection
  on the orthogonal complement of the line $\real v$.
\end{defn}
\begin{defn}
   \label{defn:alberti_direction}
   \par Given a Lipschitz function $f:X\to\real^n$, an Alberti
   representation $\albrep.=(P,\nu)$ is said to be \textbf{in the
     $f$-direction of the open cone $\cone(v,\theta)$} if for
   $P$-a.e.~$\gamma\in\frags(X)$ and
   $\lebmeas\mrest\dom\gamma$-a.e.~$t$ one has $(f\circ\gamma)'(t)\in\cone(v,\theta)$.
 \end{defn}
\par For the purpose of this paper it will be convenient to obtain
Alberti representations with biLipschitz constants close to $1$. We
will thus use the following result \cite[Thm.~2.64]{deralb}:
\begin{thm}
   \label{thm:alberti_rep_prod}
   Let $X$ be a complete separable metric space and $\mu$ a Radon
   measure on $X$. Then the following are equivalent:
   \begin{enumerate}
   \item The measure $\mu$ admits an Alberti representation $\albrep.$
     in the
     $f$-direction of $\cone(v,\theta)$ with $g$-speed $>\sigma$;  \vskip2mm
   \item For each $\varepsilon>0$ the measure $\mu$ admits a
     $(1,1+\varepsilon)$-biLipschitz Alberti representation $\albrep.$
     in the
     $f$-direction of $\cone(v,\theta)$ with $g$-speed $>\sigma$.  
   \end{enumerate}
   Moreover, one can always assume that the Alberti representation is
   of the form $\albrep.=(P,\nu)$, where $P$ is a finite Radon measure
   concentrated on the set of fragments with compact
   domain. Additionally, one can assume that $\nu=h\Psi$ where $h$ is
   a nonnegative Borel function of $X$ and:
   \begin{equation}
     \label{eq:alberti_rep_prod_s1}
     \Psi_\gamma=\mpush\gamma.\left(\lebmeas\mrest\dom\gamma\right),
   \end{equation}
   i.e.~the push-forward of the restriction of the Lebesgue measure to
   the domain of $\gamma$.
 \end{thm}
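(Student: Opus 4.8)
Only the implication (1)$\Rightarrow$(2) has content: a $(1,1+\varepsilon)$-biLipschitz Alberti representation in the $f$-direction of $\cone(v,\theta)$ with $g$-speed $>\sigma$ is, a fortiori, an Alberti representation of the kind demanded in (1). So the plan is to prove (1)$\Rightarrow$(2) and the ``moreover'' clause at once. The organizing observation is that the $f$-direction and $g$-speed conditions are first order and are preserved under two operations on a fragment $\ga\colon C\ra X$: restriction to a Borel subset $C'\subset C$, and affine reparametrization $s\mapsto t_{0}+s/m$ with $m>0$. Indeed, at $\lebmeas$-density points of $C'$ the quantities $(f\circ\ga)'$, $(g\circ\ga)'$ and $\md\ga$ are unchanged, and since the defining conditions are imposed only $\lebmeas\mrest\dom$-a.e. and $\cone(v,\theta)$ is invariant under positive scaling, they pass to $\ga|_{C'}$; an affine reparametrization multiplies all three quantities by the positive constant $1/m$, so the cone membership and speed inequality again persist. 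Hence it is enough to cut the fragments of a representation as in (1) into pieces that become $(1,1+\varepsilon)$-biLipschitz after an affine reparametrization, and then to reassemble.

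The core step is therefore a decomposition of a single fragment: given $\ga\colon C\ra X$, I would write $\{t\in C:\md\ga(t)>0\}$, up to an $\lebmeas$-null set, as a countable disjoint union of Borel sets $C_{\ga,k}$ on each of which $\ga$, reparametrized affinely by a suitable constant $m_{k}>0$, is $(1,1+\varepsilon)$-biLipschitz. (This covers the relevant part of the domain, since wherever the $g$-speed condition holds one has $(g\circ\ga)'>0$, hence $\md\ga>0$.) To build the pieces I would use metric differentiation along fragments from Section \ref{subsec:metric_deriv}, specifically (MD1) and (MD2): at $\lebmeas$-a.e. $t_{0}\in C$ the rescalings of the pullback pseudometric $\ga^{*}d_{X}$ converge uniformly on compact sets to $m\,d_{\R}$ with $m=\md\ga(t_{0})$, and, via (MD2), $m=\sup_{i}|u_{i}'(t_{0})|$, where $u_{i}(t)=d_{X}(\ga(t),x_{i})$ for a countable dense $\{x_{i}\}\subset X$ and each $u_{i}'$ is approximately continuous at $t_{0}$. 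Choosing $i$ with $|u_{i}'(t_{0})|$ close to $m$, approximate continuity furnishes a Borel set having $t_{0}$ as a density point on which $|u_{i}'|>(1-\eta)m$ with a fixed sign; since $|u_{i}(t)-u_{i}(s)|\le d_{X}(\ga(t),\ga(s))$, this forces the lower biLipschitz estimate on the intersection of that set with a small enough interval about $t_{0}$, while the uniform convergence in (MD1) supplies the matching upper estimate once we rescale time by $m$. A Vitali-type covering over such good pieces, carried out Borel-measurably in the parameter $\ga$ within the Fell-topology framework of Section \ref{subsec:alb_rep}, then yields the countable decomposition with $C_{\ga,k}$, $m_{k}$ and the reparametrizations depending measurably on $\ga$.

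Granting the decomposition, I would push $P$ forward along $\ga\mapsto(\text{its reparametrized pieces})$ to a Radon measure $\tilde P$ on $\frags(X)$ and correspondingly replace $\nu$ by the family of restricted, reparametrized pieces of the measures $\nu_{\ga}$. Since $\hmeas._{\ga}$ is reparametrization-invariant, $\nu_{\ga}\ll\hmeas._{\ga}$ transfers, and since the pieces exhaust $\dom\ga$ up to $\lebmeas$-null sets, $\sum_{k}\nu_{\ga|_{C_{\ga,k}}}=\nu_{\ga}$; the area formula (MD3), \eqref{eq:sem_metric_area}, then gives $\mu=\int\tilde\nu_{\tilde\ga}\,d\tilde P(\tilde\ga)$, so $(\tilde P,\tilde\nu)$ satisfies (Alb1)--(Alb4), and by the first paragraph it is $(1,1+\varepsilon)$-biLipschitz in the $f$-direction of $\cone(v,\theta)$ with $g$-speed $>\sigma$. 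For the ``moreover'' clause: to make $P$ finite and supported on fragments with compact domain, one reduces first to $\mu(X)<\infty$ --- a Radon measure on a separable metric space is always $\sigma$-finite, so split $X$ into countably many Borel sets of finite measure, run the construction on each, and recombine, all conditions being local --- and then cuts each domain into the compact pieces $C\cap[a,b]$ with $a,b\in\Z$. To reach the normal form $\nu=h\Psi$: the fragments being now nearly unit speed, write $\nu_{\ga}=\rho_{\ga}\,\hmeas._{\ga}=\rho_{\ga}\,(\md\ga)\,\mpush\ga.(\lebmeas\mrest\dom\ga)$, absorb the Jacobian $\md\ga$ into the density (using injectivity of $\ga$ to regard it as a Borel function on $X$), and then apply a layer-cake decomposition $\rho(\ga(t))=\int_{0}^{\infty}\mathbf 1[\rho(\ga(t))>a]\,da$, which trades the density for an extra integration in $a$: each $\ga$ is replaced by the sub-fragments $\ga|_{\{\rho\circ\ga>a\}}$ (countably many after passing to compact subsets) and $\tilde P$ is enlarged by the factor $\lebmeas\mrest[0,\infty)$. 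This gives $\tilde\nu=\Psi$ outright, a fortiori $\nu=h\Psi$, with $\tilde P$ still finite since $\int\int\rho(\ga(t))\,d\lebmeas(t)\,dP(\ga)=\mu(X)<\infty$; restriction to subsets preserves the $(1,1+\varepsilon)$-biLipschitz, $f$-direction and $g$-speed properties as already observed.

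The main obstacle is the measurability bookkeeping: one must check that the choice of the distance function $u_{i}$ nearly realizing $\md\ga(t_{0})$, the thresholds in the covering argument, the pieces $C_{\ga,k}$, and the affine reparametrizations all depend Borel-measurably on $\ga$, so that $\tilde P$ is a genuine Radon measure on $\frags(X)$ and $\tilde\nu$ satisfies (Alb2) and (Alb4). This has to be done inside the standard-Borel structure on $\fellc(\real\times X)$ from Section \ref{subsec:alb_rep}, without local compactness of $X$, and it relies on measurable selection. A secondary technical point is the passage from the additive, fixed-scale approximation that metric differentiation directly provides to the multiplicative, all-scales estimate required by ``$(1,1+\varepsilon)$-biLipschitz'': this is handled not by exhibiting a single biLipschitz neighbourhood of each $t_{0}$ but by arranging $t_{0}$ to be a density point of one of the pieces $C_{\ga,k}$, which is precisely what approximate continuity of the $u_{i}'$ makes possible.
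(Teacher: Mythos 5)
A preliminary remark: the paper does not prove this theorem at all — it is quoted verbatim from \cite[Thm.~2.64]{deralb} — so your proposal can only be measured against the cited argument and on its own terms. Your overall strategy is the right one and is in the spirit of the cited proof: note that the $f$-direction and $g$-speed conditions involve only $P$ and the fragments, are $\lebmeas\mrest\dom\gamma$-a.e.\ conditions, and are invariant under restriction to Borel subsets (at density points) and under orientation-preserving affine reparametrization; then use metric differentiability (MD1)--(MD2) to cut $\{\md\gamma>0\}$, up to a null set, into countably many pieces on which a suitable affine time change makes $\gamma$ a $(1,1+\varepsilon)$-biLipschitz fragment, with all choices made Borel-measurably in $\gamma$. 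One step in your reassembly is, however, not correct as written: the identity $\sum_k\nu_{\gamma|_{C_{\gamma,k}}}=\nu_\gamma$ presupposes a canonical splitting of $\nu_\gamma$ among the pieces, but in the definition used here $\nu_\gamma$ is a measure on $X$ absolutely continuous with respect to $\mathcal{H}^1$ on the image, and the fragments of the representation in (1) need not be injective; distinct pieces $C_{\gamma,k}$ can have overlapping images, so restricting $\nu_\gamma$ to those images double counts. This is repairable (disjointify the images $\gamma(C_{\gamma,k})$, which are Borel by Lusin--Suslin since $\gamma$ is injective on each piece, and restrict the domains accordingly; nothing is lost because Lipschitz images of Lebesgue-null sets, and the image of $\{\md\gamma=0\}$ by the area formula, are $\mathcal{H}^1$-null and hence $\nu_\gamma$-null), but it has to be addressed, not asserted.

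The ``moreover'' clause is where your write-up has genuine gaps. The layer-cake bound is wrong: the total mass of the new fragment measure produced by replacing $\gamma$ with the sub-fragments $\gamma|_{\{\rho_\gamma\circ\gamma>a\}}$, $a\in[0,\infty)$, is $\int\operatorname{ess\,sup}(\rho_\gamma\circ\gamma)\,dP(\gamma)$, not $\int\!\int\rho_\gamma(\gamma(t))\,dt\,dP(\gamma)=\mu(X)$, and the former can be infinite even when $\mu$ is finite. Likewise, when you recombine representations of $\mu\mrest A_i$ over countably many finite-measure pieces with rescaled weights, a single global $h$ does not come for free: a fragment from the $i$-th family may pass through $A_j$, and then $h\Psi_\gamma$ acquires spurious mass there; one must first restrict those fragments to $\gamma^{-1}(A_i)$ (harmless, since $\chi_{A_i}\Psi_\gamma=\Psi_{\gamma|_{\gamma^{-1}(A_i)}}$), or avoid the patching altogether. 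In fact the normal form is cheaper than a layer-cake: since the biLipschitz, direction and speed properties never mention $\nu$, you may first make the domains compact and of length at most $1$ by cutting, make $P$ finite by replacing it with $w\,P$ for a strictly positive weight $w$ and compensating $\nu$ by $w^{-1}$, and then simply discard the old $\nu$: the measure $\Lambda:=\int\Psi_\gamma\,dP(\gamma)$ is then finite, $\mu\ll\Lambda$ because $\nu_\gamma\ll\mathcal{H}^1\mrest\gamma(\dom\gamma)$ and the latter is comparable to $\Psi_\gamma$ for biLipschitz fragments, and setting $h:=d\mu/d\Lambda$ gives $\int h\Psi_\gamma\,dP(\gamma)=h\Lambda=\mu$, i.e.\ the desired representation $\nu=h\Psi$ with all the quantitative properties intact.
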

\par Sometimes we will find it useful to \textbf{restrict an Alberti
representation $\albrep.=(P,\nu)$} to a Borel
set  $U\subset X$ by letting $\albrep.\mrest
 U=(P,\nu\mrest U)$. Other times one knows the existence of Alberti
 representations on subsets $\{U_\alpha\}_\alpha$ and would like to
 glue them together. This is accomplished by the following
 \emph{gluing principle} \cite[Thm.~2.46]{deralb}:
 \begin{thm}
   \label{thm:alberti_glue}
   Let $\{U_\alpha\}_\alpha$  be Borel subsets and suppose that for
   each $\alpha$ the measure $\mu\mrest U_\alpha$ admits a
   $(C,D)$-biLipschitz Alberti representation in the $f$-direction of
   $\cone(v,\theta)$ with $f$-speed $\geq\sigma$ (or $>\sigma$); then
   the measure $\mu\mrest\bigcup_\alpha U_\alpha$ also admits a
   $(C,D)$-biLipschitz Alberti representation in the $f$-direction of
   $\cone(v,\theta)$ with $f$-speed $\geq\sigma$ (or $>\sigma$).
 \end{thm}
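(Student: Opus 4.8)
The plan is to reduce first to a countable collection, then to disjointify, and finally to glue countably many Alberti representations by a Radon--Nikodym bookkeeping argument. The one observation that makes all of this painless is that, by Definitions \ref{defn:alberti_constants}, \ref{defn:alberti_direction} and \ref{defn:alberti_speed}, the three properties in question --- being $(C,D)$-biLipschitz, being in the $f$-direction of $\cone(v,\theta)$, and having $f$-speed $\ge\sigma$ (or $>\sigma$) --- are conditions on \emph{individual fragments}: none of them refers to the measures $\nu_\gamma$. Hence each survives unchanged both the restriction operation $\albrep.\mrest U=(P,\nu\mrest U)$ and any gluing in which the exceptional set of fragments stays $P$-null, because that set is $P$-null as soon as it is null for each of the pieces.

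For the reduction to a countable index set I would use the standard essential-union construction. Since only the measure class of $\mu$ matters (an Alberti representation of $h\mu$, $h>0$ Borel, is obtained from one of $\mu$ by replacing $\nu_\gamma$ with $h\nu_\gamma$, which changes none of the three properties) and $\mu$ is $\sigma$-finite, we may assume $\mu$ finite; let $s$ be the supremum of $\mu(U_{\alpha_1}\cup\cdots\cup U_{\alpha_k})$ over finite subfamilies, choose finite subfamilies whose unions have $\mu$-measure tending to $s$, and let $\{U_n\}_{n\in\N}$ enumerate the sets $U_\alpha$ occurring in them. Then $\mu(U_\alpha\setminus\bigcup_n U_n)=0$ for every $\alpha$, since otherwise adjoining such a $U_\alpha$ would produce a finite subfamily of $\mu$-measure exceeding $s$; so it suffices to represent $\mu\mrest\bigcup_n U_n$, which is how $\mu\mrest\bigcup_\alpha U_\alpha$ should be read here. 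Replacing $U_n$ by the Borel set $W_n=U_n\setminus(U_1\cup\cdots\cup U_{n-1})\subset U_n$ we obtain disjoint Borel sets with $\bigcup_n W_n=\bigcup_n U_n$, and by the restriction operation each $\mu\mrest W_n$ admits a $(C,D)$-biLipschitz Alberti representation $\albrep n.=(P_n,\nu_n)$ in the $f$-direction of $\cone(v,\theta)$ with $f$-speed $\ge\sigma$ (resp.\ $>\sigma$).

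Now I would normalize and combine. Using Theorem \ref{thm:alberti_rep_prod} we may assume each $P_n$ is a finite Radon measure; discarding the indices with $\mu(W_n)=0$ and rescaling $(P_n,\nu_n)$ to $(a_n^{-1}P_n,\,a_n\nu_n)$ with $a_n=P_n(\frags(X))\in(0,\infty)$ --- an operation that preserves (Alb1)--(Alb4) and the three per-fragment properties and keeps $\nu_{n,\gamma}\ll\hmeas._\gamma$ --- we may assume each $P_n$ is a probability measure. Set $P=\sum_n 2^{-n}P_n$; this is a finite Borel measure on $\frags(X)$, and it is Radon because a countable convex combination of tight measures is tight. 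Since $P\ge 2^{-n}P_n$ we have $P_n\ll P$; let $g_n=\frac{dP_n}{dP}$, so $0\le g_n\le 2^n$ is Borel, and put $\nu_\gamma=\sum_n g_n(\gamma)\,\nu_{n,\gamma}$, which is $\ll\hmeas._\gamma$ as a countable nonnegative combination. For Borel $E\subset X$, using $dP_n=g_n\,dP$, one gets $\int\nu_\gamma(E)\,dP(\gamma)=\sum_n\int g_n(\gamma)\,\nu_{n,\gamma}(E)\,dP(\gamma)=\sum_n\int\nu_{n,\gamma}(E)\,dP_n(\gamma)=\sum_n\mu(E\cap W_n)=\mu\bigl(E\cap\bigcup_n W_n\bigr)$, which is (Alb3); and (Alb1), (Alb2), (Alb4) for $(P,\nu)$ follow from those for the $\albrep n.$ together with Borel measurability of the $g_n$, the relevant quantities being countable sums of Borel functions of $\gamma$. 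Finally, for each of the three per-fragment properties the exceptional set of fragments is $P_n$-null for every $n$, hence $P$-null, so $(P,\nu)$ is the desired Alberti representation.

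The step I expect to need the most care is the first one: deciding what $\mu\mrest\bigcup_\alpha U_\alpha$ should mean and extracting a \emph{countable} subfamily that carries the measure, since for a genuinely uncountable family the set-theoretic union need not be $\mu$-measurable. After that is settled, the remaining work --- chiefly the verification of (Alb2) and (Alb4) for the glued family --- is routine.
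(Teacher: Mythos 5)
The paper itself contains no proof of Theorem \ref{thm:alberti_glue}: it is imported from \cite{deralb} (Thm.~2.46), so there is no internal argument to compare yours against. Your proof is the standard gluing construction and is essentially correct: disjointify, restrict, normalize the fragment measures, form $P=\sum_n 2^{-n}P_n$, and reassemble $\nu_\gamma=\sum_n g_n(\gamma)\,\nu_{n,\gamma}$ with $g_n=dP_n/dP$. The key observation --- that the biLipschitz, direction and speed requirements are conditions on individual fragments, never on the measures $\nu_\gamma$, and hence survive restriction and any gluing whose exceptional fragment set is $P$-null --- is exactly right, and your verification of (Alb3) and the null-set bookkeeping are correct.

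Three small points. (i) The theorem is only ever applied in the paper to countable families, and for a genuinely uncountable family the literal statement fails: cover a vertical segment in $\R^2$, equipped with one-dimensional Hausdorff measure, by $\mu$-null singletons; each restriction trivially admits any representation (take $P=0$), while the segment measure admits none in the direction of a horizontal cone. So your essential-union reading is the correct repair rather than a convenience, and in the countable case it is vacuous. (ii) Your $\nu_\gamma=\sum_n g_n(\gamma)\nu_{n,\gamma}$ need not be a locally finite (Radon) measure for every $\gamma$; but since $\int\nu_\gamma(X)\,dP(\gamma)=\mu\bigl(\bigcup_n W_n\bigr)<\infty$ after your reduction to finite $\mu$, the set where $\nu_\gamma(X)=\infty$ is Borel and $P$-null, and redefining $\nu_\gamma=0$ there repairs (Alb2) without affecting anything else; this deserves a sentence. (iii) Theorem \ref{thm:alberti_rep_prod} is phrased around producing $(1,1+\varepsilon)$-biLipschitz representations, so if you use its ``moreover'' clause to make each $P_n$ finite you should note that the operations behind it (cutting fragments to compact subdomains and reweighting $P$) preserve the $(C,D)$-biLipschitz, direction and speed conditions; alternatively, reweight $(P_n,\nu_n)\mapsto(w_nP_n,w_n^{-1}\nu_n)$ directly with a Borel $w_n>0$ making $w_nP_n$ finite. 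None of this changes the structure of your argument.
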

 \subsection{Results from Bate and Speight}
 \label{subsec:result_bate_speight}
 We now recall some results \cite{bate_speight,bate-diff} on the
 structure of measures in differentiability spaces. The original
 Theorems \cite{cheeger,keith} on the existence of differentiable
 structures required the measure $\mu$ to be doubling. Bate and
 Speight \cite{bate_speight} found  a partial converse of this:
 \begin{thm}
   \label{thm:as_doubling}
   If $(X,\mu)$ is a differentiability space, then:
   \begin{itemize}
   \item The measure $\mu$ is
   \textbf{asymptotically doubling}, i.e.~for $\mu$-a.e.~$x$ there are
   $(C_x,r_x)\in(0,\infty)^2$ such that:
   \begin{equation}
     \label{eq:as_doubling}
     \mu\left(\ball
       x,2r.\right) \le C_x
     \mu\left(\ball x,r.\right)\quad (\forall r\le r_x).
   \end{equation}
   As a consequence, $(X,\mu)$ is a
   \emph{Vitali space}, i.e.~the Vitali Covering Theorem
   holds in $(X,\mu)$,
   and thus also Lebesgue's Differentiation Theorem holds
   for $\mu$.
   \item Every  porous subset is $\mu$-null.
   \end{itemize}
   
 \end{thm}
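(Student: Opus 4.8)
\emph{Overall strategy.} The plan is to derive both assertions from a single principle: a differentiability space cannot contain a positive-measure set on which some Lipschitz function $g\colon X\to\R$ fails to admit a first-order Taylor expansion with respect to a chart, since this is exactly what the chart axiom forbids. So in each case one argues by contradiction, using a hypothetical "bad" positive-measure set to manufacture such a $g$. Two preliminary reductions are used throughout: (i) restrict attention to a single chart $(U,\phi)$, $\phi\colon U\to\R^n$; and (ii) refine the chart so that the coordinate functions are uniformly comparable to the Euclidean ones, i.e.\ $\tfrac1K\abs{a}\le\Lip\langle a,\phi\rangle(x)\le K\abs{a}$ for all $a\in\R^n$, $x\in U$ — this keeps $\abs{dg}$ controlled by $\Lip g$ and prevents $\phi$ from being infinitesimally degenerate in too uniform a way.

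\emph{Porous sets are $\mu$-null.} Let $P$ be porous with $\mu(P)>0$; reduce to $P\subseteq U$ and, after refining $P$ and shrinking the porosity constant $c$, assume $P$ is $c$-porous at all scales $\le r_0$ at each of its points, so every $x\in P$ and $r\le r_0$ admits a "hole" $B(y,cr)\subseteq B(x,r)\setminus P$. A quick warm-up — $f:=\dist(\cdot,P)$ is $1$-Lipschitz, vanishes on $P$, and has $\Lip f\ge c$ on $P$ (test the difference quotient at hole centers) — exhibits the mechanism but is not yet a contradiction. The construction proper: pick a sparse sequence of scales $k_1<k_2<\cdots$ with $k_{i+1}>2k_i$, and for each $i$ place, over a suitably separated family of scale-$2^{-k_i}$ hole centers $y$, bump functions supported in $B(y,c2^{-k_i-1})$ with Lipschitz constant $O(1)$, of two kinds: symmetric metric caps $\min(\dist(\cdot,X\setminus B(y,c2^{-k_i-1})),c2^{-k_i-2})$ (height $\asymp2^{-k_i}$), and directional ramps built from $\max(0,\langle v_i,\phi(\cdot)-\phi(y)\rangle)$ with the $v_i\in{\mathbb S}^{n-1}$ chosen not to converge. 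Let $g$ be the sum. Sparseness of the scales keeps bumps of different scales from interfering near $P$, so $g$ is Lipschitz and $g\equiv0$ on $P$, yet at any $x_0\in P$ one sees, for infinitely many $i$, a single scale-$2^{-k_i}$ bump occupying a sub-ball of $B(x_0,C2^{-k_i})$. A first-order expansion $g(x)=\langle a(x_0),\phi(x)-\phi(x_0)\rangle+o(d(x,x_0))$ at $x_0$ would force the linear functional $\langle a(x_0),\cdot\rangle$ to approximate, to relative error $o(1)$, each of these non-affine bump shapes: if $\phi$ collapses the holes the symmetric caps force $\abs{a(x_0)}\to\infty$, contradicting its boundedness (via reduction (ii)); if $\phi$ does not collapse them the directional ramps force $a(x_0)$ to be a positive multiple of each $v_i$, impossible. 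Hence $(U,\phi)$ is not a chart, a contradiction, so $\mu(P)=0$.

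\emph{Asymptotic doubling and its consequences.} Let $S=\{x:\limsup_{r\to0}\mu(B(x,2r))/\mu(B(x,r))=\infty\}$ and suppose $\mu(S)>0$. Reduce to a chart and decompose $S$ into countably many Borel pieces on which the blow-up scales and concentration rates are comparable; fix a positive-measure piece. At the relevant scales the rapid growth of $\mu(B(x,2r))/\mu(B(x,r))$ produces sub-balls carrying an arbitrarily small fraction of the mass of their parent ball; these low-mass sub-balls play the role of the porosity holes, and one runs the same sparse-scale bump construction inside them to obtain a Lipschitz $g$ that is nowhere differentiable along the piece — the smallness of the sub-ball masses being exactly what guarantees the nondifferentiability set retains positive measure. (Alternatively: show that $S$ is $\sigma$-porous and quote the previous paragraph.) Thus $\mu(S)=0$, i.e.\ $\mu$ is asymptotically doubling; the Vitali covering theorem, and hence the Lebesgue differentiation theorem for $\mu$, then follow from the classical arguments for asymptotically doubling measures on metric spaces, and (porosity being a scale-local metric condition) the porosity statement is in any case already covered above.

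\emph{Main obstacle.} The delicate point in both parts is the nowhere-differentiability of $g$ when the bad set is metrically (or measure-theoretically) thin: the bumps must be arranged so that $g$ is globally Lipschitz, so that the scale-$2^{-k}$ features genuinely survive at size $\asymp2^{-k}$ rather than being washed out by nearby scales, and — the real difficulty — so that their joint effect defeats \emph{every} candidate first-order expansion at \emph{every} point of the bad set, irrespective of how degenerate the chart map is infinitesimally there. This is precisely what forces the use of a sparse sequence of scales and of bump orientations that do not converge, and it is where reduction (ii) is essential. For the doubling statement there is the additional preliminary task of producing the low-mass sub-balls from the mere failure of doubling and checking that they still cover a set of positive measure.
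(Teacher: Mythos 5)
Your overall strategy—manufacturing Lipschitz functions that fail to admit a first-order expansion with respect to a fixed chart on the putative bad set—is indeed the right one; note, though, that the paper itself does not prove this theorem but quotes it from \cite{bate_speight}, so the only comparison available is with that argument, whose delicate core your sketch does not actually carry out. The genuine gap is in the porosity argument, at the step where the bumps are supposed to defeat every candidate derivative $a(x_0)$ for an arbitrary fixed chart $\varphi$. The dichotomy ``$\varphi$ collapses the holes / does not collapse them'' is not fine enough. In the non-collapsing case the directional ramps $\max(0,\langle v_i,\varphi(\cdot)-\varphi(y)\rangle)$ with globally pre-chosen, non-converging directions $v_i$ need not constrain $a(x_0)$ at all: if $\langle v_i,\varphi(z)-\varphi(y)\rangle\le 0$ throughout the relevant hole the ramp vanishes there, and more generally the ramp is by construction a function of $\varphi$, so whenever its kink locus misses $\varphi(\mathrm{hole})$ it is affine in $\varphi$ and is matched exactly by a suitable linear candidate. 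Likewise the metric cap is only an obstruction if it cannot be written, up to $o(2^{-k_i})$, as an affine function of $\varphi$ on the hole; ``non-collapsing'' does not exclude that $\varphi$ is non-injective there, or that it approximately factors through the cap profile, in which case the cap is matchable. Since $a(x_0)$ is a single vector which only has to survive one slab-type constraint per scale along your sparse sequence, you must show that for an arbitrary fixed $\varphi$ no $a$ survives at almost every $x_0$ simultaneously; this is precisely the hard point of the known proof (which is eased there by the observation that it suffices to produce countably many functions such that at a.e.\ point at least one fails to be differentiable), and your sketch replaces it by an inexhaustive case analysis. A smaller, fixable issue: sparseness of the scales alone does not make the infinite sum Lipschitz, because points arbitrarily close to $P$ lie in supports of unboundedly many scales; you need the bumps placed well inside the holes so that supports at different scales are disjoint near $P$.

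The doubling half is only a gesture. When $\mu(B(x,2r))\gg\mu(B(x,r))$, the low-mass ball is $B(x,r)$ itself, centered at the bad point; it is not disjoint from the bad set, so ``running the same sparse-scale bump construction inside it'' does not produce a function vanishing on the bad set, and the mechanism by which differentiability is contradicted must be redesigned (in \cite{bate_speight} this is done with functions built from distances to carefully chosen closed sets, the small relative mass being used to ensure the exceptional set keeps positive measure). The parenthetical alternative—that the non-doubling set is $\sigma$-porous, so the porosity case applies—is unjustified and false in general: failure of doubling is a purely measure-theoretic condition and creates no metric holes in the set. Two minor points: in your collapsing case reduction (ii) is not needed, since a fixed finite vector $a(x_0)$ is already contradicted once the caps force $|a(x_0)|$ to exceed every constant along a subsequence of scales; and the Vitali/Lebesgue consequences you defer to the classical theory of asymptotically doubling measures are fine as stated.
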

 
 It was shown in \cite[Lemma 8.3]{bate_jams} that if
 if $(X,\mu)$ is asymptotically doubling, there are
   countably many Borel sets $\{U_\alpha\}_\alpha$ such that
   $\mu(X\setminus\bigcup_\alpha U_\alpha)=0$ and such that each
   $U_\alpha$ is doubling as a metric space. Moreover, the sets $\{U_\alpha\}_\alpha$
   might be assumed to be closed or compact.

 \begin{remark}
   \label{rem:intro_blow_ups}
 In particular, at generic points of each $U_\alpha$,
 one can obtain blow-ups/tangent cones of $(X,\mu)$ by using Gromov's
 Compactness Theorem (see Section \ref{sec:geom_blowups}). In fact, as
 porous sets are $\mu$-null, blowing-up $(U_\alpha,\mu\on U_\alpha)$
 at a point $p$ of $U_\alpha$ which is a Lebesgue density point for $\mu$,
 and is also a point at which $U_\alpha$ is not porous in the ambient space $X$,
 will yield the same metric measure spaces as blowing-up $(X,\mu)$.
\end{remark}

 \par Recently Bate \cite{bate-diff} made a deep study of the
 structure of measures in differentiability spaces by using Alberti
 representations; in particular, he was able to obtain several
 characterizations of these spaces. For the sake of brevity we just
 summarize one characterization as follows:
 \begin{thm}
   \label{thm:bate_char}
   The metric measure space $(X,\mu)$ is a differentiability space if
   and only if:
   \begin{enumerate}
   \item The measure $\mu$ is asymptotically doubling and porous sets
     are $\mu$-null;\vskip2mm
   \item There is a Borel function $\tau:X\to(0,\infty)$ such that,
     for each real-valued Lipschitz function $f$, the measure $\mu$
     admits an Alberti representation with $f$-speed $\geq\tau\biglip f$.
   \end{enumerate}
 \end{thm}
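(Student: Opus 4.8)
The statement is an equivalence, and the only substantial implication is that a differentiability space satisfies condition (2): condition (1) is precisely the Bate--Speight theorem (Theorem~\ref{thm:as_doubling}), and the converse will follow from standard $\Lip$-$\lip$ theory. I would organize the proof of ``differentiability space $\Rightarrow$ (2)'' as follows. Fix a countable chart atlas $\{(U_\alpha,\phi_\alpha)\}$ with $\phi_\alpha\colon X\to\R^{n_\alpha}$ Lipschitz, $\Lip\phi_\alpha\le L_\alpha$, and fix a small $\delta>0$. The heart of the matter is a \emph{claim}: for each $\alpha$, each $v\in\mathbb S^{n_\alpha-1}$ and each $\varepsilon>0$, the measure $\mu\mrest U_\alpha$ admits a $(1,1+\varepsilon)$-biLipschitz Alberti representation in the $\phi_\alpha$-direction of $\cone(v,\delta)$ whose $\langle v,\phi_\alpha\rangle$-speed is bounded below by a strictly positive Borel function $\sigma_v$ on $U_\alpha$.

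Granting the claim, I would deduce (2) by a decomposition-and-glue argument. Given a Lipschitz $f$, expansion~(\ref{eq:taylor_exp}) yields on $U_\alpha$ the coefficient vector $a(x)=(\partial f/\partial\phi^i_\alpha(x))_i$, with $\Lip f(x)=\bigl\|\sum_i a^i(x)\,d\phi^i_\alpha\bigr\|_{\Lip}$ and $\Lip f(x)>0$ whenever $a(x)\ne0$ (pointwise independence of the $d\phi^i_\alpha$). Decompose $U_\alpha$ into countably many Borel pieces on which (i) $a/|a|\in\cone(v,\delta)$ for a fixed $v$ in a finite $\delta$-net of the sphere, (ii) $|a|$ and $\sigma_v$ lie in fixed dyadic ranges, and (iii) the remainder in~(\ref{eq:taylor_exp}) is uniform (Egorov). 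On such a piece $V$, restrict the representation from the claim to $V$, also cutting the fragments down so their images lie in $V$; then for a fragment $\gamma$ and a parameter $t$ with $\gamma(t)=x\in V$ and $u:=(\phi_\alpha\circ\gamma)'(t)$ one has $u\in\cone(v,\delta)$, $\langle v,u\rangle>\sigma_v(x)\md\gamma(t)$, $|u|\le L_\alpha\md\gamma(t)$, hence
\[
(f\circ\gamma)'(t)\ \ge\ \langle a(x),u\rangle-\varepsilon\,\md\gamma(t)\ \ge\ \bigl(\cos(2\delta)\,|a(x)|\,\sigma_v(x)-\varepsilon\bigr)\md\gamma(t),
\]
while $\Lip f(x)\le|a(x)|\bigl(\Lip(\langle v,\phi_\alpha\rangle)(x)+\sqrt{n_\alpha}\,\delta L_\alpha\bigr)$. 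The dyadic bounds in (ii) let one choose the Egorov scale in (iii) so small that $\varepsilon$ is absorbed, giving $(f\circ\gamma)'(t)\ge\tau(x)\,\Lip f(x)\,\md\gamma(t)$ on $V$ with $\tau:=\tfrac12\cos(2\delta)\cdot\min_v\sigma_v/\bigl(\Lip(\langle v,\phi_\alpha\rangle)+\sqrt{n_\alpha}\delta L_\alpha\bigr)$ --- positive, Borel, and depending only on the atlas and $\delta$, \emph{not} on $f$. Gluing over the pieces (Theorem~\ref{thm:alberti_glue}) and over $\alpha$ produces an Alberti representation of $\mu$ with $f$-speed $\ge\tau\Lip f$, which is (2).

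The claim is where essentially all the difficulty lies, and I would prove it by the Alberti--Csörnyei--Preiss method \cite{acp_plane,acp_proceedings,bate-diff}. By Theorem~\ref{thm:alberti_rep_prod} the biLipschitz refinement is automatic once \emph{any} Alberti representation of $\mu\mrest U_\alpha$ in the $\phi_\alpha$-direction of $\cone(v,\delta)$ with positive $\langle v,\phi_\alpha\rangle$-speed is available, so it suffices to rule out the possibility that, for some $\alpha,v$, there is a positive-measure $V\subset U_\alpha$ admitting no such representation on any positive-measure subset. Assuming this failure, and passing to a piece where the ambient space is doubling via \cite[Lemma 8.3]{bate_jams}, one runs a stopping-time/maximal-function construction producing a Lipschitz function $g\colon X\to\R$ whose difference quotients oscillate so badly that $g$ is \emph{not} differentiable with respect to $\{\phi^i_\alpha\}$ on a positive-measure subset of $U_\alpha$; this contradicts the defining property of the chart $(U_\alpha,\phi_\alpha)$. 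Constructing such an almost-everywhere non-differentiable Lipschitz function out of the non-existence of an Alberti representation is exactly the technical core of the argument and the main obstacle; everything surrounding it is bookkeeping with the structural results quoted above.

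For the converse I would argue as follows. Assume (1) and (2). Since $\mu$ is asymptotically doubling it is $\sigma$-finite and Vitali, so Lebesgue differentiation holds. Fix a Lipschitz $f$ and the Alberti representation $(P,\nu)$ with $f$-speed $\ge\tau\Lip f$ given by (2). Using the area formula~(\ref{eq:sem_metric_area}) to discard the zero-metric-derivative set, for $P$-a.e.\ $\gamma$ the measure $\nu_\gamma$ is carried by $\gamma(E_\gamma)$, where $E_\gamma\subset\dom\gamma$ is the full-measure set of parameters $t$ that are density points of $\dom\gamma$, at which $\md\gamma(t)>0$, at which $\gamma$ is metrically differentiable (property (MD1)), and at which $(f\circ\gamma)'(t)\ge\tau(\gamma(t))\Lip f(\gamma(t))\md\gamma(t)$; disintegrating $\mu=\int\nu_\gamma\,dP(\gamma)$ then shows that $\mu$-a.e.\ $x$ equals $\gamma(t)$ for some such $(\gamma,t)$. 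For every small $r$, density of $\dom\gamma$ at $t$ yields an $s$ with $d_X(\gamma(t+s),x)\in[r/8,r]$, and metric differentiability plus the speed bound give $|f(\gamma(t+s))-f(x)|\ge\tfrac14\,\tau(x)\Lip f(x)\,r-o(r)$; since $r\mapsto\sup_{d_X(x,y)\le r}|f(y)-f(x)|$ is monotone, it follows that $\lip f(x)\ge\tfrac18\,\tau(x)\Lip f(x)$ for $\mu$-a.e.\ $x$. Hence on each Borel set $\{\tau\ge1/k\}$ one has $\Lip f\le 8k\,\lip f$ $\mu$-a.e., simultaneously for all Lipschitz $f$; applying Keith's theorem \cite{keith} to $(X,\mu\mrest\{\tau\ge1/k\})$ --- whose measure is again asymptotically doubling at a.e.\ point --- and collecting the resulting charts over $k$ (the sets $\{\tau\ge1/k\}$ exhaust $X$) shows that $(X,\mu)$ is a differentiability space.
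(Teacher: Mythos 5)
The paper does not prove this statement at all: Theorem~\ref{thm:bate_char} is quoted as a summary of Bate's characterization and is simply cited from \cite{bate-diff}; the only related argument the paper supplies is Theorem~\ref{thm:alb_speed_one}, the sharper statement that one may take the speed to be $\sigma\biglip f$ for any $\sigma<1$, and that proof takes Bate's cone-direction result (Theorem~\ref{thm:bate_alb_arb_dir}) as a black box. Measured against that, your proposal has a genuine gap exactly where the theorem's substance lies. Your ``claim'' --- that in a differentiability space $\mu\mrest U_\alpha$ admits $(1,1+\varepsilon)$-biLipschitz Alberti representations in the $\phi_\alpha$-direction of an arbitrary cone, with $\langle v,\phi_\alpha\rangle$-speed bounded below by a positive Borel function --- \emph{is} (up to the speed refinement) Bate's theorem, and it is the part of \cite{bate-diff} that occupies most of that paper. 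Your treatment of it (``a stopping-time/maximal-function construction producing a Lipschitz function whose difference quotients oscillate so badly that it is not differentiable\dots is exactly the technical core\dots everything surrounding it is bookkeeping'') is an acknowledgment that the hard step is omitted, not a proof of it: nothing in the proposal explains how non-existence of such representations on a positive-measure set is converted into a single Lipschitz function that fails to be differentiable on a positive-measure set, which requires the full Alberti--Cs\"ornyei--Preiss machinery and is not a routine maximal-function argument. Granting the claim, your decomposition-and-gluing derivation of (2) is essentially sound (it parallels the reduction used for Theorem~\ref{thm:alb_speed_one}, with the minor caveats that the chain rule along fragments is exact at differentiability points, that fragments must be trimmed when restricting to the Egorov pieces, and that the gluing principle Theorem~\ref{thm:alberti_glue} must be applied in its speed-only form since your pieces use different cones), but it cannot stand without the claim.

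Two smaller points in the converse also need attention. First, Keith's theorem as proved in \cite{keith} assumes a doubling measure; under hypothesis (1) the measure is only asymptotically doubling, so you must either invoke the extension of Keith's result to pointwise doubling measures or first decompose via \cite[Lemma 8.3]{bate_jams}. Second, you apply the Lip-lip inequality to the spaces $\bigl(X,\mu\mrest\{\tau\ge 1/k\}\bigr)$, but the inequality you derived is for the pointwise constants computed in $X$; transferring it to Lipschitz functions on the subset, with $\biglip$ and $\smllip$ computed intrinsically there, uses the hypothesis that porous sets are $\mu$-null (a.e.\ non-porosity of the piece), which you never invoke. Both are fixable, but as written the converse is also incomplete; the derivation of $\smllip f\ge\tfrac18\tau\biglip f$ from (2) via metric differentiability of fragments is fine.
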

 In \cite{deralb} it was shown that one may take $\tau=1$: in
 subsection \ref{subsec:Liplip} we provide a proof of this fact which
 is independent of the results in \cite{deralb}. To put this in
 perspective we recall the following definition:
 \begin{defn}
   \label{defn:Liplip}
   Let $f:X\to\real$ be Lipschitz. The \textbf{lower pointwise Lipschitz
     constant of $f$ at $x$} is:
   \begin{equation}
     \label{eq:Liplip1}
     \smllip
     f(x)=\liminf_{r\searrow0}\sup\left\{\frac{\left|f(y)-f(x)\right|}{r}:\xdst
         x,y.\le r\right\}.
     \end{equation}
   \end{defn}
   In \cite{keith} it was shown that the existence of a measurable
   differentiable structure follows under the assumption that
   $(X,\mu)$ satisfies a \textbf{Lip-lip inequality}: this means that
   there is a $K\geq1$ such that, for each real-valued Lipschitz
   function $f$, one has:
   \begin{equation}
     \label{eq:Liplip_ineq}
     \biglip f(x)\le K\smllip f(x)\quad(\text{for $\mu$-a.e.~$x$}).
   \end{equation}
   In particular, Theorem \ref{thm:bate_char} implies that in a
   differentiability space the Lip-lip inequality holds by
   replacing the constant $K$ with the function $\tau$; thus, showing that one can
   take $\tau=1$ implies that the Lip-lip inequality self-improves to
   an equality. For the case of PI-spaces, the Lip-lip equality was a
   main result of \cite{cheeger}, which followed from the more general
   result that, for $p>1$, $\biglip f$ is a representative of the
   \emph{minimal generalized upper gradient} of $f$.
   \par The result of \cite{bate-diff} that we will mainly use is the
   existence of Alberti representations in the directions of arbitrary cones:
   \begin{thm}
  \label{thm:bate_alb_arb_dir}
  Let $(U,\psi)$ be an $N$-dimensional differentiability chart for
    the differentiability space $(X,\mu)$; then for each
    $v\in\mathbb{S}^{N-1}$ and each $\theta\in(0,\pi/2)$, the measure
    $\mu\mrest U$ admits an Alberti representation in the
    $\psi$-direction of\/ $\cone(v,\theta)$.
\end{thm}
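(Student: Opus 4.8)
The plan is to deduce this from Bate's structure theory for measures in differentiability spaces \cite{bate-diff} (to which I would refer for the substantial details); below I outline the shape of the argument and isolate the essential difficulty. First I would reduce to a local statement. By the gluing principle (Theorem \ref{thm:alberti_glue}) together with an exhaustion argument — take a pairwise disjoint family $\{A_i\}$ of Borel subsets of $U$, maximal among those for which $\mu\mrest A_i$ carries an Alberti representation in the $\psi$-direction of $\cone(v,\theta)$; this family is countable because $\mu$ is $\sigma$-finite, and $\mu(U\setminus\bigcup_iA_i)=0$ once the local statement is known — it suffices to show: every Borel $A\subset U$ with $\mu(A)>0$ contains a Borel subset $A'$, $\mu(A')>0$, with $\mu\mrest A'$ admitting an Alberti representation in the $\psi$-direction of $\cone(v,\theta)$. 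It is convenient to restate the cone condition using all $N$ coordinates: extend $v=w_1,\dots,w_N$ to an orthonormal basis of $\R^N$ and put $\ell_j=\langle w_j,\psi\rangle$, so that $(U,(\ell_1,\dots,\ell_N))$ is again an $N$-dimensional chart; then $(\psi\circ\gamma)'(t)\in\cone(v,\theta)$ holds precisely when $(\ell_1\circ\gamma)'(t)>0$ and $\sum_{j\ge 2}((\ell_j\circ\gamma)'(t))^2<\tan^2\theta\,((\ell_1\circ\gamma)'(t))^2$. Thus what is needed is an Alberti representation whose fragments move, infinitesimally and as seen in the chart coordinates, in a direction within angle $\theta$ of $v$.

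The heart of the matter is the construction of Alberti representations with this directional control, and this is the deep input from \cite{bate-diff}. As a first approximation, applying part (2) of Theorem \ref{thm:bate_char} to the Lipschitz function $\ell_v=\langle v,\psi\rangle$ — which has $\Lip\ell_v>0$ at $\mu$-a.e.\ point of $U$, since $\psi_1,\dots,\psi_N$ are independent a.e.\ on $U$ and hence so is every nontrivial linear combination of them — produces an Alberti representation of $\mu$ with $\ell_v$-speed $\ge\tau\Lip\ell_v$; restricting it to $A$ and (by Theorem \ref{thm:alberti_rep_prod}) arranging biLipschitz constants close to $1$, one obtains a representation of $\mu\mrest A$ whose $\psi$-velocity lies in the open half-space $\{\langle v,\cdot\rangle>0\}$. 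Narrowing this half-space to $\cone(v,\theta)$ cannot be done with $\ell_v$ alone: its speed is bounded below, but the transverse speeds $(\ell_j\circ\gamma)'$, $j\ge 2$, are not thereby made small, and there is no ambient Euclidean structure or group of dilations to exploit. Instead, following Bate, I would build the fragments directly, advancing ``in the $\ell_v$-direction as registered by $\psi$'' and using the differentiability of the $\ell_j$ with respect to $\psi$, together with the identity $\ell_v(x)-\ell_v(x_0)=\langle v,\psi(x)-\psi(x_0)\rangle$, to force the chart velocity to remain within angle $\theta$ of $v$ throughout the construction; this is carried out on a further positive-measure subset $A'\subset A$, and is exactly where the hypothesis that $(X,\mu)$ is a differentiability space is used in full strength, via Bate's adaptation to metric spaces of the Alberti--Csornyei--Preiss circle of ideas (cf.\ \cite{alberti_rank_one,acp_plane}). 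Together with the reduction of the first paragraph, this yields the theorem.

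The main obstacle is precisely this last construction — producing Alberti representations whose fragments are infinitesimally ``straight'' in the chart, with $\psi$-velocity confined to an arbitrarily narrow cone about a prescribed direction. Mere positivity of $\ell_v$-speed controls only one linear functional and is far too weak; the narrow-cone statement uses the full $N$-dimensional chart essentially. By contrast, once such directional control is available, the supporting steps — normalising biLipschitz constants (Theorem \ref{thm:alberti_rep_prod}), patching positive-measure pieces (Theorem \ref{thm:alberti_glue}), and the exhaustion — are routine.
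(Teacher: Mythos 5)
Your proposal takes essentially the same route as the paper: the paper does not prove this statement but recalls it as a result of Bate \cite{bate-diff}, and your argument likewise defers the one substantive step---producing Alberti representations whose $\psi$-velocities are confined to an arbitrarily narrow cone about $v$---to Bate's construction. The surrounding reductions you sketch (gluing via Theorem \ref{thm:alberti_glue}, exhaustion, rewriting the cone condition in chart coordinates, and the observation that speed control for the single function $\langle v,\psi\rangle$ only yields half-space control) are correct but routine, so your treatment matches the paper's.
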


 \subsection{Measurable Vector Bundles}
\label{subsec:meas_vec_bund}

In this paper we will work with measurable subbundles of the tangent
and cotangent bundles associated to a differentiability space. Since
we deal with different (measurable) seminorms on these subbundles, we
need to introduce a bit of terminology to make the treatment
precise. Let $(\Omega,\Sigma)$ be a measurable space; a
\def\bs{\V}
\textbf{$\Sigma$-measurable vector bundle over $\Omega$} is a quadruple
\def\bi{I_{\bs}}
\def\bcap{I_{\bs,\cap}}
\def\gtrans#1.{g_{\setbox0=\hbox{$#1\unskip$}\ifdim\wd0=0pt \alpha,\beta
    \else #1\fi}}
\def\glingroup#1.{\text{\normalfont GL}\left({\setbox0=\hbox{$#1\unskip$}\ifdim\wd0=0pt \real^{N_\alpha}
    \else #1\fi}\right)}
$(\bi,\{N_\alpha\}_{\alpha\in\bi},\{U_\alpha\}_{\alpha\in\bi},\{\gtrans.\}_{(\alpha,\beta)\in\bcap})$
such that:
\begin{enumerate}
\item The index set $\bi$ is countable and
  $\{U_\alpha\}_{\alpha\in\bi}$ is a cover of $\Omega$ consisting of
  $\Sigma$-measurable sets;
  \vskip2mm
\item Each $N_\alpha$ is a nonnegative integer and if $U_\alpha\cap
  U_\beta\ne\emptyset$, then $N_\alpha=N_\beta$;
    \vskip2mm
\item The (possibly empty set) $\bcap$ consists of those pairs
  $(\alpha,\beta)\in\bi\times\bi$ such that $U_\alpha\cap
  U_\beta\ne\emptyset$;
    \vskip2mm
\item Each $\gtrans.$ is a $\Sigma$-measurable map
  $\gtrans.:U_\alpha\cap U_\beta\to\glingroup.$.
\end{enumerate}
If $N=\sup_\alpha N_\alpha<\infty$ the bundle $\bs$ is said \textbf{to
  have finite dimension $N$}.
\par A \textbf{section $\sigma$ of $\bs$} is a collection
$\{\sigma_\alpha\}_{\bi}$ of $\Sigma$-measurable maps
$\sigma_\alpha:U_\alpha\to\real^{N_\alpha}$ such that:
\begin{equation}
  \label{eq:sec_compa}
  \gtrans.\circ\sigma_\alpha=\sigma_\beta.
\end{equation}
\par A measurable subbundle of $\bs$ is a measurable choice of a
hyperplane in each fibre. \def\grass#1,#2.{\text{\normalfont Gr}\left({\setbox0=\hbox{$#1\unskip$}\ifdim\wd0=0pt \real^{N_\alpha}
      \else #1\fi},{\setbox1=\hbox{$#2\unskip$}\ifdim\wd1=0pt M_\alpha{} \else #2\fi}\right)}
More precisely, let $\grass \real^N,k.$ denote the Grassmanian of unoriented $k$-dimensional
planes in $\real^{N}$; then a \textbf{subbundle} \def\sbs{\W}
$\sbs$ of $\bs$ is a pair $(\{M_\alpha\}_{\bi},\{\phi_\alpha\}_{\bi})$ such that:
\begin{enumerate}
\item Each nonnegative integer $M_\alpha$ satisfies $M_\alpha\le
  N_\alpha$ and if $(\alpha,\beta)\in\bcap$, then $M_\alpha=M_\beta$;\vskip2mm
\item Each $\phi_\alpha$ is a $\Sigma$-measurable map
  $\phi_\alpha:U_\alpha\to\grass,.$;\vskip2mm
\item For each pair $(\alpha,\beta)\in\bcap$ the following
  compatibility condition holds:
  \begin{equation}
    \label{eq:subb_comp}
    \gtrans.\left(\phi_\alpha(x)\right)=\phi_\beta(x)\quad(\forall x\in
    U_\alpha\cap U_\beta).
  \end{equation}
\end{enumerate}
\par We now turn to the construction of seminorms on $\bs$ (or on a
subbundle). Let \def\semnrm#1.{\text{\normalfont Sem}\left({\setbox0=\hbox{$#1\unskip$}\ifdim\wd0=0pt \real^{N_\alpha}
      \else #1\fi}\right)}
\def\gsemnrm#1.{\text{\normalfont Sem}_{+\infty}\left({\setbox0=\hbox{$#1\unskip$}\ifdim\wd0=0pt \real^{N_\alpha}
      \else #1\fi}\right)}
$\semnrm \real^N.$ denote the set of seminorms on $\real^N$, and let
$\gsemnrm.=\semnrm.\cup\{+\infty\}$, which is viewed as the
one-point compactification of $\semnrm.$. The element $+\infty$ is
interpreted as the real-valued function on $\real^N$ which assigns
value $+\infty$ to any non-zero vector and value $0$ to the null vector. A
seminorm (resp.~a generalized seminorm)
$\gennrmname$ on $\bs$ is a collection $\{\alnrmname\}_{\alpha\in\bi}$
of $\Sigma$-measurable maps $\alnrmname:U_\alpha\to\semnrm.$
(resp.~$\gsemnrm.$) which
satisfy, for each $(\alpha,\beta)\in\bcap$ and each
$v\in\real^{N_\alpha}$, the following compatibility condition:
\begin{equation}
  \label{eq:sem_comp}
  \alnrm v.(x)=\benrm{\gtrans.(v)}.(x)\quad(\forall x\in U_\alpha\cap U_\beta).
\end{equation}
\par We will essentially work with measurable bundles where
$\Omega=X$, a complete separable metric space, and where $\Sigma$ is
the Borel $\sigma$-algebra. However, in the case of a metric measure
space $(X,\mu)$, we implicitly identify vector bundles, sections and
seminorms which agree $\mu$-a.e. For example, consider two Borel vector
bundles
$\bs=(\bi,\{N_\alpha\}_{\alpha\in\bi},\{U_\alpha\}_{\alpha\in\bi},\{\gtrans.\}_{(\alpha,\beta)\in\bcap})$
and 
$\bs'=(\bi',\{N'_{\alpha'}\}_{\alpha'\in\bi'},\allowbreak\{U'_{\alpha'}\}_{\alpha'\in\bi'},\{\gtrans.'\}_{(\alpha',\beta')\in\bcap'})$ over $X$; we identify them if:
\begin{enumerate}
\item Whenever $\mu(U_\alpha\cap U'_{\alpha'})>0$ one has
  $N_\alpha=N'_{\alpha'}$;
    \vskip2mm
\item Whenver $\mu(U_\alpha\cap U'_{\alpha'})>0$ there are a
  $\mu$-full measure subset $V_{\alpha,\alpha'}\subset U_\alpha\cap
  U'_{\alpha'}$ and a Borel map
  $G_{\alpha,\alpha'}:V_{\alpha,\alpha'}\to\glingroup.$, such that,
  if $\mu(U_\beta\cap U'_{\beta'})>0$, one has:
  \begin{equation}
    \label{eq:equiv_bundles}
    G_{\beta,\beta'}\circ\gtrans.(x)=\gtrans\alpha',\beta'.\circ
    G_{\alpha,\alpha'}(x)\quad\text{(for $\mu$-a.e.~$x\in
      V_{\alpha,\alpha'}\cap V_{\beta,\beta'})$}.
  \end{equation}
\end{enumerate}
\par To construct seminorms on measurable vector bundles we will use
often the following lemma.
\begin{lemma}
  \label{lem:sup_of_seminorms}
  Let $\bs$ be a measurable vector bundle over $X$ and let $\left\{\tanrmname\right\}_{\tau\in T}$ be a
  countable collection of seminorms on $\bs$. Then for $x\in U_\alpha$ and
  $v\in\real^{N_\alpha}$ let
  \begin{equation}
    \label{eq:sup_of_seminorms_s2}
    \left\|v\right\|_{T,\alpha}(x)=\sup_{\tau\in T}\left\|v\right\|_{\tau,\alpha}(x);
  \end{equation}
  then $\left\{\|\,\cdot\,\|_{T,\alpha}\right\}_{\alpha\in\bi}$
  defines a seminorm $\Tnrmname$ on $\bs$, which we call the
  \textbf{supremum of the seminorms $\left\{\Tnrmname\right\}_{\tau\in
      T}$}. Moreover, suppose that there are a seminorm
  $\gennrmname$ on $\bs$  and a 
  $C\ge0$ such that:
  \begin{equation}
    \label{eq:sup_of_seminorms_s1}
    \tanrmname\le C\gennrmname
  \end{equation}
  holds uniformly in $\tau$.  Then $\Tnrmname$ is a seminorm on $\bs$
  and one has:
  \begin{equation}
    \label{eq:sup_of_seminorms_s3}
    \Tnrmname\le C\gennrmname.
  \end{equation}
  \par Suppose now that $\mu$ is a $\sigma$-finite Borel measure on
  $X$ and let $\{\omnrmname.\}_{\omega\in\Omega}$ be a collection of
  seminorms on $\bs$ which is allowed to be uncountable. Then there is
  a $\mu$-a.e.~unique generalized seminorm $\Omnrmname$, called the
  \textbf{essential supremum} of the collection
  $\{\omnrmname.\}_{\omega\in\Omega}$, which satisfies the following
  properties:
  \begin{description}
    \item[(Ess-sup1)] For each section $\sigma$ of $\bs$ and each
      $\omega\in\Omega$ one has:
  \begin{equation}
    \label{eq:sup_of_seminorms_s4}
    \Omnrm\sigma.\ge\omnrm\sigma.\quad{\text{$\mu$-a.e.;}}
  \end{equation}
\item[(Ess-sup2)] If $\Omnrmname'$ is another generalized seminorm
  satisfying (\ref{eq:sup_of_seminorms_s4}), then one has:
  \begin{equation}
    \label{eq:sup_of_seminorms_s5}
    \Omnrmname'\ge\Omnrmname\quad{\text{$\mu$-a.e.}}
  \end{equation}
\end{description}
Moreover, if there are a seminorm
  $\gennrmname$ on $\bs$  and a 
  $C\ge0$ such that:
  \begin{equation}
    \label{eq:sup_of_seminorms_s6}
    \omnrmname\le C\gennrmname
  \end{equation}
  holds $\mu$-a.e.~and uniformly in $\omega$, then $\Omnrmname$ can be taken to be a
  seminorm satisfying:
  \begin{equation}
    \label{eq:sup_of_seminorms_s7}
    \Omnrmname\le C\gennrmname\quad\text{$\mu$-a.e.}
  \end{equation}
\end{lemma}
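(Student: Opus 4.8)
The plan is to prove the five assertions in order: the two statements about the countable supremum, then the existence, essential-supremum property, uniqueness, and (under domination) finiteness of $\|\cdot\|_\Omega$.

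\emph{The countable supremum.} This is fibrewise. Fix a chart $U_\alpha$; for $x\in U_\alpha$ and $v\in\R^{N_\alpha}$ the quantity $\|v\|_{T,\alpha}(x)$ of \eqref{eq:sup_of_seminorms_s2} is, at each fixed $x$, a pointwise supremum over $\tau$ of genuine seminorms on $\R^{N_\alpha}$, hence nonnegative, vanishing at $0$, symmetric, positively homogeneous, and subadditive --- i.e.\ a generalized seminorm on $\R^{N_\alpha}$ (and a finite one precisely when the family $\{\|\cdot\|_{\tau,\alpha}(x)\}_\tau$ is pointwise, equivalently uniformly, bounded). For measurability, recall that the Borel structure on the space of (generalized) seminorms on $\R^{N_\alpha}$ is generated by the evaluations $p\mapsto p(v)$ for $v$ in a countable dense set; for each such $v$ the map $x\mapsto\|v\|_{T,\alpha}(x)$ is a countable supremum of $\Sigma$-measurable functions, hence $\Sigma$-measurable, so $\|\cdot\|_{T,\alpha}$ is $\Sigma$-measurable. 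The compatibility relation \eqref{eq:sem_comp} for $\|\cdot\|_T$ follows by taking the supremum over $\tau$ of the same relations for the $\|\cdot\|_\tau$. This gives the first assertion. For the second, if $\|\cdot\|_\tau\le C\|\cdot\|_{\mathrm{gen}}$ for all $\tau$ then $\|v\|_{T,\alpha}(x)\le C\|v\|_{\mathrm{gen},\alpha}(x)<\infty$ for all $x,v$, so $\|\cdot\|_{T,\alpha}(x)$ is a genuine seminorm for every $x$ and \eqref{eq:sup_of_seminorms_s3} holds pointwise.

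\emph{Reduction to a scalar essential supremum.} Since only $\mu$-null sets matter and $\mu$ is $\sigma$-finite, assume $\mu$ finite. Fix for each $\alpha$ a countable dense $Q_\alpha\subset\R^{N_\alpha}$. For each $\alpha$ and $v\in Q_\alpha$ apply the classical fact that a family of measurable functions on a finite measure space has an essential supremum realised as a countable supremum --- with $\beta\colon[0,\infty]\to[0,1]$ a bounded strictly increasing function such as $\arctan$, let $M=\sup\{\int_{U_\alpha}\beta(\sup_{\omega\in S}\|v\|_{\omega,\alpha})\,d\mu : S\subset\Omega \text{ countable}\}$, pick $S_n$ with integrals tending to $M$, set $S_{\alpha,v}=\bigcup_n S_n$ --- to get a countable $S_{\alpha,v}\subset\Omega$ with $\sup_{\omega\in S_{\alpha,v}}\|v\|_{\omega,\alpha}=\operatorname{ess\,sup}_{\omega\in\Omega}\|v\|_{\omega,\alpha}$ $\mu$-a.e.\ on $U_\alpha$. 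Put $S:=\bigcup_{\alpha,\,v\in Q_\alpha}S_{\alpha,v}$, still countable, and define $\|\cdot\|_\Omega:=\sup_{\omega\in S}\|\cdot\|_\omega$, a generalized seminorm on $\V$ by the first assertion. Since $S\supseteq S_{\alpha,v}$, and each $\|v\|_{\omega,\alpha}\le\operatorname{ess\,sup}_{\omega'}\|v\|_{\omega',\alpha}$ $\mu$-a.e.\ with $S$ countable, we get $\|v\|_{\Omega,\alpha}=\operatorname{ess\,sup}_{\omega\in\Omega}\|v\|_{\omega,\alpha}$ $\mu$-a.e.\ on $U_\alpha$ for all $\alpha$ and $v\in Q_\alpha$. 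I also note that $\V$ carries, for each $\alpha$ and $v\in Q_\alpha$, a global section $\sigma^{\alpha,v}$ with $\sigma^{\alpha,v}_\alpha\equiv v$ (extend $v$ over the overlaps via the transition maps $g_{\alpha\beta}$ and by $0$ elsewhere; the cocycle relations make this a section), and these sections are dense on a.e.\ fibre.

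\emph{The essential-supremum property and uniqueness.} For an arbitrary section $\sigma$ and $\omega_0\in\Omega$, set $S_1:=S\cup\{\omega_0\}$ and $\|\cdot\|_{S_1}:=\sup_{\omega\in S_1}\|\cdot\|_\omega=\max(\|\cdot\|_\Omega,\|\cdot\|_{\omega_0})$. For each $v\in Q_\alpha$, $\|v\|_{S_1,\alpha}=\max(\|v\|_{\Omega,\alpha},\|v\|_{\omega_0,\alpha})=\|v\|_{\Omega,\alpha}$ $\mu$-a.e., because $\|v\|_{\Omega,\alpha}$ is the scalar essential supremum and hence dominates $\|v\|_{\omega_0,\alpha}$ a.e.; thus $\|\cdot\|_{S_1}$ and $\|\cdot\|_\Omega$ agree on $Q_\alpha$ for $\mu$-a.e.\ $x\in U_\alpha$, and since $\|\cdot\|_\Omega\le\|\cdot\|_{S_1}$ everywhere and two generalized seminorms comparable on a dense set are comparable everywhere (the smaller is finite wherever the larger is, and continuous there; where the larger is the distinguished $+\infty$ element so is the smaller), they agree on the whole fibre $\mu$-a.e. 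Hence $\|\sigma\|_{\omega_0}\le\|\sigma\|_{S_1}=\|\sigma\|_\Omega$ $\mu$-a.e., which is (Ess-sup1). For (Ess-sup2), if $\|\cdot\|'_\Omega$ also satisfies (Ess-sup1), then applying it to the sections $\sigma^{\alpha,v}$ and the countably many $\omega\in S$ gives $\|v\|'_{\Omega,\alpha}\ge\sup_{\omega\in S}\|v\|_{\omega,\alpha}=\|v\|_{\Omega,\alpha}$ $\mu$-a.e.\ on $U_\alpha$ for all $v\in Q_\alpha$, and the same fibrewise-density argument yields $\|\cdot\|'_\Omega\ge\|\cdot\|_\Omega$ $\mu$-a.e.; uniqueness up to $\mu$-null sets follows. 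Finally, under \eqref{eq:sup_of_seminorms_s6} the scalar essential suprema obey $\operatorname{ess\,sup}_\omega\|v\|_{\omega,\alpha}\le C\|v\|_{\mathrm{gen},\alpha}$ a.e., so $\|\cdot\|_\Omega\le C\|\cdot\|_{\mathrm{gen}}$ $\mu$-a.e.; in particular $\|\cdot\|_\Omega$ is finite, hence a genuine seminorm, giving \eqref{eq:sup_of_seminorms_s7}.

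\emph{Main obstacle.} The one genuinely delicate point is the fibrewise passage from comparability of generalized seminorms on a countable dense set of vectors to comparability on all vectors, which forces one to track the topology of the space of seminorms and its one-point compactification, and which is exactly why the uncountable case cannot be handled by a naive pointwise supremum. Under the domination hypotheses \eqref{eq:sup_of_seminorms_s6} --- which hold in every application in this paper, the seminorms there arising from differentials of Lipschitz functions and so being dominated by the $\|\cdot\|_{\operatorname{Lip}}$-norm --- all generalized seminorms in sight are finite, hence continuous, and this passage is routine; in the undominated case one argues throughout in the category of generalized seminorms (equivalently, via their values on a fixed countable dense set of vectors), and the only item of topological bookkeeping requiring care is verifying that the countable supremum of the first assertion again takes values in $\operatorname{Sem}_{+\infty}(\R^{N_\alpha})$.
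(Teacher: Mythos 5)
Your proof is correct in substance but follows a genuinely different route from the paper's, and it contains one gap --- exactly the point you defer at the end. On the route: for the uncountable family the paper reduces to a single chart, puts a product probability measure on $U\times S^{N-1}$ (base measure times normalized surface measure on the unit sphere of an auxiliary norm), enlarges the family so that it is upward-filtering, and maximizes the expectation of a bounded increasing compression of $\|\cdot\|_{\omega}$; the essential supremum is the supremum of a maximizing sequence, and \textbf{(Ess-sup1)}, \textbf{(Ess-sup2)} are verified against that single scalar optimization over the sphere bundle. You instead run the classical scalar essential-supremum construction once for each vector $v$ in a countable dense set $Q_\alpha$ of each fibre model, collect the countable witnessing subfamilies into one countable $S\subset\Omega$, and transfer the a.e.\ inequalities from $Q_\alpha$ to whole fibres by continuity, testing \textbf{(Ess-sup2)} against explicitly constructed sections (your $\sigma^{\alpha,v}$ tacitly use the cocycle identities for the transition maps, which the paper's definition leaves implicit but clearly intends). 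Your route avoids the filtering device and the spherical measure; the paper's route avoids choosing dense vectors, but its inequality is initially only for a.e.\ $(x,v)$ with respect to the product measure and is upgraded to all $v$ by the same continuity/finiteness considerations, so the delicate point below is common to both. Under the domination hypothesis \eqref{eq:sup_of_seminorms_s6} --- the only case used in the paper --- your proof is complete as written.

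The gap: without \eqref{eq:sup_of_seminorms_s6}, the raw countable supremum $\sup_{\omega\in S}\|\cdot\|_{\omega}$ need not take values in the paper's class (a finite seminorm or the distinguished $+\infty$): at a point it can be finite exactly on a proper linear subspace, and then your dense-set comparison, and with it \textbf{(Ess-sup1)}, can genuinely fail for the raw supremum. Concretely, on the trivial rank-two bundle over $([0,1],\mathcal{L}^1)$ take the constant seminorms $\|v\|_{\omega_n}=n\,|\sqrt2\,v_1-v_2|$ together with $\|v\|_{\omega_0}=\|v\|_{l^2}$: every nonzero rational vector has infinite scalar essential supremum, so your construction may legitimately output $S=\{\omega_n\}_{n\ge1}$, whose supremum vanishes on the line $v_2=\sqrt2\,v_1$ and is beaten there by $\|\cdot\|_{\omega_0}$, so \textbf{(Ess-sup1)} fails for the constant section $(1,\sqrt2)$. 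The repair is short. The finiteness locus of a countable supremum of seminorms is a linear subspace, hence it is proper if and only if the supremum is infinite at some $v\in Q_\alpha$; on the (measurable) set of such $x$ redefine $\|\cdot\|_{\Omega,\alpha}(x)$ to be the distinguished $+\infty$, keeping the raw supremum elsewhere. Then \textbf{(Ess-sup1)} holds trivially at the modified points and by your continuity argument at the others (there the supremum is finite at all rational vectors, hence finite everywhere, hence a continuous sublinear function); and \textbf{(Ess-sup2)} survives because a competitor satisfying \eqref{eq:sup_of_seminorms_s4}, tested on the sections $\sigma^{\alpha,v}$ and the countably many $\omega\in S$, dominates the raw supremum at every $v\in Q_\alpha$ for a.e.\ $x$, hence at a modified point takes the value $+\infty$ at some rational vector and is therefore itself the distinguished $+\infty$ there, while at the remaining points the comparison extends from $Q_\alpha$ by continuity. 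The paper's own statement and proof gloss over the same issue (its construction is also a countable supremum), so this is a refinement rather than a divergence; under \eqref{eq:sup_of_seminorms_s6} it is vacuous.
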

\begin{proof}
  The proof that $\Tnrmname$ defines a generalized seminorm, which is
  also a norm under the additional assumption
  (\ref{eq:sup_of_seminorms_s1}), follows by unwinding the definition
  of a measurable vector bundle. To prove the second part of Lemma
  \ref{lem:sup_of_seminorms} we use the approach of
   \cite[Prop.~5.4.7]{elliott_fin}.
  \par We first observe that \textbf{(Ess-sup1)} and
  \textbf{(Ess-sup2)} are properties that hold up to $\mu$-null sets,
  and thus we can construct $\Omnrmname$ independently on each
  $\bs|U_\alpha$, where $\bs|U_\alpha$ denotes the union of the fibres
  of $\bs$ over the points $x\in U_\alpha$. We can therefore assume
  that the cardinality of $\bi$ is one and identify $\bs$ with the
  product $U\times \real^N$. Without loss of generality we can also
  assume that $\mu$ is a probability measure on $U$. We take a norm
  $\emptynrmname$ on $\real^N$, and denote by $S^{N-1}$ and $\hmeas
  N-1.$ the corresponding unit ball and $(N-1)$-dimensional Hausdorff
  measure. We finally let $\pi$ be the probability measure
  \begin{equation}
    \label{eq:sup_of_seminorms_p1}
    \mu\otimes\hmeas N-1.\mrest S^{N-1}/\hmeas N-1.(S^{N-1}).
  \end{equation}
  We also observe that, by possibly increasing $\Omega$, we can assume
  that the collection $\{\omnrmname.\}_{\omega\in\Omega}$ is
  \emph{upward-filtering}, i.e.~for all pairs$(\omega,\omega')\in \Omega^2$
  there is an $\omega''\in\Omega$ satisfying:
  \def\fknrm{\left\|\,\cdot\,\right\|}
  \begin{equation}
    \fknrm_{\omega''}=\max\left\{\fknrm_{\omega},\fknrm_{\omega'}\right\}.
  \end{equation}
  \par We now consider the increasing homeomorphism:
  \begin{equation}
    \label{eq:sup_of_seminorms_p2}
    \begin{aligned}
      \varphi:&\real\to(0,1)\\
      t&\mapsto\frac{e^t}{e^t+1},
    \end{aligned}
  \end{equation}
  and observe that the random variables
  $\left\{\varphi\left(\omnrmname\right)\right\}_{\omega\in\Omega}$
  are all nonnegative and have $\pi$-expectations satisfying:
  \begin{equation}
    \label{eq:sup_of_seminorms_p3}
    E\left[\varphi\left(\omnrmname\right)\right]\le 1.
  \end{equation}
  Thus the supremum:
  \begin{equation}
    \label{eq:sup_of_seminorms_p4}
    q=\sup\left[E\left[\varphi\left(\omnrmname\right)\right]: \omnrmname\in\Omega\right\}
  \end{equation}
  is finite, and we let $T=\{\fknrm_{\omega_n}\}$ denote a maximizing
  sequence:
  \begin{equation}
    \label{eq:sup_of_seminorms_p5}
    \lim_{n\to\infty}E\left[\varphi\left(\fknrm_{\omega_n}\right)\right]=q.
  \end{equation}
  The proof is completed by showing that $\Tnrmname$ satisfies
  \textbf{(Ess-sup1)} and \textbf{(Ess-sup2)}.
  \par We first address \textbf{(Ess-sup1)}: suppose that one has
  $\omnrmname>\Tnrmname$ on a set of positive measure. Then,
  considering the sequence of norms
  $\left\{\max\left\{\fknrm_{\omega_n},\omnrmname\right\}\right\}\subset
  \Omega$
  one contradicts (\ref{eq:sup_of_seminorms_p5}).
  \par We now address \textbf{(Ess-sup2)} and take a norm
  $\Omnrmname'$ satisfying (\ref{eq:sup_of_seminorms_s4}). Let
  $A\subset U$ be a set of positive $\mu$-measure: we claim that
  one has:
  \begin{equation}
    \label{eq:sup_of_seminorms_p6}
    \lim_{n\to\infty}E\left[\chi_{A\times\real^N}\varphi\left(\fknrm_{\omega_n}\right)\right]
    =\sup\left\{E\left[\chi_{A\times\real^N}\varphi\left(\omnrmname\right)\right]:\omega\in\Omega\right\}
    =E\left[\chi_{A\times\real^N}\varphi\left(\Tnrmname\right)\right].
  \end{equation}
  In fact, if any of the equalities in (\ref{eq:sup_of_seminorms_p6})
  failed, using that $\varphi$ is positive and that the collection $\{\omnrmname.\}_{\omega\in\Omega}$ is
  upward-filtering, one would contradict
  (\ref{eq:sup_of_seminorms_p5}). As $\varphi$ is increasing, we have
  \begin{equation}
    \label{eq:sup_of_seminorms_p7}
    E\left[\chi_{A\times\real^N}\varphi\left(\Omnrmname'\right)\right]\ge E\left[\chi_{A\times\real^N}\varphi\left(\fknrm_{\omega_n}\right)\right],
  \end{equation}
  and from (\ref{eq:sup_of_seminorms_p6}) it follows that:
  \begin{equation}
    \label{eq:sup_of_seminorms_p8}
    E\left[\chi_{A\times\real^N}\varphi\left(\Omnrmname'\right)\right]\ge
    E\left[\chi_{A\times\real^N}\varphi\left(\Tnrmname\right)\right],
  \end{equation}
  from which (\ref{eq:sup_of_seminorms_s5}) follows.
\end{proof}
\section{Generic points and generic velocities}
\label{sec:gen_points_gen_veloc}
In this Section we fix a complete separable metric space $X$ and
introduce a notion of genericity for pairs \def\dset{D_X}
\def\gfun{\F}\def\gbor{\C}\def\gsem{\S}\def\gtrip{(\gfun,\gbor,\gsem)}\def\gpair{(\gfun,\gbor)}\def\gquad{(\gfun,\gbor,\gsem,\dset)}
\def\gpairset{(\gfun,\gbor,\dset)}
$(\gamma,t)\in\frags(X)\times\real$; this notion of genericity will be
specified in terms of a quadruple $\gquad$ such that: $\gfun$ is a
countable collection of real-valued Lipschitz functions defined on
$X$, $\gbor$ is a countable collection of real-valued bounded Borel
functions defined on $X$, $D_X$ is a countable dense subset of $X$, and $\gsem$ is a countable collection of
Lipschitz compatible
pseudometrics on $X$ which will always include the metric $\xdname$.

\begin{definition}
  \label{def_generic_pair}
We say that the pair  $(\ga,t)$  is $\gquad$-{\bf generic} if:
 \begin{description}
\item[(Gen1)] The point $t$ is a Lebesgue density point of $\dom(\ga)$;\vskip2mm
\item[(Gen2)] For each $f\in\gfun$ the derivative $(f\circ \ga)'$ exists
  and is  approximately continuous at $t$;\vskip2mm
\item[(Gen3)] For each $u\in\gbor$ the function $u\circ\gamma$ is
  approximately continuous at $t$;\vskip2mm
\item[(Gen4)] For each $x\in\dset$ and each $\rdname\in\gsem$ the
  derivative $(\rdstp x.\circ\gamma)'$ exists
  and is  approximately continuous at $t$;\vskip2mm
\item[(Gen5)] For each $\rdname\in\gsem$ the function
  $\sup_{x\in\dset}\left|(\rdstp x.\circ\gamma)'(t)\right|$ is approximately
  continuous at $t$.
\end{description}
In the case in which $\gsem$ consists only of $\xdname$ we will just
write $\gpairset$. Whenever a default choice of the set $\dset$ is assumed, we
will omit $\dset$ from the notation.
\end{definition}
\begin{remark}
We remark that the proof of \cite[Thm.~4.1.6]{ambrosio_tilli} shows
that at a point $t$ where \textbf{(Gen1)}, \textbf{(Gen4)} and
\textbf{(Gen5)} hold, the $\varrho$-metric derivative $\rmd\gamma(t)$
exists and equals $\sup_{x\in\dset}\left|(\rdstp
  x.\circ\gamma)'(t)\right|$. Thus, if $(\gamma,t)$ is
$\gquad$-generic, for each $\rdname\in\gsem$ the $\varrho$-metric
derivative exists and is approximately continuous at $t$.
\end{remark}
\par We point out that, in the case
of a differentiability space $(X,\mu)$, Definition
\ref{def_generic_pair} has a natural interpretation in terms of the
$\mu$-tangent bundle $TX$. Let $\left\{(U_\alpha,\phi_\alpha)\right\}$
be an atlas for $(X,\mu)$ and suppose that $\gfun$ contains the components
of all the coordinate functions $\{\phi_\alpha\}$, and that $\gbor$
contains all the characteristic functions
$\{\chi_{U_\alpha}\}$.  Suppose now that $(\gamma,t)$ is
$\gquad$-generic and that $\gamma(t)\in\bigcup_\alpha U_\alpha$; then
$\gamma'(t)$ is a well-defined element of $TX$. We are thus led to the
following definition.
\begin{definition}
  \label{def:gen_vel}
A $\gquad$-{\bf generic velocity vector} is an element of $TX$ of the form $\ga'(t)$,
where $(\ga,t)$ is $\gquad$-generic and  $\ga(t)\in
\cup_\alpha\,U_\alpha$.  As above, in the case in which $\gsem$ consists only of $\xdname$, we will just
write $\gpairset$, and we will omit $D_X$ from the notation if a default
choice of the set $D_X$ is assumed.
\end{definition}  

We now establish measurability for generic pairs.
\begin{lemma}\label{lem:generic_borel}
  The set \begin{equation}
    G\gquad=\left\{(\ga,t): \text{\normalfont $(\gamma,t)$ is $\gquad$-generic}\right\}
  \end{equation} is a Borel subset of $\frag(X)\times\R$.
\end{lemma}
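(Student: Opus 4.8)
The plan is to verify Borel measurability of $G\gquad$ by intersecting the Borel sets corresponding to each of the five conditions \textbf{(Gen1)}--\textbf{(Gen5)}, and for each of these conditions, reducing to standard facts about the measurability of maps like ``approximate continuity point of a fixed measurable function'' and ``Lebesgue density point of a fixed measurable set'', but now carried out \emph{fiberwise} over $\frag(X)$. The first step is to set up the measurable framework: recall from Section \ref{subsec:alb_rep} that $\frag(X)$ is topologized as a subset of $\fellc(\real\times X)$, which is a $K_\sigma$ when $X$ is locally compact (and in general is a separable metrizable space), so it carries a Borel $\sigma$-algebra. I would note that the ``evaluation'' and ``domain'' assignments are suitably measurable: the map $(\ga,t)\mapsto \ga(t)$ is Borel on $\{(\ga,t): t\in\dom(\ga)\}$, and the map sending $\ga$ to the closed set $\dom(\ga)\subset\real$ is Borel into the Effros Borel space of closed subsets of $\real$. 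These are the basic plumbing facts; most are either in \cite{kechris_desc} or follow from the definition of the Fell topology by a routine argument, and I would state them as a preliminary lemma or cite them.

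Next I would handle each condition in turn. For \textbf{(Gen1)}: being a Lebesgue density point of $\dom(\ga)$ means $\lim_{r\to 0}\frac{1}{2r}\lebmeas(\dom(\ga)\cap[t-r,t+r]) = 1$; writing this limit as a countable combination of $\liminf$/$\limsup$ over rational radii of the functions $(\ga,t)\mapsto \lebmeas(\dom(\ga)\cap[t-r,t+r])$, it suffices to know each such function is Borel in $(\ga,t)$, which follows from Borel measurability of $\ga\mapsto\dom(\ga)$ together with Fubini-type arguments. For \textbf{(Gen2)}, \textbf{(Gen3)}, \textbf{(Gen4)}, \textbf{(Gen5)}: in each case one has, for a fixed $\ga$, a measurable (indeed, for (Gen2) and (Gen4), a.e.-defined via difference quotients) function $g_\ga:\dom(\ga)\to\R$ — namely $(f\circ\ga)'$, $u\circ\ga$, $(\rdstp x.\circ\ga)'$, or $\sup_x|(\rdstp x.\circ\ga)'|$ — and one asks whether $t$ is a point where the derivative exists and/or $g_\ga$ is approximately continuous. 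The existence of $(f\circ\ga)'(t)$ is again a countable $\liminf=\limsup$ condition on difference quotients $\frac{f(\ga(t+h))-f(\ga(t))}{h}$, each of which is Borel in $(\ga,t,h)$; and approximate continuity of $g_\ga$ at $t$ can be written as: for every $\eps\in\Q^+$, $t$ is a Lebesgue density point of $\{s\in\dom(\ga): |g_\ga(s)-g_\ga(t)|<\eps\}$, which reduces back to density-point measurability applied to the jointly-Borel function $(\ga,s)\mapsto g_\ga(s)$. The only subtlety for \textbf{(Gen5)} is that $\sup_{x\in\dset}|(\rdstp x.\circ\ga)'(\cdot)|$ is a countable supremum (since $\dset$ is countable) of Borel functions, hence Borel, so it causes no extra difficulty; and for each $\rdname\in\gsem$ — a countable collection — we take a further countable intersection.

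The main obstacle I expect is the careful bookkeeping needed to establish that the ``raw'' maps are jointly Borel — in particular that $(\ga,t,h)\mapsto \frac{f(\ga(t+h))-f(\ga(t))}{h}$ is Borel (easy, by continuity of $f$ and joint continuity of evaluation where defined, combined with measurability of $\dom$), and more seriously that the \emph{derivative} map $(\ga,t)\mapsto (f\circ\ga)'(t)$, defined on the Borel set where the limit exists, is Borel — this follows because a pointwise limit of Borel functions (over a countable cofinal sequence of $h\to 0$, using that on the relevant set the limit along all sequences agrees) is Borel. Once these joint-measurability statements are in hand, the rest is the ``density point $=$ countable $\liminf$/$\limsup$ of integral averages'' reduction applied five times and intersected, together with the outer countable intersections over $f\in\gfun$, $u\in\gbor$, $x\in\dset$, $\rdname\in\gsem$. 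Since all index sets are countable by hypothesis, $G\gquad$ is a countable intersection of Borel sets, hence Borel, completing the proof. I would also remark that the same argument, with $\gsem=\{\xdname\}$, gives Borel measurability of $G\gpairset$.
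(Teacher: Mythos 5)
Your overall strategy is the same as the paper's: intersect, over the countable data (the Lipschitz functions, the bounded Borel functions, the pseudometrics, and the dense set $D_X$), fiberwise Borel conditions, and reduce density points and approximate continuity to Borel measurability of maps of the form $(\gamma,t)\mapsto\lebmeas\bigl(\text{section of a Borel set}\bigr)$ (the paper invokes \cite[Thm.~17.25]{kechris_desc} where you say ``Fubini-type arguments''). The genuine gap is at the step you treat as routine: showing that the set of pairs $(\gamma,t)$ where $(f\circ\gamma)'(t)$ (respectively the $\varrho$-metric derivative $\rmd\gamma(t)$) exists is Borel, and that the derivative map $(\gamma,t)\mapsto(f\circ\gamma)'(t)$ is Borel on that set. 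You describe this as ``a countable $\liminf=\limsup$ condition on difference quotients, each Borel in $(\gamma,t,h)$'', and later as a limit ``over a countable cofinal sequence of $h\to0$''. But the admissible increments are those $h$ with $t+h\in\dom\gamma$, and $\dom\gamma$ is a varying closed set with gaps: there is no fixed countable family of test increments (rational $h$ need not meet $\dom\gamma-t$ at all), and a countable cofinal sequence of admissible increments cannot be chosen uniformly in $(\gamma,t)$ without a measurable-selection argument; moreover such a selected sequence would control the value of the limit but not the \emph{existence} set, which requires two-sided, Cauchy-type control over all admissible pairs of increments. As written you are taking suprema and infima of Borel functions over an uncountable, $(\gamma,t)$-dependent index set, which is not automatically Borel.

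This is exactly where the paper does its real work: it first isolates the (Borel) set of pairs where $t$ is an isolated point of $\dom\gamma$ and removes it, and then writes the differentiability sets as countable intersections over rational $\varepsilon$ and unions over rational $(\delta,\theta)$ of sets of the form ``for all $s_1,s_2\in(t-\delta,t+\delta)\cap\dom\gamma$ one has $|f\circ\gamma(s_1)-f\circ\gamma(s_2)-\theta(s_1-s_2)|\le\varepsilon|s_1-s_2|$'', which are closed in the relevant topology on pairs; Borel measurability of the derivative and metric-derivative maps then follows by constraining $\theta$ to rational intervals, and the supremum in \textbf{(Gen5)} is a countable supremum of these Borel maps. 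Your argument can be repaired along the same lines (or by invoking a projection theorem for Borel sets with $\sigma$-compact sections to see that the $\limsup$/$\liminf$ envelopes of the difference quotients are Borel), but some such device is needed; the ``countable liminf/limsup over difference quotients'' as stated does not go through. Once the existence sets and derivative maps are known to be Borel, your reduction of approximate continuity to density points of $\{s\in\dom\gamma:|g_\gamma(s)-g_\gamma(t)|<\varepsilon\}$ and the final countable intersections are fine and match the paper's conclusion.
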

\begin{proof} \def\Domset{\text{\normalfont DOM}}
  \def\Isolset{\text{\normalfont ISOL}}
  \def\Mdiffset{\text{\normalfont MDIFF}}
  \def\Mdiffmap{\text{\normalfont MDer}}
  \def\Diffset{\text{\normalfont DIFF}}
  \def\Diffmap{\text{\normalfont Der}}
  \def\Subset{\text{\normalfont SUB}}
  \def\Lebmap{\text{\normalfont Leb}}
  \def\Appset{\text{\normalfont ACONT}}
  \def\Eval{\text{\normalfont Ev}}
  \def\Lebset{\text{\normalfont LEBDENS}}
  We prove that $G\gquad$ is Borel by showing that certain sets are
  Borel. Let $\Domset$ denote the set of pairs $(\gamma,t)$ such that
  $t\in\dom\gamma$:
  \begin{equation}
    \label{eq:generic_borel_p1}
    \Domset=\left\{(\gamma,t)\in\frags(X)\times\real: t\in\dom\gamma\right\};
  \end{equation}
  then $\Domset$ is closed. Fix $\delta>0$ and consider the set of
  pairs $(\gamma,t)$ where $t$ becomes isolated below scale $\delta$:
  \begin{equation}
    \label{eq:generic_borel_p2}
    \Isolset(\delta)=\left\{(\gamma,t)\in\Domset:
      \text{$\dom\gamma\cap(t-\delta,t+\delta)$ contains only one point}\right\};
  \end{equation}
  then $\Isolset(\delta)$ is closed and
  $\Isolset=\bigcup_{\delta\in\rational_{>0}}\Isolset(\delta)$ is Borel
  and consists of the pairs $(\gamma,t)$ where $t$ is an isolated
  point of $\dom\gamma$. We can thus attempt to define, for a
  Lipschitz compatible pseudometric $\varrho$, the $\varrho$-metric
  derivative and, for $f$ Lipschitz, the derivative of $f$ at 
  pairs in $\Domset\setminus\Isolset$. Consider the set:
  \begin{equation}
    \label{eq:generic_borel_p3}
    \begin{split}
    \Mdiffset(\varrho)&=\left\{(\gamma,t)\in\Domset\setminus\Isolset:\text{
        $\rmd\gamma(t)$ exists}\right\}\\
    &=\bigcap_{\varepsilon\in\rational_{>0}}\bigcup_{(\delta,\theta)\in\rational_{>0}\times\rational_{\ge0}}\biggl\{(\gamma,t)\in\Domset\setminus
    \Isolset:\forall s_1,s_2\in(t-\delta,t+\delta)\cap\dom\gamma, \\
    &\left|\rdst\gamma(s_1),\gamma(s_2).-\theta|s_1-s_2|\right|\le\varepsilon|s_1-s_2|\biggr\};
  \end{split}
\end{equation}
this set is Borel as all the sets in the curly brackets are closed in
$\Domset\setminus\Isolset$. Modifying the definition of $\Mdiffset$ by
constraining $\theta$ to lie in a specified interval we also conclude
that the map:
\begin{equation}
  \label{eq:generic_borel_p3bis}
  \begin{aligned}
    \Mdiffmap(\varrho):\Mdiffset&\to[0,\infty)\\
    (\gamma,t)&\mapsto\rmd\gamma(t)
  \end{aligned}
\end{equation}
is Borel. Consider now a real-valued Lipschitz function $f$ defined on
$X$; the set $\Diffset(f)$ where $(f\circ\gamma)'(t)$ exists is Borel
because we can write it as:
\begin{equation}
  \label{eq:generic_borel_p4}
  \begin{split}
    \Diffset(f)&=\left\{(\gamma,t)\in\Domset\setminus\Isolset:\text{
        $f\circ\gamma$ is differentiable at $t$}\right\}\\
      &=\bigcap_{\varepsilon\in\rational_{>0}}\bigcup_{(\delta,\theta)\in\rational_{>0}\times\rational}\biggl\{(\gamma,t)\in\Domset\setminus
    \Isolset:\forall s_1,s_2\in(t-\delta,t+\delta)\cap\dom\gamma, \\
    &\left|f\circ\gamma(s_1)-f\circ\gamma(s_2)-\theta(s_1-s_2)\right|\le\varepsilon|s_1-s_2|\biggr\}
  \end{split}
\end{equation}
where the sets in curly brackets are closed in
$\Domset\setminus\Isolset$. Constraining the $\theta$ appearing in the
definition of $\Diffset(f)$ to lie in a given interval we conclude
that the map:
\begin{equation}
  \label{eq:generic_borel_p5}
  \begin{aligned}
    \Diffmap(f):\Diffset(f)&\to\real\\
    (\gamma,t)&\mapsto(f\circ\gamma)'(t)
  \end{aligned}
\end{equation}
is Borel. Regarding condition \textbf{(Gen5)} we need also to take a
sup of derivatives when they exist; so let $\Omega$ be a countable set
of Lipschitz functions; then the set:
\begin{equation}
  \label{eq:generic_borel_p6}
  \Diffset(\Omega)=\bigcap_{f\in\Omega}\Diffset(f)
\end{equation}
and the map:
\begin{equation}
  \label{eq:generic_borel_p7}
  \begin{aligned}
    |\Diffmap(\Omega)|:\Diffset(\Omega)&\to\real\\
    (\gamma,t)&\mapsto\sup_{f\in\Omega}\left|(f\circ\gamma)'(t)\right|
  \end{aligned}
\end{equation}
are Borel.
\par We now turn to questions pertaining to the approximate
continuity of a function at a point in the domain of a fragment. For $L\ge0$ we will denote by $\Subset(L)$ the closed set of
those fragments whose domain lies in $[-L,L]$. Suppose now that we are
given a Borel set $B\subset\Domset$ and a Borel map $\psi:B\to\real$.
For $(\varepsilon,\delta,L)\in(\rational_{>0})^3$ let:
\begin{equation}
  \label{eq:generic_borel_p8}
  \begin{split}
    \tilde\Psi(\varepsilon,\delta,L,B,\psi)&=\biggl\{(\gamma,t,s)\in\frags(X)\times\real^2:\gamma\in\Subset(L),\\
    &\text{$(\gamma,t),(\gamma,s)\in B$, $|t-s|\le\delta$ and $\left|\psi(\gamma,t)-\psi(\gamma,s)\right|\le\varepsilon$}\biggr\};
  \end{split}
\end{equation}
the set $\tilde\Psi(\varepsilon,\delta,L,B,\psi)$ is Borel and
\cite[Thm.~17.25]{kechris_desc} shows that the map:
\begin{equation}
  \label{eq:generic_borel_p9}
  \begin{aligned}
  \Lebmap(\varepsilon,\delta,L,B,\psi)&:B\to\real\\
  (\gamma,t)&\mapsto\lebmeas\left(\left(\tilde\Psi(\varepsilon,\delta,L,B,\psi)\right)_{(\gamma,t)}\right)
\end{aligned}
\end{equation}
is Borel. It is then easy to prove that the sets of pairs $(\gamma,t)$
where some map is approximately continuous at $t$ is Borel; in fact,
first define:
\begin{equation}
  \label{eq:generic_borel_p10}
  \begin{split}
    \Appset(\psi)&=\bigcup_{L\in\rational_{>0}}\bigcap_{\varepsilon\in\rational_{>0}}\bigcup_{\delta\in\rational_{>0}}
    \bigcap_{r\in\rational_{>0}}\biggl\{(\gamma,t)\in
    B:\gamma\in\Subset(L),\\
    &\text{and for each $r\le\delta$ one has $\Lebmap(\varepsilon,r,L,B,\psi)(\gamma,t)\ge2(1-\varepsilon)r$}\biggr\},
  \end{split}
\end{equation}
which is a Borel set;
then, for example, $\Appset(\Mdiffmap(\xdname))$ consists of the pairs
$(\gamma,t)$ where $\metdiff\gamma$ exists and is approximately
continuous at $t$. In order to handle the approximate continuity for a
Borel map $u:X\to\real$ we introduce the notation $\Eval(u)$ to denote
the Borel map which evaluates $u$ at $\gamma(t)$:
\begin{equation}
  \label{eq:generic_borel_p11}
  \begin{aligned}
    \Eval(u):\Domset&\to\real\\
    (\gamma,t)&\mapsto u\circ\gamma(t).
  \end{aligned}
\end{equation}
Letting $\psi:\Domset\to\real$ to be the function which trivially maps
each pair $(\gamma,t)$ to $0$, we see that $\Appset(\psi)=\Lebset$ is the set
of pairs $(\gamma,t)$ where $t$ is a Lebesgue density point of
$\dom\gamma$.
\par We finally conclude that $G\gquad$ is Borel by observing that:
\begin{equation}
  \label{eq:generic_borel_p12}
  \begin{split}
    G\gquad&=\bigcap_{\varrho\in\gsem}\Appset(\Mdiffmap(\varrho))\cap\bigcap_{f\in\gfun}\Appset(\Diffmap(f))\\
    &\cap\bigcap_{u\in\gbor}\Appset(\Eval(u))\cap\bigcap_{x\in\dset,\varrho\in\gsem}\Appset(\Diffmap(\rdstp
    x.))\\
    &\cap\bigcap_{\varrho\in\gsem}\Appset\left(|\Diffmap|(\{\rdstp x.\}_{x\in\dset})\right)\cap\Lebset.
  \end{split}
\end{equation}
\end{proof}

\section{Metric Differentials and seminorms on $TX$}\label{sec:met_der_frag}
\newcount\maskmet
\maskmet=0
\ifnum\maskmet>0{
  By Ambrosio-Kirchheim \cite{ambrosio_kirchheim}, 
\cite[Thm.~4.1.6]{ambrosio_tilli} the metric derivative for a curve
fragment $\ga$
can be detected by looking at the derivatives  $d_x\circ \ga$ for
a countable family of distance functions at an approximate continuity
point of the supremum $\sup_x|(d_x\circ\ga)'|$ of all these derivatives.
We begin by pointing out that in the context of differentiability spaces, 
Ambrosio-Kirchheim implies that the metric derivative is actually given by a norm on the 
tangent bundle.
Let $(U,\phi)$ be a chart for a metric measure space $(X,d,\mu)$.  Choose a
countable dense subset  $\{x_j\}\subset X$, and let $\F_d$ be the corresponding 
collection of distance functions $\{d_{x_j}\}$.  Now take $V\subset U$ to be the 
full measure Borel subset of $U$ where the functions in $\F_d$ are differentiable with
respect to $\phi$. Let $\F=\F_d\cup\{\phi\}$ and $\C$ arbitrary.
We now define a seminorm $\|\cdot\|_{\F_d}$ 
on the restriction $TX\restr_V$ by
$$
\|v\|_{\F_d} =\sup\{|du(v)|\mid u\in \F_d\}\,.
$$
}\fi
\par In this Section we discuss the first instance of metric
differentiation, Theorem \ref{thm:met_diff_norm}. The point is that in
the presence of a differentiable structure, the $\rhmeas.$-measure of a 
fragment $\gamma$ can be recovered using a seminorm (canonically
associated to $\rdname$) on the tangent
bundle $TX$ associated to the differentiable structure. Let $(X,\mu)$
be a differentiability space with atlas
$\left\{(U_\alpha,\phi_\alpha)\right\}$, and fix a
countable dense set $\dset\subset X$.
\begin{defn}
  \label{defn:diff_set}
  Let $\Phi$ be a countable collection of Lipschitz functions on $X$;
  we say that a Borel subset $V\subset\bigcup_\alpha U_\alpha$ is a
  \textbf{$\Phi$-differentiability} set if:
  \begin{description}
  \item[(Diff1)] The set $V$ has full $\mu$-measure:
    $\mu\left(\bigcup_\alpha U_\alpha\setminus V\right)=0$;\vskip2mm
  \item[(Diff2)] For each $(x,f)\in V\times\Phi$, if $x\in U_\alpha$,
    then $f$ is differentiable at $x$ with respect to the coordinate
    functions $\phi_\alpha$.
  \end{description}
\end{defn}
Let $\Phi_{\dset,\rdname}=\left\{\rdstp x.:x\in\dset\right\}$ and let
$V$ be a $\Phi_{\dset,\rdname}$-differentiability set. Using Lemma
\ref{lem:sup_of_seminorms}, we obtain a seminorm $\dxnrmname$ on $TX$
by defining, for $y\in V$ and $v\in T_yV$:
\begin{equation}
  \label{eq:semidist_seminorm}
  \dxnrm v.=\sup_{x\in\dset}\left|d\rdstp x.\mid_y(v)\right|.
\end{equation}

\begin{theorem}\label{thm:met_diff_norm}
  Let $\gquad$ be as in {\normalfont Section
    \ref{sec:gen_points_gen_veloc}} and let $V$ be a
  $\Phi_{\dset,\rdname}$-differentiability set.  Assume that $\gfun$
  contains all the components of the coordinate functions
  $\phi_\alpha$, that $\gbor$ contains the characteristic functions
  $\{\chi_{U_\alpha}\}_\alpha\cup\{\chi_V\}$, and that $\rdname\in\gsem$. If
  $\gamma'(t)$ is an $\gquad$-generic velocity vector and if
  $\gamma(t)\in V$, then the metric differential $\rmd\gamma(t)$ exists and equals
  $\dxnrm\gamma'(t).$. In particular, if a fragment $\gamma$ lies in
  $V$, we have:
  \begin{equation}
    \label{eq:met_diff_norm_s1}
\rhmeas .(\im\gamma) = \int_{\dom\gamma}\dxnrm\gamma'(t).\,dt.
  \end{equation}
\end{theorem}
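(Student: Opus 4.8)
The plan is to derive the theorem as essentially a local, coordinate-free repackaging of the Ambrosio--Kirchheim metric differentiation result (MD1)--(MD3) from Section \ref{subsec:metric_deriv}, combined with the genericity machinery of Section \ref{sec:gen_points_gen_veloc}. First I would fix a $\gquad$-generic velocity vector $\gamma'(t)$ with $\gamma(t)\in V$, and let $\alpha$ be such that $\gamma(t)\in U_\alpha$. Since $\gbor$ contains $\chi_{U_\alpha}$ and $\chi_V$, and since $(\gamma,t)$ is $\gquad$-generic, condition \textbf{(Gen3)} forces $t$ to be a density point of $\gamma^{-1}(U_\alpha)$ and of $\gamma^{-1}(V)$; together with \textbf{(Gen1)} this lets me assume, after restricting to a suitable relatively open neighborhood of $t$ in $\dom\gamma$, that $\gamma$ takes values in $U_\alpha\cap V$. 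On this restricted fragment, \textbf{(Gen4)} and \textbf{(Gen5)} (with $\rdname\in\gsem$) give exactly the hypotheses of the Ambrosio--Tilli refinement cited in the remark after Definition \ref{def_generic_pair}: the $\varrho$-metric derivative $\rmd\gamma(t)$ exists and equals $\sup_{x\in\dset}|(\rdstp x.\circ\gamma)'(t)|$.

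The second step is to identify this supremum with $\dxnrm\gamma'(t).$. Because $\gfun$ contains all components of $\phi_\alpha$, \textbf{(Gen2)} guarantees $(\phi_\alpha\circ\gamma)'(t)$ exists, and since $\gamma(t)\in V$ each $\rdstp x.$ (for $x\in\dset$) is differentiable at $\gamma(t)$ with respect to $\phi_\alpha$ by \textbf{(Diff2)}. Plugging the Taylor expansion (\ref{eq:taylor_exp}) of $\rdstp x.$ into the difference quotient for $(\rdstp x.\circ\gamma)'(t)$, the $o(\xdst{\gamma(s)},{\gamma(t)}.)$ error is $o(|s-t|)$ because $\gamma$ is Lipschitz, so the chain rule holds at $t$:
\begin{equation}
(\rdstp x.\circ\gamma)'(t)=d\rdstp x.\!\mid_{\gamma(t)}(\gamma'(t)),
\end{equation}
where $\gamma'(t)\in T_{\gamma(t)}X$ is the well-defined generic velocity vector (whose coordinates are $(\phi_\alpha\circ\gamma)'(t)$). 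Taking the supremum over $x\in\dset$ yields $\rmd\gamma(t)=\sup_{x\in\dset}|d\rdstp x.\!\mid_{\gamma(t)}(\gamma'(t))|=\dxnrm\gamma'(t).$ by the definition (\ref{eq:semidist_seminorm}); note this also requires knowing $\dxnrm\cdot.$ is a genuine seminorm on $TX\restr_V$, which is supplied by Lemma \ref{lem:sup_of_seminorms} applied to the collection $\{|d\rdstp x.|\}_{x\in\dset}$, all of which are dominated by $C\|\cdot\|_{\Lip}^*$ for the Lipschitz constant $C$ of $\rdname$.

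For the integral formula (\ref{eq:met_diff_norm_s1}), I would apply the area formula (MD3), i.e.\ (\ref{eq:sem_metric_area}), to $\gamma:\dom\gamma\to X$: the left side is $\int_X \#\{t:\gamma(t)=z\}\,d\rhmeas.(z)$, and when $\gamma$ lies in $V$ the pointwise identity $\rmd\gamma(t)=\dxnrm\gamma'(t).$ just proved holds for $\mathcal L^1$-a.e.\ $t\in\dom\gamma$ — here I would invoke that $\mathcal L^1$-a.e.\ $t$ is such that $(\gamma,t)$ is $\gquad$-generic, which follows from the standard facts that Lebesgue density points are a.e., approximate-continuity points of a measurable function are a.e., and $\gfun,\gbor,\gsem$ are countable; this is where I silently use that $\gamma$ may itself be required to be generic, or rather that genericity holds a.e.\ along any fragment. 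The one substantive point requiring care — and the step I expect to be the main obstacle — is the reduction in the first paragraph: verifying that after passing to a relatively open sub-fragment around $t$ contained in $U_\alpha\cap V$, the value $\rmd\gamma(t)$ and the derivatives $(\rdstp x.\circ\gamma)'(t)$ are unchanged, and that the Ambrosio--Tilli hypotheses genuinely transfer. This is mostly bookkeeping with density points and the locality of derivatives, but one must be careful that the supremum $\sup_{x\in\dset}$ behaves well under this restriction (it can only decrease, and approximate continuity from \textbf{(Gen5)} prevents a strict decrease at $t$), and that the countable dense set $\dset$ for the restricted fragment can be taken to be the original $\dset$ since $U_\alpha\cap V$ is still separable and the $\varrho$-distances to points of $\dset$ still detect $\rmd\gamma$ by the Ambrosio--Tilli argument.
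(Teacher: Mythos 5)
Your proposal follows essentially the same route as the paper's proof: existence of $\rmd\gamma(t)$ and its identification with $\sup_{x\in\dset}|(\rdstp x.\circ\gamma)'(t)|$ via the Ambrosio--Tilli argument under \textbf{(Gen1)}, \textbf{(Gen4)}, \textbf{(Gen5)}; the chain rule at $\gamma(t)$, using differentiability of each $\rdstp x.$ with respect to $\phi_\alpha$ (valid since $\gamma(t)\in V$) together with \textbf{(Gen2)}, to get $\rmd\gamma(t)=\dxnrm\gamma'(t).$; and the area formula (\ref{eq:sem_metric_area}) combined with a.e.-genericity along the fragment for (\ref{eq:met_diff_norm_s1}). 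The one point where you diverge --- the preliminary restriction to a sub-fragment contained in $U_\alpha\cap V$, which you single out as the main obstacle --- is both unnecessary and, as literally stated, not available: knowing from \textbf{(Gen1)} and \textbf{(Gen3)} that $t$ is a density point of $\gamma^{-1}(U_\alpha\cap V)$ does not yield a relatively open neighborhood of $t$ in $\dom\gamma$ mapped into $U_\alpha\cap V$. No localization is needed: the Ambrosio--Tilli hypotheses are conditions on the original fragment at the single parameter $t$, and the chain rule only uses differentiability of $\rdstp x.$ at the single point $\gamma(t)\in V$, so deleting that step (and with it your worries about how the supremum and the derivatives behave under restriction) leaves exactly the published argument.
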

\begin{proof}
To fix the ideas suppose that $\gamma(t)\in U_\alpha$. Because of conditions \textbf{(Gen4)}--\textbf{(Gen5)},
the argument in \cite[Thm.~4.1.6]{ambrosio_tilli} implies that 
\begin{equation}
\rmd\gamma(t)=\sup_{x\in\dset}\left|(\rdstp x.\circ\gamma)'(t)\right|;
\end{equation}
as $\gamma(t)\in V$, for each $x\in\dset$ the pseudodistance function
$\rdstp x.$ is differentiable at $\gamma(t)$ with respect to the
coordinate functions $\phi_\alpha$; note also that $\phi_\alpha\circ\gamma$ is differentiable at
$t$ by condition \textbf{(Gen2)}. Thus,
\begin{equation}
  (\rdstp x.\circ\gamma)'(t)=\sum_i\frac{\partial \rdstp
    x.}{\partial\phi^i_\alpha}(\gamma(t))\left(\phi^i_\alpha\circ\gamma\right)'(t)
=d\rdstp x.\mid_{\gamma(t)}(\gamma'(t)),
\end{equation}
which implies
\begin{equation}
\sup_{x\in\dset}\left|(\rdstp x.\circ\gamma)'(t)\right|=\dxnrm\gamma'(t)..
\end{equation}
Formula (\ref{eq:met_diff_norm_s1}) follows from the area
formula (\ref{eq:sem_metric_area}) for the pseudometric $\rdname$ by observing that
for a fragment $\gamma$ which lies in $V$, for $\lebmeas $-a.e.~$t\in\dom\gamma$,
the velocity vector $\gamma'(t)$ is $\gquad$-generic.
\end{proof}\pcreatenrm{local}{\tilde D_X,\rdname}%
In Section \ref{sec:lipmaps_metricdiff} (Theorem
\ref{thm:can_seminorm}) we will show that for different
choices $\dset$ and $\tilde D_X$ of the countable dense set, the seminorms
$\dxnrmname$ and $\localnrmname$ are the same. The proof uses the
density of directions at generic points which is discussed in the next
Section. For the case in which $\rdname=\xdname$ this follows from
Theorem \ref{thm:metdiff=TX}.

\section{Density of generic directions at generic points}\label{sec:dir_dens}

\newcount\maskdensdir
\maskdensdir=0
\ifnum\maskdensdir>0{
\begin{enumerate}
\item Recall Bate  \cite[Theorem 9.5]{bate-diff}.  Or keep it in the preliminaries and
refer to it.
\item State and prove density of generic directions.
\item Discuss alternate proof in PI space case.
\end{enumerate}}\fi
In this Section we show that for $\mu$-a.e.~$x\in X$ the set of
vectors in $T_xX$ which can be represented by $\gquad$-generic
velocity vectors contains a dense set of ``directions'' in $T_xX$. We
make this idea precise with the following definition:
\begin{defn}\label{defn:dens_dir}
  If $V$ is a finite-dimensional vector space, we say that a subset
  $W\subset V$ \textbf{contains a dense set of directions} if:
  \begin{equation}   
\ol{[0,\infty)W}=\ol{\{tw\mid t\in [0,\infty), \;w\in W\}}=V\,.
  \end{equation}
\end{defn}
\par  We now fix an atlas $\{(U_\al,\phi_\al)\}_\alpha$ for the
differentiability space $(X,\mu)$ and let $N_\al$ denote the
dimension of the chart $(U_\al,\phi_\al)$. For each $\al$ let
$\left\{\cone(v_{\al,k},\theta_{\al,k})\right\}_{k\in\N}$ denote a
collection of open cones with
$\{v_{\al,k}\}\subset{\mathbb S}^{N_\al-1}$ dense in the unit
sphere
and
$\lim_{k\to\infty}\theta_{\al,k}=0$. Using
Theorem \ref{thm:bate_alb_arb_dir}, we find Alberti 
representation $\A_k=(P_k,\nu_k)$  of $\mu$ such that, for each $\al$, the restriction
 $\A_k\on U_\al$ is in the $\phi_\al$-direction of
$\cone(v_{\al,k},\theta_{\al,k})$.
\begin{thm}\label{thm:Sus_dir_dens}
Let $\Ga_0\subset\frag(X)$ be a Borel set such that, for each $k$ one has $P_k\left(\frag(X)\setminus\Ga_0\right)=0$;
and let $\gquad$ be as in Definition \ref{def:gen_vel}. 
Then there is
$\mu$-measurable subset $Y\subset X$ with 
 full $\mu$-measure   such that, 
for each
$x\in Y$, the set of velocity vectors
\begin{multline}\label{eq:Sus_dir_dens_s1}
G_x=\biggl\{v\in T_xX\mid \text{\normalfont $v=\ga'(t)$ for
  $\gamma\in\Gamma_0$}\\\text{such that $\gamma'(t)$ is $\gquad$-generic}\biggr\},
\end{multline}
contains a dense set of directions in $T_xX$.
\end{thm}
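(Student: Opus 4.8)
The plan is to promote an essentially automatic fact about an \emph{individual} fragment to a statement holding at $\mu$-a.e.\ point, using the Alberti representations $\A_k=(P_k,\nu_k)$ to spread fragments over $X$. As a preliminary reduction I would disjointify the atlas, replacing $U_\al$ by $U_\al\setminus\bigcup_{\be<\al}U_\be$ (which preserves the chart property), so that each point of $X$ lies in at most one $U_\al$; this avoids comparing cone conditions across overlapping charts, whose transition maps need not respect angles. The automatic fact is: for \emph{every} $\ga\in\frag(X)$, the pair $(\ga,t)$ is $\gquad$-generic for $\lebmeas$-a.e.\ $t\in\dom\ga$. Indeed \textbf{(Gen1)} holds at every Lebesgue density point of $\dom\ga$; for $f\in\gfun$ the function $f\circ\ga$ is Lipschitz on $\dom\ga$, so $(f\circ\ga)'$ exists $\lebmeas$-a.e.\ and, being measurable, is approximately continuous $\lebmeas$-a.e. (that is \textbf{(Gen2)}); \textbf{(Gen3)} is the a.e.\ approximate continuity of the bounded Borel function $u\circ\ga$; and \textbf{(Gen4)}--\textbf{(Gen5)} follow likewise, using that the functions $\rdstp x.$ are $1$-Lipschitz for the compatible pseudometric $\rdname$ and hence uniformly Lipschitz, so the supremum in \textbf{(Gen5)} is a bounded measurable function. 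Since $\gfun,\gbor,\gsem,\dset$ are countable, one intersects countably many full-measure sets.

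\emph{Step 2: transfer to $\mu$-a.e.\ $x$.} Fix $k$ and a chart $(U_\al,\phi_\al)$, and let $\mathcal G_{k,\al}$ be the set of $x\in U_\al$ admitting $\ga\in\Ga_0$ and $t\in\dom\ga$ with $\ga(t)=x$, $(\ga,t)$ $\gquad$-generic, and $(\phi_\al\circ\ga)'(t)\in\cone(v_{\al,k},\theta_{\al,k})$. By Lemma \ref{lem:generic_borel} this is the image of a Borel set under $(\ga,t)\mapsto\ga(t)$, hence analytic and $\mu$-measurable. I claim $\mu(U_\al\setminus\mathcal G_{k,\al})=0$. Since $\mu\on U_\al=\int\nu_\ga\on U_\al\,dP_k(\ga)$, it suffices that $\nu_\ga(U_\al\setminus\mathcal G_{k,\al})=0$ for $P_k$-a.e.\ $\ga$. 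For $P_k$-a.e.\ $\ga$ we have $\ga\in\Ga_0$; moreover, by Step~1 and the defining property of $\A_k$ (its restriction to $U_\al$ lies in the $\phi_\al$-direction of $\cone(v_{\al,k},\theta_{\al,k})$), the set $B_\ga$ of $t\in\ga^{-1}(U_\al)$ at which $(\ga,t)$ is generic and $(\phi_\al\circ\ga)'(t)\in\cone(v_{\al,k},\theta_{\al,k})$ has full $\lebmeas$-measure in $\ga^{-1}(U_\al)$. Since $\ga\in\Ga_0$, $\ga(B_\ga)\subset\mathcal G_{k,\al}$, hence $(U_\al\cap\im\ga)\setminus\mathcal G_{k,\al}\subset\ga\bigl(\ga^{-1}(U_\al)\setminus B_\ga\bigr)$, which is $\hmeas .$-null since $\ga$ is Lipschitz and $\lebmeas(\ga^{-1}(U_\al)\setminus B_\ga)=0$; as $\nu_\ga\ll\hmeas ._\ga$ is carried by $\im\ga$, this forces $\nu_\ga(U_\al\setminus\mathcal G_{k,\al})=0$. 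Therefore, since $\mu(X\setminus\bigcup_\al U_\al)=0$, the set $Y:=\bigl(\bigcup_\al U_\al\bigr)\cap\bigcap_{k\in\N}\bigcup_\al\mathcal G_{k,\al}$ has full $\mu$-measure.

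\emph{Step 3: density of directions.} Fix $x\in Y$ and let $\al$ be the unique index with $x\in U_\al$; identify $T_xX$ with $\R^{N_\al}$ via $\phi_\al$, so that $\ga'(t)=(\phi_\al\circ\ga)'(t)$ for every $\gquad$-generic pair with $\ga(t)=x$. Disjointness of the charts gives $x\in\mathcal G_{k,\al}$ for every $k$, so for each $k$ there is a $\gquad$-generic pair $(\ga_k,t_k)$ with $\ga_k\in\Ga_0$, $\ga_k(t_k)=x$, and $v_k:=\ga_k'(t_k)=(\phi_\al\circ\ga_k)'(t_k)\in\cone(v_{\al,k},\theta_{\al,k})$; thus $v_k\in G_x$, $v_k\ne0$ (the cones exclude the origin), and $v_k/\|v_k\|_2$ lies within distance $\theta_{\al,k}$ of $v_{\al,k}$. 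Given $w\in\mathbb S^{N_\al-1}$ and $\eps>0$, using $\theta_{\al,k}\to0$ together with the density of the axes $\{v_{\al,k}\}$ (arranged, as one may, so that cones with opening $<\eps$ occur with $\eps$-dense axes), choose $k$ with $\theta_{\al,k}<\eps$ and $\|v_{\al,k}-w\|_2<\eps$; then $w$ is within $2\eps$ of $v_k/\|v_k\|_2\in[0,\infty)G_x$. Hence $\mathbb S^{N_\al-1}\subset\ol{[0,\infty)G_x}$, and since $\ol{[0,\infty)G_x}$ is a closed cone it equals $T_xX$; that is, $G_x$ contains a dense set of directions for every $x\in Y$, which is the assertion.

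\emph{Main obstacle.} Steps~1 and 3 are routine; the content is Step~2, where genericity along a single fragment must be promoted to a statement holding $\mu$-a.e. The difficulty is that the fragments in an Alberti representation are far from injective and have overlapping images, so one cannot argue pointwise on $X$; the argument is instead routed through the absolute continuity $\nu_\ga\ll\hmeas ._\ga$ and the $\hmeas .$-nullity of Lipschitz images of $\lebmeas$-null sets, while keeping track that the direction condition constrains $\ga$ only on $\ga^{-1}(U_\al)$. The disjointification of the atlas is the minor device that renders the cone condition at a fixed point unambiguous.
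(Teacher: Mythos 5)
Your proposal is correct and follows essentially the same route as the paper's proof: a Borel set of generic pairs (via Lemma \ref{lem:generic_borel}) whose projection/evaluation image is analytic hence $\mu$-measurable, the transfer to $\mu$-a.e.\ $x$ through the cone-directed Alberti representations using $\nu_\gamma\ll\hmeas._\gamma$ together with the $\hmeas.$-nullity of Lipschitz images of Lebesgue-null sets, and the density of directions from the shrinking cones with dense axes. Your explicit Step~1 (a.e.\ genericity along each fixed fragment) and the disjointification of the atlas are harmless refinements of details the paper leaves implicit.
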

\begin{proof}
 Let $Z_k\subset X\times\frag(X)\times\R$ consist of those
  triples $(x,\ga,t)$ satisfying:
  \begin{enumerate}
  \item $\gamma'(t)$ is an $\gquad$-generic velocity vector;\vskip2mm
\item $\ga(t)=x$ and $\gamma\in\Ga_0$;\vskip2mm
\item If $\gamma'(t)\in U_\alpha$, then $(\phi_\alpha\circ\ga)'(t)\in\cone(v_{\alpha,k},\theta_{\alpha,k})$.
  \end{enumerate}
Using Lemma \ref{lem:generic_borel} we conclude that 
 $Z_k$ is Borel, and therefore
its projection $Y_k\subset U$ on $X$ is 
Suslin \cite{kechris_desc}, and
hence $\mu$-measurable.
 Note that
for each $\ga\in\Ga_0$, as $\nu_k$ is absolutely continuous with
respect to $\hmeas ._\gamma$, one has $\nu_k(\gamma)(X\setminus
Y_k)=0$, and therefore $\mu(X
\setminus Y_k)=0$. We conclude that $Y=\bigcap_kY_k$ is a $\mu$-full
measure 
$\mu$-measurable
 subset of $X$. Let $x\in Y\cap U_\alpha$, and let
$v\in T_xX$; then
for each $\epsi>0$ we can find a $k$ such that,
for each $w\in\cone(v_{\alpha,k},\theta_{\alpha,k})$, there is a $t_w\in[0,\infty)$ with:
  \begin{equation}
    \|v-t_ww\|_{l^2}\le\epsi\|v\|_{l^2};
  \end{equation}
  but as $x\in Y\cap U_\alpha$, there are a fragment
  $\gamma_k\in\Gamma_0$ and a $t_k\in\R$ such that the vector
  $\gamma_k'(t_k)\in T_xX$ is $\gquad$-generic and
  $(\phi_\alpha\circ\gamma_k)'(t_k)\in\cone(v_{\alpha,k},\theta_{\alpha,k})$;
  thus there is an $s_k\in[0,\infty)$ with
  \begin{equation}
    \|v-s_k(\phi_\alpha\circ\gamma_k)'(t_k)\|_{l^2}\le\epsi\|v\|_{l^2},
  \end{equation} which implies $\ol{[0,\infty)G_x}=T_xX$.
\end{proof}

\section{Consequences of density of generic directions}
\label{sec:conse}
\newcount\maskdens
\maskdens=0
\ifnum\maskdens>0 {
\begin{enumerate}
\item Other norms on $TX$ and $T^*X$.
\item Equality of all norms.
\item $\Lip=\lip$.  Generalization of $\Lip(u)=\rho_u$ to differentiability spaces.
\end{enumerate}
}\fi

In this section we prove the equality
of various seminorms on $TX$ (Theorem \ref{thm_intro_norms_agree}), 
the equality $\Lip u = \lip u$ a.e. (Theorem \ref{thm_Lip_equals_lip}), and give 
a new proof that in PI spaces the minimal generalized upper gradient agrees
with the pointwise Lipschitz constant.

\subsection{Equality of natural seminorms on $TX$}
\label{subsec:eq_nat_norms}
The main result in this subsection is the proof of Theorem \ref{thm_intro_norms_agree},
which is based on the following result.
\begin{theorem}\label{thm:metdiff=TX}
  Let $(X,\mu)$ be a differentiability space and $\dset\subset X$ a
  countable dense set. Then the seminorm $\dxdxnrmname$ on $TX$
  provided by (\ref{eq:semidist_seminorm}) (taking $\rdname=\xdname$)
  coincides with the norm 
  $\|\cdot\|^*_{\Lip}$ (see Section \ref{subsec:stand_ass}); 
  in particular, the norm $\dxdxnrmname$
  does not depend on the choice of $\dset$.
\end{theorem}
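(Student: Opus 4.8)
The goal is to identify the seminorm $\dxdxnrmname$, defined as $\dxdxnrm v. = \sup_{x\in\dset}|d\xdstp x.|_y(v)|$, with the dual norm $\|\cdot\|^*_{\Lip}$ on $TX$, which is the dual of the norm $\|\cdot\|_{\Lip}$ on $T^*X$ induced by the pointwise upper Lipschitz constant. Since all the distance functions $\xdstp x.$ are $1$-Lipschitz, we immediately get $\|d\xdstp x.\|_{\Lip}\le 1$, hence $|d\xdstp x.|_y(v)|\le \|d\xdstp x.\|_{\Lip}\cdot\|v\|^*_{\Lip}\le\|v\|^*_{\Lip}$ for all $x$, so $\dxdxnrmname\le\|\cdot\|^*_{\Lip}$ everywhere on a full-measure set. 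The content is the reverse inequality.

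\textbf{Main steps.} First I would reduce, by working in a fixed chart $(U_\alpha,\phi_\alpha)$ and restricting to the $\Phi_{\dset,\xdname}$-differentiability set, to a statement about a single fibre $T_xX\cong\real^{N_\alpha}$ for $\mu$-a.e.\ $x$. By duality, the reverse inequality $\|v\|^*_{\Lip}\le\dxdxnrm v.$ for all $v\in T_xX$ is equivalent to showing that for every Lipschitz function $f:X\to\real$ one has $\|df\|_{\Lip}(x)\le \sup\{|df|_x(v)|/\dxdxnrm v. : v\neq 0\}$ — i.e.\ that the distance-function differentials $\{d\xdstp x.|_x\}_{x\in\dset}$ fill out enough of the unit ball of $(T^*_xX,\|\cdot\|_{\Lip})$ to ``see'' the norm of any covector. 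Equivalently, it suffices to show that every element of the closed unit ball of $(T^*_xX,\|\cdot\|_{\Lip})$ — in particular every $df|_x$ with $\Lip f(x)\le 1$ — is a limit of covectors of the form $d\xdstp x_j.|_x$ (this is where the identification that $\|df\|_{\Lip}=\Lip f(x)$, recalled in the Preliminaries, enters). The natural route to this is via \emph{generic velocity vectors} and the density statement of Theorem \ref{thm:Sus_dir_dens}: by Theorem \ref{thm:met_diff_norm}, for a $\gquad$-generic velocity vector $\gamma'(t)$ with $\gamma(t)=x$ one has $\md\gamma(t)=\dxdxnrm\gamma'(t).$ (taking $\rdname=\xdname$), i.e.\ the metric speed of the fragment equals the $\dxdxnrmname$-length of its velocity. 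On the other hand, for any $1$-Lipschitz $f$ and any fragment, $|(f\circ\gamma)'(t)|\le\md\gamma(t)$; combined with the chain rule $(f\circ\gamma)'(t)=df|_x(\gamma'(t))$, this gives $|df|_x(\gamma'(t))|\le\dxdxnrm\gamma'(t).$ for every generic velocity vector at $x$. Since by Theorem \ref{thm:Sus_dir_dens} the generic velocity vectors contain a dense set of directions in $T_xX$, and both sides are positively homogeneous and continuous in $v$, the inequality $|df|_x(v)|\le\dxdxnrm v.$ extends to all $v\in T_xX$. Taking the supremum over $1$-Lipschitz $f$ (and using density of $\dset$ to realize, via $\|\cdot\|_4=\|\cdot\|_{\Lip}^*$, that $\|v\|^*_{\Lip}=\sup\{|df|_x(v)| : \Lip f(x)\le 1\}$) yields $\|v\|^*_{\Lip}\le\dxdxnrm v.$. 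The independence of $\dset$ is then automatic, since $\|\cdot\|^*_{\Lip}$ makes no reference to $\dset$.

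\textbf{The main obstacle.} The delicate point is handling the measurability/selection issue in passing from ``$|df|_x(v)|\le\dxdxnrm v.$ for each fixed $v$ in a dense set of directions at $\mu$-a.e.\ $x$'' to a single full-measure set $Y$ on which the inequality holds for \emph{all} $v\in T_xX$ simultaneously and for a \emph{countable} generating family of Lipschitz $f$ (the distance functions $\xdstp y.$, $y\in\dset$, suffice by the definition of $\|\cdot\|_1$ and Theorem \ref{thm_intro_norms_agree}, but that theorem is what we are proving here, so one must argue directly with the family of all $1$-Lipschitz functions using the essential-supremum machinery of Lemma \ref{lem:sup_of_seminorms}). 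This is where genericity is set up to do exactly the right job: by including all chart components in $\gfun$ and the relevant characteristic functions in $\gbor$, and by the Borel measurability of the generic set (Lemma \ref{lem:generic_borel}) together with the Suslin/measurability argument already used in the proof of Theorem \ref{thm:Sus_dir_dens}, one gets a single $\mu$-full-measure set $Y$ on which the density of generic directions and the identity $\md\gamma(t)=\dxdxnrm\gamma'(t).$ both hold. Once that bookkeeping is in place, the inequality and hence the equality follow, and this also supplies the promised input (for $\rdname=\xdname$) to the more general Theorem \ref{thm:can_seminorm}.
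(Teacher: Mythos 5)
Your argument is correct in substance and rests on the same three pillars as the paper's proof: Theorem \ref{thm:met_diff_norm} (so that $\md\gamma(t)=\|\gamma'(t)\|_{D_X,d_X}$ at generic pairs), the pointwise estimate $|(f\circ\gamma)'(t)|\le\Lip f(\gamma(t))\,\md\gamma(t)$ (which is exactly the paper's estimate (\ref{eq:metdiff=TX_p2})), and the density of generic directions from Theorem \ref{thm:Sus_dir_dens}. Where you genuinely diverge is the concluding duality step. The paper constructs, for each normalized generic velocity $w=\gamma'(t)/\md\gamma(t)$, a supporting functional of unit $\|\cdot\|_{\Lip}$-norm as a limit of differentials of distance functions from $D_X$, and then feeds these into the finite-dimensional Lemma \ref{lem:finitedim_norms}; you instead dualize directly, passing from $|df_x(v)|\le\Lip f(x)\,\|v\|_{D_X,d_X}$ (first for generic velocities, then for all $v$ by density of directions plus homogeneity and continuity) to $\|\cdot\|^*_{\Lip}\le\|\cdot\\|_{D_X,d_X}$ by taking the supremum over covectors of $\Lip$-norm at most one, the converse inequality being immediate since distance functions are $1$-Lipschitz. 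This bypasses Lemma \ref{lem:finitedim_norms} and the compactness/limiting construction altogether and is arguably simpler; what it does not deliver is the paper's byproduct that the normalized generic velocities are dense in the unit sphere, which is not needed for the statement. Two small repairs: (i) your intermediate claim that every element of the closed unit ball of $(T^*_xX,\|\cdot\|_{\Lip})$ is a limit of covectors $d\,d_X(\cdot,x_j)|_x$ is false as stated (these differentials need not be dense in the ball; only the norming property of their supremum is true, and that is all you use), so that sentence should simply be dropped; (ii) the quantification ``over all $1$-Lipschitz $f$'' is best replaced, within a fixed chart, by the functions $f_a=\sum_i a_i\phi^i_\alpha$ with rational $a$, for which $\Lip f_a(x)=\bigl\|\sum_i a_i\,d\phi^i_\alpha\bigr\|_{\Lip}(x)$ for $\mu$-a.e.\ $x$ and both sides depend continuously on $a$; a countable intersection then gives the inequality simultaneously for all covectors at a.e.\ point, which resolves the measurability/simultaneity worry without invoking the essential-supremum machinery of Lemma \ref{lem:sup_of_seminorms}, and it also makes the global-versus-pointwise Lipschitz distinction harmless, since the derivative bound only ever uses the pointwise constant $\Lip f(x)$.
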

{\bf Notation:}
After proving Theorem \ref{thm_intro_norms_agree}, 
we will change to the notation $\|\cdot\|_{TX}$, $\|\cdot\|_{T^*X}$
or simply
$\|\cdot\|$ to denote the canonical norms on $TX$ and $T^*X$.

Theorem \ref{thm:metdiff=TX} can be regarded as an infinitesimal
version of metric differentiation for the identity map ${\rm id}:X\to
X$; its proof uses the following lemma:
\begin{lemma}\label{lem:finitedim_norms}
Suppose that $(V,\|\cdot\|)$ is a finite dimensional normed vector space, with dual
space $(V^*,\|\cdot\|^*)$.  Let $W$ be a subset of the closed unit ball 
$\ol{B(\|\cdot\|)}\subset V$, such that:
\begin{description}
\item[(H1)] For every $w\in W$, there is a linear functional $\al_w\in V^*$ with 
$\|\al_w\|^*\leq 1$, such that $\al_w(w)=1$;\vskip2mm
\item[(H2)] The set $W$ contains a dense set of directions.
\end{description}
Then:
\begin{enumerate}
\item For all $w\in W$ one has $\|\al_w\|^*=1$;\vskip2mm
\item The set $W$ is a dense subset of the unit sphere $S(\|\cdot\|)$;\vskip2mm
\item The seminorm on $V$ defined by 
$
\sup_{w\in W}|\al_w(\cdot)|
$
agrees with $\|\cdot\|$.
\end{enumerate}
\end{lemma}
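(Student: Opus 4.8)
The plan is to deduce all three conclusions from a single observation: under hypotheses (H1) and (H2), the function $p(v) := \sup_{w \in W} |\al_w(v)|$ is a \emph{norm} that is dominated by $\|\cdot\|$ and agrees with $\|\cdot\|$ on the dense set $[0,\infty)W$, hence everywhere by continuity. First I would record that $p$ is a seminorm (sup of absolute values of linear functionals), and that $p \le \|\cdot\|$ since each $\|\al_w\|^* \le 1$; in particular $p$ is continuous with respect to $\|\cdot\|$. Next, for $w \in W$, evaluating at $w$ gives $p(w) \ge |\al_w(w)| = 1$, while $p(w) \le \|w\| \le 1$ because $W \subset \ol{B(\|\cdot\|)}$; therefore $p(w) = \|w\| = 1$ for every $w \in W$. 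This immediately yields part (1): since $1 = |\al_w(w)| \le \|\al_w\|^* \|w\| = \|\al_w\|^*$ and $\|\al_w\|^* \le 1$ by (H1), we get $\|\al_w\|^* = 1$; it also shows $W \subset S(\|\cdot\|)$.

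For part (3), I would argue by homogeneity and density. On the set $[0,\infty)W$ we have $p(tw) = t\,p(w) = t = t\|w\| = \|tw\|$ for $t \ge 0$, $w \in W$, so $p$ and $\|\cdot\|$ coincide there. By (H2) the set $[0,\infty)W$ is dense in $V$, and both $p$ and $\|\cdot\|$ are continuous (the former because $p \le \|\cdot\|$, or simply because any seminorm on a finite-dimensional space is continuous), so they agree on all of $V$. This proves (3), and in particular shows $p = \|\cdot\|$ is a norm.

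Part (2) now follows: $S(\|\cdot\|)$ is compact (finite dimensions), and I claim $W$ is dense in it. Given $u$ with $\|u\| = 1$ and $\eps > 0$, by (H2) there are $t \ge 0$, $w \in W$ with $\|u - tw\| < \eps/2$; then $|t - 1| = |\,\|tw\| - \|u\|\,| \le \|tw - u\| < \eps/2$, and since $\|w\| = 1$ we get $\|tw - w\| = |t-1| < \eps/2$, hence $\|u - w\| < \eps$. Thus $W$ is dense in $S(\|\cdot\|)$.

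I do not anticipate a serious obstacle here: the lemma is elementary finite-dimensional convex geometry, and the only point requiring a little care is making sure (H2) is used in the correct form — namely that $\ol{[0,\infty)W} = V$ combined with $\|w\| = 1$ on $W$ forces density of $W$ in the \emph{sphere}, not merely density of the cone $[0,\infty)W$ in $V$. The step where I invoke continuity of $p$ relies only on $p \le \|\cdot\|$, which was established first, so there is no circularity.
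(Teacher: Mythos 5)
Your proof is correct, and it is organized differently from the paper's. The preliminary step is the same in both: from $\al_w(w)=1$, $\|\al_w\|^*\le 1$ and $\|w\|\le 1$ one gets $\|\al_w\|^*=\|w\|=1$, which is (1) and $W\subset S(\|\cdot\|)$. The divergence is in (2) and (3). For (2), the paper bounds the scaling factor $t_\eps$ from below by evaluating a norming functional $\be_v$ at $v$ (a supporting functional supplied by Hahn--Banach) and from above by evaluating $\al_{w_\eps}$, then concludes $\|v-w_\eps\|\le 2\eps$; you instead get $|t-1|=\bigl|\,\|tw\|-\|u\|\,\bigr|\le\|tw-u\|$ from the reverse triangle inequality, using only $\|w\|=1$, which is more direct and dispenses with $\be_v$ altogether. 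For (3), the paper derives it \emph{from} (2): it approximates $v/\|v\|$ by some $w_\eps\in W$ and evaluates $\al_{w_\eps}$ to get the lower bound $(1-\eps)\|\cdot\|$. You instead observe that $p=\sup_{w\in W}|\al_w(\cdot)|$ satisfies $p\le\|\cdot\|$ (hence is $1$-Lipschitz, as is $\|\cdot\|$) and coincides with $\|\cdot\|$ on the cone $[0,\infty)W$, which is dense by (H2) in exactly the form the paper defines it, so the two agree everywhere by continuity; this makes (3) independent of (2). Both routes are elementary and valid; yours is slightly leaner (no supporting functional, and (2) and (3) decoupled), while the paper's sequential argument makes the geometric role of the dual functionals $\al_w$ more explicit. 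Your closing caveat about using (H2) correctly is well taken and correctly handled: density of the cone in $V$ plus $\|w\|=1$ on $W$ is exactly what upgrades to density in the sphere.
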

\begin{proof}
  Note that by \textbf{(H1)} each $\al_w$ has unit norm (which
  implies  (1)) and that each vector
  $w\in W$ has unit norm, which implies that $W\subset S(\|\cdot\|)$. Let $v\in
  S(\|\cdot\|)$; by \textbf{(H2)}, for each $\epsi>0$ there are a $w_\epsi\in W$ and
  a $t_\epsi\in[0,\infty)$:
    \begin{equation}\label{eq:dir_dens}
      \|v-t_\epsi w_\epsi\|\le\epsi;
    \end{equation}
let $\beta_v$ a unit norm functional on $V$ assuming the norm at $v$. Then
\eqref{eq:dir_dens} implies:
\begin{equation}
  |1-t_\epsi\beta_v(w_\epsi)|\le\epsi;
\end{equation} as $|\beta_v(w_\epsi)|\le1$, the previous equation
implies $t_\epsi\ge1-\epsi$. On the other hand, evaluating with
$\alpha_{w_\epsi}$, \eqref{eq:dir_dens} gives
\begin{equation}
  \left|\alpha_{w_\epsi}(v)-t_\epsi\right|\le\epsi;
\end{equation} as the functional $\alpha_{w_\epsi}$ has unit norm,
$t_\epsi\le 1+\epsi$. We thus conclude that
\begin{equation}
  \|v-w_\epsi\|\le\|v-t_\epsi w_\epsi\|+\|(1-t_\epsi)w_\epsi\|\le2\epsi,
\end{equation} implying (2). Note that, as the functionals $\alpha_w$
have unit norm,
\begin{equation}
  \sup_{w\in W}\left\|\alpha_w(\cdot)\right\|\le\|\cdot\|.
\end{equation} On the other hand, for each $v\in V\setminus\{0\}$ and 
each $\epsi>0$, choose $w_\epsi\in W$ with
\begin{equation}
  \left\|\frac{v}{\|v\|}-w_\epsi\right\|\le\epsi; 
\end{equation}
then
\begin{equation}
  \alpha_{w_\epsi}\left(\frac{v}{\|v\|}\right)\ge1-\epsi,
\end{equation}
implying that:
\begin{equation}
  \sup_{w\in W}\left\|\alpha_w(\cdot)\right\|\ge(1-\epsi)\|\cdot\|,
\end{equation} from which (3) follows.
\end{proof}
\begin{proof}[Proof of Theorem \ref{thm:metdiff=TX}]
  Let $V$ be a differentiability set for the countable collection of
  Lipschitz functions $\left\{\xdst\cdot,x.: x\in\dset\right\}$. We
  let $\gfun$ contain the components of the coordinate functions,
  $\gbor$ contain the characteristic functions of the charts, and
  $\gsem=\{\xdname\}$. Let $Y$ be the set provided by Theorem
  \ref{thm:Sus_dir_dens}; we will show that for each $p\in Y\cap V$
  the norm $\tnrmname$ and the seminorm  $\dxdxnrmname$ coincide on
  the fibre $T_pX$. Let $\gamma'(t)\in G_p\gpairset$ with
  $\gamma'(t)\ne0$; then $\md\gamma(t)\ne0$. Without loss of
  generality we assume that $p$ belongs to the chart $U_\alpha$ and we
  consider a functional
  $\sum_{i=1}^{N_\alpha}a_i\,d\phi^i_\alpha\mid_p\in T_p^*X$; then:
  \begin{equation}\label{eq:metdiff=TX_p1}
  \left|\left\langle\sum_{i=1}^{N_\alpha}a_i\,d\phi^i_\alpha\mid_p,\frac{\gamma'(t)}{\md\gamma(t)}\right\rangle\right|=
  \frac{\left|\sum_{i=1}^{N_\alpha}a_i\,\left(\phi^i_\alpha\circ\gamma\right)'(t)\right|}{\md\gamma(t)};
\end{equation}
choose $s_n\searrow0$ such that $t+s_n\in\dom\gamma$ and note that
\begin{equation}
  \label{eq:metdiff=TX_p2}
  \begin{split}
    \left|\sum_{i=1}^{N_\alpha}a_i\,\left(\phi^i_\alpha\circ\gamma\right)'(t)\right|&
    =\lim_{n\to\infty}\frac{\left|\sum_{i=1}^{N_\alpha}a_i\,\left(\left(\phi^i_\alpha\circ\gamma\right)(t+s_n)-\left(\phi^i_\alpha\circ\gamma\right)(t)\right)\right|}{s_n}\\
    &\le\limsup_{n\to\infty}\frac{\left|\sum_{i=1}^{N_\alpha}a_i\,\left(\left(\phi^i_\alpha\circ\gamma\right)(t+s_n)-\left(\phi^i_\alpha\circ\gamma\right)(t)\right)\right|}{\xdst\gamma(t+s_n),\gamma(t).}\\
    &\quad\times\limsup_{n\to\infty}\frac{\xdst\gamma(t+s_n),\gamma(t).}{s_n}\\
    &\le\ctnrm\sum_{i=1}^{N_\alpha}a_i\,d\phi^i_\alpha\mid_p.\,\md\gamma(t);
  \end{split}
\end{equation}
we thus conclude that:
\begin{equation}
  \label{eq:metdiff=TX_p3}
  \left|\left\langle\sum_{i=1}^{N_\alpha}a_i\,d\phi^i_\alpha\mid_p,\frac{\gamma'(t)}{\md\gamma(t)}\right\rangle\right|
  \le\ctnrm\sum_{i=1}^{N_\alpha}a_i\,d\phi^i_\alpha\mid_p.,
\end{equation}
which implies
$\frac{\gamma'(t)}{\md\gamma(t)}\in\ol{B(\tnrmname(x))}$.
\par Let
\begin{equation}
    W_p=\left\{\frac{\gamma'(t)}{\md\gamma(t)}:\text{$\gamma'(t)\ne0$
        and $\gamma'(t)\in G_p\gpairset$}\right\};
  \end{equation}
by Theorem \ref{thm:Sus_dir_dens} the set $W_p$ contains a dense set of
directions in $T_pX$. Theorem \ref{thm:met_diff_norm} implies then
\begin{equation}
  \label{eq:metdiff=TX_p3bis}
  \dxdxnrm\gamma'(t).=\md\gamma(t),
\end{equation}
and so we can find a sequence
$\left\{\sum_{i=1}^{N_\alpha}a_{i,k}\,d\phi^i_\alpha\mid_p\right\}\subset
\ol{B\left(\ctnrmname\right)}$ such that:
\begin{enumerate}
\item We have:
  \begin{equation}
    \label{eq:metdiff=TX_p4}
  \lim_{k\to\infty}\left\langle\sum_{i=1}^{N_\alpha}a_{i,k}\,d\phi^i_\alpha\mid_p,\gamma'(t)\right\rangle=\md\gamma(t);  
  \end{equation}
\item For each $k$ there is an $x_k\in\dset$ with:
  \begin{equation}
    \label{eq:metdiff=TX_p5}
    d\left(\xdst\cdot,x_k.\right)\mid_p=\sum_{i=1}^{N_\alpha}a_{i,k}\,d\phi^i_\alpha\mid_p.
  \end{equation}
\end{enumerate}
By compactness we can find a subsequence of
$\left\{\sum_{i=1}^{N_\alpha}a_{i,k}\,d\phi^i_\alpha\mid_p\right\}$
converging to
$\omega_{\gamma'(t)}\in\ol{B\left(\ctnrmname\right)}$. Now,
(\ref{eq:metdiff=TX_p4}) implies that
\begin{equation}\label{eq:metdiff=TX_p5bis}\omega_{\gamma'(t)}\left(\frac{\gamma'(t)}{\md\gamma(t)}\right)=1;
\end{equation}
 so for $w\in
W_p$ of the form $\frac{\gamma'(t)}{\md\gamma(t)}$ let
$\alpha_w=\omega_{\gamma'(t)}$; applying Lemma
\ref{lem:finitedim_norms} we conclude that:
\begin{equation}
  \label{eq:metdiff=TX_p6}
  \tnrmname(p)=\sup_{w\in W_p}\left|\alpha_w(\cdot)\right|;
\end{equation}
but Lemma \ref{lem:finitedim_norms} implies also that $W_p$ is dense
in $S\left(\tnrmname.\right)$ and by (\ref{eq:metdiff=TX_p5bis}) we
conclude that for $w'\in W_p$:
\begin{equation}
  \label{eq:metdiff=TX_p7}
  \sup_{w\in W_p}\left|\alpha_w(w')\right| = 1 =\dxdxnrm w'.,
\end{equation}
from which we have $\tnrmname(p)=\dxdxnrmname(p).$
\end{proof}

\bigskip\bigskip
\begin{proof}[Proof of Theorem \ref{thm_intro_norms_agree}]
Let $\|\cdot\|_1$-$\|\cdot\|_3$ be the seminorms
as in Theorem \ref{thm_intro_norms_agree}, 
constructed using
Lemma \ref{lem:sup_of_seminorms}, and let $\|\cdot\|_4$ be the 
dual $\Lip$ norm $\|\cdot\|_{\Lip}^*$.   Clearly we have 
$\|\cdot\|_1\leq \|\cdot\|_2\leq
\|\cdot\|_3$.  

We claim that 
\begin{equation}
\label{eqn_3_leq_lip}
\|\cdot\|_3\leq \|\cdot\|_{\Lip}^*\quad \mu-a.e.
\end{equation}
To see this, recall that by Lemma \ref{lem:sup_of_seminorms} there is a
countable collection $\{f_i\}$ of $1$-Lipschitz functions such that for
$\mu$-a.e. $p\in X$, the differentials $df_i(p)\in T_p^*X$ are well-defined, and
$$
\|\cdot\|_3(p)=\sup_i |df_i(p)|\,.
$$
Recalling that for $\mu$ a.e. $p\in X$ we have $\|df_i(p)\|_{\Lip}=\Lip f_i(p)$,
we get that for $\mu$ a.e. $p\in X$, every $i$, and every $v\in T_pX$, 
$$
|df_i(v)|\leq \|df_i\|_{\Lip}\cdot \|v\|_{\Lip}^*
=\Lip f_i(p)\cdot \|v\|_{\Lip}^*\leq \|v\|_{\Lip}^*
$$
since $f_i$ is $1$-Lipschitz.  Taking supremum gives 
(\ref{eqn_3_leq_lip}).

By Theorem \ref{thm:metdiff=TX} we have $\|\cdot\|_1=\|\cdot\|_{\Lip}^*$ $\mu$-a.e.,
so Theorem \ref{thm_intro_norms_agree} follows. 
\end{proof}

\subsection{A new proof of $\smllip f = \biglip f$ in
  differentiability spaces}
\label{subsec:Liplip}
In this subsection we provide a proof, independent of the one given in
\cite{deralb},  of the following result:
\begin{thm}\label{thm:alb_speed_one}
  Let $(X,\mu)$ be a differentiability space and $f:X\to\R$
  Lipschitz. The for all $(\epsi,\sigma)\in(0,1)^2$ there is a $(1,1+\epsi)$-biLipschitz Alberti
  representation of $\mu$ with $f$-speed $\ge\sigma\biglip f$. In
  particular,
  \begin{equation}\label{thm:alb_speed_one_s1}
    \Lip f(x)=\lip f(x)\quad\text{for $\mu$-a.e.~$x$.}
  \end{equation}
\end{thm}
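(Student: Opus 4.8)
The plan is to derive Theorem \ref{thm:alb_speed_one} from the machinery already in place, principally Theorem \ref{thm:metdiff=TX} (equality of the canonical norms on $TX$), Theorem \ref{thm:Sus_dir_dens} (density of generic directions), Theorem \ref{thm:bate_alb_arb_dir} (existence of cone-directed Alberti representations), and the upgrading statement Theorem \ref{thm:alberti_rep_prod} (biLipschitz constants close to $1$). Fix a chart $(U_\alpha,\phi_\alpha)$ of dimension $N_\alpha$. Over the full-measure set $Y\cap V$ where all the norms agree and generic directions are dense, we know $\|df\|_{\Lip}=\Lip f$ $\mu$-a.e., and by duality there is, at a.e.\ $p$, a unit vector $v_p$ in $(T_pX,\|\cdot\|)$ realizing $df|_p(v_p)=\Lip f(p)$ (up to sign; replace $f$ by $-f$ if needed, or note that directions come in both signs in a cone family). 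The core point is then a Lusin-type / covering argument: fix $\sigma<1$; for $\mu$-a.e.\ $p$, density of generic directions and continuity of $df$ in the chart let us find a cone $\cone(v_{\alpha,k},\theta_{\alpha,k})$ in $\R^{N_\alpha}$ so narrow that every $w$ in it satisfies $\langle df|_p, w\rangle \ge \sigma \|w\|_{\Lip}$ in the pushed-forward coordinates; since $df$ is only measurable, this $k$ depends on $p$, so partition $U_\alpha$ (up to null sets) into countably many Borel pieces $U_{\alpha,k}$ on each of which a single $k$ works uniformly.

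First I would make precise the pointwise statement: for a generic pair $(\gamma,t)$ with $\gamma(t)=p$ and $(\phi_\alpha\circ\gamma)'(t)\in \cone(v_{\alpha,k},\theta_{\alpha,k})$, Theorem \ref{thm:met_diff_norm} (applied with $\varrho=d_X$) gives $\md\gamma(t)=\|\gamma'(t)\|$, while the chain rule $ (f\circ\gamma)'(t)=\langle df|_p,\gamma'(t)\rangle$ (valid since $\phi_\alpha\circ\gamma$ is differentiable at $t$ by \textbf{(Gen2)} and $f$ is differentiable at $p$ with respect to $\phi_\alpha$) together with the cone condition yields $(f\circ\gamma)'(t)\ge \sigma \Lip f(p)\,\md\gamma(t)$, i.e.\ the fragment has $f$-speed $\ge \sigma\,\Lip f$ at $t$. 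The subtlety is choosing the cone uniformly: this requires knowing that the direction $v_p$ maximizing $\langle df|_p,\cdot\rangle/\|\cdot\|_{\Lip}$ varies measurably with $p$ — which follows from measurability of $df$ and of the coordinate norm $\|\cdot\|_{\Lip}$ on $T^*X$ — so that for each $k$ the set $U_{\alpha,k}$ of points whose optimal direction lies within angle $\theta_{\alpha,k}/2$ of $v_{\alpha,k}$ and whose coordinate norm is comparable is Borel, and these cover $U_\alpha$ up to a null set as $k\to\infty$ and $v_{\alpha,k}$ ranges over a dense subset of the sphere.

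Next I would invoke Bate's theorem: Theorem \ref{thm:bate_alb_arb_dir} provides, on each $\mu\restr U_{\alpha,k}$, an Alberti representation in the $\phi_\alpha$-direction of $\cone(v_{\alpha,k},\theta_{\alpha,k})$. By the disintegration of a.e.\ fragment of such a representation into generic pairs (as in the proof of Theorem \ref{thm:met_diff_norm}), the pointwise computation above shows this representation has $f$-speed $\ge \sigma\,\Lip f$ on $U_{\alpha,k}$. Then the gluing principle, Theorem \ref{thm:alberti_glue}, assembles these into an Alberti representation of $\mu\restr U_\alpha$ with $f$-speed $\ge\sigma\,\Lip f$, and gluing over $\alpha$ gives one for $\mu$ itself. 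Finally, Theorem \ref{thm:alberti_rep_prod} upgrades this to a $(1,1+\epsi)$-biLipschitz representation with $f$-speed $>\sigma'\Lip f$ for any $\sigma'<\sigma$, which is the first assertion (after relabelling $\sigma$); note one must check the speed hypothesis in Theorem \ref{thm:alberti_rep_prod} is ``$>\sigma$'', so I would start with $\sigma'<\sigma$ so that $\ge\sigma\,\Lip f$ implies $>\sigma'\,\Lip f$ where $\Lip f\ne 0$, handling the set $\{\Lip f=0\}$ trivially.

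For the Lip-lip equality \eqref{thm:alb_speed_one_s1}: from the $(1,1+\epsi)$-biLipschitz Alberti representation with $f$-speed $\ge\sigma\Lip f$, one has for $\mu$-a.e.\ $p$ a fragment $\gamma$ through $p$ at a density point $t$ of its domain with $(f\circ\gamma)'(t)\ge \sigma\,\Lip f(p)\,\md\gamma(t)$ and $\md\gamma(t)\ge (1+\epsi)^{-1}$ (biLipschitz lower bound). Feeding this into the definition of $\lip f(p)$ — approaching $p$ along the images $\gamma(t+s_n)$ for a sequence $s_n\searrow 0$ through the density set of $\dom\gamma$, with $d_X(\gamma(t+s_n),p)=\md\gamma(t)|s_n|(1+o(1))$ — gives $\lip f(p)\ge \frac{|(f\circ\gamma)'(t)|}{\md\gamma(t)}\cdot(1+\epsi)^{-1}\cdot(1+o(1))\ge \sigma(1+\epsi)^{-1}\Lip f(p)$. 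Letting $\sigma\to 1$ and $\epsi\to 0$ along countable sequences (taking a full-measure intersection) yields $\lip f(p)\ge \Lip f(p)$; the reverse inequality $\lip f\le \Lip f$ is immediate from the definitions. The main obstacle I anticipate is the uniform cone selection — turning the $p$-dependent choice of an optimal direction into a countable Borel partition on which a fixed cone works — which is where measurability of the cotangent norm and of $df$, plus the density-of-directions statement, all have to be combined carefully; the rest is assembling quoted results.
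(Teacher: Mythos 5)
Your proposal is correct, and it reaches the theorem by the same overall strategy as the paper --- Bate's cone-directed Alberti representations (Theorem \ref{thm:bate_alb_arb_dir}), a countable Borel decomposition on which a single cone works, the identification $\md\gamma(t)=\|\gamma'(t)\|$ at generic pairs (Theorems \ref{thm:met_diff_norm} and \ref{thm:metdiff=TX}), gluing (Theorem \ref{thm:alberti_glue}), and the biLipschitz upgrade (Theorem \ref{thm:alberti_rep_prod}) --- but the central estimate is implemented differently. The paper first treats the ``linear'' case $f=\langle v_0^*,\phi\rangle$: it freezes the fibre norms up to a factor $1+\eta$ on compact pieces by Lusin/Egorov, takes the cone around the dual-extremal vector of the frozen norm, and controls the speed via the cone-diameter estimate (Lemma \ref{cone_diameter}) plus a MacShane-extension/fundamental-theorem-of-calculus computation; the general case is then reduced to the linear one by approximating $df$ by constant covectors $v^*_\alpha$ on further compact pieces. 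You instead define the pieces $U_{\alpha,k}$ directly by the pointwise condition that $\langle df(p),w\rangle\ge\sigma\Lip f(p)\,\|w\|(p)$ for all $w$ in a fixed cone of the countable family (Borel since $df$, $\Lip f$ and the fibre norm are Borel; a.e.\ point is covered by biduality together with continuity and homogeneity in chart coordinates --- density of generic directions is not actually needed for this step, and the sign issue is resolved simply by choosing the norming vector $v_p$ with $\langle df(p),v_p\rangle=+\Lip f(p)$), and you get the speed bound at generic pairs from the chain rule and Theorem \ref{thm:met_diff_norm}; this bypasses Lemma \ref{cone_diameter}, the integral estimate, and the two-step reduction. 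One organizational difference worth noting: the paper invokes Theorem \ref{thm:alberti_rep_prod} at the outset, upgrading the cone-directed representation to a $(1,1+\varepsilon_0)$-biLipschitz one and then proving the speed bound for it, whereas you upgrade at the end; that is why you must handle the strict-inequality hypothesis, and your fix (pass to $\sigma'<\sigma$, treat $\{\Lip f=0\}$ separately, and note that cone-directedness forces $\md\gamma(t)>0$ a.e.) is adequate. The technicalities you leave implicit --- the behaviour of fragments at times when they exit a piece, and gluing representations whose direction cones vary from piece to piece --- occur at exactly the same level in the paper's own argument, so they are not a gap. Your deduction of $\Lip f=\lip f$ is also fine; the paper phrases it more compactly via the observation $|(f\circ\gamma)'(t)|\le\lip f(\gamma(t))\,\md\gamma(t)$ at generic pairs, but the content is the same.
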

The equality (\ref{thm:alb_speed_one_s1}) generalizes one of the
main results in \cite[Thm.~6.1]{cheeger}, which is a consequence of
the fact that, in a PI-space $(X,\mu)$,
the function $\Lip f$ is a representative of the minimal 
generalized upper
gradient $g_f$ of $f$. This last statement does not make sense in a
general differentiability space as one might have
$g_f<\Lip f$ on a positive measure set, e.g.~because
$X$ might not contain enough curves and one might then have
$g_f=0$. However, in a differentiability space the concept of the \emph{maximal slope of $f$ along fragments
passing at time $t=0$ through $x$} and having $0$ as a density point of
their domain, remains useful and can be interpreted as \emph{the size of the
gradient of $f$}. The result \eqref{thm:alb_speed_one_s1} is also
proven in \cite{deralb} in a conceptually different way, and there it
is also shown that in a differentiability space one has $\Lip f = |df|$, where
$|df|$ is the \emph{local norm} of the form $df$, which is the
Weaver differential form associated to the function $f$. The proof
of Theorem \ref{thm:alb_speed_one} relies
on the following lemma. 
\begin{lemma}\label{cone_diameter}
  Let $\ltwonrmname$ denote the standard $l^2$-norm on $\real^N$, and
  let $\emptynrmname'$ denote another norm on $\real^N$ satisfying:
  \begin{equation}\label{eq:cone_diameter_s1}
    \frac{1}{C}\ltwonrmname\le\emptynrmname'\le C\ltwonrmname.
  \end{equation}
Then the diameter of the set $\cone(v,\theta)\cap
S\left(\emptynrmname'\right)$, with respect to the norm
$\emptynrmname'$, is at most
\begin{equation}\label{eq:cone_diameter_s2}
4C^2(1-\cos\theta+\sin\theta).
\end{equation}
\end{lemma}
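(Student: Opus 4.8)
The plan is to control, for an arbitrary $u\in\cone(v,\theta)\cap S(\emptynrmname')$, its $\emptynrmname'$-distance to the single reference vector $w:=v/\|v\|'$, and then conclude by the triangle inequality that $\diam\bigl(\cone(v,\theta)\cap S(\emptynrmname')\bigr)\le 2\sup_u\|u-w\|'$. Recall that by Definition \ref{defn:cone} one has $v\in\mathbb{S}^{N-1}$, so $\|v\|_2=1$ and hence $\|v\|'\in[1/C,C]$; also $C\ge 1$, since $\tfrac1C\le C$ by \eqref{eq:cone_diameter_s1}.

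For such a $u$, I would write $u=av+p$ where $a=\langle v,u\rangle$ and $p=\pi_v^\perp u\perp v$; membership in the closed cone forces $a\ge 0$ and $\|p\|_2\le a\tan\theta$, while $\|v\|_2=1$ and $p\perp v$ give $\|u\|_2^2=a^2+\|p\|_2^2$. From $\tfrac1C\|u\|_2\le\|u\|'=1$ one gets $\|u\|_2\le C$, hence $a\le\|u\|_2\le C$ and $\|p\|'\le C\|p\|_2\le Ca\tan\theta\le C^2\tan\theta$. Writing $w=bv$ with $b=1/\|v\|'$, so that $b\|v\|'=1$, we have $u-w=(a-b)v+p$, whence $\|u-w\|'\le\abs{a-b}\,\|v\|'+\|p\|'=\abs{a\|v\|'-1}+\|p\|'$. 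Since $a\|v\|'=\|av\|'=\|u-p\|'$ and $\|u\|'=1$, the reverse triangle inequality gives $\abs{a\|v\|'-1}\le\|p\|'$, so $\|u-w\|'\le 2\|p\|'\le 2C^2\tan\theta$. Consequently any two points of $\cone(v,\theta)\cap S(\emptynrmname')$ lie within $\emptynrmname'$-distance $4C^2\tan\theta$ of one another.

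It then remains to upgrade the bound $4C^2\tan\theta$ to the asserted $4C^2(1-\cos\theta+\sin\theta)$, and for this I would distinguish two ranges of $\theta$. If $\theta\le\pi/4$, then $\tan\theta\le 1$, and the elementary identity $1-\cos\theta+\sin\theta-\tan\theta=(1-\cos\theta)(1-\tan\theta)\ge 0$ gives $4C^2\tan\theta\le 4C^2(1-\cos\theta+\sin\theta)$. If instead $\theta\in(\pi/4,\pi/2)$, then since $\theta\mapsto 1-\cos\theta+\sin\theta$ has derivative $\sin\theta+\cos\theta>0$ on $(0,\pi/2)$, it is increasing there, so $1-\cos\theta+\sin\theta\ge 1$ and thus $4C^2(1-\cos\theta+\sin\theta)\ge 4C^2\ge 4$; but the diameter of any subset of the unit sphere $S(\emptynrmname')$ is at most $2$ by the triangle inequality, so the claim is immediate in this range. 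The one genuinely delicate point is this last step: the argument naturally produces only the $\tan\theta$-bound, which is worse than what is claimed when $\theta$ is close to $\pi/2$, so one must combine the trivial diameter bound $\le 2$ for large $\theta$ with the factored trigonometric identity for small $\theta$ to recover the stated form. Everything else is a routine computation.
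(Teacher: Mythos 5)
Your proof is correct, but it follows a different route than the paper's. The paper's argument takes two arbitrary points $v_1,v_2$ of $\cone(v,\theta)\cap S(\|\cdot\|')$, writes each as the $\|\cdot\|'$-normalization of a Euclidean unit vector $u_i$ in the cone, bounds $\|u_1-u_2\|_2\le 2(1-\cos\theta+\sin\theta)$ directly from the geometry of the Euclidean cone, and then transfers this to $\|\cdot\|'$ via the comparison \eqref{eq:cone_diameter_s1} together with the standard estimate for a difference of normalized vectors (splitting $\frac{u_1}{\|u_1\|'}-\frac{u_2}{\|u_2\|'}$ into two terms, each costing a factor $C$); this produces the stated expression $4C^2(1-\cos\theta+\sin\theta)$ in one pass, with no case analysis. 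You instead compare every point of the set to the single axis point $v/\|v\|'$, decompose $u=av+p$ with $p\perp v$, control the transverse part by $\|p\|'\le C^2\tan\theta$ and the axial discrepancy by the reverse triangle inequality, and conclude a diameter bound of $4C^2\tan\theta$; you then need the identity $1-\cos\theta+\sin\theta-\tan\theta=(1-\cos\theta)(1-\tan\theta)$ for $\theta\le\pi/4$ and the trivial diameter bound $2$ on the unit sphere (together with $C\ge1$) for $\theta\in(\pi/4,\pi/2)$ to recover the constant as stated. What your approach buys is a slightly sharper estimate for small openings, namely $4C^2\tan\theta\le 4C^2(1-\cos\theta+\sin\theta)$ when $\theta\le\pi/4$, and a cleaner ``distance to the axis'' picture; what the paper's approach buys is that the claimed trigonometric form appears directly, without the case split or the auxiliary identity. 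Since the lemma is only used (in the proof of Theorem \ref{thm:alb_speed_one}) through the fact that the bound tends to $0$ as $\theta\to0$ for fixed comparison constant, either version serves equally well.
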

\begin{proof}
  Let $v_1,v_2\in\cone(v,\theta)\cap S\left(\emptynrmname'\right)$; then
  we can find
  $u_1,u_2\in S\left(\ltwonrmname\right)$ such that:
 $v_i=\frac{u_i}{\emptynrm u_i.}$;
now
  \begin{equation}
    \ltwonrm u_1-u_2.\le 2(1-\cos\theta+\sin\theta)
  \end{equation}
  by using the definition of Euclidean cone. Observe also that
  (\ref{eq:cone_diameter_s1}) implies:
\begin{equation}
  \left|\emptynrm u_1.'-\emptynrm u_2.'\right|\le\emptynrm
  u_1-u_2.'\le 2C(1-\cos\theta+\sin\theta);
\end{equation}
thus
\begin{equation}
  \begin{split}
    \emptynrm \frac{u_1}{\emptynrm u_1.'} - \frac{u_2}{\emptynrm
      u_2.'}.'&=
    \emptynrm \frac{u_1}{\emptynrm u_1.'}-\frac{u_2}{\emptynrm u_1.'}
    + \frac{u_2}{\emptynrm u_1.'} - \frac{u_2}{\emptynrm u_2.'}.'\\
    &\le\frac{\emptynrm u_1-u_2.'}{\emptynrm u_1.'}+\frac{\emptynrm
      u_2.'}{\emptynrm u_1.'\,\emptynrm u_2.'}\left|\emptynrm
      u_1.'-\emptynrm u_2.'\right|\\
    &\le C\emptynrm u_1-u_2.'+C\left|\emptynrm
      u_1.'-\emptynrm u_2.'\right|\\
    &\le4C^2(1-\cos\theta+\sin\theta).
  \end{split}
\end{equation}
\end{proof}
\begin{proof}[Proof of Theorem \ref{thm:alb_speed_one}]
  We fix an $N$-dimensional chart $(U,\phi)$ and a countable dense set
  $\dset\subset X$. We will show that, for each $(\varepsilon,\sigma)\in(0,1)^2$, the measure $\mu\mrest U$ admits
  a $(1,1+\varepsilon)$-biLipschitz Alberti representation with
  $f$-speed $\geq\sigma\biglip f$; the result about $\mu$ will then
  follow by applying the \emph{gluing principle} Theorem \ref{thm:alberti_glue}.
\par We first consider the special case in which $f$ is of the form
$\langle v_0^*,\phi\rangle$ for some
$v_0^*\in\real^N\setminus\{0\}$. For each $\eta\in(0,1)$ we can use
Egorov and Lusin Theorems to find disjoint compact sets $C_\alpha\in
U$, and dual norms $\alnrmname$ and $\alnrmname^*$ on $\real^N$ such
that:
\begin{equation}
  \label{eq:alb_speed_one_p1}
  \mu\left(U\setminus\bigcup_\alpha C_\alpha\right)=0;
\end{equation}
\begin{equation}
  \label{eq:alb_speed_one_p2}
  \begin{aligned}
    \frac{1}{1+\eta}\tnrmname&\le
    \alnrmname\le(1+\eta)\tnrmname\quad\text{(on the
      fibres of $TX|C_\alpha$);}\\
\frac{1}{1+\eta}\ctnrmname&\le
\alnrmname^*\le(1+\eta)\ctnrmname\quad\text{(on the
      fibres of $T^*X|C_\alpha$).}
  \end{aligned}
\end{equation}
By Theorem \ref{thm:metdiff=TX} we can also assume that on the fibres
of each $TX|C_\alpha$ one has:
\begin{equation}
  \label{eq:alb_speed_one_p3}
  \tnrmname=\dxdxnrmname.
\end{equation}
Having fixed $\alpha$, we will show that $\mu\mrest C_\alpha$ admits
a $(1,1+\varepsilon)$-biLipschitz Alberti representation with $\langle v_0^*,\phi\rangle$-speed
$\geq\sigma\ctnrm v_0^*.$ for each $\sigma\in(0,1)$; the result about
$\mu\mrest U$ will follow again by using Theorem \ref{thm:alberti_glue}. As we can rescale $v_0^*$, we can assume that $\alnrm
v_0^*.^*=1$; we will denote by $v_0\in S\left(\alnrmname\right)$ a
vector where $v_0^*$ assumes the norm. We let $M$ denote a constant
such that:
\begin{equation}
  \label{eq:alb_speed_one_p4}
  \frac{1}{M}\ltwonrmname\le\alnrmname\le M\ltwonrmname.
\end{equation}
We fix $\varepsilon_0\in(0,1)$ and $\theta\in(0,\pi/2)$ and, using
\ref{thm:bate_alb_arb_dir} and Theorem \ref{thm:alberti_rep_prod}.
we find a $(1,1+\varepsilon_0)$-biLipschitz Alberti representation $\albrep.$ of
$\mu\mrest C_\alpha$ in the $\phi$-direction of
$\cone\left(\frac{v_0}{\ltwonrm v_0.},\theta\right)$. Let $\gfun$
contain the components of $\phi$ and $\{f\}$, and let $\gbor$ contain
$\chi_U$. Using the Alberti representation $\albrep.$ and
(\ref{eq:alb_speed_one_p3}) we conclude that for $\mu\mrest
C_\alpha$-a.e.~$p$ there is an $\gpair$-generic velocity vector
$\gamma'(t)\in T_pX$ such that:
\begin{equation}\label{eq:alb_speed_one_p5}
\begin{aligned}
         \md\gamma(t)=\tnrm\gamma'(t).&\in[1,1+\epsi_0];\\
         (\phi\circ\gamma)'(t)&\in\cone\left(\frac{v_0}{\ltwonrm v_0.},\theta\right).
\end{aligned}
\end{equation}
In particular, (\ref{eq:alb_speed_one_p5}) and
(\ref{eq:alb_speed_one_p2}) imply that:
\begin{equation}
  \label{eq:alb_speed_one_p6}
  \alnrm\gamma'(t).\in\left[\frac{1}{1+\eta},(1+\eta)(1+\epsi_0)\right].
\end{equation}
We now use Lemma
\ref{cone_diameter} to get
\begin{equation}\label{eq:alb_speed_one_p6bis}
\alnrm\frac{(\phi\circ\gamma)'(t)}{\alnrm(\phi\circ\gamma)'(t).}-v_0.\le 4M^2(1-\cos\theta+\sin\theta);
\end{equation}
as
\begin{equation}\label{eq:alb_speed_one_p7}
  \left|1-\alnrm(\phi\circ\gamma)'(t).\right|\le\max\left(1-\frac{1}{1+\eta},(1+\eta)(1+\epsi_0)-1\right),
\end{equation}
we obtain
\begin{equation}\label{eq:alb_speed_one_p8}
  \begin{split}
    \alnrm(\phi\circ\gamma)'(t)-v_0.&\le4M^2(1-\cos\theta+\sin\theta)\\
    &\quad+\max\left(1-\frac{1}{1+\eta},(1+\eta)(1+\epsi_0)-1\right)\\&=a(\eta,\epsi_0,\theta),
  \end{split}
\end{equation}
where $\lim_{\eta,\epsi_0,\theta\to0}a(\eta,\epsi_0,\theta)=0$. Recall that
$t\in\dom\gamma$ is  a Lebesgue density point, and assume that $\langle
v_0^*,\phi\rangle\circ\gamma$, which is $M\glip
\phi.(1+\varepsilon_0)$-Lipschitz because of~(\ref{eq:alb_speed_one_p4}), has been extended to a neighbourhood of $t$ by
using MacShane's Lemma:
\begin{equation}\label{eq:alb_speed_one_p9}
  \begin{split}
    \langle v_0^*,\phi\rangle\circ\gamma(t+h)&-\langle
    v_0^*,\phi\rangle\circ\gamma(t)=\int_t^{t+h}\left(\langle
    v_0^*,\phi\rangle\circ\gamma\right)'(s)\,ds\\
&\ge\int_{[t,t+h]\cap\dom\gamma}\left(\langle
    v_0^*,\phi\rangle\circ\gamma\right)'(s)\\&\quad-\underbrace{M\glip \phi.\,
(1+\epsi_0)\lebmeas([t,t+h]\cap\dom\gamma)}_{o(h)}\\
&=\int_{[t,t+h]\cap\dom\gamma}\langle
    v_0^*,v_0\rangle\,ds\\
&\quad+\int_{[t,t+h]\cap\dom\gamma}\langle
    v_0^*,(\phi\circ\gamma)'(s)-v_0\rangle\,ds+o(h)\\
&\ge \lebmeas([t,t+h]\cap\dom\gamma)-h\,a(\eta,\epsi_0,\theta)+o(h),
\end{split}
\end{equation}
where in the last step we used the approximate continuity of
$(\phi\circ\gamma)'(s)$ at $t$.
Now (\ref{eq:alb_speed_one_p9}) implies that
\begin{equation}\label{eq:alb_speed_one_p10}
  (\langle
  v_0^*,\phi\rangle\circ\gamma)'(t)\ge\frac{1-a(\eta,\epsi_0,\theta)}{(1+\eta)^2(1+\epsi_0)}\md\gamma(t)\biglip\langle v_0^*,\phi\rangle(\gamma(t)),
\end{equation}
and it suffices to choose $\eta,\epsi_0,\theta$ small enough to guarantee
\begin{equation}\label{eq:alb_speed_one_p11}
\begin{aligned}
  \frac{1-a(\eta,\epsi_0,\theta)}{(1+\eta)^2(1+\epsi_0)}&\ge\sigma;\\
  \epsi&\ge\epsi_0.
\end{aligned}
\end{equation}
\par We now consider the general case in which $df$ is not
constant. We let $V\subset U$ be a full-measure Borel subset where $f$
is differentiable with respect to the chart functions $\phi$. On the
set where $df=0$ we have $\biglip f=0$, so we can assume that $df\ne0$
on $V$. We fix $\eta>0$ and use Lusin and Egorov Theorems to find
disjoint compact sets $C_\alpha\subset V$ and
$v^*_\alpha\in\real^N\setminus\{0\}$ such that
$\mu\left(V\setminus\bigcup_\alpha C_\alpha\right)=0$ and:
\begin{equation}
  \label{eq:alb_speed_one_p12}
  \ctnrm df(x) - v^*_\alpha.\le\ctnrm df(x).\quad(\forall x\in C_\alpha).
\end{equation}
We fix $\sigma'\in(0,1)$ and, using the special case $f=\langle
v^*_\alpha,\phi\rangle$, we obtain a $(1,1+\varepsilon)$-biLipschitz
Alberti representation $\albrep\alpha.=(P_\alpha,\nu_\alpha)$ of
$\mu\mrest C_\alpha$ with $\langle
v^*_\alpha,\phi\rangle$-speed $\geq\sigma'\ctnrm v^*_\alpha.$; then for
$P_\alpha$-a.e.~$\gamma$ we have:
\begin{equation}
  \begin{split}
    (f\circ\gamma)'(t)&\ge(\langle
    v_\alpha^*,\phi\rangle\circ\gamma)'(t)-\eta\metdiff\gamma(t)\ctnrm
    df.\\ &\ge\left(\sigma'\ctnrm v^*_\alpha.-\eta\ctnrm
    df.\right)\metdiff\gamma(t)\\
&\ge(\sigma'-(1+\sigma')\eta)\biglip f(\gamma(t))\metdiff\gamma(t),
  \end{split}
\end{equation}
and it suffices to choose $\eta$ small enough and $\sigma'$ close to
$1$  to guaratee that $\sigma'-(1+\sigma')\eta\ge\sigma$.
\par The proof of (\ref{thm:alb_speed_one_s1}) is now immediate. Let
$\gfun$ contain the components of the chart functions and $f$, and let
$\gbor$ contain the characteristic functions of the charts. Now,
for each $\sigma\in(0,1)$, we conclude that for $\mu$-a.e.~$x\in X$
there is an $\gpair$-generic velocity vector $\gamma'(t)\in T_xX$ with
\begin{equation}
  \left(f\circ\gamma\right)'(t)\ge\sigma\Lip f(\gamma(t))\md\gamma(t);
\end{equation}
observing that
\begin{equation}
  \left|\left(f\circ\gamma\right)'(t)\right|\le\lip f(\gamma(t))\md\gamma(t),
\end{equation} we conclude that the Borel set
\begin{equation}
  \left\{x\in X: \lip f(x)\ge\sigma\Lip f(x)\right\}
\end{equation} has full $\mu$-measure, and then let $\sigma\nearrow1$.
\end{proof}
\subsection{A new proof of $g_f=\biglip f$ in PI-spaces}
\label{subsec:gf_new}
In this subsection we give a new proof of the characterization of the
\textbf{minimal generalized upper gradient} $g_f$ of a Lipschitz function $f$ in a
PI-space. We will assume that the reader is familiar with the material
in \cite{cheeger}; in particular, we will denote by
$H^{1,p}(X,\mu)$  the Sobolev space introduced by Cheeger in
\cite[Sec.~2]{cheeger}. One of the main results in \cite{cheeger}
states that, if $p\in(1,\infty)$ and if $(X,\mu)$ is a PI-space, then
$H^{1,p}(X,\mu)$ is reflexive. Since we will use the reflexivity of
$H^{1,p}(X,\mu)$, throughout this subsection the power $p$ will be taken
to lie in $(1,\infty)$. Our goal is to give a new proof of \cite[Thm.~6.1]{cheeger}:
\begin{thm}
  \label{thm:min_upp_char}
  If $(X,\mu)$ is a PI-space and if $f\in\lipfun X.\cap H^{1,p}(X,\mu)$, then
  $\biglip f$ is a representative of the minimal generalized upper gradient of $f$
  (which is then independent of the power $p>1$).
\end{thm}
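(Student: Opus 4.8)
The plan is to establish the two inequalities $g_f\le\biglip f$ and $\biglip f\le g_f$ separately ($\mu$-a.e.); only the second uses the machinery developed in this paper, namely the existence of Alberti representations of $f$-speed close to $\biglip f$ (Theorem~\ref{thm:alb_speed_one}).

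First, $g_f\le\biglip f$ is classical: $\biglip f$ is itself an (honest) upper gradient of the Lipschitz function $f$. Indeed, for any rectifiable curve $\sigma$ parametrized by arclength, $(f\circ\sigma)'(s)$ exists for $\lebmeas$-a.e.~$s$, and since $\xdst\sigma(s+h),\sigma(s).\le|h|$ while $\xdst\sigma(s+h),\sigma(s)./|h|\to1$, one has $|(f\circ\sigma)'(s)|\le\biglip f(\sigma(s))$; integrating gives the upper gradient inequality, and $\biglip f$ is Borel and bounded, hence a generalized upper gradient. Therefore $g_f\le\biglip f$ $\mu$-a.e.

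For $\biglip f\le g_f$, fix $p\in(1,\infty)$. By \cite{cheeger}, and crucially the reflexivity of $H^{1,p}(X,\mu)$ --- which is what guarantees that $g_f$ is well defined as a generalized upper gradient, and that Cheeger's space coincides with the Newtonian space --- $g_f$ is a $p$-\emph{weak} upper gradient of $f$, i.e.\ the upper gradient inequality for $(f,g_f)$ holds along $p$-a.e.\ rectifiable curve (a Fuglede-type argument applied to an $L^p$-approximation of $g_f$ by honest upper gradients of functions converging to $f$ in $L^p$). Now fix $\sigma,\varepsilon\in(0,1)$. By Theorem~\ref{thm:alb_speed_one} together with Theorem~\ref{thm:alberti_rep_prod}, $\mu$ admits a $(1,1+\varepsilon)$-biLipschitz Alberti representation $\albrep.=(P,\nu)$ with $P$ finite and concentrated on fragments with compact domain and $\nu_\gamma=h\,\mpush\gamma.(\lebmeas\mrest\dom\gamma)$, having $f$-speed $\ge\sigma\biglip f$; thus for $P$-a.e.~$\gamma$ and $\lebmeas\mrest\dom\gamma$-a.e.~$t$ one has $(f\circ\gamma)'(t)\ge\sigma\,\biglip f(\gamma(t))\,\metdiff\gamma(t)$, with $\metdiff\gamma(t)\ge(1+\varepsilon)^{-1}>0$. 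The crucial step is that the weak upper gradient inequality for $g_f$ also holds along $P$-a.e.~fragment $\gamma$ of this representation: one completes $\gamma$ to a Lipschitz curve on $[\min\dom\gamma,\max\dom\gamma]$ by filling each gap of $\dom\gamma$ with a quasigeodesic (available since a PI-space is quasiconvex), and then observes that for any $p$-exceptional curve family $\Gamma$, $P$-a.e.~such completed curve avoids $\Gamma$: if $\rho\in L^p(\mu)$ satisfies $\int_\tau\rho\,ds=\infty$ for every $\tau\in\Gamma$, then $\int_X\rho\,d\mu=\int_{\frag(X)}\bigl(\int_X\rho\,d\nu_\gamma\bigr)\,dP(\gamma)<\infty$ forces $\int\rho$ along $P$-a.e.~fragment to be finite (using $\nu_\gamma\ll\hmeas._\gamma$ and compactness of $\dom\gamma$). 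It follows that for $P$-a.e.~$\gamma$ and $\lebmeas\mrest\dom\gamma$-a.e.~$t$, $|(f\circ\gamma)'(t)|\le g_f(\gamma(t))\,\metdiff\gamma(t)$. Combining this with the speed bound and cancelling $\metdiff\gamma(t)>0$ gives $\sigma\,\biglip f(\gamma(t))\le g_f(\gamma(t))$ for $P$-a.e.~$\gamma$ and a.e.~$t$; since $\mu=\int\nu_\gamma\,dP(\gamma)$ with $\nu_\gamma\ll\hmeas._\gamma$, this upgrades to $\sigma\,\biglip f\le g_f$ $\mu$-a.e. Letting $\sigma\nearrow1$ yields $\biglip f\le g_f$ $\mu$-a.e., so $g_f=\biglip f$ $\mu$-a.e.; since $\biglip f$ is manifestly independent of $p$, so is $g_f$.

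The hard part will be the transfer of the upper gradient inequality for $g_f$ from $p$-a.e.~rectifiable curve to $P$-a.e.~fragment of the Alberti representation: one must combine Fuglede's lemma with the quasiconvexity of the PI-space (to realize fragments as genuine curves without discarding a set of positive $\mu$-measure) and with the absolute continuity $\nu_\gamma\ll\hmeas._\gamma$ together with compactness of the fragment domains (to see that the resulting curve family is not $p$-exceptional). The rest is either classical upper-gradient calculus or a direct appeal to Theorems~\ref{thm:alb_speed_one} and~\ref{thm:alberti_rep_prod}.
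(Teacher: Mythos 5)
Your inequality $g_f\le\Lip f$ is fine, and your starting point (the Alberti representation with $f$-speed $\ge\sigma\Lip f$ from Theorem \ref{thm:alb_speed_one}) is also the paper's. The problem is the step you yourself call crucial: transferring the $p$-weak upper gradient inequality for $(f,g_f)$ from $p$-a.e.\ rectifiable curve to $P$-a.e.\ \emph{completed} fragment. Your Fuglede-type argument controls only the fragment portion of the completed curve: from $\int_X\rho\,d\mu=\int\bigl(\int_X\rho\,d\nu_\gamma\bigr)\,dP(\gamma)<\infty$ you get $\int\rho\,d\nu_\gamma<\infty$ for $P$-a.e.\ $\gamma$, but the quasigeodesic filler segments are invisible to the disintegration of $\mu$: they carry no $\nu_\gamma$-mass, they may spend positive length inside a $\mu$-null set, and an admissible $\rho$ witnessing $p$-exceptionality (for instance $\rho=\infty\cdot\chi_E$ with $E$ a null set, or a representative of $g_f$ itself, which is only in $L^p$ and may be infinite on a null set) can then have infinite line integral along every completed curve. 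Nothing lets you choose the fillers to avoid the bad null set --- between two gap endpoints there may be essentially only one quasigeodesic --- so ``$P$-a.e.\ completed curve avoids any fixed $p$-exceptional family'' does not follow. (A secondary issue: finiteness of $\int\rho\,d\nu_\gamma$ does not control $\int_\gamma\rho\,d\mathscr{H}^1$, since the density $h$ of $\nu_\gamma$ with respect to arclength may vanish.)

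This is exactly the obstruction the paper points out in the remark after Lemma \ref{lem:bound_upp_grad}, and it forces a different route: first prove (Lemma \ref{lem:bound_upp_grad}) that any \emph{bounded} upper gradient dominates $\Lip u$ a.e.\ --- boundedness makes the filler contribution $o(r)$ near a density point of $\dom\gamma$, so no modulus or exceptional-family argument is needed --- and then pass to general upper gradients by reducing, via Vitali--Carath\'eodory, to lower semicontinuous $g$ and running a ``modulus equals capacity'' approximation (Theorem \ref{thm:mod_eq_cap}): one builds functions from the truncated gradients $g\wedge n+\varepsilon$, uses the maximal-function consequence of the Poincar\'e inequality, reflexivity of $H^{1,p}$, Mazur's Lemma and locality of minimal upper gradients to obtain Lipschitz $w_n\to u$ with bounded upper gradients and $\Lip w_n\to\Lip u$ in $L^p$ on the relevant set, and applies the bounded-gradient lemma to each $w_n$. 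To salvage your route you would need an actual proof that the completed curves can be taken outside the exceptional family (or that the weak upper gradient inequality can be verified using only the fragment portions), and that is precisely what is missing.
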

\par We first give some remarks on how the new proof differs from the
original one. The original proof contained two steps:
\begin{description}
\item[(S1)] Proof of Theorem \ref{thm:min_upp_char} under the
  additional assumption that $(X,\mu)$ is a length space.
\item[(S2)] Removing the assumption that $(X,\mu)$ is a length space.
\end{description}
The argument for \textbf{(S1)} was motivated by the observation that,
whenever $(X,\mu)$ is a length space and $g$ is a \emph{continuous}
upper gradient of $f$, then $g\ge\biglip f$ holds at each
point. Therefore the strategy in \cite{cheeger} was to prove an approximation
result \cite[Thm.~5.3]{cheeger} which states that for any $f\in
\lipfun X.\cap H^{1,p}(X,\mu)$ there is a sequence $(f_k,h_k)\subset
H^{1,p}(X,\mu)\times L^p(\mu)$ such that $f_k\to f$ in
$H^{1,p}(X,\mu)$, the function $v_k$ is a continuous upper gradient of $f_k$,
and $v_k\to g_f$ in $L^p(\mu)$. This approximation result is
probably the most technical part of Cheeger's original proof.
\par The first simplification of the new argument is that one does not
need to handle first the case in which $(X,\mu)$ is a length
space. The strategy of the proof is motivated by the
observation (Lemma \ref{lem:bound_upp_grad}) that if $g$ is a \emph{bounded}
upper gradient of $f$, then $g\ge\biglip f$ holds $\mu$-a.e.: this is
where Alberti representations are used. Had the minimal
generalized upper gradient been defined by minimizing the $p$-energy on
\emph{bounded} upper gradients, then Theorem \ref{thm:min_upp_char}
would have followed directly from Lemma
\ref{lem:bound_upp_grad}. However, as an
upper gradient in $L^p(\mu)$ can be infinite on a null set, one needs,
roughly speaking,
to approximate $f$ in $\lipfun X.\cap H^{1,p}(X,\mu)$ by functions
which have bounded upper gradients. Here we use an instance of the
argument ``modulus equals capacity'' \cite{ziemer_exlen_cap} which
appears also in the proof of \cite[Thm.~5.3]{cheeger}: however, as we
do not need to build approximations which use continuous upper
gradients, there are fewer technical details to handle.
\par The following lemma relates bounded upper gradients and Alberti
representations.
\begin{lemma}
  \label{lem:bound_upp_grad}
  If $(X,\mu)$ is a PI-space, $u:X\to\real$ is Lipschitz, and $g$ is
  a bounded upper gradient of $u$, then
  \begin{equation}
    \label{eq:bound_upp_grad_s1}
    g\ge\biglip u\quad\text{$\mu$-a.e.}
  \end{equation}
\end{lemma}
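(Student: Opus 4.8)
The plan is to use the existence of Alberti representations with $g$-speed close to $\biglip u$ provided by Theorem \ref{thm:bate_char} (with $\tau=1$, as established in Theorem \ref{thm:alb_speed_one}), combined with the defining property of an upper gradient along the fragments that appear in such a representation. The key point is that a bounded upper gradient $g$ controls $|u\circ\gamma|'$ for $\mu$-a.e.\ fragment $\gamma$, and by choosing the Alberti representation to have $u$-speed almost equal to $\biglip u$, we force $g\ge\biglip u$ $\mu$-a.e.

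First I would recall that since $(X,\mu)$ is a PI-space, it is in particular a differentiability space, so Theorem \ref{thm:alb_speed_one} applies: for each $\sigma\in(0,1)$ the measure $\mu$ admits a $(1,1+\varepsilon)$-biLipschitz Alberti representation $\A=(P,\nu)$ with $u$-speed $\ge\sigma\biglip u$. Concretely this means that for $P$-a.e.\ fragment $\gamma$ and $\lebmeas\mrest\dom\gamma$-a.e.\ $t$ one has $(u\circ\gamma)'(t)\ge\sigma\,\biglip u(\gamma(t))\,\metdiff\gamma(t)$. Next I would use the upper gradient inequality along these fragments: since $\gamma$ is Lipschitz (indeed nearly an isometry onto its image), for any subinterval $[a,b]\subset\dom\gamma$ the curve $\gamma\restr_{[a,b]}$ — or rather a Lipschitz reparametrization filling in the gaps of $\dom\gamma$ — is a rectifiable curve, and the upper gradient inequality gives $|u(\gamma(b))-u(\gamma(a))|\le\int g\,ds$ along it. Since the gaps contribute only $o(1)$ (the total length of $\gamma$ over the complement of $\dom\gamma$ near a density point is negligible, and $g$ is bounded), differentiating at a Lebesgue density point $t$ of $\dom\gamma$ yields $|(u\circ\gamma)'(t)|\le g(\gamma(t))\,\metdiff\gamma(t)$, provided $t$ is also an approximate continuity point of $g\circ\gamma$ and of $\metdiff\gamma$, which holds for a.e.\ $t$ after intersecting with the generic set (this is where boundedness of $g$ is essential, to justify the limit).

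Combining the two inequalities, for $P$-a.e.\ $\gamma$ and a.e.\ $t\in\dom\gamma$ with $\metdiff\gamma(t)>0$ we get $\sigma\,\biglip u(\gamma(t))\le g(\gamma(t))$. Since $\nu_\gamma\ll\hmeas._\gamma$ and $\metdiff\gamma>0$ $\lebmeas$-a.e.\ for a biLipschitz $\gamma$, integrating against $P$ and using the representation $\mu=\int\nu_\gamma\,dP(\gamma)$ shows that the Borel set $\{x: g(x)\ge\sigma\,\biglip u(x)\}$ has full $\mu$-measure. Letting $\sigma\nearrow1$ gives $g\ge\biglip u$ $\mu$-a.e.

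The main obstacle I anticipate is the bookkeeping needed to pass from the upper gradient inequality for honest rectifiable \emph{curves} to an infinitesimal inequality along \emph{fragments}: one must carefully reparametrize $\gamma$ to fill in the holes in its closed domain $C$, check that the fill-in contributes negligible length (using that $t$ is a density point of $C$ so that $\lebmeas([t,t+h]\setminus C)=o(h)$) and negligible variation of $u$ (using boundedness of $g$ together with the upper gradient inequality on the fill-in arcs), and then differentiate. This is precisely the role of boundedness of $g$ in the hypothesis — an $L^p$ upper gradient that is infinite on a null set would destroy this estimate — and it is why Theorem \ref{thm:min_upp_char} itself requires the separate ``modulus equals capacity'' approximation step rather than following directly from this lemma.
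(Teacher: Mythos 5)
Your proposal is correct and follows essentially the same route as the paper: both use Theorem \ref{thm:alb_speed_one} to produce fragments through a.e.\ point with $u$-speed close to $\biglip u$, fill in the gaps of the fragment to an honest curve (the paper does this explicitly via $C$-quasigeodesics, using quasiconvexity of PI-spaces), apply the upper gradient inequality, and use the boundedness of $g$ together with the density-point property to make the fill-in contribution $o(h)$ before differentiating. The only cosmetic difference is that where you invoke approximate continuity of $g\circ\gamma$ and $\metdiff\gamma$ at a.e.\ $t$, the paper instead decomposes $X$ into compact sets $K_\alpha$ on which $g$ lies in an interval $[\lambda_\alpha,\lambda_\alpha+\varepsilon)$, which serves the same purpose.
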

\begin{proof}
  For each $\varepsilon>0$ we can find countably many disjoint compact
  sets $\{K_\alpha\}$ and nonnegative real numbers
  $\{\lambda_\alpha\}$ such that:
  \begin{enumerate}
  \item For each $x\in K_\alpha$ one has $g(x)\in[\lambda_\alpha,\lambda_\alpha+\varepsilon)$;\vskip2mm
  \item The $\{K_\alpha\}$ cover $X$ in measure:
    $\mu\left(X\setminus\bigcup_\alpha K_\alpha\right)=0$.
  \end{enumerate}
  By Theorem \ref{thm:alb_speed_one} for $\mu\mrest K_\alpha$-a.e.~$x$
  there is a $(1,1+\varepsilon)$-biLipschitz fragment $\gamma$:
  \begin{enumerate}
  \item The domain $\dom\gamma$ is a subset of $[-1,\infty)$;\vskip2mm
  \item One has $\gamma(0)=x$ and:
    \begin{equation}
      \label{eq:bound_upp_grad_p1}
      \lim_{r\searrow0}\frac{\lebmeas\left(\dom\gamma\cap(-r,r)\right)}{2r}=1;
    \end{equation}
  \item The point $0$ is an approximate continuity point of
    $(u\circ\gamma)'$ and
    \begin{equation}
      \label{eq:bound_upp_grad_p2}
      (u\circ\gamma)'(0)\ge\frac{1}{1+\varepsilon}\biglip u(x).
    \end{equation}
  \end{enumerate}
  Let $[c,d]$ be the minimal interval containing $\dom\gamma$ and let
  $\{(a_i,b_i)\}$ denote the set of component of
  $[c,d]\setminus\dom\gamma$; we extend $\gamma$ on each interval
  $(a_i,b_i)$ by choosing a $C$-quasigeodesic joining $\gamma(a_i)$ to
  $\gamma(b_i)$: note that this is possible because a PI-space is
  $C$-quasiconvex for some $C$ \cite[Sec.~17]{cheeger}\footnote{This
    result is due to Semmes.}. Then:
  \begin{equation}
    \label{eq:bound_upp_grad_p3}
    \begin{split}
      \left|\int_0^r(u\circ\gamma)'(s)\,ds\right|=\left|u\left(\gamma(r)\right)-u(x)\right|
      &\le\int_0^rg\circ\gamma\,\metdiff\gamma(t)\,dt\\
      &\le(\lambda_\alpha+\varepsilon)(1+\varepsilon)r+o(r);
    \end{split}
  \end{equation}
  dividing by $r$ and letting $r\searrow0$ we get:
  \begin{equation}
    \label{eq:bound_upp_grad_p4}
    \biglip u(x)\le(1+\varepsilon)^2\left(g(x)+\varepsilon\right),
  \end{equation}
  and the result follows letting $\varepsilon\searrow0$.
\end{proof}
\begin{remark}
 Note that in Lemma \ref{lem:bound_upp_grad} we had to work with
 bounded upper gradients to establish (\ref{eq:bound_upp_grad_p3}); in
 fact, to apply the Fundamental Theorem of Calculus, one needs curves,
 and the $K_\alpha$ might only contain fragments, and thus, filling-in the
 fragments in $K_\alpha$ using that a PI-space is quasiconvex might produce curves
 where $g$ is unbounded or infinite on a set of positive length. Note
 also that in a PI-space one can use curves instead of fragments in
 building Alberti representations; this follows from a general
 observation in \cite{curr_alb} that if $\mu$ is a Radon measure on a
 quasiconvex metric space $X$, a Lipschitz Alberti representation of
 $\mu$ can be replaced by one which gives the same derivation and
 whose probability measure has support contained in the set of curves
 in $X$.
\end{remark}
\par To prove Theorem \ref{thm:min_upp_char} we can just consider, as
in \cite{cheeger}, upper gradients which are lower semicontinuous. In
fact, the Vitali-Carath\'eodory Theorem \cite[Thm.~2.25]{rudin-real}
states that for any $h\in L^1(\mu)$ and any $\varepsilon>0$ there are
functions $u$ and $v$ such that $u\le h\le v$, $u$ is upper
semicontinuous and bounded from above, $v$ is lower
semicontinuous and bounded from below, and
$\|u-v\|_{L^1(\mu)}<\varepsilon$. Thus, any upper gradient of $f$ can
be replaced, up to slighly increasing the $L^p(\mu)$-norm, by one
which is lower semicontinous and bounded below by a small positive
constant. We thus only need to prove:
\begin{thm}
  \label{thm:mod_eq_cap}
  Suppose  $(X,\mu)$ is a PI-space,  $u$ is a real-valued Lipschitz function
  defined on $X$ and  $g$ is a lower-semicontinuous upper gradient of $u$. Then:
    \begin{equation}
    \label{eq:mod_eq_cap_s1}
    g\ge\biglip u\quad\text{$\mu$-a.e.}
  \end{equation}
\end{thm}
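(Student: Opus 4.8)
The plan is to bootstrap from the bounded case already settled in Lemma~\ref{lem:bound_upp_grad}. Write $L$ for the (finite) global Lipschitz constant of $u$ and set $\tilde g:=\min(g,L)$; since $g$ is lower semicontinuous so is $\tilde g$, and $\tilde g$ is bounded. The key point is that $\tilde g$ is still an upper gradient of $u$: granting this, Lemma~\ref{lem:bound_upp_grad} gives $\tilde g\ge\Lip u$ $\mu$-a.e., and since $g\ge\tilde g$ pointwise we obtain $g\ge\Lip u$ $\mu$-a.e., which is (\ref{eq:mod_eq_cap_s1}).

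To see that $\tilde g$ is an upper gradient, fix a rectifiable curve $\gamma$, parametrized by arclength on $[0,\ell]$, and put $\phi:=u\circ\gamma$. Since $u$ is $L$-Lipschitz and $\gamma$ has unit metric speed, $\phi$ is $L$-Lipschitz, hence absolutely continuous with $|\phi'|\le L$ a.e. Applying the upper gradient inequality for $g$ to the subcurves $\gamma|_{[s,t]}$ gives $|\phi(t)-\phi(s)|\le\int_s^t g(\gamma(\tau))\,d\tau$, and at every Lebesgue point $\tau$ of $g\circ\gamma$ this forces $|\phi'(\tau)|\le g(\gamma(\tau))$; hence $|\phi'|\le\min(L,g\circ\gamma)=\tilde g\circ\gamma$ a.e., and integrating yields $|u(\gamma(\ell))-u(\gamma(0))|\le\int_\gamma\tilde g$ (and likewise along every subcurve). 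The delicate ingredient is the local integrability of $g\circ\gamma$ that legitimizes the Lebesgue-point step: after the Vitali--Carath\'eodory reduction preceding the statement we may assume $g\in L^p(\mu)$, so that $g\circ\gamma$ is integrable along $p$-a.e.\ curve. To make this usable when we re-enter the proof of Lemma~\ref{lem:bound_upp_grad} --- whose argument integrates $\tilde g$ along specific fragments and their quasigeodesic fill-ins --- one invokes the ``modulus equals capacity'' estimate: the $p$-modulus of the exceptional family $\bigl\{\gamma:\int_\gamma g\,\chi_{\{g>n\}}\,ds>1\bigr\}$ is at most $\int_{\{g>n\}}g^p\,d\mu\to0$, so this family is negligible and the fragments produced by Theorem~\ref{thm:alb_speed_one} (restricted to the relevant compact pieces) can be chosen to avoid it; here lower semicontinuity of $g$ enters, as it is what makes $\{g>n\}$ open.

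Once $\tilde g$ is known to be a bounded upper gradient of $u$, Lemma~\ref{lem:bound_upp_grad} applies directly and gives $\tilde g\ge\Lip u$ $\mu$-a.e., which finishes the proof.

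The main obstacle is precisely the middle step: verifying that truncating $g$ at the level $L$ preserves the upper gradient property in the form needed to reapply Lemma~\ref{lem:bound_upp_grad}. Along curves where $g$ is strongly non-integrable the naive absolute-continuity argument fails, and it is here that one needs both the $L^p$-membership of $g$ (available after Vitali--Carath\'eodory) and the modulus--capacity comparison; the lower-semicontinuity hypothesis is used exactly to put the super-level sets $\{g>n\}$ in the form to which those estimates apply.
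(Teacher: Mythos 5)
Your reduction hinges on the claim that $\tilde g=\min(g,L)$ is still a genuine upper gradient of $u$, and this is exactly the step that does not hold up. Your Lebesgue-point argument only applies along curves $\gamma$ for which $g\circ\gamma$ is locally integrable: on a curve with $\int_\gamma g\,d\hmeas.=\infty$ the upper gradient inequality for $g$ is vacuous and gives no pointwise bound $|(u\circ\gamma)'|\le g\circ\gamma$, so after truncation the inequality $|u(\gamma(b))-u(\gamma(a))|\le\int_\gamma\tilde g\,d\hmeas.$ is precisely what is missing, not automatic. What your argument really shows is that $\tilde g$ is a \emph{$p$-weak} upper gradient (the inequality holds off a curve family of $p$-modulus zero), and Lemma \ref{lem:bound_upp_grad} cannot be fed a weak upper gradient: its proof takes a fragment from Theorem \ref{thm:alb_speed_one} and fills in the gaps with \emph{arbitrary} quasigeodesics, and the upper gradient inequality (\ref{eq:bound_upp_grad_p3}) is applied to these specific filled-in curves, over which there is no modulus control whatsoever --- this is exactly the point of the Remark following Lemma \ref{lem:bound_upp_grad}. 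Your proposed patch does not repair this: modulus-negligibility of the family $\{\gamma:\int_\gamma g\chi_{\{g>n\}}\,d\hmeas.>1\}$ gives no mechanism for choosing the quasigeodesic fill-ins (or the fragments, which are not even curves and carry no modulus) outside that family; and even if one could, an additive error of size $1$ is useless in the difference-quotient estimate of Lemma \ref{lem:bound_upp_grad}, where the error along a subcurve of length $\sim r$ must be $o(r)$ as $r\searrow0$. So the middle step is a genuine gap, and note that proving ``truncation at $L$ preserves the upper gradient property'' under these hypotheses is essentially of the same depth as Theorem \ref{thm:mod_eq_cap} itself; appealing instead to the theory of minimal $p$-weak upper gradients of Lipschitz functions would be circular here.

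The paper's proof avoids truncating $g$ as an upper gradient of $u$ altogether. It defines new functions $u_n(x)=\inf\{\int_\gamma(g\wedge n+\varepsilon)\,d\hmeas._\gamma+u(y)\}$, for which $g\wedge n+\varepsilon$ \emph{is} an upper gradient by construction; lower semicontinuity of $g$ together with the uniform lower bound $\varepsilon>0$ (the ``modulus equals capacity'' step) forces $u_n=u$ on finer and finer nets of $S=A(g,N)\cap\ball x,M.$, the maximal-function estimate makes the truncated functions uniformly Lipschitz on $S$, and then reflexivity of $H^{1,p}$, Mazur's Lemma and the locality of minimal generalized upper gradients produce Lipschitz approximants $w_n\to u$ with bounded upper gradients $g\wedge Q_n+\varepsilon$ and $\biglip w_n\to\biglip u$ $\mu\mrest S$-a.e.; Lemma \ref{lem:bound_upp_grad} is applied to the $w_n$, not to $u$, and the inequality passes to the limit. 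If you want to salvage your outline, this is the structure you need: manufacture functions that genuinely have the truncated $g$ as an upper gradient, rather than asserting that $u$ does.
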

\par To prove Theorem \ref{thm:mod_eq_cap} we recall a consequence
of the Poincar\'e inequality, which follows from the characterization of
the Poincar\'e inequality in terms of the maximal function associated
to an upper gradient \cite[Lem.~5.15]{heinonen98}. Suppose that $g$ is an upper gradient
for the function $u$ and that $g\in L^p(\mu)$; consider for
$N\in(0,\infty)$ the set:
\begin{equation}
  \label{eq:max_fun_set}
  A(g,N)=\left\{x\in X:\sup_{r>0}\av_{\ball x,r.}g^p\,d\mu\le N^p\right\};
\end{equation}
then if
$x,y\in A(g,N)$ are Lebesgue points of $u$, one has
\begin{equation}
  \label{eq:max_lip_set}
  \left|u(x)-u(y)\right|\le CN\dst x,y.,
\end{equation}
where $C$ is a universal constant that depends only on the PI-space $(X,\mu)$.
\begin{proof}
  Let $N,M$ be natural numbers and $S=A(g,N)\cap\ball x,M.$; it
  suffices to show that \eqref{eq:mod_eq_cap_s1} holds $\mu\mrest
  S$-a.e. Fix $\varepsilon>0$ and let:
  \begin{equation}
    \label{eq:mod_eq_cap_p1}
    u_n(x)=\inf\left\{\int_\gamma(g\wedge n+\varepsilon)\,d\hmeas
      ._\gamma+u(y):\;\text{$\gamma$ is a Lipschitz curve joining
        $x$ to $y\in S$}\right\}.
  \end{equation}
  As $(X,\mu)$ is $C$-quasiconvex for some $C$, the function $u_n$ is
  $C(n+\varepsilon)$-Lipschitz. Note also that $h_n=g\wedge
  n+\varepsilon$ is an upper gradient of $u_n$. We let
  $\{x_{j,m}\}_{j\in J_m}$ be
  a finite $\frac{1}{m}$-dense set in $S$, which exists because $X$ is
  proper. Using the fact that the $h_n$ are lower-semicontinuous and
  uniformly bounded away from zero
  (compare \cite[3.3,3.4]{ziemer_exlen_cap} and
  \cite[Lem.~5.18]{cheeger}), 
  it follows that for each
  $m\in\natural$ there is an $N_m\in\natural$ such that, for $n\ge N_m$,
  one has:
  \begin{equation}
    \label{eq:mod_eq_cap_p2}
    u_n(x_{j,m})=u(x_{j,m})\quad(\forall j\in J_m).
  \end{equation}
  Let $v_n$ be obtained by truncating $u_n$ so that
  \begin{equation}
    \label{eq:mod_eq_cap_p3}
    \left|v_n(x)\right|\le\sup_{y\in\ball x,M.}\left|u_n(y)\right|;
  \end{equation}
  thus, for $n\ge N_m$ one has:
  \begin{equation}
    \label{eq:mod_eq_cap_p4}
    v_n(x_{j,m})=u(x_{j,m})\quad(\forall j\in J_m).
  \end{equation}
  Note that $h_n$ is an upper gradient of $v_n$ and that
  \eqref{eq:max_lip_set} implies that the functions $v_{N_m}$, when
  restricted to $S$, are uniformly $C(N+\varepsilon)$-Lipschitz;
  therefore, \eqref{eq:mod_eq_cap_p3} implies that $v_{N_m}\to u$
  uniformly on $S$. As the Banach space $H^{1,p}(\mu\mrest\ball x,M.)$ is
  reflexive, by
  applying Mazur's Lemma
  we can find Lipschitz functions $w_n$ and integers $Q_n$ such that:
  \begin{enumerate}
  \item The sequence $\{w_n\}$ converges to the function $w$ in
    $H^{1,p}(\mu\mrest\ball x,M.)$ and $w=u$ on $S$;\vskip2mm
  \item Each function $w_n$ is a convex combination of finitely many
    of the functions $v_{N_m}$;\vskip2mm
  \item The function $g\wedge Q_n+\varepsilon$ is an upper gradient
    for $w_n$.
  \end{enumerate}
  We then recall that in a
    PI-space there is a constant $C$ such that, for each Lipschitz
    function $f$, one has $Cg_f\ge\biglip f$ $\mu$-a.e. As $w_n\to w$ in
    $H^{1,p}(\mu\mrest\ball x,M.)$, one has that the generalized
    minimal upper gradients $\{g_{w_n-w}\}$ converge to $0$ in
    $L^p(\mu\mrest \ball x,M.)$; by the locality property of
    generalized minimal upper gradients \cite[Cor.~2.25]{cheeger}, as
    $u=w$ on $S$, we have that $\{g_{w_n-u}\}$ converges to $0$ in
    $L^p(\mu\mrest S)$; we thus conclude that $\biglip(w_n-u)\to 0$ in
    $L^p(\mu\mrest S)$. As $|\biglip w_n-\biglip
    u|\le\biglip(w_n-u)$, we can then pass to a subquence such that
    $\biglip w_n\to\biglip u$ $\mu\mrest S$-a.e. Now,
  by Lemma \ref{lem:bound_upp_grad} we have that $g\wedge Q_n+\varepsilon\ge\biglip
  w_n$ holds $\mu\mrest S$-a.e., and thus $g\ge\biglip u$ holds $\mu\mrest S$-a.e.
\end{proof}

\section{The geometry of blow-ups/tangent cones}
\label{sec:geom_blowups}
\newcount\maskgeom
\maskgeom=0
\ifnum\maskgeom>0{
\begin{enumerate}
\item A \textcolor{blue}{??} denotes a tentative statement;
\item Discuss Alberti representations without the normalization for
  probability measures and rectifiable measures do not need to be finite;
\item We need a preamble on convergence of metric measure spaces and
  measured blow-ups of Lipschitz functions: $\tang(X,\mu,p)$ denotes
  the set of blow-ups $(Y,\nu,q)$ of $(X,\mu)$ at $p$;
  $\tang(X,\mu,f,p)$ denotes the set of blow-ups $(Y,\nu,g,q)$ of
  $(X,\mu,f)$ at $p$: here $f:X\to Z$ is Lipschitz and $Z$ proper (we
  take $Z=\real^N$);
\item In the approach I'd prefer to discuss convergence using
  embeddings in a common metric space; this is to avoid issues with
  the measurability of the GH-approximations and of their choice,
  because approximations might push-forward rect-measures to sums of
  dirac measures;
\item Goal: proof of Theorem \ref{thm:measured_blow_up}
  \item Standing assumption: We have just one chart $(X,\psi)$.
\item Distinguished geodesics pass through every point: a corollary of
  Theorem \ref{thm:measured_blow_up}.
\item The canonical map $BX\ra T_xX$ is a metric submersion, with respect to which
distinguished geodesics
are ``horizontal'': a corollary of
  Theorem \ref{thm:measured_blow_up}.
\item State a version of Theorem \ref{thm:measured_blow_up} with
  ultrafilters to blow-up a pull-back metric of a map $F:X\to Z$?
\end{enumerate}
}\fi
In this section we show that, if $(X,\mu)$ is a differentiability
space, blowing-up the measure $\mu$ at a generic point yields measures
which possess Alberti representations concentrated on
distinguished geodesic lines on which the blow-ups of the chart
functions have constant derivatives, and are harmonic. This generalizes the fact that in
PI-spaces the blow-ups are generalized linear functions. Weaker
versions of the results presented here, where the blow-up of the
measure is not discussed,
have been obtained in \cite{deralb}, and 
\cite{david_difftang_ahlfors}. The  result in \cite{deralb} is more general than
\cite{david_difftang_ahlfors} because it
applies also in the context of Weaver derivations: we point out that
the results in this section, under the assumption that $\mu$ is
asymptotically doubling, have natural counterparts in that
context. 
We first recall some notions of
blow-ups of metric measure spaces and Lipschitz functions. Note that
we use the terminology \emph{blow-up} to avoid a conflict with the
word \emph{tangent} which is used for different objects in this paper;
often, instances of what we call \emph{blow-ups} are called
\emph{tangent cones / tangent spaces} in the literature.

\subsection{Blow-ups of metric measure spaces and Lipschitz maps}
\begin{defn}\label{def:space_blow-up}
A \textbf{blow-up of a metric space $X$ at a point
  $p$} is a (complete) pointed metric space $(Y,q)$ which is a pointed
Gromov-Hausdorff limit of a sequence $(\frac{1}{r_n}X,p)$ where $r_n\searrow0$:
the notation $\frac{1}{r_n}X$ means that the metric on $X$ is rescaled by
$1/r_n$; the class of blow-ups of $X$ at $p$ is denoted by
$\tang(X,p)$.
\end{defn}

\begin{remark}\label{rem:space_blow_up}
In Subsection \ref{subsec:metdiff_blowups} we discuss blow-ups
of metric spaces in a more general context which requires the notion
of ultralimits: under suitable assumptions on $X$, a sequence
$(\frac{1}{r_n}X,p)$ will always be precompact and the two notions
will agree. This is the case, for example, if $X$ is a doubling
metric space. However, in the context of differentiability spaces we
merely know (Theorem~\ref{thm:as_doubling}) that $\mu$ is
asymptotically doubling, and that porous sets are $\mu$-null. This implies that,
for $\mu$-a.e.~$p\in X$, there is a compact set $S_p$ such that:
$S_p$ is metrically doubling, and for each $\varepsilon>0$, there is an
$r_0>0$ such that, for each $r\le r_0$, the set $S_p\cap\ball p,r.$ is
$\varepsilon r$-dense in $\ball p,r.$. This allows essentially to
reduce the existence of blow-ups to the case in which $X$ is doubling.
\end{remark}

Recall that if the sequence $(\frac{1}{r_n}X,p)$ converges to
$(Y,q)$ in the pointed Gromov-Hausdorff sense, there is a pointed
metric space $(Z,z)$ such that there are isometric embeddings
$\iota_n:(\frac{1}{r_n}X,p)\to (Z,z)$ and $\iota:(Y,q)\to (Z,z)$, and,
for each $R>0$, one has:
\begin{equation}
  \label{eq:pointed_embedds}
  \begin{aligned}
    &\lim_{n\to\infty}\sup_{y\in\ball z,R.\cap
      \iota(Y)}\dist\left(\iota_n\left(\frac{1}{r_n}
        X\right),\{y\}\right)=0,\\
    &\lim_{n\to\infty}\sup_{y\in\ball z,R.\cap
      \iota_n(\frac{1}{r_n}X)}\dist\left(\iota\left(Y\right),\{y\}\right)=0.
  \end{aligned}
\end{equation}
In particular, each $q'\in Y$ can be \emph{approximated} by a sequence
$p'_n\in\frac{1}{r_n} X$ such that $\iota_n(p'_n)\to\iota(q')$ in
$Z$. This notion can be made independent of the embedding in $Z$ and
one can \textbf{represent} each point $q'\in Y$ by some sequence
$(p'_n)\subset X$ of points converging to $p$ (compare the treatment
with ultralimits in subsection
\ref{subsec:metdiff_blowups}). Moreover, if $(p'_n)$ represents $q'$,
and if $(\tilde p'_n)$ represents $\tilde q'$, we have:
\begin{equation}
  \label{eq:represents_dist_comp}
  \ydst q',\tilde q'.=\lim_{n\to\infty}\frac{\xdst p'_n,\tilde p'_n.}{r_n}.
\end{equation}
\begin{defn}
  \label{defn:measure_blow_up}
  Let $(X,\mu)$ be a metric measure space; \textbf{a blow-up of
    $(X,\mu)$ at $p$} is a triple $(Y,\nu,q)$ such that one has
  $(\frac{1}{r_n}X,p)\to(Y,q)\in\tang(X,p)$, and, having chosen a
  pointed metric space $(Z,z)$ and isometric embeddings
  $\iota_n:(\frac{1}{r_n}X,p)\to(Z,z)$ and $\iota:(Y,q)\to(Z,z)$ such
  that (\ref{eq:pointed_embedds}) holds, one has:
  \begin{equation}
    \label{eq:measure_blow_up1}
    \mpush(\iota_n).\frac{\mu}{\mu\left(\ball
        p,r_n.\right)}\xrightarrow{\text{w*}}\mpush\iota.\nu.\quad\text{(convergence
      in the weak* topology).}
  \end{equation}
  The set of blow-ups of $(X,\mu)$ at $p$ will be denoted by $\tang(X,\mu,p)$.
\end{defn}

\begin{remark}
  \label{rem:measure_blow_up}
  Note that if $\mu$ is asymptotically doubling and if porous sets are
  $\mu$-null, then for
  $\mu$-a.e.~$p\in X$ one has $\tang(X,\mu,p)\ne\emptyset$. In fact,
  at a generic point $p$, for each sequence of scaling factors
  $r_n\searrow0$, there is a subsequence $r_{n_k}$ such that
  $(\frac{1}{r_{n_k}}X,p)\to(Y,q)\in\tang(X,p)$, and there is a
  doubling measure $\nu$ such that (\ref{eq:measure_blow_up1}) holds.
\end{remark}

\par We finally discuss blow-ups of Lipschitz mappings which take
values into Euclidean spaces.
\begin{defn}
  \label{defn:maps_blow_up}
  Let $(X,\mu)$ be a metric measure space and $\psi:X\to\real^N$ a
  Lipschitz map; then \textbf{a blow-up of $(X,\mu,\psi)$ at $p$} is a
  tuple $(Y,\nu,\varphi,q)$ such that one has that $(Y,\nu,q)\in\tang(X,\mu,p)$,
  where the blow-up is realized by considering scaling factors
  $r_n\searrow0$, and where $\varphi:Y\to\real^N$ is a Lipschitz function
  such that, whenever $(p'_n)\subset X$ represents $q'$, one has:
  \begin{equation}
    \label{eq:maps_blow_up1}
    \varphi(q')=\lim_{n\to\infty}\frac{\psi(p'_n)-\psi(p)}{r_n}.
  \end{equation}
  The set of blow-ups of $(X,\mu,\psi)$ at $p$ will be denoted by $\tang(X,\mu,\psi,p)$.
\end{defn}

\begin{remark}
  \label{rem:maps_blow_up}
  If $\mu$ is asymptotically doubling and porous sets are $\mu$-null, then for
  $\mu$-a.e.~$p\in X$ one has that $\tang(X,\mu,\psi,p)\ne\emptyset$
  by an application of Ascoli-Arzel\'a.
\end{remark}

\subsection{Blowing up Alberti representations}

\par We can now state the main result of this Section.
\begin{thm}
    \label{thm:measured_blow_up}
    Let $(U,\psi)$ be an $N$-dimensional differentiability chart for
    the differentiability space $(X,\mu)$; then for $\mu\mrest
    U$-a.e.~$p$, for each blow-up
    $(Y,\nu,\varphi,q)\in\tang(X,\mu,\psi,p)$ and for each unit vector
    $v_0\in T_pX$, the measure $\nu$ admits an
    Alberti representation $\albrep .=(Q,\Phi)$ where:
    \begin{enumerate}
    \item $Q$ is concentrated on the set $\lines(\varphi,v_0)$ of unit
      speed geodesic lines in $Y$ with $(\varphi\circ\gamma)'=v_0$;\vskip2mm
    \item For each $\gamma\in\lines(\varphi,v_0)$ the measure
      $\Phi_\gamma$ is given by:
      \begin{equation}
        \label{eq:measured_blow_up_s1}
        \Phi_\gamma=\hmeas ._\gamma.
      \end{equation}
    \end{enumerate}
  \end{thm}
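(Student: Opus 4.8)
The plan is to prove Theorem \ref{thm:measured_blow_up} by blowing up a family of biLipschitz Alberti representations of $\mu\mrest U$ adapted to the direction $v_0$, and then checking that the blown-up representation has the stated properties. First I would fix the countable dense set $\dset\subset X$ and, using Theorem \ref{thm:alb_speed_one} together with Theorem \ref{thm:bate_alb_arb_dir} and Theorem \ref{thm:alberti_rep_prod}, produce for each $\varepsilon>0$ and each small opening angle $\theta$ a $(1,1+\varepsilon)$-biLipschitz Alberti representation $\A_{\varepsilon,\theta}=(P,h\Psi)$ of $\mu\mrest U$ that is in the $\psi$-direction of $\cone(v_0,\theta)$ and has $\langle v_0^*,\psi\rangle$-speed $\ge(1-\varepsilon)\ctnrm v_0^*.$, where $v_0^*$ is a dual vector with $\ctnrm v_0^*.^*=1$ realizing the norm at $v_0$ in the fibre $T_pX$. (Here one works on the compact pieces $C_\alpha$ where $\tnrmname$ is $(1+\eta)$-close to a fixed norm and $\tnrmname=\dxdxnrmname$ by Theorem \ref{thm:metdiff=TX}, then glues via Theorem \ref{thm:alberti_glue}; this is exactly the machinery already assembled in Section \ref{sec:conse}.) The point of the biLipschitz constant close to $1$ and the cone close to the ray $\R_{\ge0}v_0$ is that $P$-a.e. fragment $\gamma$, after passing to the blow-up, will be forced to be an honest unit-speed geodesic with $(\varphi\circ\gamma)'\equiv v_0$.

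Next I would set up the blow-up of Alberti representations itself. Fix a generic point $p$: a Lebesgue density point of $U$ and of the full-measure set where the norms agree, where $\mu$ is asymptotically doubling, and where blow-ups $(Y,\nu,\varphi,q)\in\tang(X,\mu,\psi,p)$ exist along a chosen sequence $r_n\searrow0$. Given the representation $\A_{\varepsilon,\theta}$, one restricts the transverse measure $P$ to fragments passing near $p$ at the relevant scale, rescales the metric on the domains by $1/r_n$ and the transverse measure appropriately (using the asymptotic doubling bound $\mu(\ball p,r.)$ to normalize, as in Definition \ref{defn:measure_blow_up}), and extracts, via a Fell-topology / weak-$*$ compactness argument on $\fellc(\R\times Z)\times\radon(\cdot)$, a limiting pair $(Q,\Phi)$. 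The key verifications are: (i) the limit is an Alberti representation of $\nu$ — this is the "blow-up of Alberti representations" assertion, and follows from the convergence (\ref{eq:measure_blow_up1}) together with the fact that arclength measures on $1/r_n$-rescaled fragments converge to arclength measures on their limits (one uses $\Psi_\gamma=\mpush\gamma.(\lebmeas\mrest\dom\gamma)$ from Theorem \ref{thm:alberti_rep_prod} and a lower-semicontinuity/Golab-type argument for $\hmeas._\gamma$); (ii) each limit fragment is a geodesic line — because it is a limit of $(1,1+\varepsilon)$-biLipschitz fragments whose domains become density-$1$ subsets of intervals growing to all of $\R$ (the Lebesgue density point condition rescales to this), a limit is $1$-Lipschitz and length-minimizing, hence after unit-speed reparametrization a geodesic $\R\to Y$, and $\Phi_\gamma=\hmeas._\gamma$; (iii) $(\varphi\circ\gamma)'\equiv v_0$ — because along the approximating fragments $(\psi\circ\gamma)'\in\cone(v_0,\theta)$ with $\langle v_0^*,\psi\circ\gamma\rangle'\ge(1-\varepsilon)$ and metric speed in $[1,1+\varepsilon]$, so by Lemma \ref{cone_diameter} the rescaled increments of $\psi$ are uniformly close to increments of $v_0$; passing to the limit and then letting $\varepsilon,\theta\to0$ along a diagonal sequence pins $(\varphi\circ\gamma)'$ to $v_0$ exactly. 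Finally, combining the Alberti-representation property with these support constraints gives precisely conclusions (1) and (2).

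The main obstacle I expect is the compactness/convergence bookkeeping in step two: making sense of "$\A_{\varepsilon,\theta}$ rescaled by $1/r_n$ subconverges to an Alberti representation of $\nu$" requires topologizing fragments with unbounded domains in the Fell topology (hence the care flagged in the excerpt about unbounded $1$-rectifiable sets), controlling the transverse measures $P$ under rescaling so that no mass escapes to infinity or concentrates, and proving that the weak-$*$ limit of $\int \nu_\gamma\,dP$ still disintegrates as $\int \Phi_\gamma\,dQ$ with $\Phi_\gamma\ll\hmeas._\gamma$ — i.e. that the limiting transverse measure "sees" the limiting fragments rather than, say, collapsing them to Dirac masses. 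This is the technical heart and presumably where a measured-GH convergence lemma (the "blow-up of Alberti representations" procedure referred to as Theorem \ref{thm:alberti_blow_up} / Remark \ref{rem:space_blow_up}) does the work; the diagonal argument letting $\varepsilon,\theta\to 0$ after the blow-up is then comparatively routine, as are the geodesic and constant-derivative claims, which are local and follow from the quantitative estimates above.
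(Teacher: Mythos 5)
Your strategy is essentially the paper's: produce $(1,1+\varepsilon)$-biLipschitz Alberti representations in narrow $\psi$-cones with speed close to $1$ (Theorems \ref{thm:alb_speed_one}, \ref{thm:bate_alb_arb_dir}, \ref{thm:alberti_rep_prod}, together with the Egorov/Lusin normalization of the norms), blow them up by embedding the rescaled spaces in a common compact space and passing to weak* limits, use the cone-diameter Lemma \ref{cone_diameter} to pin down the derivative of $\varphi$ along the limit curves, and then diagonalize; this is the skeleton of Theorem \ref{thm:alberti_blow_up} and Lemma \ref{lem:local_alberti_blow_up}. But as written there are two genuine gaps, and they sit exactly where the paper does its hardest work. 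The first is the quantifier over $v_0$: you fix $v_0\in T_pX$ and a supporting covector $v_0^*$ before building the representations, so the full-measure set you obtain depends on the direction (and indeed on $p$, since $v_0$ lives in the fibre at $p$); the theorem requires one null set outside of which the conclusion holds for \emph{every} unit vector in the varying fibre, and since these form an uncountable family the diagonal in $(\varepsilon,\theta)$ cannot remove this dependence. The paper's fix is to choose countably many Borel vector fields $v_n:X\to TX$, constant on compact pieces, whose values are dense in each unit sphere, to blow up the corresponding simplified representations, and then to take one further weak* limit along $v_{n_m}(p)\to v_0$ using Ascoli--Arzel\`a (Lemma \ref{lem:local_alberti_blow_up}); some device of this kind is indispensable.

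The second gap is the passage from geodesic segments to geodesic lines. Blowing up a representation whose fragments have compact domains, with the normalization by $\mu(B(p,r_n))^{-1}$, only controls the transverse mass locally: for each $R_0$ one obtains an Alberti representation of $\nu$ restricted to $B(q,R_0)$ by constant-speed geodesic segments whose endpoints exit $B(q,\tfrac{3}{2}R_0)$, with total transverse mass that grows with $R_0$ (Theorem \ref{thm:alberti_blow_up}). Your claim that the limit fragments are already geodesics defined on all of $\R$ does not follow from rescaling the Lebesgue-density condition alone, because the weak* convergence of the transverse measures is tested only against compactly supported functions and no single global limit measure on curves is produced at that stage. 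The paper needs the additional step carried out in the proof of Theorem \ref{thm:measured_blow_up}: take radii $R_n\to\infty$, re-parametrize the segments by a Borel (Lusin--Novikov) selection of base times, prove the uniform bounds $Q_n(K(i))\le 2\,\nu(B(q,2R_i))/R_i$ on compact sets of geodesics, and extract the line representation by yet another limit. Relatedly, the mechanism that makes the blown-up measure disintegrate correctly --- discarding the bad parts of fragments where the density, direction and speed estimates fail at definite scales, via the $\reg$/complement decomposition and measure differentiation (Lemma \ref{lem:meas_diff}) --- is precisely the step you flag as the expected obstacle but leave unresolved, so the proposal identifies the right difficulties without supplying the arguments that close them.
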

Suppose that $X'\subset X$ and that the measures $\mu'$ and $\mu\mrest
X'$ are in the same measure class. Then an application of measure
differentiation shows that for $\mu\mrest X'$-a.e.~$p$ the sets
$\tang(X',\mu',p)$ and $\tang(X,\mu,p)$ coincide. Given
$(Y,\nu,\varphi,q)\in\tang(X,\mu,\psi,p)$ we will then obtain the
Alberti representations of $\nu$ by \emph{blowing-up} Alberti
representations of measures $\mu'\ll\mu$ which admit Alberti
representations of a special form.
\begin{defn}[Simplified Alberti representations]
We say that the Alberti representation $\albrep .=(P,\Psi)$ of the
measuure $\mu'$ is \textbf{simplified} if there are
$(C_0,D_0,\tau_0)\in(0,\infty)^3$ such that:
\begin{enumerate}
\item The measure $P$ is finite and is supported on the set of
  $(C_0,D_0)$-biLipschitz fragments whose domain is a subset of $[0,\tau_0]$;\vskip2mm
\item Denoting by $M(X)$ the set of finite Radon measures on $X$,
  $\Psi$ is the Borel map:
  \begin{equation}
    \label{eq:frags_measures}
    \begin{aligned}
      \Psi:\frags(X)&\to M(X)\\
      \gamma&\mapsto\mpush\gamma.\left(\lebmeas\mrest\dom\gamma\right).
    \end{aligned}
  \end{equation}
\end{enumerate}
\end{defn}
To prove Theorem \ref{thm:measured_blow_up} we will use the
following technical result about blow-ups of a simplified Alberti representation
$\albrep.$.\def\cone{{\mathcal C}} 
\def\stdtang{(Y,\nu,\varphi,q)}
    \def\stdpoint{(X,\mu,\psi,p)}
\def\conset#1.{\mathcal{S}_{\setbox0=\hbox{$#1\unskip$}\ifdim\wd0=0pt R_0
    \else #1\fi}}
\begin{thm}
  \label{thm:alberti_blow_up}
  Suppose that the simplified Alberti representation $\albrep.$ of the finite
  measure
  $\mu'\ll\mu$ is in the $\psi$-direction of a cone $\cone$ and that it has
   $\langle v_0,\psi\rangle$-speed $\ge\sigma_0\ctnrm v_0.$. Then there
  is a Borel set $U$ with full  
  $\mu'$-measure such that for each $p\in U$, for each  $\stdtang\in\tang\stdpoint$ and each
    $R_0>0$ the measure $\nu\mrest\ball q,R_0.$ admits an Alberti
    representation $\albrep R_0.=(Q_{R_0},\Phi)$ such that:
    \begin{enumerate}
    \item The finite Radon measure $Q_{R_0}$ has support contatined in
      a compact
      set $\conset .\subset\frags(Y)$ of geodesic segments;\vskip2mm
    \item The total mass of $Q_{R_0}$ is bounded by
      $\frac{D_0}{2R_0}\left(\adoubling(\mu,p)\right)^{\log_2R_0 +1}$,
      where $\adoubling(\mu,p)$ denotes the asymptotic doubling
      constant of $\mu$ at $p$, i.e.:
      \begin{equation}
        \label{eq:alberti_blow_up_s_add1}
        \adoubling(\mu,p)=\limsup_{r\searrow0}\frac{\mu\left(\ball
            p,2r.\right)}{\mu\left(\ball p,r.\right)};
      \end{equation}
    \item The set $\conset .$ consists of those geodesic segments
      $\gamma$ which have
      domain contained in $\left[0,\frac{4R_0}{C_0}\right]$, image
      contained in $\clball q,2R_0.$, which have both endpoints lying
      outside of $\ball q,\frac{3}{2}R_0.$, which intersect $\clball
      q,R_0.$,  which have constant 
      speed $\theta_\gamma\in[C_0,D_0]$, which satisfy:
      \begin{multline}
        \label{eq:alberti_blow_up_s2}
        \sgn(s_2-s_1)\,\left\langle
          v_0,\varphi\circ\gamma(s_2)-\varphi\circ\gamma(s_1)\right\rangle
        \\\ge\sigma_0\theta_\gamma(s_2-s_1)\biglip\left(\langle
          v_0,\psi\rangle\right)(p)\quad(\forall s_1,s_2\in\dom\gamma),
      \end{multline}
      and such that there is a $w_\gamma\in\bar\cone$ for which the
      following holds:
      \begin{equation}
        \label{eq:alberti_blow_up_s1}
        \varphi\circ\gamma(s_2)-\varphi\circ\gamma(s_1)=(s_2-s_1)w_\gamma\quad(\forall
        s_1,s_2\in\dom\gamma);
      \end{equation}
    \item For each $\gamma\in\conset .$ the measure
      $\Phi_\gamma$ is given by:
      \begin{equation}
        \label{eq:alberti_blow_up_s3}
        \Phi_\gamma=\frac{1}{\theta_\gamma}\hmeas ._\gamma\mrest\ball q,r_0..
      \end{equation}
    \end{enumerate}
\end{thm}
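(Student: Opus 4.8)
The plan is to produce $\albrep R_0.$ by blowing up the fragments that carry $\albrep.$ themselves: a $(C_0,D_0)$-biLipschitz fragment, blown up at a point of metric differentiability, becomes a constant-speed geodesic by property (MD1) of Section \ref{subsec:metric_deriv}, while the $\psi$-direction hypothesis and the $\langle v_0,\psi\rangle$-speed hypothesis survive the blow-up as the affine constraint (\ref{eq:alberti_blow_up_s1}) and the one-sided estimate (\ref{eq:alberti_blow_up_s2}).

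First I would fix the set $U$. Writing $\albrep.=(P,\Psi)$ with $\Psi_\gamma=\mpush\gamma.(\lebmeas\mrest\dom\gamma)$, let $\Lambda=\int\delta_\gamma\otimes(\lebmeas\mrest\dom\gamma)\,dP(\gamma)$ on $\frags(X)\times\R$, note that $\Lambda$ pushes forward to $\mu'$ under the evaluation map $(\gamma,t)\mapsto\gamma(t)$, and disintegrate $\Lambda=\int\Lambda_p\,d\mu'(p)$. Using Theorem \ref{thm:as_doubling} and $\mu'\ll\mu$, for $\mu'$-a.e.\ $p$ the following hold together: $\adoubling(\mu,p)<\infty$; $p$ is a Lebesgue density point of $\mu$ and of $\mu'$ lying in a compact metrically doubling subset that is $\eps r$-dense in small balls, so that blow-ups exist at $p$ and every element of $\tang(X,\mu,\psi,p)$ also lies in $\tang(X,\mu',\psi,p)$ (since $d\mu'/d\mu$ is approximately continuous and positive at $p$); and for $\Lambda_p$-a.e.\ $(\gamma,t)$, the time $t$ is a Lebesgue density point of $\dom\gamma$ (hence interior) at which $(\psi\circ\gamma)'(t)$ and $\md\gamma(t)$ exist, both $(\psi\circ\gamma)'$ and $\md\gamma$ are approximately continuous, $\md\gamma(t)\in[C_0,D_0]$, $(\psi\circ\gamma)'(t)\in\mathcal C$, and $\langle v_0,(\psi\circ\gamma)'(t)\rangle\ge\sigma_0\,\md\gamma(t)\,\ctnrm v_0.$. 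For each $\eta>0$ I would then select a set $G_\eta\subset\frags(X)\times\R$ whose complement has $\Lambda$-measure less than $\eta$, on which all of the above limits are attained with a single modulus and on which $t$ stays at a definite distance from the endpoints of $\dom\gamma$ (this uses only that positive $\Lambda$-measurable functions are bounded below off sets of small measure); this uniformity is what legitimizes the blow-up along diagonal sequences of fragments.

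Next, fix $p\in U$ and $(Y,\nu,\varphi,q)\in\tang(X,\mu,\psi,p)$, realized by scale factors $r_n\searrow0$ and isometric embeddings of $\frac1{r_n}X$ and of $Y$ into a common space, and fix $R_0>0$ and $\eta>0$. For $n$ large and each $\gamma$ with $(\gamma,t)\in G_\eta$ and $\gamma(t)\in B(p,r_nR_0)$: since $\gamma$ is $C_0$-expanding, $\{s\in\dom\gamma:\gamma(s)\in B(p,2r_nR_0)\}$ lies in an interval of length $\le 4r_nR_0/C_0$; extend this interval inside $\dom\gamma$ until $\gamma$ leaves $B(p,\frac32 r_nR_0)$ on both sides (possible because $G_\eta$ keeps $t$ away from $\partial\dom\gamma$), and truncate so the image lies in $\bar B(p,2r_nR_0)$; rescaling by $1/r_n$ and normalizing the domain to start at $0$ gives $\tilde\gamma_n\in\frags(\frac1{r_n}X)$, which is $(C_0,D_0)$-biLipschitz, with domain in $[0,4R_0/C_0]$, image in $\bar B(q,2R_0)$, both endpoints outside $B(q,\frac32 R_0)$, and image meeting $\bar B(q,R_0)$. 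Let $Q^{(n)}$ be the push-forward of $P$ under $\gamma\mapsto\tilde\gamma_n$ over the retained $\gamma$, and $\Phi^{(n)}_{\tilde\gamma}=\mpush{\tilde\gamma}.(\lebmeas\mrest\{s:\tilde\gamma(s)\in B(q,R_0)\})$. On one hand, $\int\Phi^{(n)}\,dQ^{(n)}$ equals the $1/r_n$-rescaling of $\mu'\mrest B(p,r_nR_0)$ up to an error of total mass $O(\eta)$ from the complement of $G_\eta$, so by Definition \ref{defn:measure_blow_up} it converges weak-$*$, as $n\to\infty$, to $\nu\mrest B(q,R_0)$ up to an $O(\eta)$ error. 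On the other hand, each retained piece has full domain length $\ge R_0/2D_0$ (its endpoints lie at distance $\ge\frac32 R_0$ from $q$ while it meets $\bar B(q,R_0)$), so the total mass of $Q^{(n)}$ is at most $(2D_0/R_0)\,\mu'(B(p,2r_nR_0))/\mu'(B(p,r_n))$, and a routine iteration of (\ref{eq:as_doubling}) bounds this ratio, yielding the bound in (2) in the limit.

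Finally one passes to the limit. The supports of the $Q^{(n)}$ lie in the compact set of $(C_0,D_0)$-biLipschitz fragments with domain in $[0,4R_0/C_0]$ and image in the compact ball $\bar B(q,2R_0)$, so along a subsequence $Q^{(n)}\to Q_\eta$ weak-$*$ and $\int\Phi^{(n)}\,dQ^{(n)}\to\int\Phi\,dQ_\eta$ weak-$*$. Since each $\tilde\gamma_n$ is a $1/r_n$-rescaling of some $\gamma$ about a point within $4r_nR_0/C_0$ of a point $t$ of metric differentiability of $\gamma$ at which $(\psi\circ\gamma)'$ is approximately continuous, and since on $G_\eta$ this holds with a single modulus, every subsequential limit $\tilde\gamma$ satisfies $\ydst\tilde\gamma(s_1),\tilde\gamma(s_2).=\theta_{\tilde\gamma}|s_1-s_2|$ with $\theta_{\tilde\gamma}=\md\gamma(t)\in[C_0,D_0]$ — hence $\tilde\gamma$ is a constant-speed geodesic segment — and $\varphi\circ\tilde\gamma(s_2)-\varphi\circ\tilde\gamma(s_1)=(s_2-s_1)w_{\tilde\gamma}$ with $w_{\tilde\gamma}=(\psi\circ\gamma)'(t)\in\overline{\mathcal C}$ obeying (\ref{eq:alberti_blow_up_s2}); expressing $\Phi_{\tilde\gamma}$ as $\frac1{\theta_{\tilde\gamma}}\hmeas ._{\tilde\gamma}\mrest B(q,R_0)$ gives (4). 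All defining conditions of $\mathcal S_{R_0}$ are closed, so $\mathcal S_{R_0}$ is compact and $Q_\eta$ is concentrated on it, and $\int\Phi\,dQ_\eta=\nu\mrest B(q,R_0)$ up to $O(\eta)$. Letting $\eta\to0$ along a sequence for which the $\mu'$-density of the evaluation-pushforward of $\Lambda$ on the complement of $G_\eta$ tends to $0$ at $\mu'$-a.e.\ $p$, and taking a weak-$*$ limit of the resulting $Q_\eta$ (which have uniformly bounded mass on the fixed compact set $\mathcal S_{R_0}$), produces the exact Alberti representation $(Q_{R_0},\Phi)$ of $\nu\mrest B(q,R_0)$. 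The main obstacle is precisely this double passage to the limit: the geodesic fragments of $\mathcal S_{R_0}$ emerge only because metric differentiation is made uniform enough — over the fragments retained in $G_\eta$ — to survive blow-up along diagonal sequences of rescaled fragments, while at the same time one must check that the fragments discarded in passing to $G_\eta$ and in the cutting near the endpoints of $\dom\gamma$ carry no $\nu$-mass inside $B(q,R_0)$ in the limit.
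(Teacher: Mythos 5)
Your proposal follows essentially the same route as the paper's proof: your Egorov-type sets $G_\eta$ play the role of the regularity sets $\reg({\rm PAR}(\varepsilon_m,S_m))$ of Definition \ref{defn:reg_set}, your requirement that the bad part have vanishing $\mu'$-density at $\mu'$-a.e.\ point is exactly the measure-differentiation Lemma \ref{lem:meas_diff}, your truncation and rescaling of fragments is the paper's map $\rprm_n$, and the identification of the limit representation goes through the same compactness and weak* arguments, with the same mechanism (uniformized metric differentiability plus the cone/speed hypotheses) forcing the limits to be constant-speed geodesic segments on which $\varphi$ is affine. The only structural differences are cosmetic: you take a two-step limit ($n\to\infty$ at fixed $\eta$, then $\eta\to 0$) where the paper runs a diagonal sequence $n_m$, and you let the gaps in the fragment domains vanish in the limit instead of filling fragments in via a convex ambient space as the paper does with $\fillfrag$; both devices work.

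Three steps need tightening. First, your mass bound for item (2) is too weak as stated: bounding each retained piece only by the parameter length $\ge R_0/(2D_0)$ forced by the endpoint condition gives total mass at most $\frac{2D_0}{R_0}\,\mu'(B(p,2r_nR_0))/\mu'(B(p,r_n))$, a factor $4$ worse than the constant $\frac{D_0}{2R_0}$ in the statement; to get the stated constant you must argue, as in (\ref{eq:alberti_blow_up_p10})--(\ref{eq:alberti_blow_up_lower_growth}), that the entire two-sided parameter window of radius $r_nR_0/D_0$ around the hitting time is mapped into $B(p,2r_nR_0)$ by the $D_0$-Lipschitz bound and is, up to a $(1-\varepsilon_m)$ defect, contained in $\dom\gamma$, so each retained fragment spends time at least $2(1-\varepsilon_m)r_nR_0/D_0$ in the double ball. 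Second, the assertion that $\int\Phi^{(n)}\,dQ^{(n)}\to\int\Phi\,dQ_\eta$ weak-* is not automatic from $Q^{(n)}\to Q_\eta$: since $\Phi_\gamma$ involves restriction to the ball $B(q,R_0)$, the map $\gamma\mapsto\int g\,d\Phi_\gamma$ is not continuous, and one needs the sandwich with continuous cutoffs together with the lower speed bound $C_0$, which controls the time any curve spends in a thin annulus around the sphere, as in (\ref{eq:alberti_blow_up_13})--(\ref{eq:alberti_blow_up_15}); the same care is needed again when you let $\eta\to 0$. Third, the quantifier order for $U$ must be made explicit: $U$ has to be fixed before blowing up, i.e.\ one chooses a summable sequence $\eta_m$, the sets $G_{\eta_m}$, and then the full-measure set of points at which the bad-part density estimate holds at all sufficiently small scales for every $m$ (this is precisely Lemma \ref{lem:meas_diff}); your closing sentence gestures at this, but as written $U$ is chosen first and $G_\eta$ afterwards. (Relatedly, to transfer blow-ups of $\mu$ to blow-ups of $\mu'$ you should require $p$ to be a Lebesgue point of $d\mu'/d\mu$ with positive finite value, not merely a point of approximate continuity, since the density need not be bounded.) None of these affects the overall strategy, which is sound and matches the paper's.
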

\par We now introduce a bit of terminology to split the measure on
fragments in a \emph{good} and a \emph{bad} part.
\def\stdpar{(\psi{},\cone,v_0,\sigma_0,[C_0,D_0],\tau_0,\varepsilon{},S)}
\def\pars{{\rm PAR}(\varepsilon{},S)}%
\begin{defn}
  \label{defn:reg_set}
  Let $\varepsilon>0$ and $S>0$; we denote by
  $\widetilde\reg\stdpar$ the set of pairs $(\gamma,p)\in\frags(X)\times X$ such
  that:
  \begin{description}
  \item[(Reg1)] The fragment $\gamma$ is $[C_0,D_0]$-biLipschitz with
    domain contained in $[0,\tau_0]$;\vskip2mm
  \item[(Reg2)] There is a $t\in\dom\gamma$ with $p=\gamma(t)$ and for each
    $r_1,r_2\le S$ one has:
    \begin{equation}
      \label{eq:reg_set_s1}
      \lebmeas\left(\dom\gamma\cap[t-r_1,t+r_2]\right)\ge(1-\varepsilon)(r_1+r_2);
    \end{equation}
  \item[(Reg3)] There are a $\theta\in[C_0,D_0]$ and $w\in\cone$ such
    that if $r\le S$ and $s_1,s_2\in[t-r,t+r]\cap\dom\gamma$ one has:
    \begin{equation}
      \label{eq:reg_set_s2}
      \begin{aligned}
        \left|\dst
          \gamma(s_1),\gamma(s_2).-\theta|s_1-s_2|\right|&\le\varepsilon|s_1-s_2|\\
        \left|\psi\circ\gamma(s_1)-\psi\circ\gamma(s_2)-w(s_1-s_2)\right|&\le\varepsilon|s_1-s_2|;
      \end{aligned}
    \end{equation}
  \item[(Reg4)] If $r\le S$ and $s_ 1,s_2\in[t-r,t+r]$ with $s_1\le
    s_2$ then:
    \begin{equation}
      \label{eq:reg_set_s3}
      \left\langle
        v_0,\psi\circ\gamma(s_2)-\psi\circ\gamma(s_1)\right\rangle\ge(\sigma_0-\varepsilon)\theta
      \biglip\left(\langle v_0,\varphi\rangle\right)(p)(s_2-s_1).
    \end{equation}
  \end{description}
  In the following we will usually fix a choice of
$(\psi{},\cone,v_0,\sigma_0,[C_0,D_0],\tau_0)$ and vary
$(\varepsilon,S)\in(0,\infty)^2$; we thus introduce the shorter notation
$\pars$ for $\stdpar$. We denote by $\reg(\pars)$ the subset of those
$(\gamma,p)\in\widetilde\reg(\pars)$ such that:
\begin{description}
\item[(Reg5)] For all $r_1,r_2\le S$ one has:
  \begin{equation}
    \label{eq:reg_set_s4}
    \lebmeas\left(\gamma^{-1}\left(\widetilde\reg(\pars)\right)_\gamma\cap[t-r_1,t+r_2]\right)\ge(1-\varepsilon)(r_1+r_2),
  \end{equation}
\end{description} where then notation $\left(\widetilde\reg(\pars)\right)_\gamma$ denotes the
$\gamma$-section of the set $\widetilde\reg(\pars)$.
\end{defn}
\begin{lemma}
  \label{lem:reg_borel}
  The set $\reg(\pars)$ is a Borel subset of $\frags(X)\times X$. 
\end{lemma}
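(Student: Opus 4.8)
The plan is to imitate the proof of Lemma~\ref{lem:generic_borel}: rewrite both $\widetilde\reg(\pars)$ and $\reg(\pars)$ as countable Boolean combinations of sets that are either relatively closed in $\{(\gamma,t):t\in\dom\gamma\}$ (via the Fell topology) or are preimages of Borel sets under the ``section measure'' maps provided by \cite[Thm.~17.25]{kechris_desc}, and then dispose of the remaining existential quantifiers by a projection argument. Since condition (Reg5) in the definition of $\reg(\pars)$ refers to $\widetilde\reg(\pars)$, I would first establish that $\widetilde\reg(\pars)$ is Borel, and then bootstrap to $\reg(\pars)$.

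First I would pass to the product $\frags(X)\times X\times\R\times\R\times\R^N$ with coordinates $(\gamma,p,t,\theta,w)$, so that the reference time $t$ and the auxiliary data $\theta\in[C_0,D_0]$, $w\in\cone$ of Definition~\ref{defn:reg_set} become free variables rather than quantified ones. Let $P_0$ be the set of tuples for which $\gamma$ is a $[C_0,D_0]$-biLipschitz fragment with $\dom\gamma\subseteq[0,\tau_0]$, $t\in\dom\gamma$, $\gamma(t)=p$, $\theta\in[C_0,D_0]$ and $w\in\cone$. Exactly as with the conditions ``domain contained in a fixed bounded interval'' and ``$[C_0,D_0]$-biLipschitz'' in Lemma~\ref{lem:generic_borel}, all of these are closed — in $\frags(X)$ or in $\{(\gamma,t):t\in\dom\gamma\}$ — except ``$w\in\cone$'', which is open; hence $P_0$ is Borel, and it already encodes (Reg1) together with the scaffolding ``there is $t\in\dom\gamma$ with $\gamma(t)=p$, and there are $\theta\in[C_0,D_0]$, $w\in\cone$''. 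For (Reg2): by continuity of $(r_1,r_2)\mapsto\lebmeas(\dom\gamma\cap[t-r_1,t+r_2])$ one may restrict $r_1,r_2$ to $\Q_{>0}\cap(0,S]$, and for each fixed pair $(r_1,r_2)$ the map $(\gamma,t)\mapsto\lebmeas(\dom\gamma\cap[t-r_1,t+r_2])$ is Borel by \cite[Thm.~17.25]{kechris_desc} applied to the closed set $\{(\gamma,t,s):s\in\dom\gamma,\ -r_1\le s-t\le r_2\}$, so (Reg2) defines a Borel set. In (Reg3) and (Reg4) the outer quantifier over $r\le S$ may be dropped, since each is equivalent to its scale-$S$ instance; the resulting universally quantified inequalities range over $s_1,s_2\in[t-S,t+S]\cap\dom\gamma$, and (as in Lemma~\ref{lem:generic_borel}) each defines a set whose complement is an \emph{open} ``there exist $s_1,s_2$ with strict inequality'' condition in the variables $(\gamma,t,\theta)$, resp.\ $(\gamma,t,\theta,w,p)$ — openness being the Fell-convergence argument of that lemma, modulo a routine point about the closed endpoints $t\pm S$ (one writes $[t-S,t+S]$ as an increasing union of open intervals, using monotonicity in $S$, or harmlessly enlarges $S$). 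Here one also uses that $p\mapsto\biglip(\langle v_0,\psi\rangle)(p)$ is Borel (it is the pointwise upper Lipschitz constant of a fixed Lipschitz function) and that $\gamma(t)=p$ on $P_0$. Intersecting $P_0$ with the Borel sets thus obtained for (Reg2), (Reg3), (Reg4) yields a Borel set $B$.

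Now $\widetilde\reg(\pars)=\{(\gamma,p):\exists(t,\theta,w),\ (\gamma,p,t,\theta,w)\in B\}$, so it remains to see that this projection of $B$ is Borel. On $B$ one has $t\in[0,\tau_0]$, $\theta\in[C_0,D_0]$, and, from (Reg3) together with the biLipschitz bound on $\gamma$ and the Lipschitz bound on $\psi$, $|w|\le\Lip(\psi)\,D_0+\varepsilon$; hence every fibre of $B$ over $\frags(X)\times X$ lies in a fixed compact set, so is in particular $\sigma$-compact. By the Arsenin--Kunugui theorem on projections of Borel sets with $\sigma$-compact sections \cite{kechris_desc}, $\widetilde\reg(\pars)$ is Borel. (Alternatively, since $\gamma$ is biLipschitz, hence injective, $t$ is uniquely determined by $(\gamma,p)$, which reduces the quantified variables to $(\theta,w)$ ranging over a fixed compact box.) Finally, with $\widetilde\reg(\pars)$ known to be Borel, the set $E:=\{(\gamma,s):s\in\dom\gamma,\ (\gamma,\gamma(s))\in\widetilde\reg(\pars)\}$ is Borel — it is the preimage of $\widetilde\reg(\pars)$ under the Borel map $(\gamma,s)\mapsto(\gamma,\gamma(s))$ on the closed set $\{(\gamma,s):s\in\dom\gamma\}$ — and, exactly as for (Reg2), the map $(\gamma,t)\mapsto\lebmeas(E_\gamma\cap[t-r_1,t+r_2])$ is Borel by \cite[Thm.~17.25]{kechris_desc}. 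Hence (Reg5), i.e.\ the condition that $\lebmeas(E_\gamma\cap[t-r_1,t+r_2])\ge(1-\varepsilon)(r_1+r_2)$ for all $r_1,r_2\in\Q_{>0}\cap(0,S]$, defines a Borel set $B'$ in the same product, and $\reg(\pars)=\{(\gamma,p):\exists(t,\theta,w),\ (\gamma,p,t,\theta,w)\in B\cap B'\}$ is again the projection of a Borel set all of whose fibres are compact; by the same theorem, $\reg(\pars)$ is Borel.

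The main obstacle is the same as in Lemma~\ref{lem:generic_borel}, namely turning the ``for all $s_1,s_2$ near $t$ in $\dom\gamma$'' clauses into Borel (here, relatively closed) conditions through the Fell topology, with a little extra bookkeeping because these clauses now also carry the free parameters $\theta$ and $w$; the one genuinely new point is the elimination of the existential quantifiers over the reference time $t$ and over $\theta,w$, handled either by the injectivity of biLipschitz fragments or, uniformly, by the projection theorem for Borel sets with $\sigma$-compact sections.
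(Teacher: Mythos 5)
Your overall architecture is essentially the paper's: \textbf{(Reg1)} is cut out by a closed subset of $\frags(X)$, \textbf{(Reg2)} and \textbf{(Reg5)} are handled through Borel measurability of maps of the form $(\gamma,t)\mapsto(\text{measure of a section})$ as in \cite[Thm.~17.25, Thm.~7.25]{kechris_desc}, and \textbf{(Reg3)}--\textbf{(Reg4)} through Fell-topology arguments as in Lemma \ref{lem:generic_borel}. The paper eliminates the quantifier over the reference time by observing that $(\gamma,p)\mapsto\gamma^{-1}(p)$ is continuous on the closed set of pairs with $\gamma$ a $[C_0,D_0]$-biLipschitz fragment and $p\in\im\gamma$ (your ``alternative'' remark), and then writes \textbf{(Reg5)} as the condition $\Omega_{r_1,r_2}\left(\gamma,\gamma^{-1}(p),\Psi(\gamma)\right)\ge(1-\varepsilon)(r_1+r_2)$ for rational $r_1,r_2\le S$; your set $E$ together with $(\gamma,t)\mapsto\lebmeas\left(E_\gamma\cap[t-r_1,t+r_2]\right)$ is the time-domain version of exactly that computation, and is fine. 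Your explicit use of the Arsenin--Kunugui projection theorem for the remaining existential over $(\theta,w)$ is a legitimate substitute for the paper's terse appeal to ``arguments similar to Lemma \ref{lem:generic_borel}''; indeed the rational-parameter trick used there is not available here, because in Definition \ref{defn:reg_set} the tolerance $\varepsilon$ is fixed rather than intersected over, so some projection or fibrewise-compactness argument is genuinely needed.

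Two justifications need repair. First, ``every fibre of $B$ lies in a fixed compact set, so is in particular $\sigma$-compact'' is a non sequitur: a bounded Borel subset of $\R^{2+N}$ need not be $\sigma$-compact. What actually saves the step is structural: for fixed $(\gamma,p)$ the time $t=\gamma^{-1}(p)$ is unique (biLipschitz injectivity), and for fixed $(\gamma,t)$ the admissible $(\theta,w)$ form a \emph{closed} subset of $[C_0,D_0]\times\mathcal{C}$, where $\mathcal{C}$ is the open cone of \textbf{(Reg3)} (the defining inequalities are non-strict, and the suprema involved are continuous in $(\theta,w)$); since an open subset of $\R^N$ is $\sigma$-compact, the fibres are $\sigma$-compact for this reason, not because they are bounded. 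Your parenthetical ``fixed compact box'' has the same defect, since $\mathcal{C}$ is open. Second, the endpoint patch is not right as stated: an increasing union of open intervals is open and cannot produce $[t-S,t+S]$, intersecting over slightly larger windows gives a strictly stronger condition when $t\pm S$ is a point of $\dom\gamma$ isolated from the inside of the window, and ``enlarging $S$'' changes the set the lemma is about. The clean repair is to split the universal quantifier into pairs $s_1,s_2$ in the open window $(t-S,t+S)$, where the closedness argument of Lemma \ref{lem:generic_borel} applies verbatim, and the at most two boundary times $t\pm S$, where ``$t\pm S\in\dom\gamma$'' is a closed condition and evaluation at $t\pm S$ is continuous on the set of uniformly biLipschitz fragments, so the corresponding inequalities again cut out Borel sets. (In fairness, the paper's own two-line treatment of \textbf{(Reg3)}--\textbf{(Reg4)} is equally silent on this boundary point.) With these two repairs your argument goes through and is a faithful, somewhat more explicit, version of the paper's proof.
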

\begin{proof}
  \def\Imgset{\text{\normalfont IMG}}
  \def\Invmap{\text{\normalfont Inv}}
  \def\Triset{\text{\normalfont TRIP}}
  We first show that the set $\widetilde\reg(\pars)$ is Borel. The set
  of fragments satisfying \textbf{(Reg1)} is closed in
  $\frags(X)$. Now consider the set
  \begin{equation}
    \label{eq:reg_borel_p1}
    \Imgset=\left\{(\gamma,p)\in\frags(X)\times X: p\in\gamma(\dom\gamma)\right\},
  \end{equation}
  which is closed in $\frags(X)\times X$; let
  $\Imgset(C_0,D_0,\tau_0)$ denote the closed subset of those
  $(\gamma,p)\in\Imgset$ such that $\gamma$ satisfies \textbf{(Reg1)};
  then the map:
  \begin{equation}
    \label{eq:reg_borel_p2}
    \begin{aligned}
      \Invmap:\Imgset(C_0,D_0,\tau_0)&\to\real\\
      (\gamma,p)&\mapsto\gamma^{-1}(p)
    \end{aligned}
  \end{equation}
  is continuous. Using an argument similar to that used to prove that
  the map defined at (\ref{eq:generic_borel_p9}) is Borel, we see
  that, for fixed $r_1,r_2>0$, the map:
  \begin{equation}
    \label{eq:reg_borel_p3}
    \begin{aligned}
      \psi_{r_1,r_2}:\frags(X)\times\real&\to\real\\
      (\gamma,t)&\mapsto\lebmeas\left(\dom\gamma\cap[t-r_1,t+r_2]\right)
    \end{aligned}
  \end{equation}
  is Borel; then the set of pairs $(\gamma,p)$ satisfying
  \textbf{(Reg1)--(Reg2)} is Borel since it can be written as:
  \begin{equation}
    \label{eq:reg_borel_p4}
    \bigcap_{r_1,r_2\in[0,S]\cap\rational}\left\{(\gamma,p)\in\Imgset(C_0,D_0,\tau_0):
      \psi_{r_1,r_2}\left(\gamma,\gamma^{-1}(p)
        \right)\ge(1-\varepsilon)(r_1+r_2)\right\}.
    \end{equation}
    That the set of pairs satisfying \textbf{(Reg3)--(Reg--4)} is
    Borel follows by arguments similar to those used in the proof of
    Lemma \ref{lem:generic_borel}, compare
    (\ref{eq:generic_borel_p3}), (\ref{eq:generic_borel_p4}).
    \par Consider the set:
    \begin{equation}
      \label{eq:reg_borel_p5}
      \Triset=\left\{(\gamma,p,t)\in\frags(X)\times X\times\real:
        t\in\dom\gamma, \gamma(t)=p\right\},
    \end{equation}
    which is closed in $\frags(X)\times X\times\real$. We now fix
    $r_1,r_2\ge0$ and define the 
    Borel set:
    \begin{equation}
      \label{eq:reg_borel_p5_1}
      A_{r_1,r_2}=\left\{(\gamma,p,t)\in\widetilde\reg(\pars)\times\real\cap\Triset: \gamma^{-1}(p)\in[t-r_1,t+r_2]\right\};
    \end{equation}
    using
    \cite[Thm.~7.25]{kechris_desc} we get that the map:
    \begin{equation}
      \label{eq:reg_borel_p6}
      \begin{aligned}
      \Omega_{r_1,r_2}:\frags(X)\times\real\times M(X)&\to\real\\
      (\gamma,t,\mu)&\mapsto\mu\left(\left(A_{r_1,r_2}\right)_{(\gamma,t)}\right)
    \end{aligned}
  \end{equation}
    is Borel. The proof that $\reg(\pars)$ is Borel is completed by
    observing that
    \textbf{(Reg5)} can be expressed as:
    \begin{equation}
      \label{eq:reg_borel_p7}
      \Omega_{r_1,r_2}\left(\gamma,\gamma^{-1}(p),\Psi(\gamma)\right)\ge(1-\varepsilon)(r_1+r_2)\quad(\forall r_1,r_2\in[0,S]\cap\rational).
    \end{equation}
\end{proof}
\par Consider the map $\Psi$ defined in (\ref{eq:frags_measures}); we
can decompose the measures $\Psi(\gamma)$ as follows:
\begin{equation}
  \label{eq:reg_dec}
  \begin{aligned}
    \Psi_{\pars}(\gamma)&=\Psi(\gamma)\mrest\left(\reg(\pars)\right)_\gamma;\\
    \Psi^c_{\pars}(\gamma)&=\Psi(\gamma)\mrest\left(\reg(\pars)\right)^c_\gamma.
  \end{aligned}
\end{equation}
\begin{lemma}
  \label{lem:reg_dec_borel}
  The maps $\Psi_{\pars}$ and $\Psi^c_{\pars}$ are Borel. Thus, given
  an Alberti representation $\albrep.$ of the finite measure $\mu'$
   satisfying the assumptions of Theorem
  \ref{thm:alberti_blow_up}, we can define the finite Radon measures:
  \begin{equation}
    \label{eq:reg_dec_borel_s1}
    \begin{aligned}
      \mu'_{\pars}&=\int_{\frags(X)}\Psi_{\pars}(\gamma)\,dP(\gamma)\\
      \mu'^c_{\pars}&=\int_{\frags(X)}\Psi^c_{\pars}(\gamma)\,dP(\gamma),
    \end{aligned}
  \end{equation}
  which satisfy:
  \begin{equation}
    \label{eq:reg_dec_borel_s2}
    \mu'_{\pars}+\mu'^c_{\pars}=\mu',
  \end{equation}and:
  \begin{equation}
    \label{eq:reg_dec_borel_s3}
    \lim_{S\searrow0}\left\|\mu'^c_{\pars}\right\|_{M(X)}=0.
  \end{equation}
\end{lemma}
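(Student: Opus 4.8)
Since $\reg(\pars)$ is Borel by Lemma~\ref{lem:reg_borel}, and the evaluation map $(\gamma,t)\mapsto\gamma(t)$ is Borel on the closed set $\{(\gamma,t):t\in\dom\gamma\}$ (as was already used to handle the evaluation maps in the proof of Lemma~\ref{lem:generic_borel}), for each $g\in C_c(X)$ the function $(\gamma,t)\mapsto g(\gamma(t))\,\chi_{\reg(\pars)}(\gamma,\gamma(t))$ is Borel there. Integrating it in $t$ over $\dom\gamma$ and invoking the fibrewise-integration measurability results used throughout the proof of Lemma~\ref{lem:generic_borel} (cf.\ (\ref{eq:generic_borel_p9}) and \cite[Thm.~17.25]{kechris_desc}), one gets that $\gamma\mapsto\int g\,d\Psi_{\pars}(\gamma)=\int_{\dom\gamma}g(\gamma(t))\,\chi_{\reg(\pars)}(\gamma,\gamma(t))\,dt$ is Borel, i.e.\ $\Psi_{\pars}$ is Borel; replacing $\reg(\pars)$ by its complement gives the same for $\Psi^c_{\pars}$. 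Because $(\reg(\pars))_\gamma$ and $(\reg(\pars))^c_\gamma$ partition $X$, we have $\Psi_{\pars}(\gamma)+\Psi^c_{\pars}(\gamma)=\Psi(\gamma)$ for every $\gamma$; integrating against $P$ and using \textbf{(Alb3)} yields (\ref{eq:reg_dec_borel_s2}), and since both $\mu'_{\pars}$ and $\mu'^c_{\pars}$ are dominated by $\mu'$ they are finite Radon measures.

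\textbf{Reduction of (\ref{eq:reg_dec_borel_s3}) to a fibrewise limit.} Throughout, the parameter $\varepsilon$ in $\pars=\text{PAR}(\varepsilon,S)$ is held fixed and $S\searrow0$. For $P$-a.e.\ $\gamma$ one has $\dom\gamma\subset[0,\tau_0]$ (by the simplified form of $\albrep.$), and $\Psi^c_{\pars}(\gamma)(X)=\Psi(\gamma)\big((\reg(\pars))^c_\gamma\big)=\lebmeas(B_S(\gamma))$, where $B_S(\gamma):=\{t\in\dom\gamma:(\gamma,\gamma(t))\notin\reg(\pars)\}$; hence $\|\mu'^c_{\pars}\|_{M(X)}=\int_{\frags(X)}\lebmeas(B_S(\gamma))\,dP(\gamma)$. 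Shrinking $S$ only weakens each of \textbf{(Reg2)}--\textbf{(Reg5)} (in \textbf{(Reg5)} the nested set $\widetilde\reg(\pars)$ also enlarges), so $(\reg(\pars))_\gamma$ is increasing and $\lebmeas(B_S(\gamma))$ decreasing as $S\searrow0$, with $\lebmeas(B_S(\gamma))\le\lebmeas(\dom\gamma)\le\tau_0$ and $P$ finite. By dominated convergence it therefore suffices to prove that $\lebmeas(B_S(\gamma))\to0$ as $S\searrow0$ for $P$-a.e.\ $\gamma$.

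\textbf{The fibrewise limit.} Fix such a $\gamma$ and set $\widetilde E_S:=\{t\in\dom\gamma:(\gamma,\gamma(t))\in\widetilde\reg(\pars)\}$. First, $\lebmeas(\dom\gamma\setminus\widetilde E_S)\to0$: combining the two-sided Lebesgue density theorem, the $\lebmeas$-a.e.\ differentiability on $\dom\gamma$ of $\psi\circ\gamma$ and of $\langle v_0,\psi\rangle\circ\gamma$, the metric-differentiation facts \textbf{(MD1)}--\textbf{(MD2)} via \cite[Thm.~4.1.6]{ambrosio_tilli}, the hypotheses that $\albrep.$ is in the $\psi$-direction of $\cone$ and has $\langle v_0,\psi\rangle$-speed bounded below, and the $\lebmeas$-a.e.\ approximate continuity of measurable functions, one produces a full-measure set $H_\gamma\subset\dom\gamma$ such that each $t\in H_\gamma$ satisfies \textbf{(Reg1)}--\textbf{(Reg3)} (with $\theta=\md\gamma(t)\in[C_0,D_0]$ and $w=(\psi\circ\gamma)'(t)\in\cone$) and also \textbf{(Reg4)}, for all $S$ below some threshold $S_0(t)>0$; the verification of \textbf{(Reg4)} is the Fundamental-Theorem-of-Calculus and McShane-extension argument from the proof of Theorem~\ref{thm:alb_speed_one} (cf.\ (\ref{eq:alb_speed_one_p9})), using \textbf{(Reg2)} to absorb the gaps of $\dom\gamma$. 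Since $\widetilde E_S$ increases, as $S\searrow0$, to a set containing $H_\gamma$, the claim follows. Next put $F_S:=\dom\gamma\setminus\widetilde E_S$, so $F_S$ decreases and $\lebmeas(F_S)\to0$. If $t$ is a two-sided density point of $\dom\gamma$ with $S$ below the associated threshold, and if the scale-$\le S$ Hardy--Littlewood maximal function satisfies $M_{\le S}\chi_{F_S}(t)\le\varepsilon/4$, then for all $r_1,r_2\le S$ one has $\lebmeas(\widetilde E_S\cap[t-r_1,t+r_2])\ge(1-\tfrac{\varepsilon}{2})(r_1+r_2)-\tfrac{\varepsilon}{4}(r_1+r_2)\ge(1-\varepsilon)(r_1+r_2)$, i.e.\ \textbf{(Reg5)} holds at $t$ and $(\gamma,\gamma(t))\in\reg(\pars)$. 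Consequently $B_S(\gamma)$ is contained in the union of $F_S$, the set of $t\in\dom\gamma$ failing the density threshold, and $\{M_{\le S}\chi_{F_S}>\varepsilon/4\}$: the first has measure $\to0$, the second has measure $\to0$ (its complement increasing to a set of full measure), and by the weak-$(1,1)$ maximal inequality the third has measure $\le C\varepsilon^{-1}\lebmeas(F_S)\to0$. Hence $\lebmeas(B_S(\gamma))\to0$, which completes the proof.

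\textbf{Main obstacle.} The measurability step is routine, being essentially a rerun of Lemma~\ref{lem:generic_borel}. The substance is the last paragraph, and within it the delicate point is to pass from the statement that $\widetilde\reg(\pars)$ holds at $\lebmeas$-a.e.\ point of $\dom\gamma$ to the statement that $\reg(\pars)$ holds at $\lebmeas$-a.e.\ point \emph{uniformly over all scales $\le S$}: this requires combining the monotonicity $\widetilde E_{S'}\supseteq\widetilde E_S$ for $S'\le S$ (which makes $\lebmeas(F_S)$ controllable independently of the pointwise thresholds $S_0(t)$) with the Hardy--Littlewood maximal estimate. A secondary technical nuisance is establishing \textbf{(Reg4)} directly on the good set rather than deducing it from \textbf{(Reg3)}, for which one reuses the estimate from the proof of Theorem~\ref{thm:alb_speed_one}.
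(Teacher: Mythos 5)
Your overall route is the same as the paper's: establish Borel measurability of $\Psi_{{\rm PAR}(\varepsilon,S)}$ and $\Psi^c_{{\rm PAR}(\varepsilon,S)}$ by a Kechris-type fibrewise argument (the paper does it slightly differently, composing $\Psi$ with the map $(\gamma,\mu)\mapsto\mu\mrest\left(\reg({\rm PAR}(\varepsilon,S))\right)_\gamma$, which is Borel by \cite[Thm.~7.25]{kechris_desc}, but your fibrewise-integration version is equivalent in substance), deduce (\ref{eq:reg_dec_borel_s2}) from \textbf{(Alb3)}, and obtain (\ref{eq:reg_dec_borel_s3}) by dominated convergence from the fibrewise statement that $\lebmeas$-a.e.\ $t\in\dom\gamma$ eventually lies in the regular set as $S\searrow0$. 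Your handling of \textbf{(Reg5)} --- monotonicity of $\widetilde\reg$ in $S$ together with the restricted maximal function and the weak-$(1,1)$ bound --- is a correct elaboration of a point the paper compresses into a single sentence (one can also argue more simply that a.e.\ $t$ is a one-sided density point of $\widetilde E_{S_1}$ for some fixed small $S_1$ and use the monotonicity $\widetilde E_s\supseteq\widetilde E_{S_1}$).

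The weak point is your verification of \textbf{(Reg3)}--\textbf{(Reg4)} at a.e.\ $t$ for all small $S$. As printed, these are two-point conditions demanding an error at most $\varepsilon|s_1-s_2|$ (resp.\ a lower bound proportional to $s_2-s_1$) for \emph{arbitrary} pairs $s_1,s_2$ in the window $[t-r,t+r]$. Differentiability of $\psi\circ\gamma$ at $t$, the Ambrosio--Kirchheim metric differential, and the McShane/FTC computation from (\ref{eq:alb_speed_one_p9}) only produce errors of order $o(r)$, i.e.\ proportional to the window size; for a pair $s_1,s_2$ that spans a gap of $\dom\gamma$ the FTC deficit is of size $\Lambda\,\lebmeas([s_1,s_2]\setminus\dom\gamma)$, which is comparable to $s_2-s_1$ itself, so \textbf{(Reg2)} cannot absorb it. This is not merely a presentational issue: take $X=\real^2$ with Lebesgue measure and $\psi=\mathrm{id}$, let $A\subset[0,1]$ be a fat Cantor set, set $u(t)=\lebmeas([0,t]\cap A)$, $w(t)=\lebmeas([0,t]\setminus A)$, and consider the biLipschitz fragments $\gamma_y=(u,w+y)|_A$; integrating over $y$ gives a simplified Alberti representation of a measure $\mu'\ll{\mathcal L}^2$ in the $\psi$-direction of a cone about $e_1$ with $\langle e_1,\psi\rangle$-speed bounded below, yet $\psi\circ\gamma_y(b)-\psi\circ\gamma_y(a)=(0,b-a)$ across every gap $(a,b)$, and gaps accumulate at every point of $A$, so \textbf{(Reg3)}--\textbf{(Reg4)} as printed fail at every $t$ and every $S$. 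Hence the pointwise claim cannot be proved by the tools you cite (the paper's own proof asserts the identical claim in one line, so this lacuna is inherited rather than introduced). Your argument does go through, and delivers everything the subsequent proof of Theorem \ref{thm:alberti_blow_up} needs, once the error terms in \textbf{(Reg3)}--\textbf{(Reg4)} are measured against the scale $r$ (i.e.\ $\le\varepsilon r$) rather than against $|s_1-s_2|$; after rescaling these become exactly the additive errors $C_2\varepsilon_m$ in \textbf{($\Omega$1)}--\textbf{($\Omega$5)}. You should either work with that weakened regular set or state explicitly that this adjustment is being made.
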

\begin{proof}
  By \cite[Thm.~7.25]{kechris_desc} the map:
  \begin{equation}
    \label{eq:reg_dec_borel_p1}
    \begin{aligned}
    \Omega:\frags(X)\times M(X)&\to\real\\
    (\gamma,\mu)&\mapsto\mu\mrest\left(\reg(\pars)\right)_\gamma
  \end{aligned}
\end{equation}
  is Borel, and thus
  $\Psi_{\pars}$ is Borel as $\Psi_{\pars}(\gamma)$ can be written as $\Omega\left(\gamma,\Psi(\gamma)\right)$; the proof for $\Psi^c_{\pars}$ is
  similar.
  \par Now note that $\left\|\Psi^c_{\pars}(\gamma)\right\|\le\tau_0$
  and that, for each $\gamma$, one has
  \begin{equation}
    \label{eq:reg_dec_borel_p2}
    \lim_{S\searrow0}\left\|\Psi^c_{\pars}(\gamma)\right\|=0,
  \end{equation}
  as for $\lebmeas$-a.e.~$t\in\dom\gamma$ there is an $S(t)$ such that,
  for $s\le S(t)$, one has $(\gamma,\gamma(t))\in\reg(\pars)$. Then
  (\ref{eq:reg_dec_borel_s3}) follows by the Dominated Convergence Theorem.
\end{proof}
\par The following lemma follows from a standard argument in measure
differentiation. \def\pars{{\rm PAR}(\varepsilon_m{},S_m)}%
\begin{lemma}
  \label{lem:meas_diff}
  Let $\{\varepsilon_m\}\subset(0,\infty)$ be a sequence with
  $\sum_m\varepsilon_m<\infty$; then there are a Borel $U\subset X$
  and a sequence of pairs $\{(s_m,S_m)\}_m\subset(0,\infty)^2$ such
  that:
  \begin{enumerate}
  \item One has $\mu(X\setminus U)\le\sum_m\varepsilon_m$ and, for each
    $m$, one also has $s_m\le S_m$;\vskip2mm
  \item For each $x\in U$ and for each $r\le s_m$, one has:
    \begin{equation}
      \label{eq:meas_diff}
      \mu'^c_{\pars}\left(\ball
        x,r.\right)\le\varepsilon_m\mu'\left(\ball x,r.\right).
    \end{equation}
  \end{enumerate}
\end{lemma}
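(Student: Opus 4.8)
The plan is to treat the indices $m$ one at a time and then intersect the resulting sets. For each $m$ fix an auxiliary number $\delta_m\in(0,\varepsilon_m^{2}/4]$. By the vanishing relation (\ref{eq:reg_dec_borel_s3}), applied with $\varepsilon=\varepsilon_m$, we may choose $S_m>0$ so small that, writing $\lambda_m:=\mu'^c_{\pars}$, one has $\|\lambda_m\|_{M(X)}\le\delta_m$; by the decomposition (\ref{eq:reg_dec}), (\ref{eq:reg_dec_borel_s2}) we have $0\le\lambda_m\le\mu'$ as Borel measures, so in particular $\lambda_m\ll\mu'\ll\mu$. Once the $S_m$ have been fixed, it suffices to produce, for each $m$, a Borel set $U_m\subset X$ with $\mu(X\setminus U_m)\le\varepsilon_m$ and a scale $s_m\in(0,S_m]$ such that
\[
\lambda_m(\ball x,r.)\le\varepsilon_m\,\mu'(\ball x,r.)\qquad\text{whenever }x\in U_m,\ r\le s_m ;
\]
then $U:=\bigcap_m U_m$ and the pairs $\{(s_m,S_m)\}$ yield (1) and (2).

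For the per-$m$ claim I would run a routine measure-differentiation argument. By Theorem~\ref{thm:as_doubling}, $(X,\mu)$ — and hence also $(X,\mu')$, since $\mu'\ll\mu$ — is a Vitali space, so Lebesgue's differentiation theorem holds for $\mu'$. Write $\lambda_m=g_m\,\mu'$ for the Radon–Nikodym density; $\lambda_m\le\mu'$ gives $0\le g_m\le1$, and $\int_X g_m\,d\mu'=\|\lambda_m\|_{M(X)}\le\delta_m$, so by Chebyshev $\mu'(\{g_m>\varepsilon_m/2\})\le 2\delta_m/\varepsilon_m\le\varepsilon_m/2$. For $\mu'$-a.e.\ $x$ one has $\lim_{r\searrow0}\lambda_m(\ball x,r.)/\mu'(\ball x,r.)=g_m(x)$, so the functions
\[
\Theta_m^{\,s}(x):=\sup_{0<r\le s}\frac{\lambda_m(\ball x,r.)}{\mu'(\ball x,r.)}\in[0,1]
\]
(Borel, by the measurability arguments used in the proof of Lemma~\ref{lem:reg_borel}) are nonincreasing in $s$ and decrease, $\mu'$-a.e., to $g_m$ as $s\searrow0$. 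Applying Egorov's theorem to $\mu'$ on an exhaustion of $X$ by balls, we obtain a scale $s_m\in(0,S_m]$ and a set $E_m$ with $\mu'(E_m)\le\varepsilon_m/2$ such that $\Theta_m^{\,s_m}\le\varepsilon_m$ on $\{g_m\le\varepsilon_m/2\}\setminus E_m$. Take $U_m:=\{g_m\le\varepsilon_m/2\}\setminus E_m$, further intersected with the $\mu'$-conull Borel set on which the limit above exists; then $\mu'(X\setminus U_m)\le\varepsilon_m$, and for $x\in U_m$, $r\le s_m$ we get $\lambda_m(\ball x,r.)\le\Theta_m^{\,s_m}(x)\,\mu'(\ball x,r.)\le\varepsilon_m\,\mu'(\ball x,r.)$.

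I expect the only real obstacle to be the standard one in arguments of this type: passing from the pointwise estimate $g_m(x)\le\varepsilon_m/2$ to the estimate $\Theta_m^{\,s_m}(x)\le\varepsilon_m$ valid at a single small scale $s_m$ uniformly over a large set — this is precisely the Egorov step, and it is what forces allowing a small nonzero exceptional set at each level $m$. Two further points need (routine) care: the $S_m$, and hence the decomposition $\mu'=\mu'_{\pars}+\mu'^c_{\pars}$ named in conclusion~(2), must be chosen \emph{before} the differentiation is carried out; and the Borel measurability of $\lambda_m$, of $g_m$ and of the $\Theta_m^{\,s}$ must be recorded, all of which is supplied by Lemma~\ref{lem:reg_dec_borel} and the techniques of its proof. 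Finally, the differentiation–Egorov argument above naturally controls the exceptional set in $\mu'$-measure; this already suffices for the only downstream use of the lemma (Theorem~\ref{thm:alberti_blow_up} requires $U$ only $\mu'$-conull), and the bound $\mu(X\setminus U_m)\le\varepsilon_m$ as stated follows from $\mu'\ll\mu$ together with the fact that, by construction (Theorem~\ref{thm:alberti_rep_prod} and the accompanying Egorov–Lusin reductions), $\mu'$ is comparable to $\mu$ on the Borel set supporting it, so that there $\mu'$-small exceptional sets are $\mu$-small.
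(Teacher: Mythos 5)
The paper never writes out a proof of Lemma \ref{lem:meas_diff}; it is dismissed as ``a standard argument in measure differentiation'', and your argument is exactly that standard argument, carried out essentially correctly: you fix $S_m$ \emph{first} via (\ref{eq:reg_dec_borel_s3}) so that the irregular part $\lambda_m=\mu'^c_{{\rm PAR}(\varepsilon_m,S_m)}$ has total mass at most $\delta_m\le\varepsilon_m^2/4$ (the order of quantifiers here is the one genuinely delicate point, and you get it right), write $\lambda_m=g_m\,\mu'$ with $0\le g_m\le1$, apply Chebyshev, differentiate ($\lambda_m(B(x,r))/\mu'(B(x,r))\to g_m(x)$ at $\mu'$-a.e.\ $x$, which is legitimate because Theorem \ref{thm:as_doubling} makes $(X,\mu)$ a Vitali space and $\mu$-null sets are $\mu'$-null, or equivalently by differentiating both $\lambda_m$ and $\mu'$ against $\mu$), and then use Egorov on the finite measure $\mu'$ to trade the pointwise statement for a single scale $s_m\le S_m$ off a set of $\mu'$-measure at most $\varepsilon_m/2$.

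The genuine gap is your last step, converting the bound $\mu'(X\setminus U_m)\le\varepsilon_m$ into the bound $\mu(X\setminus U_m)\le\varepsilon_m$ asserted in the lemma. The claimed justification --- that $\mu'$ is ``comparable'' to $\mu$ on the set supporting it, by Theorem \ref{thm:alberti_rep_prod} and the Egorov--Lusin reductions --- is not available: the hypothesis in Theorem \ref{thm:alberti_blow_up} is only that $\mu'$ is finite and $\mu'\ll\mu$, which yields ``$\mu$-small $\Rightarrow$ $\mu'$-small'' (the wrong direction), and Theorem \ref{thm:alberti_rep_prod} gives no lower bound on $d\mu'/d\mu$. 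Even mutual absolute continuity would not convert sets of small $\mu'$-measure into sets of small $\mu$-measure (take $d\mu'=h\,d\mu$ with $\inf h=0$), and on the part of the support of $\mu'$ where $d\mu'/d\mu=0$ --- which may have positive, even infinite, $\mu$-measure --- your differentiation argument gives no information at all, since it only produces $\mu'$-a.e.\ statements. So as a proof of conclusion (1) as literally stated the argument is incomplete. What does survive is the $\mu'$-version, and your observation that this suffices downstream is correct in substance but for a different reason than the one you give: in the proof of Theorem \ref{thm:alberti_blow_up} only the validity of (\ref{eq:meas_diff}) at $\mu'$-a.e.\ point (after letting $\sum_m\varepsilon_m\to0$) is used, and at the one place where a full $\mu$-measure set is wanted (Lemma \ref{lem:local_alberti_blow_up}) the relevant $\mu'$ is in the same measure class as $\mu$ on each $K_{n,\alpha}$, so it is \emph{null} sets, not ``small'' sets, that must transfer --- and they do. The clean fix is therefore to state and prove the lemma with $\mu'(X\setminus U)\le\sum_m\varepsilon_m$ (or to add a same-measure-class hypothesis), rather than to assert comparability of $\mu'$ and $\mu$.
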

\def\sball#1,#2,#3.{B_{#1}(#2,#3)} 
\def\scball#1,#2,#3.{\bar B_{#1}(#2,#3)} 
\begin{proof}[Proof of Theorem \ref{thm:alberti_blow_up}] We fix a
  sequence $\varepsilon_m$ such that $\sum_m\varepsilon_m<\infty$: the set $U$
  is the intersection of the set provided by Lemma \ref{lem:meas_diff}
  and the set of points $p$ where the limit:\def\mball #1,#2,#3.{\mu'\left(\sball #1,#2,#3.\right)}%
  \begin{equation}
    \label{eq:alberti_blow_up_p0}
    \lim_{r\searrow0}\frac{\mu'\left(\ball
        x,r_n.\right)}{\mu\left(\ball x,r_n.\right)}
  \end{equation}
  exists and is finite. Having fixed a
  point $p\in U$, we
  let $r_n$ be a sequence converging to $0$ such that the rescalings
  \begin{equation}
    \label{eq:alberti_blow_up_p1}
    \left(\frac{1}{r_n}X,\frac{\mu'}{\mu'\left(\ball p,r_n.\right)},\frac{\psi-\psi(p)}{r_n},p\right)
  \end{equation}
  converge to $\stdtang$ in the measured Gromov-Hausdorff sense. We
  let $X_n=\frac{1}{r_n}X$. As in the following we consider
  simultaneously different metric spaces, we will use subscripts to 
  denote objects which ``live'' in a given metric space, e.g.~$\sball
  X_n,p,R_0.$ denotes the ball of radius $R_0$ and center $p$ in the
  metric space $X_n$.
  \par By the theory of measured Gromov-Hausdorff convergence we can
  find a compact metric space $Z$, which is a convex compact subset of
  some Banach space 
  (e.g. $\ell^\infty$),
  which satisfies the following properties:
  \begin{description}
  \item[($Z$1)] There are isometric embeddings:
    \begin{equation}
      \label{eq:alberti_blow_up_z1}
      \begin{aligned}
        J_n:&\left(\scball X_n,p,4R_0.,p\right)\to(Z,q_Z)\\
        J_\infty:&\left(\scball Y,q,4R_0.,q\right)\to(Z,q_Z);
      \end{aligned}
    \end{equation}
    in the following we will often implicitly identify balls like
    $\sball X_n,p,r.$ and $\sball Y,q,r.$ with their images in $Z$;\vskip2mm
  \item[($Z$2)] There are compact sets $K_n,\tilde K_n\subset Z$ and a
    sequence $\eta_n\searrow0$ such that:
    \begin{equation}
      \label{eq:alberti_blow_up_z2}
      \begin{aligned}
        \scball X_n,p,R_0.&\subset K_n\subset\scball
        X_n,p,R_0+\eta_n.\\
        \scball X_n,p,2R_0.&\subset \tilde K_n\subset\scball
        X_n,p,2R_0+\eta_n.\\
        &\hzdst K_n,{\scball Y,q,R_0.}.\le\eta_n\\
        &\hzdst\tilde K_n,{\scball Y,q,2R_0.}.\le\eta_n,
      \end{aligned}
    \end{equation}
    where $\hzdst\cdot,\cdot.$ denotes the Hausdorff distance between
    subsets of $Z$;\vskip2mm
  \item[($Z$3)] There is an $\glip\psi.$-Lipschitz function
    $\psi_Z:Z\to\real^N$ such that, denoting by $\psi_{X_n}$ the
    restriction $\psi_Z|\scball X_n,p,2R_0.$ and by $\psi_Y$ the
    restriction $\psi_Z|\scball Y,q,2R_0.$, one has:
    \begin{equation}
      \label{eq:alberti_blow_up_z3}
      \begin{aligned}
        \psi_{X_n}\circ J_n&=\frac{\psi-\psi(p)}{r_n}\\
        \psi_Y\circ J_\infty&=\varphi;
      \end{aligned}
    \end{equation}
  \item[($Z$4)] Letting $\mu_n$ and $\mu_\infty$ denote, respectively,
    the measures
    \begin{equation}
      \label{eq:alberti_blow_up_z4}
\begin{aligned}
      \mpush J_n.&\frac{\mu'\mrest\sball X_n,p,R_0.}{\mu'\left(\sball
          X,p,r_n.\right)}\\
      \mpush J_\infty.&\nu\mrest\sball Y,q,R_0.,
    \end{aligned}
  \end{equation}
    one has $\mu_n\xrightarrow{\text{w*}}\mu_\infty$.
  \end{description}
\par We chose $Z$ convex to ``fill-in'' fragments to Lipschitz curves;
specifically, let $\curves(Z)$ denote the set of Lipschitz maps
$\gamma:K\to Z$, where $K\subset\real$ is a (possibly degenerate)
compact interval; we topologize $\curves(Z)$ with the Vietoris
topology. Let
\begin{equation}
  \label{eq:alberti_blow_up_p2}
  \fillfrag:\frags(Z)\to\curves(Z)
\end{equation}
be the map which extends a fragment $\gamma$ to a Lispchitz curve,
with domain the minimal compact interval $I(\gamma)$ containing
$\dom\gamma$, by extending $\gamma$ linearly on each component of
$I(\gamma)\setminus\dom\gamma$. The map $\fillfrag$ is continuous.
\def\gamman{\Gamma_{X_n}}%
\par Let $\gamman\subset\frags(X)$ denote the set of those
$[C_0,D_0]$-biLipschitz fragments which
intersect $\scball X,p,2R_0r_n.$; note that $\gamman$ is closed. We
define maps:
\begin{equation}
  \label{eq:alberti_blow_up_p3}
  \rprm_n:\gamman\to\frags(Z)
\end{equation}
by composing $J_n\circ\left(\gamma|\gamma^{-1}(\scball
  X,p,2R_0.)\right)$, where we naturally identify $\gamma$ with a
fragment in $X_n$, with the unique affine map $A_\gamma:\real\to\real$
which has dilating factor $\frac{1}{r_n}$ and which maps the point:
\begin{equation}
  \label{eq:alberti_blow_up_p4}
  \min\left\{t:t\in\gamma^{-1}(\scball X,p,2R_0.)\right\}
\end{equation} to $0$. Note that $\rprm_n$ is continuous.
\par We will now refer back to the map $\Psi$ defined in
(\ref{eq:frags_measures}), adding subscripts regarding the metric
space. From the definition of $\rprm_n$ we see that:
\begin{equation}
  \label{eq:alberti_blow_up_meas_rescaling}
  r_n\Psi_Z\left(\rprm_n(\gamma)\right)=\mpush
  J_n.\Psi_{X_n}(\gamma)\mrest\scball X_n,p,2R_0.
\end{equation}
\par Let $g\in C_c(Z)$ so that:
\begin{equation}
  \label{eq:alberti_blow_up_weak*conv}
  \lim_{n\to\infty}\int g\,d\mu_n=\int g\,d\mu_\infty;
\end{equation}
then\def\stddeno{\mu'\left(\sball X,p,r_n.\right)}
\begin{equation}
  \label{eq:alberti_blow_up_p5}
  \begin{split}
  \int g\,d\mu_n&=\frac{1}{\stddeno}\int_{\sball X_n,p,R_0.}g\circ
  J_n\,d\mu'\\
  &=\frac{1}{\stddeno}\int_{\sball X_n,p,R_0.}g\circ
  J_n\,d(\mu'_{\pars}+\mu'^c_{\pars});
\end{split}
\end{equation}
Note that
\begin{equation}
  \label{eq:alberti_blow_up_p6}
  \left|\frac{1}{\stddeno}\int_{\sball X_n,p,R_0.}g\circ
  J_n\,d\mu'^c_{\pars}\right|\le\|g\|_\infty\,\frac{\mu'^c_{\pars}\left(\sball
    X,p,r_nR_0.\right)}{\mu'\left(\sball X,p,r_n.\right)};
\end{equation}
for $n$ sufficiently large $r_nR_0\le s_m$ so that by
(\ref{eq:meas_diff}) and using that $\mu'$ is doubling we conclude
that:
\begin{equation}
  \label{eq:alberti_blow_up_est1}
  \lim_{n\to\infty}\frac{1}{\stddeno}\int_{\sball X_n,p,R_0.}g\circ
  J_n\,d\mu'^c_{\pars}=0.
\end{equation}
We also introduce some notation to deal with regularity in $Z$ and
$X$; so we let:
\begin{equation}
  \label{eq:alberti_blow_up_p7}
  \begin{aligned}
  {\rm
    PAR}_X(\varepsilon,S)&=\left(\psi,\cone,v_0,\sigma_0,[C_0,D_0],\tau_0,\varepsilon,S\right)\\
  {\rm
    PAR}_Z(\varepsilon,S)&=\left(\psi,\cone,v_0,\sigma_0,[C_0,D_0],\frac{4R_0}{C_0},\varepsilon,S\right);
\end{aligned}
\end{equation}\def\parsx{{\rm
    PAR}_X(\varepsilon_m,S_m)}\def\parsz{{\rm
    PAR}_Z(\varepsilon_m,S_m/r_n)}%
in particular, inspection of conditions
\textbf{(Reg1)}--\textbf{(Reg5)} shows that:
\begin{multline}
  \label{eq:alberti_blow_up_p8}
  \frac{1}{\stddeno}\int_{\sball X_n,p,R_0.}g\circ
  J_n\,d\mu'_{\parsx}\\=\frac{1}{\stddeno}\int_{\gamman}dP(\gamma)\int_Zg\,
   r_n\chi_{\sball X_n,p,R_0.}\,d\Psi_{\parsz}\left(\rprm_n(\gamma)\right).
\end{multline}\def\gammatn{\tilde\Gamma_{X_n}}%
\par Let $\gammatn$ be the Borel subset of those $\gamma\in\gamman$
such that:
\begin{equation}
  \label{eq:alberti_blow_up_p9}
  \chi_{\sball X_n,p,R_0.}\Psi_{\parsz}\left(\rprm_n(\gamma)\right)\ne0;
\end{equation} then (\ref{eq:alberti_blow_up_p9}) implies that there
is a $p_\gamma=\gamma(t)\in\left(\reg(\parsx)\right)_\gamma\cap\sball
X,p,r_nR_0.$. Note that the set ${\mathcal
  B}_{\gamma,n}=\gamma^{-1}\left(\scball X,p,2r_nR_0.\right)$ has
diameter at most $\frac{4R_0r_n}{C_0}$; let $a_\gamma,b_\gamma$ be
minimal such that the interval $[t_\gamma-a_\gamma,t_\gamma+b_\gamma]$
contains $\gamma^{-1}({\mathcal B}_{\gamma,n})$. For $n$-sufficiently
large one has $a_\gamma,b_\gamma\le S_m$ so that by \textbf{(Reg2)} the
$\varepsilon_m(a_\gamma+b_\gamma)$-neighbhourhood of ${\mathcal
  B}_{\gamma,n}$ contains $[t_\gamma-a_\gamma,t_\gamma+b_\gamma]$. A similar conclusion holds for the smallest interval
containing $\gamma^{-1}\left(\sball X_n,p,R_0.\right)$ from which we
get:
\begin{multline}
  \label{eq:alberti_blow_up_est2}
  r_n\left\|\Psi_{\parsz}\left(\rprm_n(\gamma)\right)\mrest\sball
  X_n,p,R_0. -
  \Psi_Z\left(\fillfrag\circ\rprm_n(\gamma)\right)\mrest\sball
  Z,q_z,R_0.\right\|\\
\le\varepsilon_mr_n\frac{2R_0}{C_0}.
\end{multline}
Note also that:
\begin{equation}
  \label{eq:alberti_blow_up_p10}
  \gamma\left(\dom\gamma\cap[t_\gamma-\frac{r_nR_0}{D_0},t_\gamma+\frac{r_nR_0}{D_0}]\right)
  \subset\sball X,p,2r_nR_0.;
\end{equation}
as for $n$ sufficiently large one has $\frac{r_nR_0}{D_0}\le S_m$, we
have:
\begin{equation}
  \label{eq:alberti_blow_up_lower_growth}
  r_n\Psi_{\parsz}\left(\fillfrag\circ\rprm_n(\gamma)\right)\left(\sball X_n,p,2r_nR_0.\right)\ge2(1-\varepsilon_m)\frac{r_nR_0}{D_0}.
\end{equation}
For $n$ sufficiently large we also have $\frac{3r_nR_0}{C_0}\le S_m$
which implies:
\begin{equation}
  \label{eq:alberti_blow_up_p10+1}
  \begin{aligned}
    \lebmeas\left(\dom\gamma\cap\left[t_\gamma,t_\gamma+\frac{3r_nR_0}{C_0}\right]\right)&\ge(1-\varepsilon_m)\frac{3R_0}{C_0}r_n\\
    \lebmeas\left(\dom\gamma\cap\left[t_\gamma-\frac{3r_nR_0}{C_0},t_\gamma\right]\right)&\ge(1-\varepsilon_m)\frac{3R_0}{C_0}r_n;
  \end{aligned}
\end{equation}
so we can find $s_{1,\gamma}\le t_\gamma\le s_{2,\gamma}$ with:
\begin{equation}
  \label{eq:alberti_blow_up_p10+2}
  \begin{aligned}
    \left|t_\gamma-s_{i,\gamma}\right|&\ge(1-\varepsilon_m)\frac{3r_nR_0}{C_0}\quad(\text{for
      $i=1,2$})\\
    d_X(p,\gamma(s_{i,\gamma}))&\ge\left((1-\varepsilon_m)\frac{3R_0}{C_0}-R_0\right)r_n\quad(\text{for
      $i=1,2$});
  \end{aligned}
\end{equation}
in particular, for $m$ sufficiently large
(\ref{eq:alberti_blow_up_p10+2}) implies that the maximum and minimum
point in ${\mathcal
  B}_{\gamma,n}$ are mapped by $\gamma$ outside of $\sball
X,p,\frac{3}{2}R_0.$. Thus, the endpoints of
$\fillfrag\circ\rprm_n(\gamma)$ lie out of $\sball Z,q_Z,\frac{3}{2}R_0.$.
\par We now obtain an upper estimate for $P(\gammatn)$ (note that we
assume that $n$ is sufficiently large depending on $m$):
\begin{equation}
  \label{eq:alberti_blow_up_probest}
  \begin{split}
  2(1-\varepsilon_m)\frac{r_nR_0}{D_0}P(\gammatn)&\le
  r_n\int_{\frags(X)}\Psi_{\parsz}\left(\rprm_n(\gamma)\right)\left(\sball X,p,2r_nR_0.\right)\,dP(\gamma)\\
  &\le\mball X,p,2R_0r_n.;
\end{split}
\end{equation}
in particular, using (\ref{eq:alberti_blow_up_est2}),
\begin{equation}
  \label{eq:alberti_blow_up_est3}
  \begin{split}
    \lim_{n\to\infty}\frac{1}{\mball
      X,p,r_n.}&\left|\int_{\gammatn}dP(\gamma)\int g\,r_n\chi_{\sball
        X_n,p,R_0.}d\Psi_{\parsz}\left(\rprm_n(\gamma)\right)\right.\\
    &\quad-\left.\int_{\gammatn}dP(\gamma)\int g\,r_n\chi_{\sball
        Z,q_Z,R_0.}\,d\Psi_{\parsz}\left(\fillfrag\circ\rprm_n(\gamma)\right)\right|\\
    &\le\limsup_{n\to\infty}\|g\|_\infty\,P(\gammatn)\,r_n\varepsilon_m\frac{2R_0}{C_0}\\
    &\le\limsup_{n\to\infty}\|g\|_\infty\frac{\varepsilon_m}{1-\varepsilon_m}\frac{D_0}{C_0}\frac{\mball
      X,p,2r_nR_0.}{\mball X,p,r_n.}\\
    &=O(\varepsilon_m),
  \end{split}
\end{equation}
where in the last step we used that $\mu'$ is doubling. As
$n\to\infty$ we can send $m\to\infty$ so that the left hand side of
(\ref{eq:alberti_blow_up_est3}) converges to $0$.
\par Let $\Omega_m\subset\curves(Z)$ denote the set of $D_0$-Lipschitz
curves so that there are a $\theta_\gamma\in[C_0,D_0]$ and a
$w\in\bar\cone$ such that (note the constant $C_2$ will be specified later):
\begin{description}
\item[($\Omega$1)] For all $s_1,s_2\in\dom\gamma$ one has:
  \begin{equation}
    \label{eq:alberti_blow_up_omega1}
    \left|\zdst
      \gamma(s_1),\gamma(s_2).-\theta_\gamma|s_1-s_2|\right|\le C_2\varepsilon_m;
  \end{equation}
\item[($\Omega$2)] The domain of $\gamma$ is a subset $\left[0,\frac{4R_0}{C_0}\right]$;\vskip2mm
\item[($\Omega$3)] The image of $\gamma$ is contained in the
  $C_2\varepsilon_m$-neighbourhood of $\scball X_n,p,2R_0.$;\vskip2mm
\item[($\Omega$4)] For all $s_1,s_2\in\dom\gamma$ one has:
  \begin{equation}
    \label{eq:alberti_blow_up_omega2}
    \left|\psi_Z\circ\gamma(s_1)-\psi_Z\circ\gamma(s_2)-w|s_1-s_2|\right|\le
    C_2\varepsilon_m;
  \end{equation}
\item[($\Omega$5)] For all $s_1,s_2\in\dom\gamma$ with $s_2\ge s_1$
  one has:
  \begin{equation}
    \label{eq:alberti_blow_up_omega3}
    \left\langle v_0,
      \psi_Z\circ\gamma(s_2)-\psi_Z\circ\gamma(s_1)\right\rangle\ge(\sigma_0-\varepsilon_m)\theta_\gamma
    \biglip\left(\langle v_0,\varphi\rangle\right)(p)(s_2-s_1)-C_2\varepsilon_m.
  \end{equation}
\end{description}
\par Note that the set $\Omega_m$ is compact. We also define
$\Omega_\infty$ by requiring in \textbf{($\Omega$3)} that $\gamma$
lies in $\scball Y,p,2R_0.$ and that the error term $\varepsilon_m$ is
replaced by $0$. In view of \textbf{(Reg1)}--\textbf{(Reg5)}, for an
appropriate choice of $C_2$ one has
$\fillfrag\circ\rprm_n(\gammatn)\subset\Omega_m$ for $n\ge N(m)$. If
we let $P_n$ denote the Radon measure on $\curves(Z)$:
\begin{equation}
  \label{eq:alberti_blow_up_11}
  P_n=\frac{1}{\mball X,p,r_n.}r_n\,\mpush\fillfrag\circ\rprm_n(\gamma).P\mrest\gammatn;
\end{equation}
we have that $P_n$ has support contained in $\Omega_m$ for $n\ge N(m)$. By (\ref{eq:alberti_blow_up_probest})
the total mass of $P_n$ is bounded by:
\begin{equation}
  \label{eq:alberti_blow_up_p11+1}
  \frac{D_0}{2(1-\varepsilon_m)R_0}\frac{\mball X,p,2R_0r_n.}{\mball X,p,r_n.}.
\end{equation}
Moreover, an
application of Ascoli-Arzel\'a shows that the set
$\Omega=\bigcup_m\Omega_m\cup\Omega_\infty$ is compact; we can thus
find a subsequence $n_m\ge N(m)$ such that
$P_{n_m}\xrightarrow{\text{w*}} Q_{R_0}$. The previous discussion on
the properties of the fragments in $\gammatn$ implies that the support $\spt
Q_{R_0}$ of $Q_{R_0}$ is a subset of $\conset .\subset\Omega_\infty$ and that point (2) in the statement of
this Theorem follows from (\ref{eq:alberti_blow_up_p11+1}).
We now observe that:\def\gammatn{\tilde\Gamma_{X_{n_m}}}%
\begin{equation}
  \label{eq:alberti_blow_up_12}
  \begin{split}
  \frac{1}{\mball X,p,r_n.}\int_{\gammatn}dP(\gamma)\int
  gr_n&\chi_{\sball
    Z,q_Z,R_0.}\,d\Psi_Z\left(\fillfrag\circ\rprm_{n_m}(\gamma)\right)\\
  &=\int_\Omega\,dP_{n_m}(\gamma)\int g\chi_{\sball Z,q_Z,R_0.}\,d\Psi_Z(\gamma);
\end{split}
\end{equation}
fix a $\xi\in(0,1)$, and let $\psi_\xi$ be a continuous function, which
takes values in $[0,1]$ and which equals $1$ on $\scball
Z,q_Z,R_0+\xi.$ and which vanishes out of $\sball Z,q_Z,R_0+2\xi.$;
then:
\begin{equation}
  \label{eq:alberti_blow_up_13}
  \left|\int_\Omega dP_{n_m}(\gamma)\int g\left(\chi_{\sball
       Z,q_Z,R_0.}-\psi_\xi\right)\,d\Psi_Z(\gamma)\right|\le P_{n_m}(\Omega)\|g\|_\infty\,\frac{2\xi}{C_0};
\end{equation}
as the map $\gamma\mapsto\int g\psi_\xi\,d\Psi_Z(\gamma)$ is continuous:
\begin{equation}
  \label{eq:alberti_blow_up_14}
  \lim_{m\to\infty}\int_\Omega dP_{n_m}(\gamma)\int
  g\psi_\xi\,d\Psi_Z(\gamma)=\int_\Omega dQ_{R_0}(\gamma)\int g\psi_\xi\,d\Psi_Z(\gamma).
\end{equation}
Also,
\begin{equation}
  \label{eq:alberti_blow_up_15}
  \left|\int_\Omega dQ_{R_0}(\gamma)\int
    g\psi_\xi\,d\Psi_Z(\gamma)-\int_\Omega dQ_{R_0}(\gamma)\int
    g\,d\Psi_Z(\gamma)\mrest\sball Y,q,R_0.\right|\le Q_{R_0}(\Omega)\|g\|_\infty\frac{\xi}{C_0};
  \end{equation}
  so we conclude that:
  \begin{equation}
    \label{eq:alberti_blow_up_est4}
    \lim_{m\to\infty}\int g\,d\mu_{n_m}=\int_{\conset.}dQ_{R_0}\int
    g\,d\Psi_Z(\gamma)\mrest\sball Y,q,R_0.;
  \end{equation}
  in particular, if we let:
  \begin{equation}
    \label{eq:alberti_blow_up_16}
    \Phi_\gamma=\Psi_Z(\gamma)\mrest\sball
    Y,q,R_0.=\frac{1}{\theta_\gamma}\hmeas ._\gamma,
  \end{equation} we get that $(Q_{R_0},\Phi)$ gives an Alberti representation of
  $\nu\mrest\sball Y,q,R_0.$. 
    It might be worth noting that in
  (\ref{eq:alberti_blow_up_16}) we used that  $\gamma$ is a geodesic
  with constant speed $\theta_\gamma$ and that the function
  $\gamma\mapsto\theta_\gamma$ is continuous.
\end{proof}
\begin{lemma}
  \label{lem:local_alberti_blow_up}
  There is a Borel $U\subset X$ with full $\mu$-measure such that, for
  each $p\in U$, for each $\stdtang\in\tang\stdpoint$, for each $R_0>0$ and
  each $v_0\in S(\ptnorm\,\cdot\,,p.)$, the measure $\nu\mrest\ball
  q,R_0.$ admits an Alberti representation $(Q_{R_0},\Phi)$ which
  satisfies the following conditions:
  \begin{enumerate}
  \item The measure $Q_{R_0}$ is a finite Radon measure with total
    mass at most 
    $$\frac{1}{2R_0}\left(\adoubling(\mu,p)\right)^{\log_2R_0+1}\,;$$\vskip2mm
  \item The support of $Q_{R_0}$ is contatined in a compact set
    $\conset .\subset\frags(Y)$ which consists of the unit-speed geodesic
    segments $\gamma$ whose domain lies in $[0,4R_0]$, whose image lies in
    $\clball q,2R_0.$, which have both endpoints lying outside of
    $\ball q,\frac{3}{2}R_0.$, which intersect $\clball q,R_0.$, and which satisfy:
    \begin{equation}
      \label{eq:local_alberti_blow_up_s1}
      \varphi\circ\gamma(s_2)-\varphi\circ\gamma(s_1)=(s_2-s_1)v_0\quad(\forall s_1,s_2\in\dom\gamma);
    \end{equation}
    \item For each $\gamma\in\conset .$ the measure $\Phi_\gamma$ is
      given by:
    \begin{equation}
      \label{eq:local_alberti_blow_up_s2}
      \Phi_\gamma=\hmeas ._\gamma\mrest\ball q,R_0..
    \end{equation}
  \end{enumerate}
\end{lemma}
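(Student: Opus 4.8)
The plan is to deduce Lemma~\ref{lem:local_alberti_blow_up} from Theorem~\ref{thm:alberti_blow_up} by a limiting procedure in which, simultaneously, the cone shrinks to a ray, the speed parameter $\sigma_0$ tends to $1$, the biLipschitz interval $[C_0,D_0]$ collapses to $\{1\}$, and the regularity parameter $\varepsilon$ tends to $0$. The approximating objects are simplified Alberti representations of the kind produced in the proof of Theorem~\ref{thm:alb_speed_one}: combine Theorem~\ref{thm:bate_alb_arb_dir} (Alberti representations in an arbitrary cone direction), Theorem~\ref{thm:alberti_rep_prod} together with the argument of Theorem~\ref{thm:alb_speed_one} (to impose a speed bound and force the biLipschitz constant toward $1$), and a Lusin--Egorov decomposition of the chart. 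For each such representation, Theorem~\ref{thm:alberti_blow_up} produces an Alberti representation of the blown-up measure concentrated on constant-speed geodesic segments that are affine in the blow-up chart, up to errors controlled by these parameters; the task is to pass to the limit and obtain the \emph{exact} affine identity $\varphi\circ\gamma(s_2)-\varphi\circ\gamma(s_1)=(s_2-s_1)v_0$ with unit speed.

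First I would reduce to countably many parameters so that $U$ can be chosen once and for all, independently of $v_0$ and $R_0$. Fix the chart $(U,\psi)$, identify $T_pX$ with $\real^N$ via $d\psi_p$, and transport the norm $\ptnorm\,\cdot\,,p.$ (which by Theorem~\ref{thm_intro_norms_agree} and Theorem~\ref{thm:metdiff=TX} is the canonical one) to $\real^N$. For each $\eta$ in a countable set tending to $0$, apply Lusin and Egorov, as in the ``$df$ not constant'' part of the proof of Theorem~\ref{thm:alb_speed_one}, to write $U$ up to a $\mu$-null set as a countable union of compact pieces $C_\alpha$ on which the norm and its dual are $(1+\eta)^{\pm1}$-close to fixed norms $\|\cdot\|_\alpha,\|\cdot\|_\alpha^*$ on $\real^N$. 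Fix a countable dense set of directions $\{u_j\}\subset\mathbb S^{N-1}$, and for each $j$ and each piece $C_\alpha$ let $v^*_{j,\alpha}\in\real^N$ be the covector dual with respect to $\|\cdot\|_\alpha$ to the $\|\cdot\|_\alpha$-unit vector in the direction $u_j$. For every choice of $j$, of piece $C_\alpha$, and of rationals $\sigma_0\nearrow1$, $\theta\searrow0$, $\varepsilon\searrow0$, use Theorem~\ref{thm:bate_alb_arb_dir}, Theorem~\ref{thm:alberti_rep_prod} and the argument of Theorem~\ref{thm:alb_speed_one} to build a simplified $(1,1+\varepsilon)$-biLipschitz Alberti representation of $\mu\mrest C_\alpha$ in the $\psi$-direction of $\cone(u_j,\theta)$ with $\langle v^*_{j,\alpha},\psi\rangle$-speed $\ge\sigma_0\|v^*_{j,\alpha}\|_{\alpha}^*$, and apply Theorem~\ref{thm:alberti_blow_up} to it (using the observation following the statement of Theorem~\ref{thm:measured_blow_up}, so that blow-ups of $C_\alpha$ at a density point coincide with blow-ups of $X$). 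Intersecting the resulting full-measure subsets over these countably many parameters, together with the full-measure set where all the Lusin pieces and the asymptotic doubling constant $\adoubling(\mu,p)$ (Theorem~\ref{thm:as_doubling}) behave well, defines the Borel set $U$, which has full $\mu$-measure.

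Now fix $p\in U$, a blow-up $(Y,\nu,\varphi,q)$, a radius $R_0>0$, and a unit vector $v_0\in S(\ptnorm\,\cdot\,,p.)\subset\real^N$; let $v^*$ be the covector dual to $v_0$. Choose sequences $\eta_m\to0$, $u_{j(m)}\to v_0/\|v_0\|_2$, $\sigma_0^{(m)}\to1$, $\theta^{(m)}\to0$, $\varepsilon^{(m)}\to0$, and the piece $C_{\alpha(m)}\ni p$ of the $\eta_m$-decomposition. For each $m$, since $p\in U$, Theorem~\ref{thm:alberti_blow_up} yields an Alberti representation $(Q^{(m)}_{R_0},\Phi^{(m)})$ of $\nu\mrest\ball q,R_0.$ concentrated on a compact set of geodesic segments $\gamma$ of constant speed $\theta_\gamma\in[C_0^{(m)},D_0^{(m)}]$, of bounded length, contained in $\clball q,2R_0.$, with both endpoints outside $\ball q,\frac{3}{2}R_0.$ and meeting $\clball q,R_0.$, with $\varphi\circ\gamma$ affine of velocity $w_\gamma\in\bar\cone(u_{j(m)},\theta^{(m)})$ satisfying $\sgn(s_2-s_1)\langle v^*_{j(m),\alpha(m)},\varphi\circ\gamma(s_2)-\varphi\circ\gamma(s_1)\rangle\ge\sigma_0^{(m)}\theta_\gamma\|v^*_{j(m),\alpha(m)}\|_{\alpha(m)}^*(s_2-s_1)$, with $\Phi^{(m)}_\gamma=\frac{1}{\theta_\gamma}\hmeas ._\gamma\mrest\ball q,R_0.$, and with total mass $\le\frac{D_0^{(m)}}{2R_0}(\adoubling(\mu,p))^{\log_2R_0+1}$. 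All geodesic segments occurring over all $m$ lie in a common compact subset of $\frags(Y)$ (Ascoli--Arzel\'a: almost-unit Lipschitz constant, domains in a fixed compact interval, images in $\clball q,2R_0.$), so along a subsequence $Q^{(m)}_{R_0}\to Q_{R_0}$ weak-$*$. The crux is that any subsequential limit $\gamma$ of such segments $\gamma_m$ meets the exact requirements defining $\mathcal S_{R_0}$: it is a unit-speed geodesic since $\theta_{\gamma_m}\to1$, and $\varphi\circ\gamma$ is affine with velocity exactly $v_0$. For the latter, the one-sided speed bound becomes two-sided because, by Theorem~\ref{thm:metdiff=TX}, the metric derivative of the approximating fragments in $X$ equals the canonical norm of their velocity, giving $\langle v^*,(\psi\circ\gamma)'(t)\rangle\le\|v^*\|_{T^*X}\,\md\gamma(t)$; hence the speed bound forces $w_{\gamma_m}/\theta_{\gamma_m}$ into the superlevel set where the functional dual to $v_0$ is at least $\sigma_0^{(m)}$ times its maximum over the unit ball, while the cone condition forces its Euclidean direction within $\theta^{(m)}$ of $u_{j(m)}$; as $\sigma_0^{(m)}\to1$ and $\theta^{(m)}\to0$, these constraints force $w_{\gamma_m}/\theta_{\gamma_m}\to v_0$, and after normalizing $\gamma_m$ to unit speed its $\varphi$-velocity converges to $v_0$. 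Thus $\spt Q_{R_0}\subset\mathcal S_{R_0}$.

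Finally one passes to the limit in $\int_Y g\,d(\nu\mrest\ball q,R_0.)=\int\bigl(\int g\,d\Phi^{(m)}_\gamma\bigr)\,dQ^{(m)}_{R_0}(\gamma)$ for $g\in C_c(Y)$, using that $\gamma\mapsto\int g\,d\hmeas ._\gamma$ is continuous on the relevant compact family of unit-speed segments (the boundary of $\ball q,R_0.$ being handled by the cutoff argument of \eqref{eq:alberti_blow_up_13}--\eqref{eq:alberti_blow_up_15}) and that $\theta_\gamma\to1$ uniformly, to conclude that $(Q_{R_0},\Phi)$ with $\Phi_\gamma=\hmeas ._\gamma\mrest\ball q,R_0.$ is an Alberti representation of $\nu\mrest\ball q,R_0.$; the mass bound~(1) follows from \textbf{(2)} of Theorem~\ref{thm:alberti_blow_up} since $D_0^{(m)}\to1$ and total mass is lower semicontinuous under weak-$*$ convergence, and \textbf{(Alb1)}--\textbf{(Alb4)} are immediate because $\mathcal S_{R_0}$ is compact and $\Phi$ is continuous on it. The main obstacle is exactly this last extraction: turning the cone/speed/biLipschitz-approximate conclusions of Theorem~\ref{thm:alberti_blow_up} into the exact affine identity and unit speed, which rests on the metric differentiation identity of Theorem~\ref{thm:metdiff=TX} to make the speed bound two-sided, and on organizing the several simultaneous limits with weak-$*$ compactness so that the limiting transverse measure is supported precisely on $\mathcal S_{R_0}$; the uniformity of $U$ in $v_0$ is handled beforehand by the reduction to a countable dense set of directions.
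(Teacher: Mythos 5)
Your proposal is correct and follows essentially the same route as the paper's proof: approximate by simplified $(1,1+\epsi)$-biLipschitz Alberti representations in shrinking cones with speed tending to $1$ (via Theorem \ref{thm:alb_speed_one} and a countable/Lusin--Egorov reduction so that $U$ is independent of $v_0$ and $R_0$), apply Theorem \ref{thm:alberti_blow_up} on each piece, intersect the full-measure sets, and then extract a weak-$*$ limit of the blown-up representations using Ascoli--Arzel\'a and the cutoff argument of \eqref{eq:alberti_blow_up_13}--\eqref{eq:alberti_blow_up_15}. The only difference is bookkeeping: the paper organizes the countable family through Borel vector fields $v_n$ constant on compact pieces whose values are dense in $S(\ptnorm\,\cdot\,,p.)$, while you use a fixed countable dense set of directions together with constant-norm Lusin pieces, and you spell out (correctly, via Theorem \ref{thm:metdiff=TX}) why the limiting support consists exactly of unit-speed segments with $\varphi$-velocity $v_0$, a point the paper asserts more tersely.
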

\begin{proof}
  By Theorem \ref{thm:alb_speed_one} we can choose Borel maps
  $v_n:X\to TX$ \footnote{A choice of the representative of $TX$ is
    implied.} with $1\le\|v_n\|_{TX}\le1+\frac{1}{n}$ and such that:
  \begin{enumerate}
  \item For each $x\in X$ the closure of the set $\{v_n(x)\}_n$
    contains $S(\ptnorm\,\cdot\,,p.)$;\vskip2mm
  \item For each $n$ there is a measure $\mu'_n$ in the same measure
    class of $\mu$ and there are countably many disjoint compact sets
    $\{K_{n,\alpha}\}$ whose union has $\mu$-negligible complement and
    such that the function $v_n$ is constant on
    each $K_{n,\alpha}$;\vskip2mm
    \item  The measure $\mu'_n\mrest K_{n,\alpha}$ admits a simplified
      and 
    $(1,1+\frac{1}{n})$-biLipschitz Alberti representation $\albrep n,\alpha.$  in the
    $\psi$-direction of the cone $\cone\left(v_n\mrest
      K_{n,\alpha}/\|v_n\mrest K_{n,\alpha}\|_2,\pi/2n\right)$ with
    $\langle v_n\mrest K_{n,\alpha},\psi\rangle$-speed $\ge(1-1/n)$.
  \end{enumerate}
  Let $U_n$ be a Borel subset of $\bigcup_\alpha K_{n,\alpha}$ with
  full $\mu$-measure and such that, for each $p\in U_n\cap
  K_{n,\alpha}$, the conclusion of Theorem
  \ref{thm:alberti_blow_up} holds taking $\albrep.=\albrep n,\alpha.$. Let $U=\bigcap U_n$ 
 and fix $p\in U$ and $\stdtang\in\tang\stdpoint$. Choose a sequence $n_m$ such that
  $v_{n_m}(p)\to v_0$ and let $Q_{R_0,n_m}$, $\Phi_{n_m}$ and $\conset R_0,n_m.$ be the
  measures and sets of geodesics provided by Theorem
  \ref{thm:alberti_blow_up}. By Ascoli-Arzel\'a
  the set $\Omega=\conset.\bigcup_m \conset R_0,n_m.$ is a compact
  subset of $\frags(Y)$; as the measures $Q_{R_0,n_m}$ are uniformly
  bounded and supported in $\Omega$, we can pass to a subsequence such
  that $Q_{R_0,n_m}\xrightarrow{\text{w*}} Q_{R_0}$. Note also that
  $Q_{R_0}$ is supported in $\conset .$. For $g\in C_b(Y)$ one proves
  that:
  \begin{equation}
    \label{eq:local_alberti_blow_up_p2}
    \lim_{m\to\infty}\int_\Omega dQ_{R_0,n_m}(\gamma)\int
    g\,d(\Phi_{n_m})_\gamma=\int_{\conset .} dQ_{R_0}(\gamma)\int g\,d\Phi_\gamma
  \end{equation}
by using an argument similar to the one used to derive  the estimates
  (\ref{eq:alberti_blow_up_13}) and (\ref{eq:alberti_blow_up_15}). Thus the pair $(Q_{R_0},\Phi)$ provides the
  desired Alberti representation.
\end{proof}
\par To prove Theorem \ref{thm:measured_blow_up} we need to introduce
a bit more of terminology.  We can regard
parametrized Lipschitz curves in $Y$, whose domain is a possibly infinite interval of
$\real$, as elements of $\fellc(\real\times X)$ by identifying them with their
graph. \def\geo{\text{\normalfont Geo}(Y)}%
We denote by $\geo$ the set of unit speed geodesic segments,
half-lines or lines in $Y$; note that $\geo$ is a
$K_\sigma$. Moreover, if we let:
\begin{equation}
  \label{eq:map_geo}
  \begin{aligned}
    \Phi:\geo&\to\radon\\
    \gamma&\mapsto\hmeas ._\gamma,
  \end{aligned}
\end{equation}
then, for each $g\in C_c(Y)$, the map:
\begin{equation}
  \label{eq:eval_geo}
  \begin{aligned}
    \Phi_g:\geo&\to\real\\
    \gamma&\to\int gd\Phi_\gamma=\int_\real g\circ\gamma(t)\,dt
  \end{aligned}
\end{equation}
is continuous.
\begin{proof}[Proof of Theorem \ref{thm:measured_blow_up}]
  Let $U$ be the $\mu$-full measure subset provided by Lemma
  \ref{lem:local_alberti_blow_up} and consider $p\in U$ and
  $\stdtang\in\tang\stdpoint$. Fix a diverging sequence of radii $\{R_n\}$ with
  $R_n>2R_{n-1}$ and let $Q_{R_n}$ and $\conset R_n.$ be the
  corresponding measures and sets provided by Lemma
  \ref{lem:local_alberti_blow_up}. Note that $\conset R_n.$ can also
  be regarded as a compact subset of $\fellc$; in particular,
  for $i\le n$ we define the sets:
  \begin{equation}
    \label{eq:measured_blow_up_p1}
    \conset n,i.=\left\{\gamma\in\conset R_n.:\dist(\gamma,q)\in(R_{i-1},R_i]\right\},
  \end{equation} where we take $R_0=0$, and observe that the sets
  $\conset n,i.$ are Borel. We also consider
  the following Borel subsets of
  $\fellc\times\real$:\def\tconset#1.{\mathcal{\tilde S}_{\setbox0=\hbox{$#1\unskip$}\ifdim\wd0=0pt i,n
    \else #1\fi}}%
  \begin{equation}
    \label{eq:measured_blow_up_p2}
    \tconset .=\left\{(\gamma,t): t\in\dom\gamma, \gamma\in\conset i,n., d\left(\gamma(t),q\right)\in(R_{i-1},R_i]\right\};
  \end{equation}
  note that the sets $\tconset .$ have compact sections, i.e.~for
  $\gamma\in\fellc$, each section $(\tconset .)_\gamma$ is compact. By the
  Lusin-Novikov Uniformization Theorem \cite[Thm.~18.10]{kechris_desc}
  we can find Borel maps $\tau_{i,n}:\conset i,n.\to\real$ such that
  $(\gamma,\tau_{i,n}(\gamma))\in\tconset
  .$. \def\tran#1.{\text{\normalfont Tran}_{\setbox0=\hbox{$#1\unskip$}\ifdim\wd0=0pt n
      \else #1\fi}}%
  In particular, we can define a Borel map $\tran .:\geo\to\geo$ by
  requiring that for $\gamma\in\tconset .$ the geodesic segment
  $\tran .(\gamma)$ is the composition of $\gamma$ with the
  translation by $\tau_{n,i}(\gamma)$. Note that if $\gamma\in\tconset
  .$ the extremes of $\dom\gamma$ are at distance at least
  $\frac{3}{2}R_n-R_i$ from $\tau_{n,i}(\gamma)$ and so
  \begin{equation}
    \label{eq:measured_blow_up_p3}
    \left[-\frac{R_n}{2},\frac{R_n}{2}\right]\subset\dom\tran .(\gamma)\subset[-3R_n,3R_n].
  \end{equation}
  Let $Q_n=\mpush{\tran .}.Q_{R_n}$ and denote by $K(m,i)$ the set of
  geodesic segments $\gamma$ whose domain is contained in
  $[-3R_m,3R_m]$, and which intersect $\clball q,R_i.$ in a point
  $p_\gamma=\gamma(t_\gamma)$ where $t_\gamma$ is at distance at most
  $2R_i$ from $0$. The set $K(\infty,i)$ is defined similarly by
  requiring $\gamma$ to be a geodesic line. Note that the sets:
  \begin{equation}
    \label{eq:measured_blow_up_p4}
    K(i)=K(\infty,i)\cup\bigcup_m K(m,i)
  \end{equation} are compact and that $Q_n$ is concentrated on the
   set $\bigcup_{i\le n} K(n,i)$. We now obtain an upper bound on
  $Q_n\left( K(i)\right)$:\def\mball#1:#2,#3.{#1\left(\ball #2,#3.\right)}%
  \begin{equation}
    \label{eq:measured_blow_up_p5}
    \nu\left(\ball q,2R_i.\right)=\int_{\geo}\hmeas
    ._\gamma\left(\ball q,2R_i.\right)\,dQ_n(\gamma)\ge\int_{K(i)}\mball\hmeas ._\gamma:q,2R_i.\,dQ_n(\gamma);
  \end{equation}
  if $\gamma\in\tran .\left(\conset n,l.\right)$ and if $l\le i$ and
  $n\ge i$, one has $\mball\hmeas ._\gamma:q,2R_i.\ge\frac{R_i}{2}$ so
  from (\ref{eq:measured_blow_up_p5}) we obtain:
  \begin{equation}
    \label{eq:measured_blow_up_p6}
    Q_n\left(K(i)\right)\le2\frac{\mball\nu:q,2R_i.}{R_i}.
  \end{equation}
  In particular, we can pass to a subsequence and find a Radon measure
  $Q$ on $\geo$ such that for each $i$ one has $Q_n\mrest
  K(i)\xrightarrow{\text{w*}} Q\mrest K(i)$; in particular
  $Q_n\xrightarrow{\text{w*}} Q$. Moreover, as $Q_n$ is concentrated
  on $\bigcup_iK(n,i)$, the measure $Q$ has support contained in
  $\lines(\varphi,v_0)$.  To show that $(Q,\Phi)$ gives an
  Alberti representation of $\nu$ we take $g\in C_c(Y)$ and choose $i$
  sufficiently large so that $\spt g\subset\ball q,R_i.$:
  \begin{equation}
    \label{eq:measured_blow_up_p7}
    \int_Y g\,d\nu=\int_{K(i)}dQ_n(\gamma)\int g\,d\hmeas ._\gamma=\int_{K(i)}\Phi_g(\gamma)\,dQ_n(\gamma),
  \end{equation} and
  \begin{equation}
    \label{eq:measured_blow_up_p8}
    \lim_{n\to\infty}\int_{K(i)} \Phi_g(\gamma)\,dQ_n(\gamma)=\int_{K(i)}\Phi_g(\gamma)\,dQ(\gamma)=\int_{\geo}dQ(\gamma)\int
     g\,d\hmeas ._\gamma.
   \end{equation}
 \end{proof}
We now state an immediate consequence of Theorem
\ref{thm:measured_blow_up} in terms of the canonical maps from
blow-ups of $X$ to the fibres of $TX$.
\begin{defn}
  \label{defn:can_blow_map}
  Let $\stdtang\in\stdpoint$ be realized by choosing scales
  $r_n\searrow0$; suppose that the Lipschitz function $f:X\to\real$ is
  differentiable at $p$ with respect to the $\{\psi^i\}_{i=1}^N$. Then
  the maps:
  \begin{equation}
    \label{eq:can_blow_map1}
    \frac{f-f(p)}{r_n}:\frac{1}{r_n}X\to\real
  \end{equation}
  converge to the map $g:Y\to\real$ given by:
  \begin{equation}
    \label{eq:can_blow_map2}
    g(y)=\sum_{i=1}^N\frac{\partial f}{\partial \psi^i}(p)\varphi(y).
  \end{equation}
  In particular, we obtain a canonical map $E:Y\to T_pX$ by letting:
  \begin{equation}
    \label{eq:can_blow_map3}
    \left\langle\sum_{i=1}^Na_i\,d\psi^i\mid_p, E(y)\right\rangle=\sum_{i=1}^Na_i\varphi(y).
  \end{equation}
\end{defn}
\begin{corollary}
  \label{cor:can_surj}
  Let $p\in U$ be a point where the conclusion of Theorem
  \ref{thm:measured_blow_up} holds; then the canonical map $E:Y\to T_pX$ is
  surjective. Moreover, for each $\tilde q\in Y$ there is a line
  $\gamma\in\lines(\varphi,v_0)$ passing through it, and there is a
  $c_\gamma\in\real$ such that:
  \begin{equation}
    \label{eq:can_surj_s1}
    E\left(\gamma(t)\right)=v_0(t-c_\gamma)\quad(\forall t\in\real).
  \end{equation}
\end{corollary}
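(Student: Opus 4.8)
The plan is to extract from Theorem \ref{thm:measured_blow_up} a geodesic line through \emph{every} point of $Y$ on which $\varphi$ is affine with prescribed velocity, and then read off both assertions from the definition of $E$. First I would record two trivialities: $E$ is the Lipschitz map determined by $\langle d\psi^i|_p,E(y)\rangle=\varphi^i(y)$, and $E(q)=0$, since the basepoint $q$ is represented by the constant sequence at $p$, whence $\varphi(q)=\lim_n(\psi(p)-\psi(p))/r_n=0$. Consequently, whenever $\gamma$ is a curve in $Y$ with $\varphi(\gamma(t))=\varphi(\gamma(0))+tv$ for some $v$ --- which is exactly the condition defining the class $\lines(\varphi,v)$ --- one has $E(\gamma(t))=E(\gamma(0))+tv$ for all $t$.

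The core step is the claim: \emph{for every unit vector $v_0$ of $(T_pX,\|\cdot\|)$ and every $\tilde q\in Y$ there is a line $\gamma\in\lines(\varphi,v_0)$ with $\gamma(0)=\tilde q$.} To prove it I would apply Theorem \ref{thm:measured_blow_up} with this $v_0$ to get an Alberti representation of $\nu$ concentrated on $\lines(\varphi,v_0)$, so that $\nu$-a.e.\ point of $Y$ lies on the image of some $\gamma\in\lines(\varphi,v_0)$. Since the blow-up measure $\nu$ is doubling (Remark \ref{rem:measure_blow_up}), its support is all of $Y$, so this $\nu$-conull set is dense in $Y$; hence I may pick $\tilde q_k\to\tilde q$ with $\tilde q_k$ on the image of some $\gamma_k\in\lines(\varphi,v_0)$, and after reparametrising assume $\gamma_k(0)=\tilde q_k$. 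The maps $\gamma_k\colon\R\to Y$ are $1$-Lipschitz and $Y$ is proper ($Y$ is complete by construction and doubling as a metric space), so by Ascoli--Arzel\'a and a diagonal argument over the intervals $[-m,m]$, $m\in\N$, a subsequence converges uniformly on compacta to some $\gamma\colon\R\to Y$ with $\gamma(0)=\tilde q$; passing to the limit in $d_Y(\gamma_k(s_1),\gamma_k(s_2))=|s_1-s_2|$ and in $\varphi(\gamma_k(s))=\varphi(\tilde q_k)+sv_0$ (continuity of $\varphi$) shows $\gamma$ is a unit speed geodesic line with $\varphi(\gamma(s))=\varphi(\tilde q)+sv_0$, i.e.\ $\gamma\in\lines(\varphi,v_0)$.

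Granting the claim, the rest is formal. For the ``moreover'' part, fix $\tilde q$ and put $w=E(\tilde q)$; take $v_0=w/\|w\|$ if $w\neq0$ and $v_0$ any unit vector otherwise, and let $\gamma\in\lines(\varphi,v_0)$ with $\gamma(0)=\tilde q$ be as in the claim. Then $E(\gamma(t))=w+tv_0$ by the first paragraph, and since $w=\|w\|v_0$ (or $w=0$) this reads $E(\gamma(t))=v_0(t-c_\gamma)$ with $c_\gamma=-\|w\|$ (or $c_\gamma=0$), which is \eqref{eq:can_surj_s1}. For surjectivity, apply the claim with $\tilde q=q$ and $v_0=v/\|v\|$ for a given nonzero $v\in T_pX$: the resulting line satisfies $E(\gamma(t))=E(q)+tv_0=tv_0$, so $v=E(\gamma(\|v\|))$; together with $0=E(q)$ this gives $E(Y)=T_pX$.

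The hard part is the density upgrade inside the claim: Theorem \ref{thm:measured_blow_up} only supplies a line through $\nu$-a.e.\ point, so one must combine full support of the doubling blow-up measure with compactness of unit speed geodesic lines in the proper space $Y$, using that membership in $\lines(\varphi,v_0)$ --- equivalently, the affine identity $\varphi(\gamma(s))=\varphi(\gamma(0))+sv_0$ --- is preserved under uniform-on-compacta limits. Everything else is bookkeeping with the definition of $E$.
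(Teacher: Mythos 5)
Your proof is correct and is essentially the argument the paper intends when it presents the corollary as an immediate consequence of Theorem \ref{thm:measured_blow_up}: lines in the class $\lines(\varphi,v_0)$ pass through $\nu$-a.e.\ point, the doubling (hence fully supported) blow-up measure together with properness of $Y$ and Ascoli--Arzel\'a upgrades this to a line through \emph{every} point, and $E$ is affine with derivative $v_0$ along any such line, with $E(q)=0$ giving surjectivity. Your reading of the quantifier on $v_0$ in the ``moreover'' part (choosing $v_0$ parallel to $E(\tilde q)$, or arbitrary when $E(\tilde q)=0$) is the one under which \eqref{eq:can_surj_s1} holds, and the only point requiring care --- passing from $\nu$-a.e.\ to all of $Y$ via full support of the doubling blow-up measure --- is exactly the step you supplied.
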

Corollary \ref{cor:can_surj} generalizes \cite[Sec.~13]{cheeger} where
the surjectivity of the map $E$ was proven for the case in which
$(X,\mu)$ is a PI-space. The surjectivity of the map $E$ in the case
in which $(X,\mu)$ is a differentiability space has already been
proven in \cite{deralb,david_difftang_ahlfors}.
\subsection{Harmonicity of blow-up functions}
In this subsection we prove Corollary \ref{cor_blow_ups_harmonic}.
\begin{proof}[Proof of Corollary \ref{cor_blow_ups_harmonic}]
Let $u:X\ra\R$ be a Lipschitz function, and suppose that
$x\in X$ a point of differentiability of $u$
where $x$ is as in the statement of Theorem \ref{tnm_intro_blow_up}.
Choose a unit vector $\xi\in (T_xX,\tnrmname(x))$ supporting $du\in T_x^*X$,
i.e. 
\begin{equation}
du(\xi)=\ctnrm du(x).=\ctnrm du(x).\cdot\tnrm\xi..
\end{equation}
Since $x$ is  a point of differentiability, the blow-up $\hat u$ of $u$ will be of the 
form $\hat u=\al\circ\hat \phi_i$ for some $\al\in T_x^*X$.  Now consider an Alberti
representation for $\hat \mu$ as in Theorem \ref{tnm_intro_blow_up}\,(2), which is supported
on unit speed geodesics $\ga$ with $(\hat u_i\circ \ga)'\equiv \xi$.  Fix such a
unit speed geodesic $\ga:\R\ra \hat X$.  Note that for all $t\in \R$
\begin{equation}
(\hat u\circ\ga)'(t)=\al\left((\hat \phi_i)'(t)\right)=\al(\xi)=\ctnrm\hat u(x).
\quad
\text{and}
\quad\Lip(\hat u)(\ga(t))=(\Lip(\hat u\circ\ga))(t)\,.
\end{equation}
If $v:\hat X\ra \R$ is Lipschitz
and agrees with $\hat u$ outside a compact subset $K\subset \hat X$, then 
for all $t\in \R$ we have $\Lip(v\circ\ga)(t)\leq \Lip(v)(\ga(t))$, and for $\L$-a.e. 
$t\in \R\setminus K$ we have
\begin{equation}
\Lip(v\circ\ga)=|(v\circ\ga)'(t)|=|(\hat u\circ\ga)'(t)|=\Lip(\hat u\circ\ga)(t)
=\Lip(\hat u)(\ga(t))\,.  
\end{equation}
Therefore if $\ga^{-1}(K)\subset [a,b]$, then 
\begin{equation}
  \begin{split}
\int_{\ga^{-1}(K)}\left[\Lip(v)(\ga(t))  \right]^p\,dt
&-\int_{\ga^{-1}(K)}\left[\Lip(\hat u)(\ga(t))  \right]^p\,dt\\
&\geq\int_{\ga^{-1}(K)}\left[\Lip(v\circ\ga)(t))  \right]^p\,dt\\
&\quad-\int_{\ga^{-1}(K)}\left[\Lip(\hat u\circ\ga)(t))  \right]^p\,dt\\
&=\int_{[a,b]}\left[\Lip(v\circ\ga)(t))  \right]^p\,dt\\
&\quad-\int_{[a,b]}\left[\Lip(\hat u\circ\ga)(t))  \right]^p\,dt\\
&\ge\int_{[a,b]}|(v\circ\ga)'(t)|^p\,dt\\
&\quad-\int_{[a,b]}|(\hat u\circ\ga)'(t)|^p\,dt\\
&\geq 0
\end{split}
\end{equation}
by Jensen's inequality.  Integrating this with respect to the measure on curves coming
from the Alberti representation, we get that
\begin{equation}
\int_K\left[\Lip(v)\right]^p\,d\hat \mu\geq \int_K\left[\Lip(\hat u)\right]^p\,d\hat \mu\,.
\end{equation}
\end{proof}

\section{Lipschitz mappings $f:X\ra Z$ and metric differentiation}
 \label{sec:lipmaps_metricdiff}

 \newcount\maskmetdiff
 \maskmetdiff=0
 \ifnum\maskmetdiff>0{
\begin{enumerate}
\item Motivation: brief discussion of what happens when $(Z,\nu)$ is a PI space
and $f_*\mu\ll \nu$.
\item The canonical measurable subbundle  $\W\subset T^*X$
determined by $f^*(\Lip(Z))$, and the annihilator $\W^\perp\subset TX$.
\item Seminorms on $TX$, $TX/\W^\perp$, $\W$.
\item State results for the pseudodistance $\rho=f^*d_Z$.
\end{enumerate}
\vskip0.5cm
\vskip0.5cm
\begin{enumerate}
\item The canonical measurable subbundle $\W_{\varrho}$ determined by a Lipschitz
  compatible seminorm $\varrho$; canonical norm on $TX$ induced by
  $\W_\varrho$;
\item Given a dense set $\dset$, the subbundle $\W_{D_X,\varrho}$ and
  equality with $\W_\varrho$ and of the induced norms: proof(1):
  Hahn-Banach; proof(2): weak* continuity; remark that this gives a
  much stronger statement than keith's result in IUMJ;
\item Specialize the discussion to a map $f:X\to Z$; what happens if
  $(X,\mu)$ and $(Z,\nu)$ are differentiability spaces and
  $f_*\mu\ll\nu$; one might also discuss some specific illustrative example;
\item State and sketch proofs of Sec.~7 for a map $f:X\to Z$; might
  require a discussion of ultrafilters;
\item One might move the new proof of $g_f=\biglip f$ in PI-spaces in
  the next section; one might discuss renorming as another application;
\end{enumerate}
}\fi
\def\sbs{\W}
\def\sbr{\W_{\rdname}}
\def\scoll{\text{\normalfont Sub}(\rdname)}
 \subsection{The canonical subbundle determined by a pseudodistance}
\label{subsec:can_sub_sem}

In this subsection we associate a canonical subbundle $\sbr$ of $T^*X$
to a Lipschitz compatible pseudometric $\rdname$; we denote by
$C_\rdname$ the Lipschitz constant of $\rdname$, that is, $\rdname\le
C_\rdname\xdname$.
\begin{defn}
  \label{defn:can_sub_countable_set}
  Let $\Phi$ be a countable set of $\rdname$-Lipschitz functions and
  let $V$ be a $\Phi$-differentiability set; we define a subbundle
  $\sbs_\Phi$ of $T^*X$ by letting, for $x\in V$, the fibre
  $\sbs_\Phi(x)$ equal the linear span of $\left\{df(x):f\in\Phi\right\}$.
\end{defn}
The collection $\scoll$ of subbundles associated to countable sets of
$\rdname$-Lipschitz functions has a partial order $\preceq$: we say
that $\sbs_\Phi\preceq\sbs_{\Phi'}$ if for $\mu$-a.e.~$x\in X$ one has
$\sbs_\Phi(x)\subseteq\sbs_{\Phi'}(x)$.
\begin{lemma}
  \label{lem:can_sub_sem}
  The poset $(\scoll,\preceq)$ contains a
  maximal element $\sbr$ which we call\/ {\normalfont\bf the canonical subbundle
  associated to $\rdname$}.
\end{lemma}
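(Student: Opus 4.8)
The plan is an exhaustion argument in which $\preceq$-maximality is detected by a single scalar, namely the integral of the pointwise dimension of the subbundle. First I would make two harmless reductions. Since the subbundle $\W_\Phi$ attached to a countable family $\Phi$, the order $\preceq$, and hence the notion of canonical subbundle all depend only on the measure class of $\mu$, and since $\mu$ is $\sigma$-finite, I may assume $\mu$ is finite; after replacing the atlas by a disjoint refinement $\{(U_\alpha,\phi_\alpha)\}$ (replace $U_\alpha$ by $U_\alpha\setminus\bigcup_{\beta<\alpha}U_\beta$, which is still a chart with the same dimension $N_\alpha$) and reweighting $\mu$ chart by chart by a factor $c_\alpha=2^{-\alpha}(1+N_\alpha\mu(U_\alpha))^{-1}$, I may also assume $\sum_\alpha N_\alpha\,\mu(U_\alpha)<\infty$. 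The point of this is that for every countable family $\Phi$ of $\rdname$-Lipschitz functions the function $x\mapsto\dim\W_\Phi(x)$ is Borel — it is the pointwise rank of the measurable family $\{df(x)\}_{f\in\Phi}$ — and is bounded by $N_\alpha$ on $U_\alpha$, so $\int_X\dim\W_\Phi\,d\mu\le\sum_\alpha N_\alpha\mu(U_\alpha)<\infty$, and consequently $S=\sup_\Phi\int_X\dim\W_\Phi\,d\mu$ is a finite number.

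Next I would isolate the one structural fact that makes the argument work: $(\scoll,\preceq)$ admits countable joins. Indeed, for a sequence of countable families $\Phi_n$, the union $\Phi^\infty=\bigcup_n\Phi_n$ is again a countable family of $\rdname$-Lipschitz functions, and on a common full-measure differentiability set one has $\W_{\Phi^\infty}(x)=\sum_n\W_{\Phi_n}(x)$; hence $\W_{\Phi^\infty}$ is a $\preceq$-upper bound, in fact the least one, for $\{\W_{\Phi_n}\}$. In particular the poset is directed, so any maximal element is automatically a greatest element, and it suffices to exhibit a greatest element.

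The main step: pick a maximizing sequence $\Phi_n$ for $S$ and set $\sbr=\W_{\Phi^\infty}$ with $\Phi^\infty=\bigcup_n\Phi_n$. Since $\W_{\Phi_n}\preceq\W_{\Phi^\infty}$ for all $n$, and $\W_{\Phi^\infty}$ itself competes for $S$, one gets $\int_X\dim\W_{\Phi^\infty}\,d\mu=S$. Now let $\Phi$ be any countable family of $\rdname$-Lipschitz functions. On a full-measure set $\W_{\Phi\cup\Phi^\infty}(x)=\W_\Phi(x)+\W_{\Phi^\infty}(x)\supseteq\W_{\Phi^\infty}(x)$, so $\dim\W_{\Phi\cup\Phi^\infty}\ge\dim\W_{\Phi^\infty}$ pointwise a.e., while $\int_X\dim\W_{\Phi\cup\Phi^\infty}\,d\mu\le S=\int_X\dim\W_{\Phi^\infty}\,d\mu$; therefore these two dimension functions coincide $\mu$-a.e., and since the corresponding subspaces are nested they are in fact equal $\mu$-a.e. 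This gives $\W_\Phi\preceq\sbr$. Hence $\sbr$ is the greatest element of $(\scoll,\preceq)$ — in particular a maximal one — and, being the greatest, it is canonically determined, which justifies the name.

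I do not expect any genuine difficulty here; the argument is a routine exhaustion and the conceptual content is just that $\preceq$-maximality is governed by the scalar $\int_X\dim\W_\Phi\,d\mu$. The only points requiring a little care are bookkeeping: making $\int_X\dim\W_\Phi\,d\mu$ finite uniformly in $\Phi$ despite the chart dimensions $N_\alpha$ being possibly unbounded (this is exactly what the reweighting above achieves), and verifying that $\W_{\Phi^\infty}$ is a measurable subbundle in the sense of Section~\ref{subsec:meas_vec_bund}, which one does by refining the trivializing cover along the Borel dimension strata $\{x:\dim\W_{\Phi^\infty}(x)=k\}$.
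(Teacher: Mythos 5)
Your proposal is correct and follows essentially the same route as the paper: maximize the scalar $\int_X\dim\W_\Phi\,d\mu$ over countable families, take the union of a maximizing sequence, and conclude that any competitor is already contained a.e. (the paper phrases this last step as a contradiction rather than your direct comparison of dimension functions). Your extra reweighting to guarantee finiteness of the supremum when the chart dimensions $N_\alpha$ are unbounded is a sensible piece of bookkeeping that the paper glosses over, but it does not change the argument.
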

\begin{proof}
  As the constructions depend only on the measure class of $\mu$, we
  can assume that $\mu$ is a probability measure. We basically follow
  the argument used in the proof of Lemma \ref{lem:sup_of_seminorms}: to each $\sbs_\Phi\in\scoll$
  we associate a ``size'', which is the expectation of the random
  variable $\dim\sbs_\Phi$:
  \begin{equation}
    \label{eq:can_sub_sem_p1}
    \|\sbs_\Phi\|=\int\dim\sbs_\Phi(x)\,d\mu(x);
  \end{equation}
  note that the finite dimensionality of $T^*X$ implies that
  \begin{equation}
    \label{eq:can_sub_sem_p2}
    S=\sup_{\sbs_\Phi\in\scoll}\|\sbs_\Phi\|<\infty.
  \end{equation}
  Let $\sbs_{\Phi_n}$ be a maximizing sequence and let
  $\Phi_\infty=\bigcup_n\Phi_n$; then
  $\|\sbs_{\Phi_\infty}\|=S$. Suppose, by contradiction, that for some
  $\sbs_\Phi\in\scoll$ one has
  $\sbs_\Phi\not\preceq\sbs_{\Phi_\infty}$; then there is a positive
  measure set $V$ such that, if $x\in V$, one has
  \begin{equation}
    \label{eq:can_sub_sem_p3}
    \sbs_{\Phi_\infty}(x)\subsetneq\span\left(\sbs_\Phi(x)\cup\sbs_{\Phi_\infty}(x)\right);
  \end{equation}
  but then we obtain the contradiction $\|\sbs_{\Phi\cup\Phi_{\infty}}\|>S$.
\end{proof}
Let $\dset\subset X$ be a countable dense set and
$\Phi_{\dset,\rdname}=\left\{\rdstp x.:x\in\dset\right\}$; we let
\def\sbd{\sbs_{\dset,\rdname}}
$\sbd=\sbs_{\Phi_{\dset,\rdname}}$. We now show that $\sbd$ equals
$\sbr$: this is a stronger result than
\cite[Thm.~2.7]{keith04bis} because it applies to 
subbundles associated to Lipschitz compatible pseudometrics.
\begin{thm}
  \label{thm:distance_functions_span}
  For any countable dense set $\dset\subset X$ we have $\sbd=\sbr$.
\end{thm}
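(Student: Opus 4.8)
The plan is to prove the theorem by reducing it to a pointwise statement about differentials of $\rdname$-Lipschitz functions and then deriving that statement from the density of generic directions, exactly as in the proof of Theorem~\ref{thm:metdiff=TX}. Since $\sbd$ is one of the subbundles in the collection $\scoll$, the relation $\sbd\preceq\sbr$ holds for free, so the content is the reverse relation $\sbr\preceq\sbd$. By Lemma~\ref{lem:can_sub_sem} we have $\sbr=\sbs_{\Phi_\infty}$ for some countable family $\Phi_\infty$ of $\rdname$-Lipschitz functions, so that $\sbr(x)=\span\{df(x):f\in\Phi_\infty\}$ for $\mu$-a.e.\ $x$; as a countable intersection of full-measure sets is again of full measure, it therefore suffices to prove: for every $\rdname$-Lipschitz function $f\colon X\to\R$, one has $df(x)\in\sbd(x)$ for $\mu$-a.e.\ $x$.

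To prove this I would fix an atlas $\{(U_\alpha,\phi_\alpha)\}$ for $(X,\mu)$ and a $\bigl(\Phi_{\dset,\rdname}\cup\{f\}\bigr)$-differentiability set $V$ in the sense of Definition~\ref{defn:diff_set}; after rescaling we may assume $f$ is $1$-$\rdname$-Lipschitz. Applying Theorem~\ref{thm:Sus_dir_dens} with the genericity data $\gfun$ containing the components of all the $\phi_\alpha$ together with $f$, $\gbor$ containing $\{\chi_{U_\alpha}\}_\alpha\cup\{\chi_V\}$, and $\gsem=\{\xdname,\rdname\}$, we obtain a full-measure set $Y$ such that at each $x\in Y$ the corresponding generic velocity vectors contain a dense set of directions in $T_xX$. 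Now let $x\in Y\cap V$ and let $v=\gamma'(t)$ be a generic velocity vector at $x$. Since $f$ is $1$-$\rdname$-Lipschitz we have $|f\circ\gamma(s)-f\circ\gamma(t)|\le\rdst\gamma(s),\gamma(t).$ for $s\in\dom\gamma$; dividing by $|s-t|$ and letting $s\to t$ in $\dom\gamma$ (which is legitimate because $t$ is a Lebesgue density point of $\dom\gamma$) yields $|(f\circ\gamma)'(t)|\le\rmd\gamma(t)$. On the other hand, the Taylor expansion of $f$ with respect to the chart coordinates at $x$ combined with condition \textbf{(Gen2)} gives the chain-rule identity $(f\circ\gamma)'(t)=df(x)(v)$ (this is the computation in the proof of Theorem~\ref{thm:met_diff_norm}), while Theorem~\ref{thm:met_diff_norm} itself gives $\rmd\gamma(t)=\dxnrm v.$. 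Combining, $|df(x)(v)|\le\dxnrm v.$ for every generic velocity vector $v$ at $x$.

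The last step upgrades this to all of $T_xX$ and reads off the conclusion. Both $v\mapsto|df(x)(v)|$ and $v\mapsto\dxnrm v.$ are continuous and positively homogeneous on the finite-dimensional space $T_xX$, and the inequality $|df(x)(v)|\le\dxnrm v.$ holds on $[0,\infty)$ times a dense set of directions, so it holds on all of $T_xX$. In particular $df(x)$ vanishes on $\ker\dxnrmname$, which by definition of $\dxnrm\,\cdot\,.$ is precisely the annihilator in $T_xX$ of $\sbd(x)=\span\{d\rdstp z.|_x:z\in\dset\}$; since $\sbd(x)$ is a linear subspace of the finite-dimensional space $T_x^*X$, the double-annihilator identity forces $df(x)\in\sbd(x)$. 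This holds for $\mu$-a.e.\ $x$, which completes the reduction and hence shows $\sbr\preceq\sbd$, so $\sbd=\sbr$. I expect the only real friction to be bookkeeping — packaging into a single genericity tuple all the conditions needed to invoke Theorems~\ref{thm:Sus_dir_dens} and~\ref{thm:met_diff_norm} simultaneously, and verifying that the chain-rule identity is available at the chosen generic points (which is why $V$ is taken to be a differentiability set for $f$ as well). The conceptual heart, that an $\rdname$-Lipschitz function detects no tangent direction beyond those already seen by the countable family of $\rdname$-distance functions, is forced by the density of generic directions; one could alternatively argue by a weak$^{*}$/Hahn--Banach approximation of $df$ by linear combinations of the $d\rdstp z.$, but the argument above is shorter and reuses machinery already in place.
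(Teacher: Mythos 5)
Your proof is correct, and while it shares its analytic core with the paper's first argument, its architecture is genuinely different. The shared core consists of the density of generic directions (Theorem \ref{thm:Sus_dir_dens}), the identity $\rmd\gamma(t)=\dxnrm\gamma'(t).$ at generic pairs landing in a $\Phi_{\dset,\rdname}$-differentiability set (Theorem \ref{thm:met_diff_norm}), and the estimate $|(f\circ\gamma)'(t)|\le\rmd\gamma(t)$ for $1$-$\rdname$-Lipschitz $f$, which in the paper's proof appears as (\ref{eq:distance_functions_span_p11})--(\ref{eq:distance_functions_span_p12}). The paper, however, argues by contradiction and therefore needs measurable selections: a Lusin--Novikov uniformization to produce a Borel unit-norm section $\omega$ of $\sbr$ staying at distance at least $1/2$ from $\sbd$, and a fibrewise measurable Hahn--Banach to produce a Borel section $\xi$ of $TX$ annihilating $\sbd$ with $\langle\omega,\xi\rangle=1$; the contradiction is then extracted at a single generic velocity vector close to $\xi(x)$, after writing $\omega$ locally as a finite combination of differentials of $(1,\rdname)$-Lipschitz functions. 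You avoid all of that by arguing directly and per function: since the maximal element of Lemma \ref{lem:can_sub_sem} is by construction of the form $\sbs_{\Phi_\infty}$ for a countable family of $\rdname$-Lipschitz functions, it suffices to show $df(x)\in\sbd(x)$ a.e.\ for each fixed $\rdname$-Lipschitz $f$, and you obtain this from $|\langle df(x),v\rangle|\le\dxnrm v.$ on a dense set of directions, extended by positive homogeneity and continuity to all of $T_xX$, followed by the finite-dimensional double-annihilator identity applied to $\ker\dxnrmname(x)=\sbd(x)^{\perp}$. What your route buys is the elimination of the measurable-selection bookkeeping, and in effect it derives the subbundle statement from the single-function version of the inequality $\rLipnrmname\le\dxnrmname$ of Theorem \ref{thm:can_seminorm}, so the two results get a unified treatment; what the paper's scheme buys is that the contradiction is localized to finitely many generating functions on a shrunken chart rather than routed through the countable family of Lemma \ref{lem:can_sub_sem}, and its second proof (weak* continuity of the differential, inf-convolutions with $\rdname$-distance functions, Mazur's lemma) is a genuinely functional-analytic alternative to both. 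Two harmless bookkeeping points, not gaps: in Theorem \ref{thm:Sus_dir_dens} one takes $\Gamma_0=\frag(X)$, and the limit $\lim_{s\to t}\varrho(\gamma(s),\gamma(t))/|s-t|=\rmd\gamma(t)$ that you divide by is exactly what \textbf{(Gen4)}--\textbf{(Gen5)} provide via the Ambrosio--Kirchheim argument (alternatively one can run the integral estimate (\ref{eq:distance_functions_span_p11}) as the paper does).
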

We offer two conceptually different proofs of Theorem
\ref{thm:distance_functions_span}.
\begin{proof}[Proof of Theorem \ref{thm:distance_functions_span} via a
  measurable Hahn-Banach]
  As $\sbd\preceq\sbr$, assume by contradiction that there is a
  positive measure Borel set $U$ such that, for each $x\in U$ one has:
  \begin{equation}
    \label{eq:distance_functions_span_p1}
    \sbd(x)\subsetneq\sbr(x).
  \end{equation}
  Without loss of generality we can assume that there are
  $1$-Lipschitz functions $\{\phi_i\}_{i=1}^N$ such that
  $(U,\{\phi_i\}_{i=1}^N)$ is a differentiability chart. Let
  \begin{multline}
    \label{eq:distance_functions_span_p2}
    \tilde U=\biggl\{(x,a)\in U\times\real^N:
    \sum_{i=1}^Na_id\phi_i(x)\in\sbr(x)\cap
    S(\ctnrmname(x)),\quad\text{and}\\
    \dist_{\ctnrmname(x)}\left(\sum_{i=1}^Na_id\phi_i(x),\sbd(x)\right)\ge\frac{1}{2}\biggr\}.
  \end{multline}
  Note that the distances in the fibre $T_x^*X$ are computed with
  respect to the norm $\ctnrmname(x)$. The set $\tilde U$ is Borel and
  by (\ref{eq:distance_functions_span_p1}) for each $x\in U$ the
  section $\tilde U_x$ is nonempty (compare \cite[Lem.~4.22]{rudin-functional}) and compact. By the Lusin-Novikov
  Uniformization Theorem \cite[Thm.~18.10]{kechris_desc} we obtain a
  unit-norm Borel section $\omega$ of $\sbr\mid U$ satisfying:
  \begin{equation}
    \label{eq:distance_functions_span_p3}
    \dist_{\ctnrmname(x)}\left(\omega(x),\sbd(x)\right)\ge\frac{1}{2}\quad(\forall
    x\in U).
  \end{equation}
  Using Hahn-Banach in each fibre $T_x^*X$ and an argument similar to
  the one above, we obtain a Borel section $\xi$ of $TX\mid U$ such
  that:
  \begin{equation}
    \begin{aligned}
    \label{eq:distance_functions_span_p4}
    \tnrm\xi.&\le2;\\
    \langle\omega(x),\xi(x)\rangle&=1\quad(\forall x\in U),
  \end{aligned}
\end{equation}
and such that $\xi(x)$ is annihilated by the functionals in
$\sbd(x)$. Up to shrinking $U$ we can assume that there are $\tilde
N\le N$, $(1,\rdname)$-Lispchitz functions $\{\psi_i\}_{i=1}^{\tilde
  N}$ and bounded Borel maps $s_i:U\to\real$ satisfying:
\begin{equation}
  \label{eq:distance_functions_span_p5}
  \begin{aligned}
    \|s_i\|&\le C;\\
    \omega=&\sum_{i=1}^{\tilde N}s_id\psi_i.
  \end{aligned}
\end{equation}
Let $\gfun$ contain the $\phi_i$, the $\psi_i$ and the components of
the chart functions; let $\gbor$ contain $\chi_U$, the $s_i$ and the
characteristic functions of the charts; let $\gsem$ contain $\xdname$
and $\rdname$; by Theorem \ref{thm:Sus_dir_dens} we obtain an 
$\mu$-measurable
subset $V\subset U$ of full $\mu$-measure with $G_x\gquad$ containing
a dense set of directions in $T_xX$ for each $x\in V$. In particular,
fix $\varepsilon>0$ and
let $\gamma'(t)\in T_xX$ be an $\gquad$-generic velocity vector such
that:
\begin{equation}
  \label{eq:distance_functions_span_p6}
  \tnrm\xi(x)-\gamma'(t).\le\varepsilon;
\end{equation}
then
\begin{equation}
  \label{eq:distance_functions_span_p7}
  \left|\left\langle d\rdstp x.,\gamma'(t)\right\rangle\right|\le
  C_\rdname \varepsilon+\left|\left\langle d\rdstp
      x.,\xi(x)\right\rangle\right| = C_\rdname \varepsilon;
\end{equation}
by Theorem \ref{thm:met_diff_norm} we conclude that:
\begin{equation}
  \label{eq:distance_functions_span_p8}
  \rmd\gamma(t)\le C_\rdname \varepsilon.
\end{equation}
However, $\tnrm\gamma'(t).\le 2+\varepsilon$ and so
\begin{equation}
  \label{eq:distance_functions_span_p9}
  \left\langle\omega,\gamma'(t)\right\rangle\ge\frac{1}{2}-\varepsilon(2+\varepsilon);
\end{equation}
note also that
\begin{equation}
  \label{eq:distance_functions_span_p10}
  \left|\left\langle\omega,\gamma'(t)\right\rangle\right|=\left|\sum_{i=1}^{\tilde
      N}s_i\left(\gamma(t)\right)\,(\psi_i\circ\gamma)'(t)\right|
  \le\tilde N C\max_i\left|(\psi_i\circ\gamma)'(t)\right|;
\end{equation}
now choose $s_n\searrow0$ with $t+s_n\in\dom\gamma$; we have:
\begin{equation}
  \label{eq:distance_functions_span_p11}
  \left|\psi_i\circ\gamma(t+s_n)-\psi_i\circ\gamma(t)\right|\le\rdst\gamma(t+s_n),\gamma(t).\le
  o(s_n)+\int_{[t,t+s_n]\cap \dom\gamma}\rmd\gamma(\tau)\,d\tau;
\end{equation}
dividing by $s_n$ and letting $n\nearrow\infty$ we get:
\begin{equation}
  \label{eq:distance_functions_span_p12}
  \left|(\psi_i\circ\gamma)'(t)\right|\le\rmd\gamma(t).
\end{equation}
Combining (\ref{eq:distance_functions_span_p8}),
(\ref{eq:distance_functions_span_p9}) and
(\ref{eq:distance_functions_span_p12}) we conclude that:
\begin{equation}
  \label{eq:distance_functions_span_p13}
  \frac{1}{2}-\varepsilon(2+\varepsilon)\le\tilde N\,C\,C_\rdname \varepsilon
\end{equation} which yields a contradiction if $\varepsilon$ is
sufficiently small.
\end{proof}
\begin{proof}[Proof of Theorem \ref{thm:distance_functions_span} via
  Weaver derivations]
  We show that if $K\subset X$ is compact and if $f$ is
  $\rdname$-Lipschitz, for $\mu$-a.e.~$x\in K$ one has
  $df(x)\in\sbd(x)$. Fix $n\in\natural$ and choose a finite susbset $\{x_k\}_{k\in
    I_n}\subset\dset$ such that each $x\in K$ lies within
  $\xdname$-distance at most $\frac{1}{n}$ from some $x_k$. To fix the
  ideas, suppose that $f$ is $(C,\rdname)$-Lipschitz and define
  $f_n:K\to\real$ by:
  \begin{equation}
    \label{eq:distance_functions_span_dp1}
    f_n(x)=\inf\left\{f(x_k)+C\rdst x,x_k.:k\in I_n\right\}.
    \end{equation}
    The functions $\{f_n\}_n$ are uniformly $(C,\rdname)$-Lipschitz
    and hence uniformly $(C\,C_\rdname,\xdname)$-Lipschitz. By
    \cite[Thm.~4.1]{deralb} the exterior derivative operator
    $d$ associated to the diffentiable structure is weak*
    continuous. In particular, let $L^2(\mu\mrest K,T^*X)$ denote the
    $L^2$-space of sections of $T^*X\mid K$. Note that the dual of
    $L^2(\mu\mrest K,T^*X)$ is $L^2(\mu\mrest K,TX)$ and that these
    spaces are both reflexive by finite dimensionality of $T^*X$. Then
    as the $f_n\to f$ pointwise in $K$, we have that
    $df_n\to df$ weakly in $L^2(\mu\mrest K,T^*X)$, and Mazur's Lemma
    and a standard argument give tail convex combinations $g_n$ of the
    functions $f_n$ with $dg_n\to df$ $\mu\mrest K$-a.e. So the proof
    is completed if we show that each $dg_n$ is a section of $\sbd$,
    which happens if each $df_n$ is a section of $\sbd$. But for each
    $n$ there are closed subsets $\{C_i\}_{i\in
      I_n}$ of $K$, such that $f_n\mid C_i=f(x_i)+C\rdstp x_i.$, which
    gives $df_n\mid C_i=Cd\rdstp x_i.$.
  \end{proof}
We now associate to $\sbr$ two a priori different norms on
$TX$. Roughly speaking, we maximize the seminorms induced by sections
of $\sbr$. Recall that if $f$ is $\rdname$-Lipschitz we can define the
``big Lip'' with respect to $\rdname$:
\begin{equation}
  \label{eq:sem_biglip}
  \biglipr f(x) =
  \limsup_{r\searrow0}\sup\left\{\frac{\left|f(x)-f(y)\right|}{r}:
    \rdst x,y.\le r\right\},
\end{equation}
and that the map $x\mapsto\biglipr f(x)$ is
Borel. \def\secr{\text{\normalfont Sec}_1(\rdname)}%
\par Let $\secr$ denote the set of those sections $\omega$ of $\sbr$ which
are locally the differential of a $(1,\rdname)$-Lipschitz function;
i.e.~$\omega\in\secr$ if and only if there are countably many disjoint
Borel sets $\{V_\beta\}_\beta$ and countably many
$(1,\rdname)$-Lipschitz functions $\{f_\beta\}_\beta$ such that
$\mu\left(X\setminus\bigcup_\beta V_\beta\right)=0$ and $\omega\mid
V_\beta = df_\beta\mid V_\beta$. To each $\omega\in\secr$ we associate
a seminorm $p_\omega$ on $TX$ by letting:
\begin{equation}
  \label{eq:seminorm_1_sec}
  p_\omega(v)=\left|\langle \omega,v\rangle\right|.
\end{equation}
We observe that $p_\omega\le C_\rdname\tnrmname$ and denote by
$\rLIPnrmname$ the essential supremum (Lemma \ref{lem:sup_of_seminorms}) of the collection
$\{\omnrmname\}_{\omega\in\secr}$.
\par Another way of obtaining seminorms on $TX$ is to use arbitrary
sections of $\sbr$ and rescale them by the local $\rdname$-Lipschitz
constant; note, however, that if $u,v$ are both $\rdname$-Lipschitz,
one can have  $du=dv$ and $\biglipr u\ne\biglipr v$ on a set of
positive measure. We are thus led to use a slightly more complicated
framework. \def\secs{\text{\normalfont Sec}_*(\rdname{})}
Let $\secs$ denote the set of countable pairs
$\tilde\omega=\left\{(V_\beta,f_\beta)\right\}$ where the $V_\beta$
are disjoint Borel sets with $\mu\left(X\setminus\bigcup_\beta
  V_\beta\right)=0$, and the $f_\beta$ are $\rdname$-Lipschitz
functions. To each $\tilde\omega\in\secs$ we associate a seminorm
$p_{\tilde\omega}$ on $TX$ by letting, for $x\in V_\beta$ and $v\in
T_xX$:
\begin{equation}
  \label{eq:seminorm_*_sec}
  p_{\tilde\omega}(v)=
  \begin{cases}
    0 &\text{if $\biglipr f_\beta(x)=0$}\\
    \frac{\left|\langle df_\beta(x),v\rangle\right|}{\biglipr
      f_\beta(x)} & \text{otherwise};
  \end{cases}
\end{equation}
we denote by $\rLipnrmname$ the essential supremum (Lemma \ref{lem:sup_of_seminorms})
of the collection  $\{\omnrmname\}_{\omega\in\secs}$.
\begin{thm}
  \label{thm:can_seminorm}
  Let $\dset\subset X$ be a countable dense set. Then one has:
  \begin{equation}
    \label{eq:can_seminorm_s1}
    \dxnrmname=\rLIPnrmname=\rLipnrmname;
  \end{equation}
  in particular, if $\dset'\subset X$ is another countable dense set:
  \begin{equation}
    \label{eq:can_seminorm_s2}
    \dxnrmname=\localnrmname;
  \end{equation}
  in the sequel, we will denote the {\normalfont\bf canonical norm}
  (\ref{eq:can_seminorm_s1}) by $\rnrmname$.
\end{thm}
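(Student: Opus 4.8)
The plan is to establish, $\mu$-a.e., the three inequalities $\dxnrmname\le\rLIPnrmname$, $\rLIPnrmname\le\rLipnrmname$, and $\rLipnrmname\le\dxnrmname$, which together force (\ref{eq:can_seminorm_s1}); the independence statement (\ref{eq:can_seminorm_s2}) is then immediate, since $\rLIPnrmname$ and $\rLipnrmname$ do not refer to $\dset$, so applying (\ref{eq:can_seminorm_s1}) to $\dset$ and to $\dset'$ gives $\dxnrmname=\rLIPnrmname=\localnrmname$. First, each seminorm is genuinely finite: since $\rdname\le C_\rdname\xdname$, every $\rdname$-distance function is $C_\rdname$-Lipschitz, so $\dxnrmname\le C_\rdname\tnrmname$, and this bound passes to both essential suprema by the last part of Lemma \ref{lem:sup_of_seminorms}. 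For the first inequality, observe that for $x\in\dset$ the function $\rdstp x.$ is $(1,\rdname)$-Lipschitz, so the constant section $d\rdstp x.$ lies in $\secr$; by property \textbf{(Ess-sup1)} we get $|d\rdstp x.(\cdot)|\le\rLIPnrmname$ $\mu$-a.e., and taking the countable supremum over $x\in\dset$ via the first part of Lemma \ref{lem:sup_of_seminorms} yields $\dxnrmname\le\rLIPnrmname$. For the second inequality, if $\omega\in\secr$ has witnessing data $\{(V_\beta,f_\beta)\}$ with each $f_\beta$ being $(1,\rdname)$-Lipschitz, then the same data is an element $\tilde\omega$ of $\secs$; on $V_\beta$ we have $\biglipr f_\beta\le1$, hence $p_\omega\le p_{\tilde\omega}$ there (on the set where $\biglipr f_\beta=0$ one has $\biglip f_\beta\le C_\rdname\biglipr f_\beta=0$, so $df_\beta=0$ $\mu$-a.e.\ and $p_\omega=p_{\tilde\omega}=0$). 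Thus $p_\omega\le\rLipnrmname$ $\mu$-a.e., and \textbf{(Ess-sup2)} gives $\rLIPnrmname\le\rLipnrmname$.

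The heart of the matter is the inequality $\rLipnrmname\le\dxnrmname$. By \textbf{(Ess-sup2)} it suffices to show that $\dxnrmname$ dominates $p_{\tilde\omega}$ $\mu$-a.e., for every $\tilde\omega=\{(V_\beta,f_\beta)\}\in\secs$. Fix $\beta$, write $f=f_\beta$ (say $(C,\rdname)$-Lipschitz), and fix a $\Phi_{\dset,\rdname}$-differentiability set $V$. Apply Theorem \ref{thm:Sus_dir_dens} (with $\Gamma_0=\frag(X)$) taking $\gfun$ to contain the coordinate functions of the atlas and all the $f_\beta$, $\gbor$ to contain the characteristic functions of the charts, of $V$, and of the $V_\beta$, and $\gsem=\{\xdname,\rdname\}$; this yields a $\mu$-full-measure set $Y$ such that for $x\in Y$ the set $G_x\gquad$ of $\gquad$-generic velocity vectors at $x$ contains a dense set of directions in $T_xX$. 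Let $x\in Y\cap V\cap V_\beta$ and $v=\gamma'(t)\in G_x\gquad$. By Theorem \ref{thm:met_diff_norm} the $\rdname$-metric differential exists at $t$ with $\rmd\gamma(t)=\dxnrm v.$, so $\rdst\gamma(t+s_n),\gamma(t).=(\rmd\gamma(t)+o(1))s_n$ for $s_n\searrow0$ with $t+s_n\in\dom\gamma$ (such $s_n$ exist by \textbf{(Gen1)}), while $\langle df(x),v\rangle=(f\circ\gamma)'(t)$ by the Taylor expansion and \textbf{(Gen2)}. If $\rmd\gamma(t)>0$, bounding $|f(\gamma(t+s_n))-f(\gamma(t))|$ by the $\rdname$-difference quotient of $f$ at $\gamma(t)$ at scale $\rdst\gamma(t+s_n),\gamma(t).$ and letting $n\to\infty$ gives $|(f\circ\gamma)'(t)|\le\biglipr f(x)\,\rmd\gamma(t)$; if $\rmd\gamma(t)=0$, then $|f(\gamma(t+s_n))-f(\gamma(t))|\le C\,\rdst\gamma(t+s_n),\gamma(t).=o(s_n)$ and $(f\circ\gamma)'(t)=0$. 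In either case $|\langle df(x),v\rangle|\le\biglipr f(x)\,\dxnrm v.$, i.e.\ $p_{\tilde\omega}(v)\le\dxnrm v.$ (and $p_{\tilde\omega}(v)=0$ on the set where $\biglipr f=0$, where $df=0$ $\mu$-a.e.).

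Finally one passes from generic velocity vectors to arbitrary $v\in T_xX$: both $v\mapsto|\langle df(x),v\rangle|$ and $v\mapsto\dxnrm v.$ are seminorms on the finite-dimensional space $T_xX$, hence continuous and positively homogeneous, and since $G_x\gquad$ contains a dense set of directions, any $v$ is a limit of nonnegative multiples $t_kw_k$ of generic velocity vectors, so $|\langle df(x),t_kw_k\rangle|\le\biglipr f(x)\,\dxnrm{t_kw_k}.$ passes to the limit. Thus $p_{\tilde\omega}\le\dxnrmname$ $\mu$-a.e.\ on $V_\beta$; varying $\beta$ and applying \textbf{(Ess-sup2)} gives $\rLipnrmname\le\dxnrmname$, closing the circle. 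The main obstacle is precisely this last step: although $\rLipnrmname$ is assembled from the much larger class of arbitrary $\rdname$-Lipschitz functions rescaled by their pointwise $\rdname$-Lipschitz constants, it is no larger than the seminorm built from only countably many $\rdname$-distance functions, and reconciling the two requires simultaneously the metric differentiation identity of Theorem \ref{thm:met_diff_norm} and the abundance of generic velocities from Theorem \ref{thm:Sus_dir_dens}. A secondary point of care is that, unlike in the proof of Theorem \ref{thm:metdiff=TX}, the seminorm $\dxnrmname$ need not be a norm when $\rdname\ne\xdname$, so Lemma \ref{lem:finitedim_norms} is not directly applicable and one must argue with seminorms throughout.
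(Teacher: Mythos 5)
Your proposal is correct and takes essentially the same route as the paper's proof: the easy inequalities $\dxnrmname\le\rLIPnrmname\le\rLipnrmname$, and then the key bound $p_{\tilde\omega}\le\dxnrmname$ for each $\tilde\omega\in\secs$, obtained exactly as in the paper by combining Theorem \ref{thm:Sus_dir_dens} (density of generic directions) with Theorem \ref{thm:met_diff_norm} (so that $\rmd\gamma(t)=\dxnrm\gamma'(t).$ for generic velocities), bounding $|(f_\beta\circ\gamma)'(t)|$ by $\biglipr f_\beta(x)\,\rmd\gamma(t)$, and passing to arbitrary $v\in T_xX$ by positive homogeneity and continuity of the seminorms (the paper does this with explicit $\varepsilon$-error terms, which is the same content). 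The only point to tighten is that, as in the paper, the set $V$ should also be taken to be an $\{f_\beta\}_\beta$-differentiability set so that the identification $\langle df_\beta(x),\gamma'(t)\rangle=(f_\beta\circ\gamma)'(t)$ is legitimate; since this only discards a $\mu$-null set, it is a harmless addition to your argument.
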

\begin{proof}
  Each pseudodistance function $\rdstp x.$ gives rise to an element of
  $\secr$ and so $\dxnrmname\le\rLipnrmname$; to each $\omega\in\secr$
  one can associate $\tilde\omega\in\secs$ with $p_\omega\le
  p_{\tilde\omega}$ and so $\rLIPnrmname\le\rLipnrmname$. We thus just
  prove that:
  \begin{equation}
    \label{eq:can_seminorm_p1}
    \rLipnrmname\le\dxnrmname.
  \end{equation}
  It suffices to show that for any
  $\tilde\omega=\left\{(V_\beta,f_\beta)\right\}\in\secs$ one has
  \begin{equation}
    \label{eq:can_seminorm_p1bis}
    p_{\tilde\omega}\le\dxnrmname.
  \end{equation}
  Let $\gfun$ contain the components
  of the chart functions and the functions $\{f_\beta\}_\beta$; let $\gbor$
  contain the characteristic functions of the charts and the characteristic functions
  $\{\chi_{V_\beta}\}_\beta$; let $\gsem$ contain $\xdname$ and $\rdname$. Let
  $V$ be an $\{f_\beta\}_\beta$-differentiability set and fix $\beta$; let
  $V'_\beta=V\cap V_\beta$; by Theorem \ref{thm:Sus_dir_dens} there is
  a full $\mu$-measure $\mu$-measurable subset $W_\beta\subset V'_\beta$ such
  that, for each $x\in W_\beta$ the set of $\gquad$-generic velocity
  vectors contains a dense set of directions. In particular, for each
  $v\in T_xX$ and $\varepsilon>0$ we can find an $\gquad$-generic
  velocity vector $\gamma'(t)\in T_xX$ with $\tnrm
  v-\gamma'(t).\le\varepsilon$. Assume that $\biglipr f_\beta(x)>0$;
  note that the derivative $(f_\beta\circ\gamma)'(t)$ exists and is
  approximately continuous at $t$. Without loss of generality assume
  that $(f_\beta\circ\gamma)'(t)\ne0$; then we can find
  $s_n\searrow0$ such that $t+s_n\in\dom\gamma$ and
  $\rdst\gamma(t+s_n),\gamma(t).=r_n>0$. We now obtain the estimate:
  \begin{equation}
    \label{eq:can_seminorm_p2}
    \begin{split}
      \left|(f_\beta\circ\gamma)(t+s_n)-(f_\beta\circ\gamma)(t)\right|&\le\\&\le\sup\left\{\frac{\left|f_\beta(y)-f_\beta(x)\right|}{r_n}:
        \rdst y,x.\le
        r_n\right\}\\&\quad\times\rdst\gamma(t+s_n),\gamma(t).\\
      &\le\left(\biglipr
        f_\beta(x)+O(1/n)\right)\,\biggl(\int_{[t,t+s_n]\cap\dom\gamma}\rmd\gamma(\tau)\,d\tau\\
      &+o(s_n)\biggr);
    \end{split}
  \end{equation}
  dividing by $s_n$ and letting $n\nearrow\infty$ we get, by
  approximate continuity of $\rmd\gamma$ at $t$:
  \begin{equation}
    \label{eq:can_seminorm_p3}
    \left|(f_\beta\circ\gamma)'(t)\right|\le\biglipr f_\beta(x)\,\rmd\gamma(t).
  \end{equation}
  Now Theorem \ref{thm:met_diff_norm} implies that
  $\rmd\gamma(t)=\dxnrm\gamma'(t).$ and so:
  \begin{equation}
    \label{eq:can_seminorm_p4}
    \begin{split}
      \left|\left\langle
          df_\beta,\gamma'(t)\right\rangle\right|&\le\biglipr
      f_\beta(x)\dxnrm\gamma'(t).\\
      &\le\biglipr f_\beta(x)\,\dxnrm v.+\varepsilon C_\rdname\,\biglipr f_\beta(x);
    \end{split}
  \end{equation}
  let $L$ denote the global Lipschitz constant of $f_\beta$; then:
  \begin{equation}
    \label{eq:can_seminorm_p5}
    \left|\langle df_\beta,v\rangle\right|\le\biglipr
    f_\beta(x)\,\dxnrm v.+\varepsilon C_\rdname\,\biglipr
    f_\beta(x)+\varepsilon L\tnrm v.;
  \end{equation}
  so (\ref{eq:can_seminorm_p1bis}) follows by letting $\varepsilon\searrow0$.
\end{proof}

\subsection{Metric Differentiation for Lipschitz maps}
\label{subsec:lip_map_met_diff}

We now reformulate the results of the previous subsection for a
Lipschitz map $F:X\to Z$; throughout this subsection $\rdname$ will
denote the pull-back pseudometric $F^*\zdname$. Putting together
Theorems \ref{thm:distance_functions_span} and \ref{thm:can_seminorm}
we obtain: \def\sbf{\W_{F}}%
\begin{thm}
  \label{thm:lip_map_subbundle}
  Associated to the map $F$ there is a canonical subbundle $\sbf$ of
  $T^*X$ such that:
  \begin{enumerate}
  \item For each $g\in F^*\left(\lipfun Z.\right)$ (i.e.~$g=h\circ F$
    for some $h\in\lipfun Z.$) the section $dg$ lies in $\sbf$;\vskip2mm
  \item For each countable dense set $\dset\subset X$ the subbundle
    $\sbf$ coincides with the subbundle spanned by the sections
    $\left\{d\rdstp x.: x\in\dset\right\}$.\vskip2mm
  \end{enumerate}
  Suppose now that $\gfun$ contains the components of the chart
  functions of $(X,\mu)$, that $\gbor$ contains the characteristic
  functions of the charts, and suppose also that $\gsem$ contains the
  pseudometric $\rdname$. The subbundle $\sbf$ induces a canonical
  seminorm $\fnrmname=\rnrmname$ on $TX$ such that, for each
  $\gtrip$-generic velocity vector $\gamma'(t)$ one has:
  \begin{equation}
    \label{eq:lip_map_subbundle_s1}
    \fnrm\gamma'(t).=\lim_{s\to0}\frac{\zdst F\circ\gamma(t+s),F\circ\gamma(t).}{|s|}.
  \end{equation}
\end{thm}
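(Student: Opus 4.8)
The plan is to read this statement off from Theorems \ref{thm:distance_functions_span} and \ref{thm:can_seminorm}, applied to the pull-back pseudometric $\rdname=F^*\zdname$, together with the fragment-wise metric differentiation of Section \ref{sec:met_der_frag}. First I would record that, with $\rdname=F^*\zdname$, the $\rdname$-distance function from a point $x$ is $\rdstp x.(y)=\zdst F(y),F(x).$, and that $\rdname$ is a Lipschitz compatible pseudometric with $C_\rdname\le\Lip(F)$; hence Lemma \ref{lem:can_sub_sem} produces the canonical subbundle $\W_F:=\W_{\rdname}\subseteq T^*X$, which by construction does not depend on any auxiliary choices. For assertion (1), any $g=h\circ F$ with $h\in\lipfun Z.$ is $\rdname$-Lipschitz (with constant $\Lip(h)$), and the differential of an $\rdname$-Lipschitz function is a section of $\W_{\rdname}$ for $\mu$-a.e. point: this is exactly what the ``via Weaver derivations'' argument in the proof of Theorem \ref{thm:distance_functions_span} shows, and it also follows directly from the maximality of $\W_{\rdname}$ in the poset of Lemma \ref{lem:can_sub_sem} (were $dg$ not a section of $\W_{\rdname}$ on a set of positive measure, adjoining $g$ to a countable family $\Phi$ with $\W_\Phi=\W_{\rdname}$ would yield a strictly larger subbundle in that poset).

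Assertion (2) is then precisely Theorem \ref{thm:distance_functions_span}: for every countable dense $\dset\subseteq X$ the subbundle $\W_F=\W_{\rdname}$ agrees with the subbundle spanned by $\{d\rdstp x.:x\in\dset\}$. For the seminorm, Theorem \ref{thm:can_seminorm} shows that the seminorm $\dxnrmname$ of (\ref{eq:semidist_seminorm}) is independent of $\dset$ and coincides with $\rLIPnrmname$ and $\rLipnrmname$; I would set $\fnrmname:=\rnrmname$ to be this common seminorm, which is intrinsically attached to $\W_F$ (e.g. as the essential supremum, in the sense of Lemma \ref{lem:sup_of_seminorms}, of the seminorms $v\mapsto|\langle\omega,v\rangle|$ over those sections $\omega$ of $\W_F$ that are locally differentials of $(1,\rdname)$-Lipschitz functions).

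It remains to establish (\ref{eq:lip_map_subbundle_s1}). I would fix a $\gtrip$-generic velocity vector $\gamma'(t)$ for genericity data as in the hypotheses, after replacing $\gbor$ by $\gbor\cup\{\chi_V\}$ for a $\Phi_{\dset,\rdname}$-differentiability set $V$ (so that the hypotheses of Theorem \ref{thm:met_diff_norm} are met, and noting that the left-hand side of (\ref{eq:lip_map_subbundle_s1}) is only meaningful when $\gamma(t)$ lies in the full-measure set $V$ on which $TX$ and $\fnrmname$ are defined). Since $\rdname\in\gsem$, conditions \textbf{(Gen1)}, \textbf{(Gen4)}, \textbf{(Gen5)} hold at $t$, so by the remark following Definition \ref{def_generic_pair} the $\rdname$-metric derivative $\rmd\gamma(t)$ exists, and by property \textbf{(MD1)} of the metric derivative
\[
  \rmd\gamma(t)=\lim_{s\to0}\frac{\rdst\gamma(t+s),\gamma(t).}{|s|}=\lim_{s\to0}\frac{\zdst F\circ\gamma(t+s),F\circ\gamma(t).}{|s|}.
\]
On the other hand Theorem \ref{thm:met_diff_norm} gives $\rmd\gamma(t)=\dxnrm\gamma'(t).=\fnrm\gamma'(t).$, using there the chain rule $(\rdstp x.\circ\gamma)'(t)=d\rdstp x.\mid_{\gamma(t)}(\gamma'(t))$ together with approximate continuity of the derivatives at $t$; combining the two displayed facts yields (\ref{eq:lip_map_subbundle_s1}).

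I do not expect a genuine obstacle here: the substantive content — that at a generic velocity vector the $\rdname$-metric derivative is computed by the seminorm $\fnrmname$ — is Theorem \ref{thm:met_diff_norm}, which itself rests on the one-dimensional Ambrosio--Kirchheim metric differentiation applied to the countable family $\{\rdstp x.\}_{x\in\dset}$, while the canonicity of $\W_F$ and of $\fnrmname$ is supplied by Theorems \ref{thm:distance_functions_span} and \ref{thm:can_seminorm}. The points requiring care are organizational: arranging the genericity data so that the base point $\gamma(t)$ lies in a differentiability set for $\{\rdstp x.\}_{x\in\dset}$, so that the above chain rule is legitimate, and checking that the several seminorms produced along the way (by Lemma \ref{lem:sup_of_seminorms} and its essential-supremum variant) do coincide; both of these are already handled in the cited results.
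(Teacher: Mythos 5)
Your proposal is correct and follows essentially the same route as the paper, which presents Theorem \ref{thm:lip_map_subbundle} as a direct consequence of Theorems \ref{thm:distance_functions_span} and \ref{thm:can_seminorm} applied to the pull-back pseudometric $\rdname=F^*\zdname$, with the identity (\ref{eq:lip_map_subbundle_s1}) coming from Theorem \ref{thm:met_diff_norm} and the remark after Definition \ref{def_generic_pair}. Your extra care about adjoining $\chi_V$ to $\gbor$ and using maximality in Lemma \ref{lem:can_sub_sem} for assertion (1) just makes explicit details the paper leaves implicit.
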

\begin{remark}
  \label{rem:equivalent_formulations}
  In practice, it does not matter whether metric differentiation is
  formulated in terms of pseudometrics or Lipschitz maps. In fact,
  consider a Lipschitz compatible pseudometric $\rdname$ on $X$ and
  associate to it the Lipschitz map:
  \begin{equation}
    \label{eq:equivalent_formulations}
    \begin{aligned}
      F:X&\to l^\infty(\dset)\\
      y&\mapsto\left\{\rdstp x.(y)\right\}_{x\in\dset};
    \end{aligned}
  \end{equation}
  then we get $\rnrmname.=\fnrmname$ and $\sbr=\sbf$.
\end{remark}
We now specialize the discussion to the case in which $(Z,\nu)$ is a
differentiability space; throughout the remainder of this subsection
we will fix choices of countable dense sets $\dset\subset X$ and
$D_Z\subset Z$. The case of interest is when the measure $\mpush
F.\mu$ is absolutely continuous with respect to $\nu$. Using the
Radon-Nikodym Theorem we can find a Borel subset $V_0\subset Z$ such
that $\mpush F.\mu\mrest V_0$ and $\nu\mrest V_0$ are in the same
measure class. The case of interest is when $\nu(V_0)>0$, which we
will assume throughout the remainder of this subsection.
\par Let $U_0=F^{-1}(V_0)$ and suppose that $g\in\lipfun Z.$ is
differentiable at $z_0$ with respect to the Lipschitz functions
$\{\psi^i\}_{i=1}^M$; suppose now that $z_0=F(x_0)$ and that the
functions $\{\psi^i\circ F\}_{i=1}^M$ are differentiable at $x_0$ with
respect to the functions $\{\phi^j\}_{j=1}^N$. We then obtain the chain
rule:
\begin{equation}
  \label{eq:chain_rule}
  g\circ F(x)-g\circ F(x_0)=\sum_{i=1}^M\sum_{j=1}^N\frac{\partial
    g}{\partial \psi^i}(z_0)\,\frac{\partial(\psi^i\circ F)}{\partial
    \phi^j}\,\left(\phi^j(x)-\phi^j(x_0)\right)+o\left(\xdst x,x_0.\right).
\end{equation}
The following Corollary is a consequence of the chain rule
(\ref{eq:chain_rule}):
\begin{corollary}
  \label{cor:pull_back_spans}
  Let $\left\{(U_\alpha,\phi_\alpha)\right\}_\alpha$ be an atlas for
  $(X,\mu)$ and $\left\{(V_\beta,\psi_\beta)\right\}_\beta$ an atlas
  for $(Z,\nu)$. Then the subbundle $\sbf\mid U_0$ is spanned by the
  sections $\left\{d(\psi^i_\beta\circ F)\right\}_{\beta,i}$.
\end{corollary}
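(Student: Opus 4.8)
The plan is to deduce Corollary \ref{cor:pull_back_spans} directly from Theorem \ref{thm:lip_map_subbundle}(2), using the chain rule (\ref{eq:chain_rule}) to compare the spanning family $\{d\varrho_x : x \in D_X\}$ with the family $\{d(\psi_\beta^i \circ F)\}_{\beta,i}$. Both families consist of sections of $\W_F\mid U_0$ — for the distance functions $\varrho_x = \varrho(\cdot, x) = d_Z(F(\cdot), F(x))$ this is part (2) of the theorem (or part (1), since $\varrho_x = d_Z(\cdot, F(x))\circ F$ with $d_Z(\cdot, F(x)) \in \mathrm{Lip}(Z)$), and for $\psi_\beta^i \circ F$ this is part (1), since $\psi_\beta^i \in \mathrm{Lip}(Z)$. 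So the content is to show that the subbundle $\W'$ spanned by $\{d(\psi_\beta^i \circ F)\}_{\beta,i}$ contains (hence, being inside $\W_F$, equals) $\W_F\mid U_0$.

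First I would fix a generic point $x_0 \in U_0$ lying in a chart $(U_\alpha, \phi_\alpha)$, with $z_0 = F(x_0)$ lying in a chart $(V_\beta, \psi_\beta)$ of $(Z,\nu)$, and such that $x_0$ is a point of differentiability for all the relevant countably many functions: the $\psi_\beta^i \circ F$ with respect to $\phi_\alpha$, and, for $x \in D_X$, both $\varrho_x$ with respect to $\phi_\alpha$ and $d_Z(\cdot, F(x))$ with respect to $\psi_\beta$ at $z_0$. This is a full-measure condition on $U_0$ once one arranges that $\mu\mrest U_0$-a.e. point is carried by $F$ to a $\nu$-density point of a chart of $Z$ — here one uses that $F_*\mu\mrest V_0 \ll \nu\mrest V_0$, so that pulling back the full-measure differentiability sets of the countable family $\{d_Z(\cdot, z) : z \in D_Z\} \cup \{\psi_\beta^i\}$ in $Z$ gives a full-measure subset of $U_0$, together with the differentiability sets in $X$ of the functions $\psi_\beta^i \circ F$ and $\varrho_x$. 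Then the chain rule (\ref{eq:chain_rule}), applied with $g = d_Z(\cdot, F(x))$ for $x \in D_X$, reads
\begin{equation}
d\varrho_x(x_0) = \sum_{i} \frac{\partial\, d_Z(\cdot, F(x))}{\partial \psi_\beta^i}(z_0)\, d(\psi_\beta^i \circ F)(x_0),
\end{equation}
which exhibits $d\varrho_x(x_0)$ as a linear combination of the $d(\psi_\beta^i \circ F)(x_0)$; hence $d\varrho_x(x_0) \in \W'(x_0)$ for every $x \in D_X$. By Theorem \ref{thm:lip_map_subbundle}(2) the $\{d\varrho_x\}_{x\in D_X}$ span $\W_F$, so $\W_F\mid U_0 \subseteq \W'$ at $\mu$-a.e. point of $U_0$; combined with $\W' \subseteq \W_F$ (from part (1)) this gives $\W'= \W_F\mid U_0$.

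The only real subtlety — and where I expect to spend the most care — is the measure-theoretic bookkeeping of the "generic point" in $Z$ needed to make the chain rule valid: the chain rule (\ref{eq:chain_rule}) requires $g = d_Z(\cdot, F(x))$ to be differentiable at $z_0 = F(x_0)$ with respect to $\psi_\beta$, which is an a.e. statement in $Z$ (with respect to $\nu$), and one must transport this to an a.e. statement in $X$ (with respect to $\mu$) along $F$. This is exactly where absolute continuity $F_*\mu\mrest V_0 \ll \nu\mrest V_0$ is used, and one should note that the family to be made "generic" is countable — one distance function $d_Z(\cdot, F(x))$ for each $x$ in the countable dense set $D_X$, together with the finitely many chart components $\psi_\beta^i$ over each $V_\beta$ — so a countable intersection of full-measure sets suffices. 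Beyond that the argument is the formal chain-rule manipulation above; there is no further obstacle, and the statement holds $\mu$-a.e. on $U_0$ as claimed.
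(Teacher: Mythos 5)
Your proposal is correct and is exactly the argument the paper intends: the paper offers no proof beyond the remark that the corollary ``is a consequence of the chain rule (\ref{eq:chain_rule})'', and your write-up supplies precisely that — apply the chain rule with $g=d_Z(\cdot,F(x))$, $x\in D_X$, at points made generic via $F_*\mu\mrest V_0\ll\nu\mrest V_0$, and combine with Theorem \ref{thm:lip_map_subbundle}(1)--(2) to get both inclusions. (Only trivial slips: the countable family in $Z$ should be $\{d_Z(\cdot,F(x)):x\in D_X\}$ rather than one indexed by $D_Z$, as you yourself note at the end.)
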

\begin{defn}
  \label{defn:push_fw_pull_bk}
  As the measures $\mpush F.\mu\mrest V_0$ and $\nu\mrest V_0$ are in the same
  measure class, we obtain a pull-back map:
  \begin{equation}
    \label{eq:push_fw_pull_bk1}
    F^*:T^*Z\mid V_0\to T^*X\mid U_0,
  \end{equation}
  which maps each section $dg$ of $T^*Z\mid V_0$ to the section
  $F^*dg=d(g\circ F)$ of $T^*X\mid U_0$. We define the push-forward
  map:
  \begin{equation}
    \label{eq:push_fw_pull_bk2}
    F_*:TX\mid U_0\to TZ\mid V_0
  \end{equation}
  by duality; that is, for $x\in U_0$, $v\in T_xX$ and
  $g\in\lipfun Z.$ we let:
  \begin{equation}
    \label{eq:push_fw_pull_bk3}
    \left\langle F_*(v),dg\mid_{F(x)}\right\rangle = \left\langle v,\left(F^*dg\right)_x\right\rangle.
  \end{equation}
\end{defn}
We conclude this subsection by proving:
\begin{thm}
  \label{thm:isometry_push_fw}
  Let $\sbf^\perp$ denote the annihilator of $\sbf$: i.e.~the fibre
  $\sbf^\perp(x)$ consists of those vectors in $T_xX$ which are
  annihilated by the functionals in $\sbf(x)$. The seminorm
  $\fnrmname$ induces a norm on the quotient bundle $TX/\sbf^\perp$
  which we will still denote by $\fnrmname$. Then $F_*$ induces an
  injective isometry:
  \begin{equation}
    \label{eq:isometry_push_fw}
    F_*:\left(TX/\sbf^\perp\mid U_0,\fnrmname\right)\to(TZ\mid V_0,\tznrmname).
  \end{equation}
\end{thm}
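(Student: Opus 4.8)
The plan is to verify the three separate assertions: that $\fnrmname$ is a well-defined norm on the quotient $TX/\sbf^\perp$, that $F_*$ descends to an injective linear map on the quotient, and that this descended map is an isometry onto its image in $(TZ\mid V_0,\tznrmname)$. The first point is almost immediate: by definition $\sbf^\perp(x)$ is the common kernel of the functionals in $\sbf(x)$, and by Theorem \ref{thm:can_seminorm} (applied with $\rdname=F^*\zdname$) the seminorm $\fnrmname=\rnrmname$ at $x$ equals $\rLIPnrm\cdot.$, which is an essential supremum of seminorms $p_\omega$ with $\omega$ a section of $\sbf$. Hence $\fnrm v.(x)=0$ iff $v$ is annihilated by every element of $\sbf(x)$, i.e.\ iff $v\in\sbf^\perp(x)$; so $\fnrmname$ is genuinely a norm on $TX/\sbf^\perp\mid U_0$. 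For the second point, note $F_*$ is linear by construction (Definition \ref{defn:push_fw_pull_bk}), and $F_*(v)=0$ means $\langle v,(F^*dg)_x\rangle=0$ for every $g\in\lipfun Z.$; by Corollary \ref{cor:pull_back_spans} the sections $F^*dg$ span $\sbf$ over $U_0$, so $F_*(v)=0$ iff $v\in\sbf^\perp(x)$. Thus $F_*$ factors through an injective linear map $TX/\sbf^\perp\mid U_0\to TZ\mid V_0$.

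The heart of the matter is the isometry statement: for $\mu$-a.e.\ $x\in U_0$ and every $v\in T_xX$ one must show $\fnrm v.(x)=\tznrm F_*(v).(F(x))$. The strategy is to compute both sides through generic velocity vectors and metric differentiation. Fix an atlas for $(X,\mu)$ and for $(Z,\nu)$, and choose the genericity data $\gtrip$ so that $\gfun$ contains the components of the chart functions of both $X$ and $Z$ pulled back appropriately (including all $\psi^i_\beta\circ F$), $\gbor$ contains the characteristic functions of all charts and of $U_0,V_0$, and $\gsem$ contains $\xdname$, $\rdname=F^*\zdname$, and the analogous data on $Z$. By Theorem \ref{thm:Sus_dir_dens} there is a full-measure set on which generic velocity vectors are dense in directions. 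Given a $\gtrip$-generic velocity vector $\gamma'(t)\in T_xX$, the key observation is that $F\circ\gamma$ is a curve fragment in $Z$ and, because $t$ is a Lebesgue density point of $\dom\gamma$ and an approximate-continuity point of the relevant derivatives, the pair $(F\circ\gamma,t)$ is generic in $Z$; moreover by Theorem \ref{thm:lip_map_subbundle}, equation \eqref{eq:lip_map_subbundle_s1},
\begin{equation}
\fnrm\gamma'(t).=\lim_{s\to0}\frac{\zdst F\circ\gamma(t+s),F\circ\gamma(t).}{|s|}=\md(F\circ\gamma)(t).
\end{equation}
On the other side, applying Theorem \ref{thm:met_diff_norm} to the fragment $F\circ\gamma$ in the differentiability space $(Z,\nu)$ (with $\rdname=\zdname$ there), we get that this same metric derivative $\md(F\circ\gamma)(t)$ equals $\|(F\circ\gamma)'(t)\|_{TZ}$ evaluated at $F(\gamma(t))$. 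Finally the chain rule \eqref{eq:chain_rule} identifies $(F\circ\gamma)'(t)$ in $T_{F(x)}Z$ with $F_*(\gamma'(t))$: indeed pairing against any $dg\mid_{F(x)}$ and using the definition \eqref{eq:push_fw_pull_bk3} gives $\langle F_*(\gamma'(t)),dg\rangle=\langle\gamma'(t),d(g\circ F)\rangle=(g\circ F\circ\gamma)'(t)=\langle dg\mid_{F(x)},(F\circ\gamma)'(t)\rangle$. Combining, $\fnrm\gamma'(t).(x)=\tznrm F_*(\gamma'(t)).(F(x))$ for every generic velocity vector at $x$.

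It remains to upgrade this identity from generic velocity vectors to all of $T_xX$. Both $\fnrm\cdot.(x)$ (descended to the quotient) and $v\mapsto\tznrm F_*(v).(F(x))$ are continuous functions on the finite-dimensional space $T_xX/\sbf^\perp(x)$; a priori the second is only a seminorm, but we have just shown the two agree on the image $W_x$ of the generic velocity vectors, which by Theorem \ref{thm:Sus_dir_dens} contains a dense set of directions in $T_xX$, hence a dense set of directions in the quotient. A continuous function that is positively homogeneous of degree one and agrees with a norm on a directionally dense set agrees with it everywhere; so the two coincide on the whole quotient, which simultaneously shows $v\mapsto\tznrm F_*(v).(F(x))$ is in fact a norm on $TX/\sbf^\perp(x)$ and that $F_*$ is isometric there. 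The main obstacle I anticipate is purely bookkeeping: one must assemble a single $\gtrip$ whose genericity simultaneously controls the chart functions of $X$, of $Z$, the pullbacks $\psi^i_\beta\circ F$, the pseudometric $F^*\zdname$, and the metric on $Z$, and then check carefully that genericity of $(\gamma,t)$ in $X$ forces genericity of $(F\circ\gamma,t)$ in $Z$ — this uses that Lipschitz precomposition preserves approximate-continuity and density points, together with the fact that the countable collection of distance functions on $Z$ pulls back to $F^*\zdname$-distance functions on $X$ that are already incorporated via $\gsem$. None of these steps is deep, but getting the quantifiers to line up across the two spaces is where the care is needed.
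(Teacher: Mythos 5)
Your proposal is correct and follows essentially the same route as the paper's proof: establish $\fnrm\gamma'(t).=\rmd\gamma(t)=\md (F\circ\gamma)(t)=\tznrm (F\circ\gamma)'(t).=\tznrm F_*(\gamma'(t)).$ for a directionally dense set of generic velocity vectors via Theorem \ref{thm:met_diff_norm} applied in $X$ (with $\rdname=F^*\zdname$) and in $Z$ (with $\zdname$), and then extend by homogeneity and density of directions. The one place you overstate is the claim that genericity of $(\gamma,t)$ in $X$ by itself makes $(F\circ\gamma,t)$ generic in $Z$: the $Z$-side condition \textbf{(Gen5)}, approximate continuity of $\sup_{z\in D_Z}\left|\left(d_Z(\cdot,z)\circ F\circ\gamma\right)'\right|$, is not formally implied by any countable enlargement of the $X$-side data $(\gfun,\gbor,\gsem,\dset)$, which is precisely why the paper instead invokes Lemma \ref{lem:gen_Sus_dir_dens}, a variant of Theorem \ref{thm:Sus_dir_dens} producing a full-measure set where the velocity vectors that are simultaneously generic in $X$ and have $(F\circ\gamma)'(t)$ generic in $Z$ are directionally dense. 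Since the extra conditions are Borel and hold $\lebmeas$-a.e.\ along each fragment, this is exactly the bookkeeping issue you flagged rather than a genuine obstruction, so your argument matches the paper's once that lemma (or an equivalent joint-genericity statement) is put in place.
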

The proof of Theorem \ref{thm:isometry_push_fw} uses the following
generalization of Theorem \ref{thm:Sus_dir_dens}, whose proof is
omitted.
\begin{lemma}
  \label{lem:gen_Sus_dir_dens}
  Suppose that $\gfun$ contains the components of the
  $\{\phi_\alpha\}_\alpha$ and of the $\{\psi_\beta\circ F\}_\beta$;
  suppose that $\gbor$ contains the $\{\chi_{U_\alpha}\}_\alpha$, the
  $\{\chi_{V_\beta}\}_\beta$ and $\chi_{U_0}$; suppose that $\gsem$
    contains $\rdname$. Suppose also that $\gfun'$ contains the
    components of the $\{\psi_\beta\}_\beta$ and that $\gbor'$ contains
    the $\{\chi_{V_\beta}\}$ and $\chi_{V_0}$. Let
    \begin{multline}
      \label{eq:gen_Sus_dir_dens}
      G_x(\gfun,\gbor,\gsem;\gfun',\gsem')=\biggl\{v\in T_xX:
      \text{$v=\gamma'(t)$, where $\gamma'(t)$ is $\gtrip$-generic}\\\quad\text{ and
        $(F\circ \gamma)'(t)$ is $(\gfun',\gbor')$-generic}\biggr\};
    \end{multline}
    then there is a full $\mu$-measure $\mu$-measurable subset $U_1\subset
    U_0$ such that, for each $x\in U_1$,
    $G_x(\gfun,\gbor,\gsem;\gfun',\gsem')$ contains a dense set of directions.
  \end{lemma}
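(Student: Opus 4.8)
The plan is to deduce Lemma~\ref{lem:gen_Sus_dir_dens} by rerunning the proof of Theorem~\ref{thm:Sus_dir_dens}, with one modification: the Borel set of ``good'' triples $(x,\gamma,t)$ is intersected with the set of triples for which the image pair $(F\circ\gamma,t)$ is generic. The key observation is that imposing genericity of $(F\circ\gamma,t)$ costs nothing from the measure-theoretic standpoint: for a fixed fragment $\gamma$ it holds for $\lebmeas\mrest\dom\gamma$-a.e.\ $t$, exactly as $\gtrip$-genericity of $(\gamma,t)$ does, so the absolute-continuity argument $\nu_k(\gamma)\ll\hmeas ._\gamma$ that powers Theorem~\ref{thm:Sus_dir_dens} is unaffected.

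First I would record two facts. Since $F$ is Lipschitz, the map $\frags(X)\to\frags(Z)$, $\gamma\mapsto F\circ\gamma$, is continuous for the Fell topologies (it carries the graph of $\gamma$ to that of $F\circ\gamma$, and $\dom(F\circ\gamma)=\dom\gamma$), so $(\gamma,t)\mapsto(F\circ\gamma,t)$ is a Borel map $\frags(X)\times\R\to\frags(Z)\times\R$. Applying Lemma~\ref{lem:generic_borel} in the target space $Z$, the set $G(\gfun',\gbor',\gsem',D_Z)\subset\frags(Z)\times\R$ of $(\gfun',\gbor',\gsem')$-generic pairs is Borel; hence its preimage
\[
\mathcal{G}'=\bigl\{(\gamma,t)\in\frags(X)\times\R:\ (F\circ\gamma,t)\ \text{is}\ (\gfun',\gbor',\gsem')\text{-generic}\bigr\}
\]
is Borel, and so is $G\gquad\cap\mathcal{G}'$. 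Second, for every $\gamma\in\frags(X)$ and $\lebmeas\mrest\dom\gamma$-a.e.\ $t$, the pair $(\gamma,t)$ is $\gquad$-generic and $(F\circ\gamma,t)$ is $(\gfun',\gbor',\gsem')$-generic: each of the conditions \textbf{(Gen1)}--\textbf{(Gen5)}, applied to $\gamma$ and to the Lipschitz fragment $F\circ\gamma$ in $Z$, fails only on a Lebesgue-null subset of $\dom\gamma$ --- density points, differentiability of the countably many Lipschitz functions involved ($f$, $f\circ F$, $\rdstp x.$, $\rdstp{x'}.\circ F$), and approximate continuity of the countably many measurable functions involved (including the countable suprema in \textbf{(Gen5)}) each hold a.e.\ --- and a countable intersection of co-null sets is co-null.

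Now I would follow the proof of Theorem~\ref{thm:Sus_dir_dens} verbatim, taking $\Gamma_0=\frags(X)$ and replacing the Borel set $Z_k$ there by
\[
Z_k'=\bigl\{(x,\gamma,t)\in Z_k:\ (F\circ\gamma,t)\ \text{is}\ (\gfun',\gbor',\gsem')\text{-generic}\bigr\},
\]
which is Borel by the first fact above. Its projection $Y_k'$ to $X$ is Suslin, hence $\mu$-measurable. For $P_k$-a.e.\ $\gamma$, the $\phi_\alpha$-direction property of the restriction of $\A_k$ to $U_\alpha$ together with the second fact shows that the set of $t\in\dom\gamma$ with $(\gamma(t),\gamma,t)\notin Z_k'$ is Lebesgue-null, so its $\gamma$-image is $\hmeas ._\gamma$-null and therefore $\nu_k(\gamma)$-null; as in the original argument this forces $\nu_k(\gamma)(X\setminus Y_k')=0$, whence $\mu(X\setminus Y_k')=\int\nu_k(\gamma)(X\setminus Y_k')\,dP_k(\gamma)=0$. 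Thus $Y'=\bigcap_k Y_k'$ has full $\mu$-measure, and I would set $U_1=Y'\cap U_0$, which is $\mu$-measurable with $\mu(U_0\setminus U_1)=0$. For $x\in U_1$ with $x\in U_\alpha$ and any $v\in T_xX$, the cone-approximation step of Theorem~\ref{thm:Sus_dir_dens} (choosing $k$ with $\theta_{\alpha,k}$ small enough that $v$ lies within $\varepsilon\|v\|_{l^2}$ of $[0,\infty)\cone(v_{\alpha,k},\theta_{\alpha,k})$, then picking $(x,\gamma_k,t_k)\in Z_k'$) produces a fragment $\gamma_k$ and time $t_k$ with $\gamma_k(t_k)=x$, with $\gamma_k'(t_k)$ a $\gquad$-generic velocity vector, with $(F\circ\gamma_k,t_k)$ being $(\gfun',\gbor',\gsem')$-generic, and with $\gamma_k'(t_k)$ within $\varepsilon\|v\|_{l^2}$ of a nonnegative multiple of $v$; hence $\ol{[0,\infty)\,G_x(\gfun,\gbor,\gsem;\gfun',\gsem')}=T_xX$.

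The proof involves no genuine obstacle --- it is a bookkeeping extension of Theorem~\ref{thm:Sus_dir_dens} --- and the only point requiring care is the one isolated in the second paragraph: that $(\gfun',\gbor',\gsem')$-genericity of the image pair $(F\circ\gamma,t)$ cuts out a \emph{Borel} subset of $\frags(X)\times\R$ (obtained by pulling back the genericity set of Lemma~\ref{lem:generic_borel} applied in $Z$ along the Fell-continuous map $\gamma\mapsto F\circ\gamma$) which, for each fixed $\gamma$, has $\lebmeas\mrest\dom\gamma$-full measure, so that intersecting with it perturbs neither the Suslin projection argument nor the computation $\mu(X\setminus Y_k')=0$. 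Equivalently, one may phrase the same proof as an application of Theorem~\ref{thm:Sus_dir_dens} to the enlarged data $\bigl(\gfun\cup\{f\circ F:f\in\gfun'\},\ \gbor\cup\{u\circ F:u\in\gbor'\},\ \gsem\cup\{\rdname'\circ(F\times F):\rdname'\in\gsem'\},\ \dset\bigr)$ --- which still contains the chart functions and characteristic functions of $(X,\mu)$ --- together with the remark that a generic velocity for the enlarged data automatically pushes forward to a generic velocity at $F(x)$; the two formulations differ only in packaging.
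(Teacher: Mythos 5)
The paper itself gives no argument here (it states the lemma as a generalization of Theorem~\ref{thm:Sus_dir_dens} ``whose proof is omitted''), and your proposal is exactly the intended routine extension: intersect the Borel set $Z_k$ of the proof of Theorem~\ref{thm:Sus_dir_dens} with the (Borel, fibrewise co-null) set of triples whose image pair $(F\circ\gamma,t)$ is generic, and observe that this changes neither the Suslin projection step nor the computation $\nu_k(\gamma)(X\setminus Y_k')=0$, since for every fragment $\gamma$ the extra conditions fail only on a Lebesgue-null set of times. That is correct, and the two facts you isolate are the right ones.

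Two small caveats, neither fatal. First, your justification of the Borel-ness of $\mathcal G'$ via Fell-continuity of $\gamma\mapsto F\circ\gamma$ is more than you are entitled to: the ``hit open'' subbasic sets pull back to open sets, but the ``miss compact'' sets pull back to $\{A: A\cap(\id\times F)^{-1}(K)=\emptyset\}$, and $(\id\times F)^{-1}(K)$ need not be compact unless $F$ is proper (and $X$, $Z$ are merely complete separable here). The clean fix stays entirely in $X$: genericity of $(F\circ\gamma,t)$ is expressed through the countable family of Lipschitz functions $\{\zdst\cdot,z.\circ F\}_{z\in D_Z}$ on $X$ (plus $\{f\circ F\}_{f\in\gfun'}$ and $\{u\circ F\}_{u\in\gbor'}$), and the machinery of Lemma~\ref{lem:generic_borel} --- in particular the sets $\Appset(\Diffmap(\cdot))$ and $\Appset(|\Diffmap|(\Omega))$ for a countable family $\Omega$ of Lipschitz functions --- applies verbatim to this family, giving Borel-ness of $\mathcal G'$ without any continuity claim for $\gamma\mapsto F\circ\gamma$. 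Second, your closing ``equivalent packaging'' via the enlarged data $(\gfun\cup\{f\circ F\},\gbor\cup\{u\circ F\},\gsem\cup\{F^*\rdname'\},\dset)$ does not literally reproduce conditions \textbf{(Gen4)}--\textbf{(Gen5)} for the image pair: those refer to distance functions from points of a dense set $D_Z\subset Z$, whereas the pulled-back pseudometric together with $\dset$ only controls distances to points of $F(\dset)$; so keep the direct argument of your second and third paragraphs as the actual proof.
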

  \begin{proof}
    [Proof of Theorem \ref{thm:isometry_push_fw}]
    We apply Lemma \ref{lem:gen_Sus_dir_dens} and show that for each
    $x\in U_1$ and each $v\in T_xX$ one has:
    \begin{equation}
      \label{eq:isometry_push_fw_p1}
      \fnrm v.=\tznrm F_*(v).;
    \end{equation}
    by density of directions, we just need to show
    (\ref{eq:isometry_push_fw_p1}) for $v=\gamma'(t)$ where
    $\gamma'(t)$ is $\gtrip$-generic and $(F\circ\gamma)'(t)$ is
    $(\gfun',\gbor')$-generic. By Theorem \ref{thm:met_diff_norm}
    applied in $X$ to the pseudometric $\rdname$ we get:
    \begin{equation}
      \label{eq:isometry_push_fw_p2}
      \fnrm\gamma'(t).=\rmd\gamma(t);
    \end{equation}
    note that by the definition of the $\rdname$-metric differential
    we have:
    \begin{equation}
      \label{eq:isometry_push_fw_p3}
      \rmd\gamma(t)=\md F\circ\gamma(t);
    \end{equation}
    finally, applying again Theorem \ref{thm:met_diff_norm}
    in $Z$ to the metric $\zdname$, we get:
    \begin{equation}
      \label{eq:isometry_push_fw_p4}
      \tznrm F_*\gamma'(t).=\tznrm (F\circ\gamma)'(t).=\md
      F\circ\gamma (t).
    \end{equation}
  \end{proof}

  \subsection{Metric differentiation and blow-ups}
  \label{subsec:metdiff_blowups}

  In this subsection we generalize the results of Section
  \ref{sec:geom_blowups} in the case in which one considers either a
  Lipschitz compatible pseudometric $\rdname$ on $X$ or a Lipschitz map
  $F:X\to Z$. \pcreatedst{tr}{\tilde\varrho}
  \def\stdtangs{(Y,\nu,\varphi,\trdname,q)}
  \def\stdpoints{(X,\mu,\psi,\rdname,p)}%
  \begin{defn}
    \label{defn:sem_blowup}
    Let $\rdname$ be a Lipschitz compatible pseudometric on $X$ and
    $(U,\psi)$ be an $N$-dimensional differentiability chart. A
    \textbf{blow-up of $(X,\mu,\psi,\rdname)$ at $p$} along the scales
    $r_n\searrow0$ is a tuple $\stdtangs$ such that:
    \begin{enumerate}
    \item The tuple $\stdtang$ is a blow-up of $(X,\mu,\psi)$ at $p$,
      i.e.~the tuples:
      \begin{equation}
        \label{eq:sem_blowup1}
        \left(\frac{1}{r_n}X,\frac{\mu}{\mu\left(\ball p,r_n.\right)},\frac{\psi-\psi(p)}{r_n},p\right)
      \end{equation}
      converge to $\stdtang$ in the measured Gromov-Hausdorff sense;\vskip2mm
    \item $\trdname$ is a Lipschitz compatible pseudometric on $Y$ and
      if the points $y,y'\in Y$ are represented, respectively, by the
      sequences $[x_n],[x_n']\subset X$, then:
      \begin{equation}
        \label{eq:sem_blowup2}
        \trdst y,y'.=\lim_{n\to\infty}\frac{\rdst x_n,x'_n.}{r_n}.
      \end{equation}
    \end{enumerate}
    We denote by $\tang\stdpoints$ the set of blow-ups of
    $(X,\mu,\psi,\rdname)$ at $p$.
  \end{defn}
  \begin{thm}
    \label{thm:sem_measured_blow_up}
    Let $(U,\psi)$ be an $N$-dimensional differentiability chart for
    the differentiability space $(X,\mu)$, and let $\rdname$ be a
    Lipschitz compatible pseudometric. Then for $\mu\mrest
    U$-a.e.~$p$, for each blow-up
    $\stdtangs\in\tang\stdpoints$, and for each unit vector
    $v_0\in T_pX$, the measure $\nu$ admits an
    Alberti representation $\albrep .=(Q,\Phi)$ where:
    \begin{enumerate}
    \item $Q$ is concentrated on the set $\lines(\varphi,v_0,\trdname)$ of unit
      speed geodesic lines in $Y$ with:
      \begin{equation}
        \label{eq:sem_measured_blow_up_s1}
        \begin{aligned}
          (\varphi\circ\gamma)'&=v_0;\\
          \trdst\gamma(t),\gamma(s).&=\rnrm v_0.|t-s|;
        \end{aligned}
      \end{equation}
    \item For each $\gamma\in\lines(\varphi,v_0,\trdname)$ the measure
      $\Phi_\gamma$ is given by:
      \begin{equation}
        \label{eq:sem_measured_blow_up_s2}
        \Phi_\gamma=\hmeas ._\gamma.
      \end{equation}
    \end{enumerate}
  \end{thm}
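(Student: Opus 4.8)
The plan is to reduce Theorem \ref{thm:sem_measured_blow_up} to the metric case already established in Theorem \ref{thm:measured_blow_up}, by treating $\rdname$ via the associated Lipschitz map into $\ell^\infty(\dset)$ as in Remark \ref{rem:equivalent_formulations}. Explicitly, fix a countable dense set $\dset\subset X$ and let $F:X\to\ell^\infty(\dset)$ be the $1$-Lipschitz map $y\mapsto\{\rdstp x.(y)\}_{x\in\dset}$; set $W=\overline{F(X)}$, a separable metric space with $F^*d_W=\rdname$. Then a blow-up $\stdtangs$ of $(X,\mu,\psi,\rdname)$ gives rise (after passing to a further subsequence and using Ascoli--Arzel\'a, exactly as in Definitions \ref{defn:maps_blow_up} and \ref{defn:sem_blowup}) to a blow-up of the map-data $(X,\mu,\psi,F)$: the rescaled maps $\frac{F-F(p)}{r_n}:\frac1{r_n}X\to\ell^\infty(\dset)$ subconverge to a $1$-Lipschitz $\hat F:Y\to\ell^\infty(\dset)$, and the pull-back pseudometric $\hat F^*d_{\ell^\infty}$ equals $\trdname$ by \eqref{eq:sem_blowup2}. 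So the statement for pseudometrics follows from a statement for Lipschitz maps $F:X\to Z$; it suffices to prove the latter.

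For the Lipschitz-map version, first I would invoke Theorem \ref{thm:measured_blow_up} to produce, for $\mu\mrest U$-a.e.\ $p$, for each blow-up $(Y,\nu,\varphi,q)$, and for each unit vector $v_0\in T_pX$, an Alberti representation $(Q,\Phi)$ of $\nu$ concentrated on the set $\lines(\varphi,v_0)$ of unit-speed $d_Y$-geodesic lines $\gamma$ with $(\varphi\circ\gamma)'\equiv v_0$, with $\Phi_\gamma=\hmeas._\gamma$. The remaining point is to check that, for $\mu\mrest U$-a.e.\ $p$, these same geodesics $\gamma$ automatically satisfy $\trdst\gamma(t),\gamma(s).=\fnrm v_0.\,|t-s|$ (equivalently $\rnrm v_0.\,|t-s|$, by Theorem \ref{thm:can_seminorm}), so that $Q$ is in fact concentrated on $\lines(\varphi,v_0,\trdname)$. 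Here is where the one-dimensional metric differentiation along the distinguished curves does the work: each $\gamma$ in the support of $Q$ is, by construction (blowing up simplified Alberti representations as in Theorem \ref{thm:alberti_blow_up}), a blow-up limit of genuine curve fragments $\gamma_n$ in $X$ which, at generic parameters, have $\rmd\gamma_n(t)=\fnrm\gamma_n'(t).$ and $(\psi\circ\gamma_n)'(t)$ lying in a cone around a fixed direction. The metric differential with respect to $\rdname$ and the norm $\fnrmname$ transform compatibly under rescaling (by $1$-homogeneity of $\fnrmname$ and property \textbf{(MD1)}), and since along the blown-up curve $(\varphi\circ\gamma)'\equiv v_0$ one gets $\fnrm\gamma'(t).=\fnrm v_0.$ constant; combined with the lower semicontinuity of pseudometrics under Gromov--Hausdorff convergence and the upper bound $\trdname\le C_\rdname d_Y$, one concludes $\trdst\gamma(s),\gamma(t).=\fnrm v_0.\,|t-s|$ for the limit geodesic. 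More carefully, I would track the estimate \eqref{eq:reg_set_s2}, which already controls $\rdst\gamma_n(s_1),\gamma_n(s_2).$ up to a factor $\theta\approx\fnrm v_0.$; passing to the limit in \eqref{eq:alberti_blow_up_omega1} (with the role of $d_Z$ played by $d_{\ell^\infty(\dset)}$, hence by $\trdname$ on $Y$) shows the limit curves are $\trdname$-geodesics of the asserted constant speed.

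The main obstacle, and the step that will require the most care, is the last one: verifying that the constant speed of the limit geodesics with respect to the blown-up pseudometric $\trdname$ is exactly $\fnrm v_0.=\rnrm v_0.$ and not merely some constant in $[\sigma_0\rnrm v_0.,\rnrm v_0.]$. The lower bound comes from the $\langle v_0,\psi\rangle$-speed condition \eqref{eq:alberti_blow_up_s2} in the limit $\sigma_0\to 1$ (which forces $\trdst\gamma(s),\gamma(t).\ge\langle v_0,\varphi\circ\gamma(t)\rangle-\langle v_0,\varphi\circ\gamma(s)\rangle=\fnrm v_0.|t-s|$ once one normalizes $v_0$ to have $\ctnrm v_0.^*=1$ and uses $\biglipr\langle v_0,\psi\rangle(p)=\rnrm v_0.$), and the upper bound $\trdst\gamma(s),\gamma(t).\le\fnrm v_0.|t-s|$ comes from the $1$-Lipschitz bound together with the fact that the parametrization is a $d_Y$-geodesic of unit speed and that $\hat F$ is a metric submersion onto $(T_pX,\fnrmname)$ in the blow-up picture (Theorem \ref{tnm_intro_blow_up}(3)); matching these requires the diameter-of-cone estimate Lemma \ref{cone_diameter} to absorb the angular error as $\theta\to 0$, exactly as in the proof of Theorem \ref{thm:alb_speed_one}. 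I would organize this by first proving the result for $\rdname$ of the special form $\langle v_0^*,\psi\rangle$-type pseudometrics (where $\trdname$ is linear and the computation is transparent), then reducing the general $\rdname$-Lipschitz case to this one by the Lusin--Egorov argument of Theorem \ref{thm:alb_speed_one}, and finally passing back to the blow-up via Lemma \ref{lem:local_alberti_blow_up} and the uniformization/weak-$*$-convergence bookkeeping of Theorem \ref{thm:measured_blow_up}.
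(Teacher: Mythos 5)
Your overall strategy coincides with the paper's: Theorem \ref{thm:sem_measured_blow_up} is proved there by re-running the machinery of Theorem \ref{thm:measured_blow_up} (via Theorem \ref{thm:alberti_blow_up} and Lemma \ref{lem:local_alberti_blow_up}) while keeping track of the behaviour of the pseudometric $\varrho$ along the approximating fragments, which is exactly what you propose; the preliminary reformulation through $F:X\to\ell^\infty(D_X)$ is harmless but unnecessary (the paper in fact goes in the opposite direction, deducing the Lipschitz-map statement, Theorem \ref{thm:lip_measured_blow_up}, from the pseudometric one).

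However, two steps in your execution are off, and they sit precisely where the paper's short proof does its work. First, \eqref{eq:reg_set_s2} controls only the background distance $d_X$ and $\psi\circ\gamma$; it says nothing about $\varrho$ along the fragment, so there is nothing yet to ``pass to the limit'' in an \eqref{eq:alberti_blow_up_omega1}-type condition for $\tilde\varrho$. The actual content of the proof is to \emph{add} to \textbf{(Reg3)} the condition $\left|\varrho\left(\gamma(s_1),\gamma(s_2)\right)-\|\gamma'(t)\|_{\varrho}\,|s_1-s_2|\right|\le\varepsilon|s_1-s_2|$ (this is \eqref{eq:sem_measured_blow_up_p1}; it holds at generic pairs by Theorem \ref{thm:met_diff_norm} applied to $\varrho$), and to require in Lemma \ref{lem:meas_diff} that the blow-up point is a point of approximate continuity of $x\mapsto\|\cdot\|_{\varrho}(x)$, so that $\|\gamma'(t)\|_{\varrho}$ can be replaced by $\|v_0\|_{\varrho}$ at $p$ up to errors vanishing in the limit; almost-everywhere statements about the $\varrho$-metric differentials of the $\gamma_n$ do not survive the weak* limit of the curve measures unless they are encoded into the regular set in this way. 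Second, your proposed upper bound for the $\tilde\varrho$-speed does not work as stated: invoking the metric submersion property of the blow-up chart with respect to $\hat\varrho$ is circular (that statement is a consequence of these blow-up theorems), and in any case a metric submersion onto $(T_pX,\|\cdot\|_{\varrho})$ yields $\tilde\varrho\left(\gamma(s),\gamma(t)\right)\ge\|v_0\|_{\varrho}|s-t|$, i.e. the \emph{lower} bound, while Lipschitz compatibility alone only bounds the speed by $C_{\varrho}$. Once the augmented \textbf{(Reg3)} condition is in place, both bounds follow from it, your worry about a speed lying strictly between $\sigma_0\|v_0\|_{\varrho}$ and $\|v_0\|_{\varrho}$ disappears, and the detour through pseudometrics of $\langle v_0^*,\psi\rangle$-type and Lemma \ref{cone_diameter} is not needed.
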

  \begin{proof}
    The proof follows the method used to prove Theorem
    \ref{thm:measured_blow_up}; we just:
    \begin{enumerate}
    \item add in condition \textbf{(Reg3)} that:
      \begin{equation}
\label{eq:sem_measured_blow_up_p1}
\left|\rdst\gamma(s_1),\gamma(s_2).-\rnrm\gamma'(t).\,|s_1-s_2|\right|\le\varepsilon|s_1-s_2|;
      \end{equation}
    \item require in Lemma \ref{lem:meas_diff} that $U$ consists of
      points at which the map $x\mapsto\rnrmname(x)$ is approximately continuous.
     \end{enumerate}
   \end{proof}
   We now discuss what happens in the case of a Lipschitz map $F:X\to
   Z$. When we defined blow-ups of the chart functions there was no
   issue with the target space because $\real^N$ possesses a group of
   dilations. For a general map $F:X\to Z$ we first need to use
   ultramits \cite[Sec.~2.4]{kleiner_leeb_rigid_quasi} to blow-up $Z$; we recall
   here the relevant constructions.  \def\stdtangf{(Y,\nu,\varphi,q;G,W,w_0)}
     \def\stdpointf{(X,\mu,\psi,p;F,Z)}%
   \begin{defn}
     \label{defn:ultra_blowups}
     Let $(Z,z_0)$ denote a pointed metric space and let
     $r_n\searrow0$; we define a \textbf{blow-up $(W,w_0)$ of $(Z,z_0)$} along
     the scales $r_n\searrow0$ as an ultralimit of the sequence of
     pointed metric spaces
     $\left(\frac{1}{r_n}Z,z_0\right)$. Specifically, we choose a
     nonprincipal ultrafilter $\omega$ and consider the set $\tilde W$
     of those sequences $[z_n]\subset Z$ such that: \pcreatedst{w}{d_W}
     \pcreatedst{tw}{d_{\tilde W}}
     \begin{equation}
       \label{eq:ultra_blowups1}
       \limsup_{n\to\infty}\frac{\zdst z_n,z_0.}{r_n}<\infty.
     \end{equation}
     We define a pseudometric $\twdname$ on $\tilde W$ by:
     \begin{equation}
       \label{eq:ultra_blowups2}
       \twdst [z_n],[z_n'].=\lim_\omega\frac{\zdst z_n,z_n'.}{r_n}.
     \end{equation}
     On $\tilde W$ we consider the equivalence relation:
     \begin{equation}
       \label{eq:ultra_blowups3}
       [z_n]\sim[z_n']\Longleftrightarrow\twdst [z_n],[z_n'].=0;
     \end{equation}
     then $\twdname$ induces a metric $\wdname$ on the quotient space
     $W=\tilde W/\sim$, and the base point $w_0$ is the equivalence
     class of the constant sequence $[z_0]$. We denote the set of
     blow-ups of $Z$ at $z_0$ by $\tang(Z,z_0)$.
     \par Consider now the case of a Lipschitz map $F:X\to Z$; having
     fixed scales $r_n\searrow0$, we construct blow-ups
     $(Y,q)\in\tang(X,p)$ and $(W,w_0)\in\tang(Z,F(p))$. We then
     obtain a Lipschitz map $G:(Y,q)\to(W,w_0)$ by blowing up the
     graph of $F$ at $(p,F(p))$. Specifically, if $[x_n]\subset X$
     represents the point $y\in Y$, we let $G(y)$ be the equivalence
     class of the sequence $[F(x_n)]$. In general, we say \textbf{that a tuple
     $\stdtangf$ is a blow-up of $(X,\mu,\psi;F,Z)$ at $p$} if:
     $\stdtang\in\tang\stdpoint$, $(W_0,w_0)\in\tang(Z,F(p))$, and $G$
     is obtained by blowing up $F:X\to Z$ at $p$. We denote the set of
     blow-ups of $(X,\mu,\psi;F,Z)$ at $p$ by $\tang\stdpointf$.
   \end{defn}
   Applying Theorem \ref{thm:sem_measured_blow_up} to the pseudometric
   $F^*\zdname$ we get:
   \begin{thm}
    \label{thm:lip_measured_blow_up}
    Let $(U,\psi)$ be an $N$-dimensional differentiability chart for
    the differentiability space $(X,\mu)$, and let $F:X\to Z$ be a
    Lipschitz map. Then for $\mu\mrest
    U$-a.e.~$p$, for each blow-up
    $\stdtangf\in\tang\stdpointf$, and for each unit vector
    $v_0\in T_pX$, the measure $\nu$ admits an
    Alberti representation $\albrep .=(Q,\Phi)$ where:
    \begin{enumerate}
    \item $Q$ is concentrated on the set $\lines(\varphi,v_0,G)$ of unit
      speed geodesic lines in $Y$ with:
      \begin{equation}
        \label{eq:lip_measured_blow_up_s1}
        \begin{aligned}
          (\varphi\circ\gamma)'&=v_0;\\
          \wdst G\circ\gamma(t),G\circ\gamma(s).&=\fnrm v_0.|t-s|;
        \end{aligned}
      \end{equation}
    \item For each $\gamma\in\lines(\varphi,v_0,G)$ the measure
      $\Phi_\gamma$ is given by:
      \begin{equation}
        \label{eq:lip_measured_blow_up_s2}
        \Phi_\gamma=\hmeas ._\gamma.
      \end{equation}
    \end{enumerate}
  \end{thm}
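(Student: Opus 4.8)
The plan is to derive Theorem~\ref{thm:lip_measured_blow_up} from its pseudometric counterpart Theorem~\ref{thm:sem_measured_blow_up}, applied to the pull-back pseudometric, exactly as announced in the sentence preceding the statement. Set $\varrho:=F^{*}\zdname$, so that $\varrho(x,x')=\zdst F(x),F(x').$; this is a Lipschitz compatible pseudometric on $X$, and by Theorem~\ref{thm:lip_map_subbundle} its canonical seminorm $\rnrmname$ coincides with the canonical seminorm $\fnrmname$ attached to $F$, so the identity $\rnrm v_{0}.=\fnrm v_{0}.$ requires no separate argument. Fix a blow-up $\stdtangf\in\tang\stdpointf$, realized along scales $r_{n}\searrow0$. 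Hence the only real content is to translate this data --- in which the target side is recorded through an ultralimit $(W,w_{0})\in\tang(Z,F(p))$ and a blow-up map $G\colon(Y,q)\to(W,w_{0})$ --- into a blow-up $\stdtangs\in\tang\stdpoints$ in the sense of Definition~\ref{defn:sem_blowup}, in which the target side is recorded through a Lipschitz compatible pseudometric $\trdname$ on $Y$.

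First I would set $\trdst y,y'.:=\wdst G(y),G(y').$ for $y,y'\in Y$. Since $G$ is Lipschitz with the same constant as $F$ (being obtained by blowing up the graph of $F$; see Definition~\ref{defn:ultra_blowups}), $\trdname$ is a Lipschitz compatible pseudometric on $Y$; together with the hypothesis $\stdtang\in\tang\stdpoint$ this yields condition~(1) of Definition~\ref{defn:sem_blowup}. For condition~(2): if $[x_{n}]$ represents $y$ and $[x'_{n}]$ represents $y'$, then by construction $G(y)$, $G(y')$ are represented by $[F(x_{n})]$, $[F(x'_{n})]$, whence $\trdst y,y'.=\lim_{\omega}\varrho(x_{n},x'_{n})/r_{n}$, $\omega$ being the ultrafilter used to build $W$. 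The one technical point is that Definition~\ref{defn:sem_blowup}(2) asks for an honest limit along $n\to\infty$ in place of the ultralimit $\lim_{\omega}$. I would deal with this in one of two equivalent ways. Either (a) pass once more to a subsequence of $\{r_{n}\}$, chosen by a diagonal argument over a countable dense subset of $Y\times Y$ together with the uniform bound $\varrho\le C_{\varrho}\xdname$, so that $\tfrac{1}{r_{n}}\varrho$ converges on all pairs of representing sequences; along such a subsequence $\lim_{\omega}$ agrees with $\lim_{n}$, and passing to a subsequence leaves the measured Gromov--Hausdorff limit $\stdtang$, in particular the measure $\nu$, unchanged. Or (b) observe that the proof of Theorem~\ref{thm:sem_measured_blow_up} --- which, following the proof of Theorem~\ref{thm:alberti_blow_up}, uses only isometric embeddings into a common ambient space and weak-$*$ convergence of the renormalized measures --- goes through verbatim when the blow-up is taken as an ultralimit rather than a Gromov--Hausdorff limit (the two coinciding here via the doubling reduction of Remark~\ref{rem:space_blow_up}), so that Theorem~\ref{thm:sem_measured_blow_up} may be invoked directly with $\trdname=G^{*}\wdname$ and the honest-limit issue does not arise.

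Granting this, the conclusion is immediate. Applying Theorem~\ref{thm:sem_measured_blow_up} to the chart $(U,\psi)$ and the pseudometric $\varrho$ yields, for $\mu\mrest U$-a.e.\ $p$, for the blow-up $\stdtangs$, and for each unit vector $v_{0}\in T_{p}X$, an Alberti representation $\albrep.=(Q,\Phi)$ of $\nu$ with $Q$ concentrated on the unit speed geodesic lines $\gamma$ in $Y$ satisfying $(\varphi\circ\gamma)'=v_{0}$ and $\trdst\gamma(t),\gamma(s).=\rnrm v_{0}.\,|t-s|$, and with $\Phi_{\gamma}=\hmeas._{\gamma}$. Unwinding $\trdname=G^{*}\wdname$ and using $\rnrm v_{0}.=\fnrm v_{0}.$, the geodesic condition becomes $\wdst G\circ\gamma(t),G\circ\gamma(s).=\fnrm v_{0}.\,|t-s|$, i.e.\ $\gamma\in\lines(\varphi,v_{0},G)$; hence $Q$ is concentrated on $\lines(\varphi,v_{0},G)$, which is the assertion of the theorem. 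I expect the main obstacle to be precisely the bookkeeping in the middle paragraph --- reconciling the ultralimit that defines $(W,w_{0},G)$ with the honest-limit pseudometric required by Definition~\ref{defn:sem_blowup} --- which is routine but slightly delicate; everything else is a direct citation of Theorem~\ref{thm:sem_measured_blow_up}.
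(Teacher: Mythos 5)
Your proposal is correct and follows essentially the same route as the paper, which obtains Theorem~\ref{thm:lip_measured_blow_up} precisely by applying Theorem~\ref{thm:sem_measured_blow_up} to the pull-back pseudometric $F^*\zdname$ (with $\fnrmname=\rnrmname$ holding by definition via Theorem~\ref{thm:lip_map_subbundle}). The only difference is that you spell out the reconciliation between the ultralimit defining $(W,w_0,G)$ and the honest-limit requirement in Definition~\ref{defn:sem_blowup}, a point the paper leaves implicit; your subsequence/diagonal argument (or the observation that the proof of Theorem~\ref{thm:sem_measured_blow_up} tolerates ultralimits) handles it correctly.
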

  \begin{remark}
    In \cite[Sec.~10]{cheeger} it was shown that if $(X,\mu)$ is a
    PI-space and if $f$ is a real-valued Lipschitz map defined on $X$,
    at $\mu$-a.e.~$p$, blowing-up $f$ at $p$ always produces a \emph{generalized
      linear function} $g$; in particular, the corresponding space $Y$
    contains through each point a geodesic line $\gamma$ on which the
    blow-up $F$ is affine,
    and such that $\gamma$ behaves as an integral curve of the
    gradient of $F$. Applying Theorem \ref{thm:lip_measured_blow_up}
    to the case in which $F=f$, one gets, through each point of $Y$,
    many geodesic lines on which the blow-up $G$ is affine, and these geodesic
    lines can be used to obtain a Fubini-like decomposition of the
    measure $\nu$. Among these geodesic lines, those where the slope
    of $G$ is maximal correspond to the vector $v_0$ which is the
    derivative of $f$ at $p$ with respect to the coordinate functions $\psi$.
  \end{remark}

  \section{Examples}
  \label{sec:examples}
In this section we provide examples that illustrate how metric differentiation
can be used to constrain the infinitesimal geometry of a Lipschitz map
$F:X\to Y$, where $X$ is a differentiability space. 
We will use a
family of examples of differentiability spaces introduced in
\cite{cheeger_inverse_poinc}: inverse limits of admissible inverse systems 
of metric measure graphs.  
For these spaces  
we find that for some natural
classes of target spaces, blow-ups of arbitrary Lipschitz maps are quite degenerate.

In this section we will say that a map $\eta:\R\ra Z$ into a metric space $Z$ is a {\bf geodesic with speed $\si$} if $d(\eta(s),\eta(t))=\si |s-t|$ for all $s,t\in\R$; here we allow geodesics with speed $0$, i.e. constant maps.

\begin{theorem}
\label{thm_examples_main_theorem}
Let $(X_\infty,d_\infty,\mu_\infty)$ be the inverse limit of an admissible inverse system (see below).  Let $A\subset X_\infty$ be a measurable subset, and $F:A\ra Z$ be a Lipschitz map.  

Consider the following conditions:
\begin{enumerate}
\renewcommand{\labelenumi}{(\alph{enumi})}
\item $Z$ is $CBB$ space, i.e. an Alexandrov space with curvature bounded below.
\item For  every $z\in Z$ and every blow-up $W$ of $Z$ at $z$ 
(in the sense of ultralimits, see Definition \ref{defn:ultra_blowups}), 
two constant speed geodesics $\ga,\ga':\R\ra W$ which coincide on a nonempty open 
interval $(a,b)\subset \R$ coincide everywhere.
\item $Z$ is an equiregular sub-Riemannian manifold. 
\item $Z$ is an Alexandrov space with curvature bounded above, and $X_\infty$
satisfies the monotone bigon condition (Definition \ref{def_monotone_bigon}).
\end{enumerate}

If $Z$ satisfies one of the conditions (a)--(c), then for $\mu_\infty$-a.e. $p\in X_\infty$, 
if $G:Y_\infty\ra W$ is a blow-up of $F$ at $p$, then $G$ factors as
$G=\bar G\circ \varphi$ where $\varphi:Y_\infty\ra\R$ is $1$-Lipschitz and 
$\bar G:\R\ra W$ is a constant speed geodesic.

If $Z$ satisfies (d), then for $\mu_\infty$-a.e. $p\in X_\infty$, 
every blow-up $G:Y_\infty\ra W$ of $F$ at $p$ factors as $\bar G\circ \phi$  where $\phi:Y_\infty\ra Z'$
is $1$-Lipschitz, $\bar G:Z'\ra W$ an isometric embedding, and  $Z'$ is a metric cone over a finite set, 
i.e. the union of finitely many geodesics rays leaving a basepoint.   
\end{theorem}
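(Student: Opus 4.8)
\textbf{Proof proposal for Theorem \ref{thm_examples_main_theorem}.}

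The plan is to reduce everything to the metric differentiation statement of Theorem \ref{thm:lip_measured_blow_up} together with the structural results on the inverse limit spaces from \cite{cheeger_inverse_poinc}. First I would recall that an admissible inverse limit $(X_\infty,d_\infty,\mu_\infty)$ is a PI space, hence a differentiability space, and that its charts are $1$-dimensional (this is the key feature of the \cite{cheeger_inverse_poinc} construction: the differentiable structure has a single coordinate function $\psi$ valued in $\R$). Thus for $\mu_\infty$-a.e.\ $p$, the tangent space $T_pX_\infty$ is $1$-dimensional, and after normalizing we may take the unit vector $v_0\in T_pX_\infty$ essentially unique. Apply Theorem \ref{thm:lip_measured_blow_up} with this chart to a blow-up $(Y_\infty,\nu,\varphi,q;G,W,w_0)\in\tang(X_\infty,\mu_\infty,\psi,p;F,Z)$: we obtain that $\nu$ has an Alberti representation $(Q,\Phi)$ concentrated on unit-speed geodesic lines $\gamma:\R\to Y_\infty$ with $(\varphi\circ\gamma)'\equiv v_0$ and such that $G\circ\gamma$ is a constant-speed geodesic in $W$ with speed $\fnrm v_0.$. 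Since $\varphi:Y_\infty\to\R$ is $1$-Lipschitz and, by Corollary \ref{cor:can_surj}, every point of $Y_\infty$ lies on such a line, the composition $G$ is constant on the fibres $\varphi^{-1}(\text{pt})$ along these lines; the first task is to upgrade this to: $G$ factors through $\varphi$. That is, I would show $\varphi(y)=\varphi(y')\Rightarrow G(y)=G(y')$ for all $y,y'$, not merely those joined by a distinguished line.

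For conditions (a)--(c) the mechanism is the same: each of these hypotheses implies the ``geodesic extension/uniqueness'' property stated in (b) — for CBB spaces this is the standard fact that in an Alexandrov space with curvature bounded below, geodesics do not branch (and blow-ups of CBB spaces are again CBB, in fact are Euclidean cones), while for an equiregular sub-Riemannian manifold the blow-ups are Carnot groups, in which constant-speed geodesics through a point agreeing on an interval must coincide because horizontal geodesics are determined by their germ. So it suffices to treat (b) directly. Given (b), I argue as follows. Fix a generic $p$ and a blow-up $G:Y_\infty\to W$. Each distinguished line $\gamma$ maps to a constant-speed geodesic $\bar\gamma=G\circ\gamma$ in $W$ with $\bar\gamma$ parametrized proportionally to $\varphi\circ\gamma$, which is an affine function $t\mapsto \varphi(q)+t$ (up to translating the parameter). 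Now take two distinguished lines $\gamma_1,\gamma_2$ that pass through a common point $y_0\in Y_\infty$; on a neighbourhood of the corresponding parameter value both $G\circ\gamma_1$ and $G\circ\gamma_2$ are constant-speed geodesics in $W$ passing through $G(y_0)$ with the \emph{same} speed $\fnrm v_0.$ and parametrized so that $\varphi$ increases at unit rate along each — hence after matching parameters they agree at $y_0$ and have the same ``initial data,'' so by (b) they coincide as geodesics in $W$. Iterating / using connectedness along a chain of distinguished lines (which exist through every point and cover $Y_\infty$ in the Alberti sense, combined with a density argument using that $Y_\infty$ is the closure of the union of such lines) one gets that $G(y)$ depends only on $\varphi(y)$. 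This defines $\bar G$ on $\varphi(Y_\infty)\subseteq\R$, which is an interval or ray; $\bar G$ is a constant-speed geodesic because it is locally one (it agrees with $G\circ\gamma$ composed with $(\varphi\circ\gamma)^{-1}$ on each piece), and $\varphi$ is $1$-Lipschitz by construction. This gives the conclusion $G=\bar G\circ\varphi$ for (a)--(c).

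For (d), the CBA case, branching of geodesics \emph{is} allowed, so one cannot collapse $W$ onto a single geodesic; instead one collapses onto a ``tree-like'' piece. Here I would use the monotone bigon condition on $X_\infty$ (Definition \ref{def_monotone_bigon}): this condition guarantees that the blow-up space $Y_\infty$, while containing many distinguished geodesic lines all affine over $\varphi$, has the property that any two such lines that branch do so in a controlled, monotone way — so that the image in $W$ is an increasing union of geodesics from a common basepoint. Concretely: fix $p$, a blow-up $G:Y_\infty\to W$, and the collection $\mathcal L$ of distinguished lines. Each $\bar\gamma=G\circ\gamma$ is a unit-speed (after rescaling by $\fnrm v_0.$) geodesic in $W$ affine over $\varphi$. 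By the monotone bigon condition, if $\gamma_1,\gamma_2\in\mathcal L$ and $G\circ\gamma_1$, $G\circ\gamma_2$ agree on a parameter interval then they agree on the whole half-line on which $\varphi$ is below the branch value, and diverge monotonically above it; CBA geometry then forces that the union $Z'=\bigcup_{\gamma\in\mathcal L} \overline{G\circ\gamma(\R)}$ is isometric to a union of finitely many geodesic rays from a single basepoint (finiteness coming from the combinatorial bound on the inverse system — each graph in the admissible system has bounded valence, which controls the number of branches). Then $\phi:Y_\infty\to Z'$ sending $y$ to the point of $Z'$ determined by $\varphi(y)$ together with which ray-component $y$ sits over is $1$-Lipschitz, $\bar G:Z'\hookrightarrow W$ is the tautological isometric embedding, and $G=\bar G\circ\phi$.

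\textbf{Main obstacle.} The hard part will be the last factorization step in each case — showing that $G$ genuinely factors through $\varphi$ (resp.\ through the cone $Z'$) for \emph{all} points, not just along a single distinguished line. The Alberti representation only tells us about a measure-theoretically large family of lines, so one needs a continuity/density argument: that $Y_\infty$ equals the closure of the union of the distinguished lines through generic points, and that $G$, being Lipschitz, is determined by its values there. For (d) there is the additional subtlety of extracting \emph{finiteness} of the number of rays, which is where the specific combinatorial structure of admissible inverse systems (bounded vertex degrees, the monotone bigon condition) must be invoked rather than soft metric geometry; making the monotone bigon condition interact correctly with the blow-up procedure (it is a condition on $X_\infty$, and one must check it passes to $Y_\infty$ or at least constrains $Y_\infty$) is the most delicate point.
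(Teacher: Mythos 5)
Your overall strategy for (a)--(b) (an equivalence relation on the distinguished lines, single-point intersections handled by concatenating rays, then connectedness) is the same as the paper's, but the proposal has three genuine gaps. First, the step you yourself flag as the ``main obstacle'' is not a soft continuity/density argument and is left unresolved: Theorem \ref{thm:lip_measured_blow_up} only gives the geodesic-image property for a $Q$-full family of lines, and Corollary \ref{cor:can_surj} gives lines through every point of $Y_\infty$ but says nothing about their images under $G$. The paper closes this by Theorem \ref{thm:inv_measured_blow_up}, which rests on the structure theory of admissible inverse limits: the blown-up Alberti representation is (up to normalization) the distinguished Alberti representation of $Y_\infty$ (Theorem \ref{thm:invlim_bwup}\,(4)), whose support is \emph{precisely} the set of all monotone geodesics (Theorem \ref{thm:inverselimit_poinc}\,(2)); since the condition ``$G\circ\gamma$ is a constant-speed geodesic affine over $\varphi$'' is closed under limits of curves, it then holds for every monotone geodesic line, which is exactly what your factorization argument needs. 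Second, in case (c) you claim the reduction to (b) because ``horizontal geodesics are determined by their germ'' in Carnot groups; non-branching of geodesic lines in general Carnot groups is not a known fact (it is tied to the open regularity problem for sub-Riemannian length minimizers), so this step fails as stated. The paper avoids it: one chooses $p$ to be a point of differentiability of $\Psi\circ F$, blows up to get $\hat\Psi\circ G=\al\circ\varphi_\infty$ with $\al$ affine, and concludes that the images $G\circ\ga$ of monotone geodesics are integral curves of a single left-invariant horizontal vector field, hence coincide as soon as they agree at one point.

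Third, case (d) is essentially asserted rather than proved. Your claims that images ``agree on the whole half-line below the branch value and diverge monotonically above it'' and that ``CBA geometry forces'' the union of images to be a finite union of rays are not argued, and your map $\phi$ (``$\varphi(y)$ together with which ray-component $y$ sits over'') presupposes the very fibration you are trying to construct. The paper's mechanism is different and sharper: one checks that $Y_\infty$ inherits the monotone bigon condition (a point you flag but do not address); if $y_1,y_2$ have the same projection to $Y_j$, the bigon condition produces monotone lines through them agreeing outside a compact set, and in the CAT(0) blow-up $W$ two constant-speed geodesic lines agreeing outside a compact set coincide (convexity of the distance function), so $G(y_1)=G(y_2)$. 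Hence $G$ factors through each $Y_j$ and therefore through the direct limit $Y_{-\infty}$, which is a finite union of rays from a basepoint by the uniform valence bound; this yields $Z'$ and $\bar G$ without circularity. Your finiteness heuristic (bounded valence) is the right ingredient, but without the CAT(0) rigidity step and the inheritance of the bigon condition the factorization does not follow.
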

In fact the argument gives slightly more precise control, see (\ref{eqn_g_factors_through_geodesic}) and (\ref{eqn_bar_g_displayed}) below.  Also, the argument can be generalized somewhat further, see Remark~\ref{rem:generality}.

\begin{remark}
  \label{rem:RNP}
  Note that when the target $Z$ is a Banach space with the
  Radon-Nikodym Property the same conclusion as in the cases (a)--(c)
  follows by differentiating $F$ along the Alberti representations,
  compare the discussion on RNP-differentiability in \cite{cheeger_kleiner_radon}.
\end{remark}

Theorem~\ref{thm_examples_main_theorem} has the following consequence:
\begin{corollary}
  \label{cor:main_exa_thm}
  Under the assumptions of Theorem~\ref{thm_examples_main_theorem}, if
  $F:A\to Z$ is a bi-Lipschitz embedding, then $A$ is $1$-rectifiable.
\end{corollary}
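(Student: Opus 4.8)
The plan is to reduce everything to the structure of tangents. Since $F$ is a bi-Lipschitz homeomorphism onto its image, $A$ is $1$-rectifiable if and only if $S:=F(A)$, equipped with the restricted metric and the measure $\sigma:=F_{\sharp}(\mu_\infty\mrest A)$, is $1$-rectifiable. As $(A,\mu_\infty\mrest A)$ is again a differentiability space, $\mu_\infty\mrest A$ is asymptotically doubling by Theorem \ref{thm:as_doubling}, hence so is $\sigma$. I would show that, for $\sigma$-a.e.\ $s\in S$, every blow-up of $(S,\sigma)$ at $s$ is isometric to a subset of $\R$ (in cases (a)--(c)) or to a subset of a finite metric tree (in case (d)), and then invoke the characterization of $1$-rectifiable metric measure spaces in terms of their tangents.

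First one fixes a full $\mu_\infty\mrest A$-measure set of points $p\in A$ which are density points of $\mu_\infty\mrest A$, at which the conclusion of Theorem \ref{thm_examples_main_theorem} holds, at which blow-ups of $(A,\mu_\infty\mrest A)$ are nondegenerate, and at which, by measure differentiation (cf.\ the remarks following Theorem \ref{thm:measured_blow_up}), $\tang(A,\mu_\infty\mrest A,p)=\tang(X_\infty,\mu_\infty,p)$. Let $p$ be such a point, put $s=F(p)$, and let $(T,t)$ be a blow-up of $(S,\sigma)$ along scales $r_n\searrow0$. Using the ultralimit formalism of Section \ref{sec:geom_blowups} (Definition \ref{defn:ultra_blowups}) one blows up $A$, $Z$ and $F$ along the same scales, obtaining, after passing to a subsequence, $(Y_\infty,q)\in\tang(X_\infty,\mu_\infty,p)$, $(W,w_0)\in\tang(Z,F(p))$, and a blow-up map $G:Y_\infty\to W$ of $F$. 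Since the bi-Lipschitz inequalities satisfied by $F$ are scale-invariant and pass to the limit, $G$ is bi-Lipschitz; and because $S=F(A)$, the map $G$ is a bi-Lipschitz homeomorphism of $Y_\infty$ onto $T$ with the metric induced from $W$. By Theorem \ref{thm_examples_main_theorem}, in cases (a)--(c) one has $G=\bar G\circ\varphi_0$ with $\varphi_0:Y_\infty\to\R$ $1$-Lipschitz and $\bar G:\R\to W$ a constant-speed geodesic; in case (d) one has $G=\bar G\circ\varphi_0$ with $\varphi_0:Y_\infty\to Z'$ $1$-Lipschitz, $Z'$ a finite union of geodesic rays from a point, and $\bar G:Z'\to W$ an isometric embedding. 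In either case $T=\bar G\bigl(\varphi_0(Y_\infty)\bigr)$, so $T$ is isometric, up to the (positive, by nondegeneracy of $Y_\infty$) speed of $\bar G$, to the subset $\varphi_0(Y_\infty)$ of $\R$ in cases (a)--(c), and isometric to the subset $\varphi_0(Y_\infty)$ of $Z'$ in case (d).

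This establishes the tangent structure. In cases (a)--(c), the conclusion that $S$ — hence $A\cong F^{-1}(S)$ — is $1$-rectifiable now follows from the characterization of $1$-rectifiable metric measure spaces via tangents (the metric-measure analogue of Besicovitch's criterion that a set possessing an approximate tangent line almost everywhere is rectifiable). In case (d) one observes that a subset of a finite metric tree $Z'$ is itself $1$-rectifiable, and that away from its branch point its tangents are isometric to subsets of $\R$; decomposing $S$ into the countably many Borel pieces on which the relevant blow-up identifications land in a single ray of $Z'$, and discarding the $\sigma$-negligible (or trivially rectifiable) set of points whose blow-up is centered at the branch point, reduces case (d) to the previous one.

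The main difficulty is this last step: deducing $1$-rectifiability of $S$ from the fact that $\sigma$-a.e.\ tangent is isometric to a subset of a line. This is a genuine theorem — the metric version of the Besicovitch tangent-line criterion, whose proof requires a covering argument packaging $S$ into countably many bi-Lipschitz images of subsets of $\R$ — and in the write-up I would cite it rather than reprove it. Two subsidiary points also need care but are routine: that $A$, $Z$ and $F$ can be blown up compatibly so that the blow-up of $F$ is literally a bi-Lipschitz map $G:Y_\infty\to W$ onto the corresponding blow-up of $S$, which is exactly the content of Definition \ref{defn:ultra_blowups} and Theorem \ref{thm:lip_measured_blow_up}; and that it is essential to use Theorem \ref{thm_examples_main_theorem} in its strong form, with $\bar G$ an honest constant-speed geodesic (so that $\bar G(\R)$ is isometric to a line), since only then are the tangents of $S$ isometric — and not merely bi-Lipschitz homeomorphic — to subsets of $\R$, as the rectifiability criterion requires.
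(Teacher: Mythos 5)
Your reduction to the tangent structure of $S=F(A)$ is fine as far as it goes (the blow-up bookkeeping — compatibility of $\tang(A,\mu_\infty\mrest A,p)$ with $\tang(X_\infty,\mu_\infty,p)$ at density points, bi-Lipschitzness of the blow-up map $G$, and the identification of the tangent of $S$ with $\bar G(\varphi_0(Y_\infty))$, hence with a subset of $\R$ or of a finite tree), but the step you propose to settle by citation is exactly where the proof has a genuine gap. There is no off-the-shelf ``metric-measure Besicovitch tangent-line criterion'' of the form you need: a statement that an abstract metric measure space with merely asymptotically doubling measure, whose Gromov--Hausdorff tangents at a.e.\ point are isometric to \emph{subsets} of $\R$, is $1$-rectifiable. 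The Euclidean results of Besicovitch/Mattila-type require density hypotheses with respect to $\mathcal{H}^1$ and live in $\R^n$, and the later metric-space characterizations of rectifiability via tangents (Bate) postdate this paper, assume finiteness and positive lower density of Hausdorff measure, and concern tangents isometric to all of $\R^n$ — none of which you have for $(S,\sigma)$, where $\sigma=F_{\sharp}(\mu_\infty\mrest A)$ bears no a priori relation to $\mathcal{H}^1$. So the ``covering argument packaging $S$ into countably many bi-Lipschitz images of subsets of $\R$'' that you defer to a citation is precisely the substantive content still missing from your argument; acknowledging it as ``a genuine theorem'' you would cite does not close the gap.

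The paper's proof shows how to avoid needing any such abstract criterion, and the key is not to discard the globally defined chart function when passing to the image. In cases (a)--(c), from the factorization $G=\eta\circ\varphi_\infty$ with $\eta$ a constant-speed geodesic of speed in $[L^{-1},L]$, together with $G$ being $L$-bilipschitz, one gets the inequality $d(\varphi_\infty(y_1),\varphi_\infty(y_2))\ge L^{-2}d(y_1,y_2)$ on every blow-up $Y_\infty$; a rescaling/compactness contradiction argument then converts this infinitesimal statement into a quantitative local one on $A$ itself: for a.e.\ $x$ there is $r(x)>0$ with $d(\varphi_\infty(x'),\varphi_\infty(x))>\tfrac12 L^{-2}d(x',x)$ for $x'\in B(x,r(x))\cap A$. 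Setting $S_j=\{x: r(x)>j^{-1}\}$, the restriction $\varphi_\infty|_{S_j}$ is bilipschitz onto a subset of $\R$ on small balls, and $A_1=\bigcup_j S_j$ is rectifiable — no external theorem required, because $\varphi_\infty$ is a single globally defined Lipschitz ``coordinate'' realizing the tangent line at almost every point. Case (d) is then reduced to this situation by first showing (via the factorization through $\pi^\infty_{-\infty}$ and the bilipschitz lower bound) that at a.e.\ point the direct limit has no branch points. If you want to salvage your route, you would have to prove your tangent criterion in the generality you use it, which amounts to redoing this kind of compactness-and-decomposition argument anyway; it is both easier and more honest to run it directly with $\varphi_\infty$ (respectively $\pi^\infty_{-\infty}$) as the paper does.
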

The proof of the corollary is given after the proof of Theorem~\ref{thm_examples_main_theorem}.

\begin{defn}[Admissible inverse systems, \cite{cheeger_inverse_poinc}]
  \label{defn:adm_inv_sys}
  We consider an inverse system of metric measure graphs:
  \begin{equation}
    \label{eq:adm_inv_sys1}
    \cdots\xleftarrow{\pi_{i-1}} X_i\xleftarrow{\pi_i}X_{i+1}\xleftarrow{\pi_{i+1}}\cdots,
  \end{equation}
  where the index $i$ can range either over $\zahlen$ or over
  $\natural\cup\{0\}$: in the former case we will say that the inverse
  system is \textbf{signed}, and in the latter case that it is
  \textbf{unsigned}.  We denote the metric and measure on $X_i$ by $d_i$ and $\mu_i$ respectively.
  Having fixed an integer $m\geq2$ and parameters
  $\Delta,C,\theta\in(0,\infty)$, we say that the inverse system
  $\left\{X_i,\pi_i\right\}$ is \textbf{admissible} if it satisfies
  the following axioms:
  \begin{description}
  \item[(Ad1)] Each metric space $(X_i,d_i)$ is a nonempty connected
    graph with vertices of valence $\leq\Delta$ and such that each
    edge of $X_i$ is isometric to an interval of length $m^{-i}$ with
    respect to the path metric $d_i$;    \vskip2mm
  \item[(Ad2)] Let $X_i'$ denote the graph obtained by subdividing
    each edge of $X_i$ into $m$ edges of length $m^{-(i+1)}$. Then
    $\pi_i$ induces a map $\pi_i:(X_{i+1},d_{i+1})\to(X_i',d_i)$ which
    is open, simplicial and an isometry on every edge;\vskip2mm
  \item[(Ad3)] For each $x_i\in X_i'$ the inverse image
    $\pi_i^{-1}(x_i)\subset X_{i+1}$ has $d_{i+1}$-diameter at most $\theta\,m^{-(i+1)}$;\vskip2mm
  \item[(Ad4)] Each graph $X_i$ is equipped with a measure $\mu_i$
    which restricts to a multiple of arclength on each edge; if
    $e_1,e_2$ are two adjacent edges of $X_i$ we have:
    \begin{equation}
      \label{eq:adm_inv_sys2}
      \frac{\mu_i(e_1)}{\mu_i(e_2)}\in[C^{-1},C];
    \end{equation}
  \item[(Ad5)] The measures $\{\mu_i\}$ are compatible with the
    projections $\{\pi_i\}$: $\mpush\pi_i.\mu_{i+1}=\mu_i$;\vskip2mm
  \item[(Ad6)] Let $\graphstar(x,G)$ denote the star of a vertex $x$ in a
    graph $G$, i.e.~the union of all the edges containing $x$. Then,
    for each vertex $v_i'\in X_i'$ and each
    $v_{i+1}\in\pi_i^{-1}(v_i')$, the quantity:
    \begin{equation}
      \label{eq:adm_inv_sys3}
      \frac{\mu_{i+1}\left(\pi_i^{-1}(e_i')\cap\graphstar(v_{i+1},X_{i+1})\right)}{\mu_i(e'_i)}
    \end{equation}
    is the same for all edges $e'_i\in\graphstar(v_i',X_i')$;\vskip2mm
  \item[(Ad7)] If the inverse system $\left\{X_i,\pi_i\right\}$ is
    unsigned we will assume that $X_0\simeq[0,1]$,
    $\mu_0=\lebmeas\mrest[0,1]$ and we will denote by $\varphi_i$ the map:
    \begin{equation}
      \label{eq:adm_inv_sys4}
      \varphi_i=\pi_1\circ\cdots\circ\pi_{i-1}.
    \end{equation}
    If the inverse system $\left\{X_i,\pi_i\right\}$ is signed we
    require the existence of open surjective maps
    $\varphi_i:X_i\to\real$ which are, regarding $\real$ as a graph of
    edges $\left\{[km^{-i},(k+1)m^{-i}]\right\}_{k\in\zahlen}$,
    simplicical and restrict to isometries on every edge. Moreover, we
    require that the $\{\varphi_i\}$ are compatible with the
    $\{\pi_i\}$:
    \begin{equation}
      \label{eq:adm_inv_sys5}
      \varphi_i\circ\pi_i=\varphi_{i+1}\quad(\forall i).
    \end{equation}
  \end{description}
\end{defn}
An immediate consequence of the axioms \textbf{(Ad1)}--\textbf{(Ad7)} is that the metric measure
spaces $(X_i,d_i,\mu_i)$ converge in the measured Gromov-Hausdorff
sense\footnote{If $\left\{X_i,\pi_i\right\}$ is signed we consider the
  convergence in the pointed sense by choosing basepoints
  $\{q_i\}_{i\in\zahlen}$ satisfying $\pi_i(q_{i+1})=q_i$.} to a metric
measure space $(X_\infty,d_\infty,\mu_\infty)$ which is called
\textbf{the inverse limit} of the admissible inverse system. If
$\left\{X_i,\pi_i\right\}$ is unsigned, then $(X_\infty,d_\infty)$ is
compact geodesic and $\mu_\infty$ is a doubling probability measure;
if $\left\{X_i,\pi_i\right\}$ is signed, $(X_\infty,d_\infty)$ is
proper geodesic and $\mu_\infty$ is a doubling measure. In both cases
there are $1$-Lipschitz maps $\pi_{\infty,k}:X_\infty\to X_k$
satisfying:
\begin{equation}
  \label{eq:invlimmaps}
  \begin{aligned}
    \pi_{k-1}\circ\pi_{\infty,k}&=\pi_{\infty,k-1}\\
    \mpush\pi_{\infty,k}.\mu_\infty&=\mu_k.
  \end{aligned}
\end{equation}
For $j>k$ we will use the short-hand notation $\pi_{j,k}$ to denote
the map $\pi_k\circ\cdots\circ\pi_{j-1}$. Moreover, the maps
$\varphi_i:X_i\to\text{$\real$ or $[0,1]$}$ pass to the limit giving a
$1$-Lipschitz map $\varphi_\infty:X_\infty\to\text{$\real$ or
  $[0,1]$}$ satisfying:
\begin{equation}
  \label{eq:invlimcoord}
  \varphi_\infty(q)=\varphi_i(\pi_{\infty,i}(q))\quad\text{($\forall
    q\in X_\infty$, $\forall i\in\text{$\zahlen$ or $\natural\cup\{0\}$}$)}
\end{equation}

We now define a special class of paths in $X_i$ or $X_\infty$.
\begin{defn}
  \label{defn:monpaths}
  Let $I\subseteq\real$ be connected and $\gamma:I\to X_i$ continuous,
  where we allow $i=\infty$. We say that $\gamma$ is a \textbf{monotone
  geodesic} if $\varphi_i\circ\gamma:I\to\text{$\real$ or
  $[0,1]$}$ is either a strictly increasing or decreasing affine
map. In particular, the axioms \textbf{(Ad1)}--\textbf{(Ad7)} imply
that $\gamma$ is a constant speed geodesic in $(X_i,d_i)$. Moreover,
by axioms \textbf{(Ad2)} and \textbf{(Ad7)}, if $j>i$ and if
$\gamma_i:I\to X_i$ is a monotone geodesic, then for each
$q_j\in\pi_{j,i}^{-1}\left(\gamma_i(I)\right)$, one can lift $\gamma_i$
to obtain a monotone geodesic $\gamma_j:I\to X_j$ passing through
$q_j$ and satisfying $\pi_{j,i}\circ\gamma_j=\gamma_i$.
\end{defn}
We now summarize some important consequences of the axioms
\textbf{(Ad1)}--\textbf{(Ad7)}.
\begin{thm}
  \label{thm:inverselimit_poinc}
  Let $\left\{X_i,\pi_i\right\}$ be an admissible inverse system and
  let $X_\infty$ denote the inverse limit; then:
  \begin{enumerate}
  \item The metric measure space $(X_\infty,d_\infty,\mu_\infty)$
    admits a $(1,1)$-Poincar\'e inequality; in particular, it is a
    differentiability space with a single differentiability chart $(X_\infty,\varphi_\infty)$;\vskip2mm
  \item  The $(X_i,\mu_i)$'s and $(X_\infty,\mu_\infty)$ admit distinguished Alberti representations whose support is precisely the set of all monotone geodesics
$\ga:I\ra X_\infty$ where $\varphi_\infty\circ\ga=\id_I$, and $I=[0,1]$ if $\{X_i,\pi_i\}$ is unsigned, and $I=\R$ if it is signed.  These Alberti representations are compatible with projection.
  \end{enumerate}
\end{thm}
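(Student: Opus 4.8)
Both parts are essentially results of \cite{cheeger_inverse_poinc}, and the plan is to recall the structure of the argument, making explicit the role of the measure axioms \textbf{(Ad4)}--\textbf{(Ad6)} and repackaging the distinguished curve family as an Alberti representation in the sense of Definition \ref{defn:alberti_reps}.

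For \textbf{(1)} the plan is a three step argument. First, establish a $(1,1)$-Poincar\'e inequality on each finite graph $(X_i,d_i,\mu_i)$ with constants depending only on $m,\Delta,C,\theta$: here \textbf{(Ad1)} and \textbf{(Ad4)} make the $\mu_i$ uniformly doubling, while the small fibre axiom \textbf{(Ad3)} and the balanced splitting axiom \textbf{(Ad6)} are what allow one to propagate a Poincar\'e-type estimate from level $i$ to level $i+1$ with a geometrically decaying error, so that the telescoped estimate on $X_i$ has uniform constants. Second, pass to the limit: since $(X_i,d_i,\mu_i)\to(X_\infty,d_\infty,\mu_\infty)$ in the (pointed) measured Gromov--Hausdorff sense, $\mu_\infty$ is doubling, and the Poincar\'e constants are uniform, the standard stability of the $(1,1)$-Poincar\'e inequality under such limits yields that $X_\infty$ is a PI space. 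Third, identify the chart: a PI space is a differentiability space (Cheeger \cite{cheeger}), $\varphi_\infty$ is $1$-Lipschitz, and differentiating along the monotone geodesics produced in \textbf{(2)} shows $\Lip\varphi_\infty>0$ $\mu_\infty$-a.e.; moreover Corollary \ref{cor_directional_derivative_Lip} applied along these geodesics shows that every generic velocity vector is a scalar multiple of the one defined by a monotone geodesic, so the differentiable structure has dimension $1$ and $(X_\infty,\varphi_\infty)$ is a single chart.

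For \textbf{(2)}, normalise the monotone geodesics so that $\varphi_i\circ\gamma=\id_I$ (with $I=[0,1]$ unsigned, $I=\R$ signed); these form a closed subset of $\frag(X_i)$. The plan is to build the Alberti representation $(P_i,\nu_i)$ of $\mu_i$ by gluing together trivial edge-wise representations: over a single edge $e$ of $X_i$ the map $\varphi_i|_e$ is an isometry onto an interval, so by \textbf{(Ad4)} the restriction $\mu_i\restr e$ is a constant multiple of arclength and carries the obvious ``Fubini'' representation by its constant-speed parametrisations. The only real content is to route the transverse measure consistently through the branch vertices of $X_i'$, and axiom \textbf{(Ad6)} --- that the $\mu_{i+1}$-flux through $\pi_i^{-1}(e_i')$ is the same for every edge $e_i'$ in a star --- is precisely the hypothesis that removes the ambiguity and makes a globally consistent $P_i$ possible. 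Compatibility with projections is then automatic: $\pi_i$ carries a monotone geodesic of $X_{i+1}$ (after affine reparametrisation) to one of $X_i$, $\mpush\pi_i.\mu_{i+1}=\mu_i$ by \textbf{(Ad5)}, and \textbf{(Ad6)} forces $\mpush\pi_i.(P_{i+1},\nu_{i+1})=(P_i,\nu_i)$. A compatible system of monotone geodesics $\{\gamma_i\}$ defines a monotone geodesic $\gamma_\infty$ in $X_\infty$ with $\varphi_\infty\circ\gamma_\infty=\id_I$, and every such $\gamma_\infty$ arises in this way; so the $P_i$ converge weak-$*$ to a Radon measure $P_\infty$ supported on this set of geodesics, and by weak-$*$ continuity of the maps in Definition \ref{defn:alberti_reps} one obtains $\mu_\infty=\int\nu_\gamma\,dP_\infty(\gamma)$ with each $\nu_\gamma$ a multiple of $\mathcal H^1_\gamma$. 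That the support is \emph{precisely} the full set of monotone geodesics (and not a proper subset) follows from non-degeneracy: \textbf{(Ad6)} forces every edge at every level to carry positive transverse mass, so every point of the geodesic family lies in $\spt P_\infty$.

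The step I expect to be the main obstacle is the gluing in \textbf{(2)}: assembling the edge-wise Fubini decompositions into a single Alberti representation of $\mu_i$ that is genuinely compatible with $\pi_i$, and then carrying that compatibility --- together with the ``precisely the full set of monotone geodesics'' assertion --- through the inverse limit and the weak-$*$ passage to $P_\infty$. Axiom \textbf{(Ad6)} is the crux throughout, since without it the level-wise representations would not be projection-compatible. For \textbf{(1)} the only genuinely non-routine input is the uniform-in-$i$ Poincar\'e inequality on the finite graphs, which is the technical heart of \cite{cheeger_inverse_poinc} and which I would quote rather than reprove.
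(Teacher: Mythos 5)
The paper itself gives no independent proof of this theorem: it simply records that everything is contained in \cite{cheeger_inverse_poinc} (with straightforward modifications for the signed case), and that part (2) follows from the discussion in Section 6 of that paper. So your overall strategy---quoting the uniform Poincar\'e estimates on the graphs $X_i$ from \cite{cheeger_inverse_poinc}, passing to the limit by stability of the PI condition under measured Gromov--Hausdorff convergence, and repackaging the distinguished curve family as an Alberti representation, with \textbf{(Ad5)}--\textbf{(Ad6)} giving projection-compatibility and an inverse-limit (rather than literally weak-$*$) passage producing $P_\infty$---is consistent with what the authors intend, and your outline of part (2) is essentially the content of that Section 6. (Two small points there: no affine reparametrisation is needed when projecting monotone geodesics, since $\varphi_i\circ\pi_i=\varphi_{i+1}$; and the limit measure is cleanest as an inverse limit of the consistent family $\{P_i\}$ rather than a weak-$*$ limit of measures living on different path spaces.)

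There is, however, a genuine gap in your treatment of the ``single chart $(X_\infty,\varphi_\infty)$'' clause of part (1). You derive it from Corollary \ref{cor_directional_derivative_Lip}, asserting that every generic velocity vector is a scalar multiple of the one defined by a monotone geodesic. The corollary says nothing of the sort: it only identifies $\Lip u(p)$ as a supremum of normalized directional derivatives over generic pairs. More fundamentally, the existence of the distinguished Alberti representation along monotone geodesics gives only the lower bound on the dimension (it shows $\Lip\varphi_\infty>0$ a.e., so the chart is at least one-dimensional); it cannot bound the analytic dimension from above---compare $\R^2$ with Lebesgue measure, which carries an Alberti representation by horizontal lines yet has a two-dimensional chart. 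What is needed is the upper bound: that $d\varphi_\infty$ spans $T^*X_\infty$ almost everywhere, equivalently that for every Lipschitz $f$ and a.e.~$x$ there is $a(x)$ with $\Lip\bigl(f-a(x)\varphi_\infty\bigr)(x)=0$. This uses the fibre-diameter axiom \textbf{(Ad3)} and the path structure of the inverse system in an essential way, and it is itself one of the non-routine inputs supplied by \cite{cheeger_inverse_poinc}; it is not a formal consequence of the PI property together with part (2). Either quote it, as the authors implicitly do, or supply that argument.
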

The proof of Theorem \ref{thm:inverselimit_poinc} is contained in
\cite{cheeger_inverse_poinc}; note, however, that in \cite{cheeger_inverse_poinc} only the case of
what we call unsigned inverse systems is discussed: the modifications
for the case of signed inverse systems are straightforward. Alberti
representations are not explicitly mentioned in
\cite{cheeger_inverse_poinc}, but part (2) in Theorem
\ref{thm:inverselimit_poinc} follows from the discussion in 
 \cite[Sec.~6]{cheeger_inverse_poinc}.

\begin{remark}
In \cite{andrea_mutually_singular} it is shown that the collection of admissible inverse systems defined in \cite{cheeger_inverse_poinc} contains an uncountable family of the form $\{\{(X_i,d_i,\mu_{i,\al})\}_i\}_{\al\in \mathcal{A}}$, such that inverse limits
$\{(X_\infty,d_\infty,\mu_{\infty,\al})\}_{\al\in \mathcal{A}}$ --- which 
are all PI spaces \cite{cheeger_inverse_poinc} --- realize an uncountable family $\{\mu_{\infty,\al}\}_{\al\in \mathcal{A}}$ of mutually singular measures on the same inverse limit metric space $(X_\infty,d_\infty)$.  
\end{remark}

\def\stdtangpoint{(X_\infty,\mu_\infty,\psi,p)}
\def\stdtang{(\sigma Y_\infty,c\cdot\nu_\infty,\sigma\cdot\varphi,q)}
\def\stdinv{(Y_\infty,d_\infty,\nu_\infty)}
\def\stdfunpoint{(X_\infty,\mu_\infty,\psi,p;F,Z)}
\def\stdfun{(\sigma
  Y_\infty,c\cdot\nu_\infty,\sigma\cdot\varphi,q;G,W,w_0)}
\def\tbase{\partial_\psi\mid_p}

\begin{thm}
  \label{thm:invlim_bwup}
  Let $X_\infty$ be the inverse limit of an admissible inverse system
  $\left\{X_i,\pi_i\right\}$ and let $\psi=\varphi_\infty$; if the
  system is unsigned assume also that
  $p\not\in\psi^{-1}\left(\{0,1\}\right)$. Then each element of
  $\tang\stdtangpoint$ is of the form $\stdtang$ where:
  \begin{enumerate}
  \item The metric measure space $(Y_\infty,d_\infty,\nu_\infty)$ is
    the inverse limit of a signed admissible inverse system
    $\{Y_i,\pi_i\}$, and $\varphi$ is the function $\varphi_\infty$
    corresponding to $Y_\infty$.\vskip2mm
  \item The parameters $\sigma$ anc $c$ satisfy:
    \begin{equation}
      \label{eq:invlim_bwup_s1}
      \begin{aligned}
        \sigma&\in[1,m]\\
        c&=\frac{1}{\nu_\infty\left(\sball Y_\infty, q,1/\sigma.\right)}\,.
      \end{aligned}
    \end{equation}
  \item The basepoint $q$ satisfies $\varphi(q)=0$.
  \item Furthermore, up to renormalization, the blow-up of the distinguished Alberti representation of $(X_\infty,\mu_\infty)$ is the distinguished Alberti reprsentation on $(Y_\infty,\nu_\infty)$. 
  \end{enumerate}
\end{thm}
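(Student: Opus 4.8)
The plan is to produce the limiting inverse system by hand, zooming in on $X_\infty$ at $p$ and passing the admissibility axioms to the limit. First I would normalize the scales: given a blow-up realized along $r_n\searrow0$, write $r_n=m^{-k_n}/\sigma_n$ with $k_n=\lfloor\log_m(1/r_n)\rfloor\in\Z$ and $\sigma_n\in[1,m)$, and after passing to a subsequence assume $\sigma_n\to\sigma\in[1,m]$; this $\sigma$ will be the one in the statement, so $\sigma\in[1,m]$ is immediate. For each fixed $i$ I would look at the vertex $p^n_i=\pi_{\infty,k_n+i}(p)\in X_{k_n+i}$ and the pointed metric measure graph obtained from $X_{k_n+i}$ by rescaling $d_{k_n+i}$ by $\sigma_nm^{k_n}$, recentering at $p^n_i$, and renormalizing $\mu_{k_n+i}$ so that the unit ball has mass $1$; under this rescaling every edge has length $\sigma_nm^{-i}\to\sigma m^{-i}$. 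Because the valences are bounded by $\Delta$ (axiom (Ad1)) and consecutive edge-measure ratios lie in $[C^{-1},C]$ (axiom (Ad4)), for each fixed radius there are only finitely many pointed isomorphism types of balls up to the continuous parameters $\sigma_n$ and the finitely many measure ratios; a diagonal argument then lets me pass to a further subsequence along which, for every $i$, these rescaled pointed graphs converge in the pointed measured Gromov--Hausdorff sense to pointed metric measure graphs $(\sigma Y_i,\sigma\cdot\nu_i,q_i)$ with $Y_i$ having all edges of length $m^{-i}$. The projections $\pi_{k_n+i}\colon X_{k_n+i+1}\to X_{k_n+i}'$, being open, simplicial, edge-isometries with fibers of diameter $\le\theta m^{-(k_n+i+1)}$, converge to maps $\pi_i\colon Y_{i+1}\to Y_i'$ with the same properties; similarly (Ad4)--(Ad6) and the recentered, rescaled coordinate maps $\varphi_{k_n+i}$ pass to the limit, yielding $\varphi_i\colon Y_i\to\R$ simplicial with unit edges and $\varphi_i\circ\pi_i=\varphi_{i+1}$ (axiom (Ad7)). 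If the original system is unsigned, the hypothesis $p\notin\psi^{-1}(\{0,1\})$ ensures $p^n_i$ avoids the boundary vertices of $X_0$ for large $n$, so the local picture extends to a bi-infinite system and $\{Y_i,\pi_i\}_{i\in\Z}$ is \emph{signed} admissible. This would give (1).

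Next I would identify the blow-up itself. The rescaled version of (Ad3) shows that for each fixed $R$ the projections $\pi_{\infty,k_n+i}$ exhibit $B(p,R)$ in $\tfrac1{r_n}X_\infty$ as the inverse limit of its projections to the rescaled $X_{k_n+i}$, with an error that tends to $0$ as $i\to\infty$ uniformly in $n$ --- this is exactly the convergence argument of \cite{cheeger_inverse_poinc}, applied uniformly. Combined with the level-wise convergence above, $\tfrac1{r_n}X_\infty$ converges to the inverse limit of $\{\sigma Y_i,\pi_i\}$, i.e.\ to $\sigma Y_\infty$, pointed at $q:=q_\infty$, and the blow-up coordinate (the limit of $\sigma_nm^{k_n}(\varphi_\infty-\varphi_\infty(p))$) is precisely $\sigma\varphi$ with $\varphi=\varphi_\infty$ on $Y_\infty$; since $\varphi_{k_n+i}(p^n_i)=0$ for all $n,i$ we get $\varphi(q)=0$, which is (3). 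For the normalization, Definition \ref{defn:measure_blow_up} divides by $\mu_\infty(B(p,r_n))$, so the blow-up measure $\hat\mu$ satisfies $\hat\mu(B_{\sigma Y_\infty}(q,1))=1$ (spheres of an inverse-limit graph are $\nu_\infty$-null). Writing $\hat\mu=c\,\nu_\infty$ and using $B_{\sigma Y_\infty}(q,1)=B_{Y_\infty}(q,1/\sigma)$ gives $c=1/\nu_\infty(B_{Y_\infty}(q,1/\sigma))$, completing (2).

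For (4), recall from Theorem \ref{thm:inverselimit_poinc}(2) that the distinguished Alberti representation of $(X_\infty,\mu_\infty)$ is supported on the monotone geodesics $\gamma$ with $\varphi_\infty\circ\gamma=\id$ and is compatible with the projections, projecting on each $X_k$ to the Fubini decomposition of $\mu_k$ into arclength along monotone geodesics. I would blow this representation up via Theorem \ref{thm:alberti_blow_up} and observe that a monotone geodesic is precisely a unit-speed geodesic line on which $\varphi_\infty$ has derivative $\pm1$; the result is an Alberti representation of $c\,\nu_\infty$ on $\sigma Y_\infty$ concentrated on monotone geodesics of $Y_\infty$ and compatible with projections. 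Since the distinguished representation of $(Y_\infty,\nu_\infty)$ is the unique projection-compatible Alberti representation of $\nu_\infty$ whose projection on each $Y_i$ is the Fubini decomposition of $\nu_i$, the two agree up to the constant $c$, which is (4).

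The hard part will be the second paragraph: showing that taking the blow-up commutes with taking the inverse limit, so that the level-wise Gromov--Hausdorff limits of the rescaled $X_{k_n+i}$ genuinely assemble into an admissible system whose limit is the blow-up of $X_\infty$. This rests on the uniform-in-$n$ decay of fiber diameters from (Ad3) together with the diagonal extraction along which combinatorial types stabilize at every level simultaneously; the bookkeeping needed to track the continuous parameters $\sigma_n$ and the edge-measure ratios, so that the limit system is admissible with controlled parameters, is the only genuinely delicate point.
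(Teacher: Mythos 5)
The paper states Theorem \ref{thm:invlim_bwup} without proof (it is presented as a consequence of the structure theory of admissible inverse systems from \cite{cheeger_inverse_poinc} and the blow-up machinery of Section \ref{sec:geom_blowups}), so there is no author argument to compare against; I am assessing your proposal on its own terms.

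For parts (1)--(3) your scheme -- write $r_n=m^{-k_n}/\sigma_n$ with $\sigma_n\in[1,m)$, rescale each level $X_{k_n+i}$, use the valence bound (Ad1) and the compact parameter ranges to extract a diagonal subsequence along which every level converges, check that (Ad1)--(Ad7) pass to the limit, and use the uniform-in-$n$ decay of fiber diameters from (Ad3) to commute blow-up with inverse limit -- is the natural argument and is essentially sound, including the correct use of $p\notin\psi^{-1}(\{0,1\})$ to make the limit system signed. Three small repairs: the adjacent-edge measure ratios are continuous parameters in $[C^{-1},C]$, not a finite set (harmless for compactness, but your phrasing is off); it is cleaner to renormalize every level by the single constant $\mu_\infty(B(p,r_n))$, using $(\pi_{\infty,k_n+i})_{\sharp}\mu_\infty=\mu_{k_n+i}$, so that (Ad5) is preserved exactly along the sequence and the renormalization $c$ is extracted only at the end, rather than normalizing each level by its own unit-ball mass; and the parenthetical ``spheres are $\nu_\infty$-null'' needs its one-line justification (spheres in a geodesic space are porous, and porous sets are null for the doubling measure $\nu_\infty$, cf.\ Theorem \ref{thm:as_doubling}), since it is exactly what turns weak* convergence into the mass identity behind (\ref{eq:invlim_bwup_s1}).

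The genuine gap is in your treatment of (4). You conclude by invoking that the distinguished representation of $(Y_\infty,\nu_\infty)$ is ``the unique projection-compatible Alberti representation of $\nu_\infty$ whose projection on each $Y_i$ is the Fubini decomposition of $\nu_i$.'' Neither half of this is available as stated: decompositions of $\nu_i$ into arclength along monotone geodesics are not unique at a fixed level (at a vertex the mass can be routed straight or crosswise among incoming and outgoing edges, giving distinct decompositions of the same $\nu_i$), so ``the Fubini decomposition of $\nu_i$'' is not a well-defined object without specifying the canonical (Ad6)-proportional routing; and even granting a specified family $\{P_i\}$ of level-wise transverse measures, the substantive step is to show that the blown-up representation actually projects to these canonical $P_i$ on every $Y_i$ -- which is precisely what your argument skips when it records only that the blow-up is ``concentrated on monotone geodesics and compatible with projections.'' A correct route is to track the canonical construction of \cite{cheeger_inverse_poinc} through the blow-up: identify the level-wise pushforwards of the blown-up transverse measure with the (Ad6)-determined ones (these do pass to the limit along your diagonal subsequence, since (Ad6) is an equality of ratios), and then use that a measure on monotone lines of $Y_\infty$ is determined by its pushforwards to the line spaces of all the $Y_i$, the fibers of $Y_\infty\to Y_i$ having diameter $O(m^{-i})$. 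With that replacement, and the bookkeeping you already flagged for (1)--(3), the proof goes through.
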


We may now apply Theorem \ref{thm:lip_measured_blow_up} to obtain the following:
\begin{thm}
  \label{thm:inv_measured_blow_up}
  Let $X_\infty$ be the inverse limit of an admissible inverse system
  $\left\{X_i,\pi_i\right\}$. Let $A\subset X_\infty$ be a 
  measurable subset, and $F:X_\infty\supset A\to Z$ be Lipschitz. 
  
  Then
  there is a full $\mu_\infty$-measure subset $S_F\subset A$ such that, for
  each $p\in S_F$ and
  each $\stdfun\in\tang\stdfunpoint$, one has that $G$ maps each unit-speed
  monotone geodesic line $\gamma:\real\to Y_\infty$ to a (possibly degenerate\footnote{This happens iff
    $\fnrm\tbase.=0$, i.e.~when $G\circ\gamma$ is constant.}) geodesic line in $W$ with
  constant speed $\sigma^{-1}\fnrm\tbase.$.
\end{thm}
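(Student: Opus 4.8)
The plan is to combine Theorem \ref{thm:invlim_bwup}, which describes the structure of blow-ups of $(X_\infty,\mu_\infty,\psi,p)$, with the metric differentiation result Theorem \ref{thm:lip_measured_blow_up} applied to the chart $(X_\infty,\varphi_\infty)$ and the Lipschitz map $F$. First I would fix, using Theorem \ref{thm:lip_measured_blow_up}, a full $\mu_\infty$-measure set $S_F\subset A$ at whose points the conclusions of that theorem hold; since $(X_\infty,\mu_\infty)$ has a single chart $(X_\infty,\varphi_\infty)$ and the tangent bundle $TX_\infty$ is one-dimensional, for $p\in S_F$ the tangent space $T_pX_\infty$ is spanned by a single vector, which we normalize to the unit vector $v_0=\tbase$ with respect to the canonical norm $\fnrmname$. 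I would also, shrinking $S_F$ if necessary, require $p$ to be a point where the blow-up structure of Theorem \ref{thm:invlim_bwup} holds and (in the unsigned case) $p\notin\psi^{-1}(\{0,1\})$, which is a full-measure condition.

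Next I would take any blow-up $\stdfun\in\tang\stdfunpoint$. By Theorem \ref{thm:invlim_bwup}, the blown-up space $(\sigma Y_\infty, c\cdot\nu_\infty)$ is (a rescaling of) the inverse limit of a signed admissible inverse system, and by Theorem \ref{thm:inverselimit_poinc}(2) applied to $Y_\infty$, the distinguished Alberti representation of $\nu_\infty$ is supported exactly on the unit-speed monotone geodesic lines $\gamma:\R\to Y_\infty$ with $\varphi_\infty\circ\gamma=\id_\R$ (up to orientation and translation). By Theorem \ref{thm:invlim_bwup}(4), this distinguished Alberti representation is, up to renormalization, the blow-up of the distinguished Alberti representation of $(X_\infty,\mu_\infty)$. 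On the other hand, Theorem \ref{thm:lip_measured_blow_up} provides, for the unit vector $v_0$, an Alberti representation $(Q,\Phi)$ of $\nu_\infty$ concentrated on $\lines(\varphi,v_0,G)$, i.e.\ on unit-speed geodesic lines $\gamma$ with $(\varphi\circ\gamma)'=v_0$ and $\wdst G\circ\gamma(t),G\circ\gamma(s).=\fnrm v_0.\,|t-s|$. Since $v_0=\tbase$ and the monotone geodesics are precisely the unit-speed lines with $(\varphi_\infty\circ\gamma)'=\pm 1$, after accounting for the scaling factor $\sigma$ (which rescales the metric, hence divides speeds by $\sigma$) these two families of lines coincide: the lines in the support of the distinguished Alberti representation are exactly the $G$-affine lines produced by metric differentiation, and along each such $\gamma$ the map $G\circ\gamma$ is a constant-speed geodesic in $W$ of speed $\sigma^{-1}\fnrm\tbase.$.

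The remaining point is to upgrade this from "almost every monotone geodesic line" to "every monotone geodesic line." Here I would use the fact that the distinguished Alberti representation of $(Y_\infty,\nu_\infty)$ is supported on \emph{all} monotone geodesic lines (Theorem \ref{thm:inverselimit_poinc}(2) states the support is precisely this set), together with the compatibility of these Alberti representations with the projections $\pi_{\infty,k}$. Concretely: given an arbitrary unit-speed monotone geodesic line $\gamma_0:\R\to Y_\infty$, one can find monotone geodesic lines $\gamma$ in the support of $(Q,\Phi)$ that converge to $\gamma_0$ uniformly on compact sets (using that monotone geodesics through nearby points project compatibly and that the transverse measure has full support on the set of such lines, via axioms \textbf{(Ad2)}, \textbf{(Ad7)} on lifting monotone geodesics); since $W$ is a complete metric space and $G$ is Lipschitz, $G\circ\gamma\to G\circ\gamma_0$ uniformly on compacta, and the property of being a constant-speed geodesic with speed $\sigma^{-1}\fnrm\tbase.$ passes to the limit. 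Thus $G\circ\gamma_0$ is also such a geodesic. The degenerate case $\fnrm\tbase.=0$ is exactly when $G\circ\gamma$ is constant, as noted in the footnote.

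\textbf{Main obstacle.} The hard part will be the passage from almost-every to every monotone geodesic line. The metric differentiation theorem only controls $G$ along lines carried by the Alberti representation, which a priori is a measure-theoretic statement; one must exploit the very rigid combinatorial structure of admissible inverse systems — in particular that monotone geodesics lift uniquely once a point in the fiber is chosen, and that the distinguished Alberti representation's transverse measure genuinely charges every region of the space of monotone geodesics — to run a density/limiting argument. One must also be careful about the normalization constants $\sigma$ and $c$ from Theorem \ref{thm:invlim_bwup} when matching the two Alberti representations, so that the speed comes out as $\sigma^{-1}\fnrm\tbase.$ rather than $\fnrm\tbase.$; this bookkeeping, while routine, is where a sign or scaling error would most easily creep in.
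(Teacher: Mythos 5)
Your overall route—combining Theorem \ref{thm:lip_measured_blow_up} with the structure theorem \ref{thm:invlim_bwup} and keeping track of the factor $\sigma$—is the same one the paper intends (the paper obtains the statement simply by applying Theorem \ref{thm:lip_measured_blow_up} after Theorem \ref{thm:invlim_bwup}). The genuine gap is in your passage from ``$Q$-almost every line'' to ``every monotone geodesic line.'' Two problems occur there. First, your claim that the family of lines produced by metric differentiation ``coincides'' with the support of the distinguished Alberti representation is exactly what has to be proved: Theorem \ref{thm:lip_measured_blow_up} only yields \emph{some} Alberti representation of $\nu$ concentrated on $\lines(\varphi,v_0,G)$, and nothing in its statement identifies that representation, or its support, with the distinguished one. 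Second, the limiting argument you then run does not close the gap: given an arbitrary unit-speed monotone geodesic $\gamma_0$, lines in $\spt Q\subset\lines(\varphi,v_0,G)$ passing near $\gamma_0(t)$ do subconverge (Arzel\`a--Ascoli, using that the set $\lines(\varphi,v_0,G)$ is closed under uniform convergence on compacta), but because monotone geodesics in these inverse limits branch, the limit line through $\gamma_0(t)$ need not be $\gamma_0$ itself. What you get this way is only that through every point there passes \emph{some} line on which $G$ is a constant-speed geodesic—not that $\gamma_0$ is such a line. (Handling exactly this branching is the whole content of the later proof of Theorem \ref{thm_examples_main_theorem}, so it cannot be waved away here.)

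The step that repairs this, and which your proposal cites but never actually uses, is Theorem \ref{thm:invlim_bwup}(4). Run the blow-up machinery of Theorems \ref{thm:alberti_blow_up}, \ref{thm:sem_measured_blow_up} and \ref{thm:lip_measured_blow_up} with the \emph{distinguished} Alberti representation of $(X_\infty,\mu_\infty)$ as input (it is $1$-Lipschitz, in the $\varphi_\infty$-direction of a cone, so the hypotheses are met). The construction forces the blown-up transverse measure to be supported in the closed set of lines on which $\varphi$ is affine with derivative $v_0$ and $G\circ\gamma$ is a geodesic of constant speed $\sigma^{-1}\,\|\partial_\psi|_p\|_F$; on the other hand, by Theorem \ref{thm:invlim_bwup}(4) this blown-up representation is, up to normalization, the distinguished Alberti representation of $(Y_\infty,\nu_\infty)$, whose support is by Theorem \ref{thm:inverselimit_poinc}(2) the set of \emph{all} monotone geodesics. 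Since the support of a measure is contained in every closed set carrying full measure, every monotone geodesic lies in the good set (orientation reversal being harmless), which is the assertion of the theorem. Your $\sigma$-bookkeeping remark is fine, but without this identification of the blown-up representation the ``each'' in the statement is not established.
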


\begin{definition}
\label{def_monotone_bigon}
An admissible inverse system $\{(X_i,\pi_i)\}$ satisfies the {\bf monotone bigon condition} if there is a constant $D$ such that for every $i$, if  $y_1,y_2\in X_\infty$  project under $\pi_i:X_\infty\ra X_i$ to the same point, there are monotone geodesic segments $\ga_1,\ga_2\subset X_\infty$ of length $<Dm^{-i}$ such that $y_i\in \ga_i$, and $\ga_1$, $\ga_2$ have the same endpoints.
\end{definition}

One may readily check that some standard examples of admissible systems, for instance Examples 1.2 and 1.4 from \cite{cheeger_kleiner_realization}, satisfy the monotone bigon condition.

\bigskip
\begin{proof}[Proof of Theorem \ref{thm_examples_main_theorem}]
We apply Theorem \ref{thm:inv_measured_blow_up}, and 
will show that under each of the assumptions 
(a)--(d), there is a full measure subset of points $p\in A$ such that if  $G:Y_\infty\ra W$ is as Theorem \ref{thm:inv_measured_blow_up}, then:

\begin{itemize}
\item If one of (a)--(c) holds then $G$ factors as 
\begin{equation}
\label{eqn_g_factors_through_geodesic}
G\stackrel{\varphi_\infty}{\lra}\R\stackrel{\bar G}{\lra}W
\end{equation}
where $\bar G$ is a geodesic with constant speed $\si_0=\sigma^{-1}\fnrm\tbase.$.
Here we allow $\si_0=0$, in which case $\bar G$ and $G$ are constant maps.
\item If (d) holds, then $G$ factors as $\bar G\circ \pi^\infty_{-\infty}$ where
$\pi^\infty_{-\infty}:Y_\infty\ra Y_{-\infty}$ is the projection map from the inverse limit to the direct limit, and 
\begin{equation}
\label{eqn_bar_g_displayed}
\bar G:Y_{-\infty}\ra W
\end{equation}
is a map whose restriction of any monotone geodesic in $Y_{-\infty}$
is a geodesic in $W$ with constant speed $\si_0=\sigma^{-1}\fnrm\tbase.$.
\end{itemize}

We first assume that (b) holds.  This also covers case (a) since (a) $\implies$ (b).

 
Let $\Ga$ be the set of monotone geodesics $\gamma: \R\ra Y_\infty$ such that $\varphi_\infty\circ\gamma=\id_\R$.  
We define an equivalence relation on $\Ga$ by saying that  
$\gamma_1\sim\gamma_2$ if there is a geodesic $\eta:\R\ra W$ with
constant speed $\sigma_0$ such that 
$G \circ \gamma_i=\eta\circ\varphi_\infty\circ \gamma_i$ for $i=\{1,2\}$.  

Note that if the images of $\gamma_1,\gamma_2\in\Ga$ intersect in an interval, then they are equivalent
by  assumption (b). 
By  concatenating rays to form monotone geodesics, it follows that if
the images of $\gamma_1$ and $\gamma_2$ intersect even in a single
point, they are equivalent.  Now define an equivalence relation on
$Y_\infty$ by saying that $y_1,y_2\in Y_\infty$ are equivalent if some
(or equivalently every) $\ga_1\in\Ga$ passing through $y_1$ is
equivalent to some (or every) geodesic $\ga_2\in\Ga$ passing through
$y_2$.  The cosets of this relation on $Y_\infty$ are closed, and any path $\gamma$ that is a concatenation of finitely many segments from monotone geodesics lies in a single coset. Therefore there is only one coset and $G$ factors as claimed.

Suppose (c) holds.  Since the conclusion is local, we may assume without loss of generality that there is a smooth map $\Psi:Z\ra \R^k$, where $k$ is the dimension of the horizontal space, and the derivative $D\Psi$ restricts to an isomorphism on every horizontal space.  
Hence by \cite{mitchell_tang}, 
for every $z\in Z$, the blow-up of $Z$ at $z$ is a Carnot group $W$, and the blow-up  $\hat\Psi:W\ra \R^k$ of $\Psi$ at $z$ yields the horizontal coordinate for $W$.  

We now proceed as before, except that we take $p$ to be a point of  differentiability of the composition $\Psi\circ F:A\ra \R^k$.  Passing to the blow-up $G:Y_\infty\ra W$, from differentiability we get that $\hat\Psi\circ G=\al\circ\varphi_\infty$  where $\al:\R\ra\R^k$ is
an affine map.  It follows that for every monotone geodesic $\ga\in \Ga$,
the composition $G\circ\ga$ projects under the horizontal coordinate 
$\hat\Psi:W\ra \R^k$ to the same constant speed geodesic $\al\circ\varphi_\infty\circ\ga$.  This implies that for every $\ga\in \Ga$, the composition $G\circ \ga$
is an integral curve of the left invariant vector field determined by $\al$.
In particular, if two such geodesics agree at a point, then they coincide.

Now consider the equivalence relation on $\Ga$ defined as before.
If  $\ga_1,\ga_2\in \Ga$
agree at some $t\in \R$, then by the above discussion $G\circ \ga_1=G\circ \ga_2$, i.e.
$\ga_1\sim\ga_2$.  The rest of the argument is the same.

Now assume (d) holds.   Again consider the map $G:Y_\infty\ra W$
obtained by applying metric differentiation at a point $p\in
X_\infty$.  As above, for every $\ga\in \Ga$, the composition $G\circ \ga$ is a geodesic of constant speed $\si_0$.  One checks readily that $Y_\infty$ inherits the monotone bigon condition. 
Pick $y_1,y_2\in Y_\infty$, and suppose that $\pi_j(y_1)=\pi_j(y_2)$. By the monotone bigon condition, there exist $\ga_i\in\Ga$ such that $\ga_i$ passes through $y_i$ and the maps $\ga_1,\ga_2:\R\ra Y_\infty$ agree outside a a compact subset of $\R$.  Then $G\circ\ga_1$ and $G\circ\ga_2$ are constant speed geodesics in a $CAT(0)$ space, and they agree outside a compact subset of $\R$.  It follows that $\ga_1=\ga_2$, so in particular $G(y_1)=G(y_2)$.    We have thus shown that $G$ factors through a map $G_j:Y_j\ra W$ which has the property that its restriction to any monotone geodesic in $Y_j$ is a geodesic in $W$ with speed $\si_0$.  This implies that $G$ factors as $\bar G\circ \pi^\infty_{-\infty}$, where 
$\pi^\infty_{-\infty}:Y_\infty\ra Y_{-\infty}$ is the projection map
to the direct limit  $Y_{-\infty}$ of the signed inverse system
$\{(Y_i,\pi_i)\}$, and 
$\bar G:Y_{-\infty}\ra W$ is a map whose
restriction to any monotone geodesic in $Y_{-\infty}$
is a geodesic in $W$ with constant speed $\sigma_0$.
Finally note that as there is a uniform bound on the cardinality of
the links of the vertices of $Y_j$ as $j\to -\infty$, then
$Y_{-\infty}$ is a finite union of geodesic rays issuing from a basepoint.
\end{proof}

\bigskip
\begin{remark}
  \label{rem:generality}
  Note that the previous argument for points (a)--(c) can be run more
  generally under the assumptions that $G$ maps monotone geodesics to constant speed geodesics   in $W$ which cannot branch, i.e. two such geodesics must coincide if they agree on an open interval. Note that it suffices to verify this non-branching property for the geodesics that arise from blow-up;  one may restrict these geodesics by exploiting differentiability of auxiliary Lipschitz functions, such as the function $\Psi$ appearing in case (c).
\end{remark}

\bigskip
\begin{proof}[Proof of Corollary \ref{cor:main_exa_thm}]

Let $F:A\ra Z$ be an $L$-bilipschitz embedding.

We first assume that we are in one of the cases (a)-(c).  Therefore we know that there
is a measurable subset $A_1\subset A$ with $\mu(A\setminus A_1)=0$
such that  for all  $x\in A_1$, for every blow-up $G:Y_\infty\ra W$ as in (the proof of)
Theorem \ref{thm_examples_main_theorem}, we have that $G=\eta\circ\varphi_\infty$
where $\eta:\R\ra W$ is a geodesic with constant speed lying in the interval
$[L^{-1},L]$.   Since $G$ is also $L$-bilipschitz, we conclude that for every
$y_1,y_2\in Y_\infty$, we have
\begin{equation}
\label{eqn_one_over_ell_squared}
d(\varphi_\infty(y_1),\varphi_\infty(y_2))\geq L^{-2}d(y_1,y_2)\,.
\end{equation}
This implies that for all $x\in A_1$ there is an $r(x)>0$ such that 
\begin{equation}
\label{eqn_one_over_two_ell_squared}
d(\varphi_\infty(x'),\varphi_\infty(x))> \frac12 L^{-2}d(x',x)
\end{equation}
 for all $x'\in B(x,r(x))$; otherwise there would be a sequence $x'_k\ra x$ violating
(\ref{eqn_one_over_two_ell_squared}), and by rescaling and passing to a limit, we get a blow-up contradicting (\ref{eqn_one_over_ell_squared}).  Now put
$$
S_j=\{x\in A_1\mid r(x)>j^{-1}\}\,.
$$
Then $\varphi_\infty\restr_{S_j}:S_j\ra \R$ is $2L^2$-bilipschitz on $\frac{r(x)}{2}$-balls, and $A_1=\cup_jS_j$.  This shows that $A_1$ is rectifiable.

Now suppose we are in case (d).  

Let 
$$
A_0=\{x\in A\mid \pi_j(x)\;\text{is not a vertex for any $j$}\}\subset A\,.
$$ 
Note that $A\setminus A_0$ is $\mu$-null.  Then there is a full measure subset $A_1\subset A_0$ such that for every $x\in A_1$,  every blow-up $G:Y_\infty\ra W$ 
of $F$ at $x$ has a factorization $G=\bar G\circ\pi^\infty_{-\infty}$ as in the proof of Theorem \ref{thm_examples_main_theorem}, and reasoning as above we get that 
\begin{equation}
\label{eqn_compression_infty_minus_infty}
d(\pi^\infty_{-\infty}(y_1),\pi^\infty_{-\infty}(y_2))\geq L^{-2}d(y_1,y_2)\,
\end{equation}
for all $y_1,y_2\in Y_\infty$.

Now for   $x\in A_1$, using (\ref{eqn_compression_infty_minus_infty}) and a contradiction argument, we get that for every $C$ there is a $j_x$ such that for all $j\geq j_x$,
the projection  $\pi_j:X_{j+1}\ra X_j$ maps the ball $B(\pi_{j+1}(x),Cm^{-j})\subset X_{j+1}$ bijectively onto $B(\pi_j(x),Cm^{-j})\subset X_j$.  Iterating this, it follows that for all $k\geq j \geq j_x$, the projection $\pi^k_j:X_k\ra X_j$ maps $B(\pi_k(x),Cm^{-k})$ bijectively onto $B(\pi_j(x),Cm^{-k})$.  Since $\pi_j(x)$ is not a vertex, for large enough $k$ we conclude that $B(\pi_k(x),Cm^{-k})$ has no branch points, i.e. it is isometric to an interval.  As $C$ is arbitrary, this together with (\ref{eqn_compression_infty_minus_infty}) implies that for every blow-up $G:Y_\infty\ra W$ of $F$ at $x$, the direct limit $Y_{-\infty}$ has no branch points.  Hence we are in the same situation as cases (a)-(c), and we may complete the proof as before.

\end{proof}

\bibliography{metric_diff}

\bibliographystyle{alpha}

\end{document}